\newtheorem{thm}{Theorem}[section]
\newtheorem{cor}[thm]{Corollary}
\newtheorem{lem}[thm]{Lemma}
\newtheorem{prop}[thm]{Proposition}
\newtheorem{rmk}[thm]{Remark}
\numberwithin{equation}{section}
\newcommand\be{\begin{equation}}
\newcommand\ee{\end{equation}}
\newcommand{\bsl}{\backslash}
\newcommand{\bH}{\mathbb{H}}
\newcommand{\R}{\mathbb{R}}
\newcommand{\C}{\mathbb{C}}
\newcommand{\Z}{\mathbb{Z}}
\newcommand{\Q}{\mathbb{Q}}
\newcommand{\N}{\mathbb{N}}
\newcommand{\cO}{\mathcal{O}}
\newcommand{\cM}{\mathcal{M}}
\newcommand{\cL}{\mathcal{L}}
\newcommand{\cI}{\mathcal{I}}
\newcommand{\cS}{\mathcal{S}}
\newcommand{\cR}{\mathcal{R}}
\newcommand{\tE}{\tilde{E}}
\newcommand{\tM}{\tilde{M}}
\newcommand{\cuspa}{\mathfrak{a}} 
\newcommand{\m}{\mathfrak{m}}
\newcommand{\sgn}{{\rm sign}} 
\newcommand{\sm}{\left(\begin{smallmatrix}}
\newcommand{\esm}{\end{smallmatrix}\right)}
\newcommand{\bpm}{\begin{pmatrix}}
\newcommand{\ebpm}{\end{pmatrix}}
\newcommand{\cusp}{{\rm cusp}}
\newcommand{\cont}{{\rm cont}}
\newcommand{\Res}{{\rm Res}}
\newcommand{\Int}{{\rm Int}}
\newcommand{\fin}{{\rm finite}}
\newcommand{\fund}{\mathfrak{F}}
\newcommand{\bnu}{\bar{\nu}}
\newcommand{\tth}{\tilde{h}}
\begin{document}
\title{Shifted multiple Dirichlet series}

\author{Jeff Hoffstein} \email{jhoff@math.brown.edu}
\address{Department of Mathematics,
Brown University, Providence, RI, $02912$}

\author{Min Lee}
 \email{minlee@math.brown.edu}
 \address{Department of Mathematics,
Brown University, Providence, RI, $02912$}

\date{\today}

\maketitle
\begin{abstract}
We develop certain aspects of the theory of shifted multiple Dirichlet series and study their meromorphic continuations. 
These continuations are used to obtain explicit spectral first and second moments of Rankin-Selberg convolutions.  
One consequence is a Weyl type estimate for the Rankin-Selberg convolution of a holomorphic cusp form and a Maass form with spectral parameter $|t_j|\le T$, namely:
$$
	\left| L\left(\frac{1}{2}+ir, f\times u_j\right) \right|
	\ll_N T^{2/3+\epsilon},
$$
uniformly, for $|r| \le T^{2/3}$, with the implied constant depending only on $f$ and the level $N$.
\end{abstract}

\tableofcontents

\section{Introduction}
The object of this paper is to present a new technique for approaching moment formulas, and to give several examples of applications.   The foundation of the results of this paper is  a spectral identity which may be interesting in its own right.   This identity is related to some results of  \cite{Mot92} and \cite{Ivi02}, and also to \cite{JM05}, but comes from an entirely different approach.    They begin with the Kuznetsov trace formula, and follow this with a use of a Voronoi formula.  Interestingly, Motohashi-Jutila speculate at the end of their paper on the possibility of extending their results to Rankin-Selberg convolutions without recourse to Kuznetsov and Voronoi.   

Our method uses neither Kuznetsov nor Voronoi, but depends instead on analytic properties of shifted multiple Dirichlet series developed in \cite{HHR} and \cite{Hul13}.    In particular,  an application of an integral transform to a spectral decomposition of a single shifted multiple Dirichlet series leads to a certain first moment identity, presented below in Theorem~\ref{thm:first_Eisenstein}.    Summing over a parameter leads to a shifted Dirichlet series in two variables, and the analytic properties of this series lead to our second moment results.  

One of the usual characteristics of this sort of work is the careful analysis of integral transforms of Dirichlet series that are turned into essentially finite sums by means of the approximate functional equation.   These sums are generally broken into a number of different pieces and different techniques are used to estimate the contributions of the integral transforms of these pieces.   We would argue that although it certainly has its share of complicated formulas, our approach is conceptually much simpler.   Dirichlet series are dealt with as complete objects, and are used to build shifted multiple Dirichlet series in several variables.   The meromorphic properties of these series are then used to directly extract information, using standard techniques from analytic number theory.

We will begin our discussion of the results in this paper with a second moment formula for Rankin-Selberg convolutions of cusp forms and Eisenstein series, and some historical background.

Let $f$ and $g$ be holomorphic cusp forms of even weight $k$ and level $N$.   
Assume that $f$ and $g$ are newforms. 
We will in general take $N$ to be square free, in order to simplify the Fourier expansions of Eisenstein series of level $N$.  
We will write the Fourier expansions of $f$ and $g$ as
$$
	f(z) = \sum_{n=1}^\infty a(n) e^{2\pi inz} \quad \text{and} \quad g(z) = \sum_{n=1}^\infty b(n) e^{2\pi inz}
$$
and also use the notation
$$
a(n) = A(n)n^{\frac{k-1}{2}}    \quad \text{and} \quad  b(n) = B(n)n^{\frac{k-1}{2}}.
$$

Let $\Gamma:= \Gamma_0(N) < SL_2(\Z)$ and $\bH=\{x+iy\;|\; x\in \R, y>0\}$ be the Poincar\'e upper half plane. 
For any $\varphi_1, \varphi_2\in L^2(\Gamma\bsl \bH)$, we define the Petersson inner product 
$$
	\left< \varphi_1, \varphi_2\right>
	:=
	\iint_{\Gamma\bsl \bH} \varphi_1(z) \overline{\varphi_2(z)} 
	\; \frac{dx\; dy}{y^2}. 
$$

Let $\{u_j\}_{j\geq 0}$
be an orthonormal basis (with respect to the Petersson inner product) 
for the discrete part of the spectrum of the Laplace operator on $L^2\left(\Gamma\bsl \bH\right)$.   We will assume the basis is simultaneously diagonalized with respect to Hecke operators corresponding to primes not dividing $N$.
Let $s_j(1-s_j)$ for $s_j\in \C$ be an eigenvalue of the Laplace operator for $u_j$ for each $j\geq 0$.   Then we have the Fourier expansion
\begin{equation}\label{e:normalized_maassform}
	u_j(z) 
	= 
	\sum_{n\neq 0} \rho_j(n)\sqrt{y}K_{it_j}(2\pi|n|y)e^{2\pi inx}\,.
\end{equation}
Here $s_j=\frac{1}{2}+it_j$ for $t_j\in \R$ or $\frac{1}{2}< s_j< 1$. 
Also 
$$
	K_{\nu}(y)
	=
	\frac{1}{2} \int_0^\infty e^{-\frac{1}{2} y \left(t+t^{-1}\right)} t^{\nu} \; \frac{dt}{t},  
$$
for $\nu\in \C$ and $y>0$, is the $K$-Bessel function. 

For each $a\mid N$, let $E_{1/a}(z, 1/2+it)$ be the Eisenstein series at the cusp $1/a$ as defined in \eqref{e:Ea}. 
	
The Rankin-Selberg convolutions of the cusp form $f$ with $u_j$ is written as
$$
	\cL\left(s, f\times u_j\right)
	=
	\zeta(2s)
	\sum_{n=1}^\infty \frac{A(n) \rho_j(n)}{n^{s}}
$$
and
$$
	L\left(s, f\times E_{1/a}(*, 1/2+it)\right)
	=
	\zeta(2s) \sum_{n=1}^\infty \frac{A(n)\sigma_{-2it}^a(n) n^{it}}{n^s}.
$$
Here $\sigma_{-2it}^a(n)$ is given in \eqref{e:sigmaa}. 

If $u_j$ is a new form then $\rho_j(n)= \rho_j(1)\lambda_j(n)$, where $\lambda_j(n)$ is the $n^{th}$ Hecke eigenvalue.  In this case 
$$
	\cL\left(s, f\times u_j\right)
	=
	 \rho_j(1)\zeta(2s)
	\sum_{n=1}^\infty \frac{A(n) \lambda_j(n)}{n^{s}} = \rho_j(1) L\left(s, f\times u_j\right),
$$
where $L\left(s, f\times u_j\right)$ is the usual normalized Rankin-Selberg convolution.

The first sub convexity result for Rankin-Selberg $L$-functions was  
obtained by Sarnak \cite{Sar01} in the weight aspect.   He showed that
 for $f$ a holomorphic cusp form of weight $k$ for $\Gamma_0(N)$, and $g$ 
 a fixed either  holomorphic cusp form or Maass form for $\Gamma_0(N)$, 
$$
L\left(1/2 + it, f \times g \right) \ll_{t,g,\epsilon,N} k^{576/601 + \epsilon}.
$$
Here 1 represents the convexity bound and the exponent $576/601$ was achieved with the input of the $\theta=7/64$ bound for the Ramanujan conjecture.
There were several improvements in the next few years, culminating in  2006 in a result of  Lau, Liu and Ye in \cite{LLY06}.  They showed that for any small $\epsilon >0$
$$
\sum_{K-L\le k \le K+L}|L\left(1/2 + it, f \times g \right)|^2\ll_{t,g,\epsilon}\left(N^{A(\epsilon)}KL\right)^{1+\epsilon}.
$$
They also obtained the consequent sub convexity bound
$$
L\left(1/2 + it, f \times g \right) \ll_{t,g,\epsilon} k^{2/3 + \epsilon}.
$$
In both results the constant $A(\epsilon)>0$ depends only on $\epsilon$, and the implied constant depends polynomially on $t$.
Also $f$ was a holomorphic Hecke eigenform for $\Gamma_0(N)$ of weight $k$, or a Maass Hecke eigenform for $\Gamma_0(N)$ with Laplace eigenvalue $1/4 + k^2$, and $g$ was a fixed holomorphic or Maass cusp form for $\Gamma_0(N)$, or for $\Gamma_0(N')$ with $(N,N')= 1$. 

Our results are the first we are aware of giving asymptotic estimates for the second moment of Rankin-Selberg $L$-functions with sharp error terms, though in \cite{Ivi02}, Ivic gives a fourth moment estimate for the value of Hecke $L$-functions at $1/2$, with an error term of $K^{3/2}(\log K)^{25/2}$.  In our work, the parameter $K$ is replaced by $T$ and refers only to the special parameter, not the weight.   Our second moment result has the form
\begin{thm}\label{thm:second_intro}
Fix $T \gg 1$,  $1 \ge \alpha \ge 1/3$,  $0 \le r \le T^{2/3}$ and $\epsilon >0$.  Then 
\begin{multline}\label{e:IfEg}
	\sum_{j} 
	\frac{e^{-\frac{(t_j-T)^2}{T^{2\alpha}}}}
	{\cosh(\pi t_j)}
	\cL\left(1/2-ir, \overline{u_j}\times \overline{g}\right)
	\cL(1/2+ir, f\times u_j)
	\\
	+	
	\sum_{\cuspa} 
	\frac{1}{4\pi} \int_{-\infty}^\infty 
	\frac{e^{-\frac{(t-T)^2}{T^{2\alpha}}}}
	{\cosh(\pi t)}
	\frac{4\left(\frac{N}{a}\right)^{-1}
	L\left(1/2-ir, E_{1/a}(*, 1/2-it) \times \overline{g}\right)
	L\left(1/2+ir, f\times E_{1/a}(*, 1/2+it) \right)}
	{\zeta^*(1-2it)\zeta^*(1+2it)\prod_{p\mid N} (1-p^{-1+2it})(1-p^{-1-2it})}
	\; dt
	\\
	=
	M_{f, g}(T, \alpha; r) 
	+
	 \cO_N\left(T^{3/2-\alpha/2 + \epsilon}+ T^{\alpha+\epsilon}|r|^{3/2 +\epsilon}\right).
\end{multline}
The implied constant is polynomial in $N$.
Exact formulas for $M_{f, g}(T, \alpha; r)$ are given explicitly in \eqref{e:Nrne}, \eqref{e:1re} and \eqref{e:1re0}, and asymptotically

$$
	M_{f, g}(T, \alpha; 0)
	=
	T^{1+\alpha} P_d(\log T) + \cO(T^{1+\alpha-\epsilon}), 
$$
where $P_d(\log T)$ is a degree $d$ polynomial of $\log T$. 
For $f=g$, $d=3$ and for $f\neq g$, $d=2$. 
The leading coefficient of $P_d(\log T)$ is 
$$ 
	c
	\Res_{s=1} L(s, f\times\bar{f})
$$
for $f=g$, 
and 
$$
	c'
	L(1, f\times g) 
$$
for $f\neq g$. 
Here $c,c'$ are explicit positive constants which are independent of $f$ and $g$. 

\end{thm}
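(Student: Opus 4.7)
The plan is to derive Theorem~\ref{thm:second_intro} directly from the first moment identity of Theorem~\ref{thm:first_Eisenstein}. That identity, obtained by applying an integral transform to the spectral decomposition of a single shifted multiple Dirichlet series, expresses a weighted first moment of $\cL(1/2+ir, f\times u_j)$ twisted by $\overline{\rho_j(m)}$ (together with the analogous Eisenstein contribution) as a main term plus an auxiliary arithmetic sum in $m$ built from the Fourier coefficients $A(n)$ of $f$. To extract the second moment, I would twist by the Fourier coefficients $B(m)$ of $g$, weighted by $m^{-1/2+ir}$, and sum over $m$. On the spectral side the resulting sum unfolds, via the Dirichlet expansion
$$
\cL\lt(1/2-ir,\, \overline{u_j}\times \overline{g}\rt)
= \zeta(1-2ir) \sum_{m\geq 1} \frac{\overline{\rho_j(m)}\, B(m)}{m^{1/2-ir}},
$$
and its analogue for Eisenstein series, into precisely the product on the left side of \eqref{e:IfEg}; the spectral test function would be specialized to the Gaussian $h(t) = e^{-(t-T)^2/T^{2\alpha}}$.

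On the arithmetic side, summing over $m$ assembles the auxiliary sum into a shifted Dirichlet series in two additional complex variables, schematically
$$
D(s,w; r) = \sum_{n,m\geq 1} \frac{A(n)\,\overline{B(m)}}{n^s\, m^w}\,\Phi\!\lt(\frac{n}{m};\, r\rt),
$$
where $\Phi$ encodes the shift by $r$ and the integral transform built from $h$. The meromorphic continuation of $D$ in $(s,w)$ is provided by the results of \cite{HHR} and \cite{Hul13}, and its pole structure is governed by the Rankin--Selberg factor $L(s+w, f\times \overline{g})$ together with shifted zeta factors. Shifting the $(s,w)$-contours past these poles, the dominant residue at $s+w=1$ contributes $M_{f,g}(T,\alpha;r)$: the pole of $L(s, f\times\overline{g})$ produces $\Res_{s=1}L(s, f\times \overline{f})$ when $f=g$ and the value $L(1, f\times g)$ when $f\neq g$. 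Spectral integration against the Gaussian gives the leading factor $T^{1+\alpha}$, and the polynomial $P_d(\log T)$ of degree $d\in\{2,3\}$ arises from the order of the residue at $s+w=1$ combined with the logarithmic contributions of neighboring shifted zeta poles.

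The principal obstacle is bounding the contour integrals that remain after the residues are collected. Two competing sources contribute to the error. First, the dependence of $D$ on $r$, propagated from the Rankin--Selberg factor $L(s, f\times\overline{g})$ evaluated at argument shifted by $\pm ir$, produces an error of size $T^{\alpha+\epsilon}|r|^{3/2+\epsilon}$; the exponent $3/2$ reflects a convexity-style bound for $L(1/2+it, f\times\overline{g})$ when $|t|\approx|r|$ averaged against a Gaussian of width $T^\alpha$. Second, the intrinsic polynomial growth of $D(s,w;r)$ on vertical lines — tempered by the $T^{-\alpha/2}$ saving coming from second-moment-in-$t$ behaviour of the Gaussian — yields an error of shape $T^{3/2-\alpha/2+\epsilon}$, arising from the dual representation of the SMDS produced by its functional equation. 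Proving these two bounds uniformly in $r$, $T$, and the SMDS variables is the delicate analytic step of the argument; once in hand, grouping the residues into $M_{f,g}(T,\alpha;r)$ and the remaining contour contributions into the stated $\cO_N$ error completes the proof of \eqref{e:IfEg}, with the polynomial dependence on $N$ coming from the local Euler factors of $f$, $g$, and the Eisenstein series at primes dividing $N$.
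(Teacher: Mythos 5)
Your skeleton does match the paper's: the second moment is obtained by summing the first moment identity of Theorem~\ref{thm:first_Eisenstein} (really Theorem~\ref{thm:first} with $\phi_1=E_N^{(k)}(\ast,1/2+\overline{ir})$, $\phi_2=gy^{k/2}$) against the coefficients of $g$, which assembles the arithmetic side into the two-variable shifted series $\cM_{f,\phi_1,\phi_2}(s,w)$ of Section~\ref{s:SDDS}, and the main and error terms are then separated by meromorphic continuation back to $\Re(s)=1/2$. But the proposal has genuine gaps exactly at the points where the stated bounds are produced. First, the error mechanism is misattributed. The exponents $T^{3/2-\alpha/2+\epsilon}$ and $T^{\alpha+\epsilon}|r|^{3/2+\epsilon}$ do not come from a convexity bound for the fixed $L(1/2+it,f\times\bar g)$, nor from a functional equation or ``dual representation'' of the double series; they require (i) the spectral mean-square input \eqref{MI} of Motohashi--Jutila type for the degree-four values $\cL(1/2+ir,\overline{u_j}\times E_N(\ast,1/2-ir))$, (ii) the inner-product bound \eqref{HHR} from \cite{HHR}, combined by Cauchy--Schwarz into \eqref{jsum1}, and (iii) the decisive contour manoeuvre of moving the $u$-line to $\sigma_u\asymp T^{\alpha-\epsilon}$, which forces exponential decay unless $|t_j|+|r|\le T^{1-\alpha+\epsilon}$; it is this truncation that yields $T^{\alpha}\cdot(T^{1-\alpha+\epsilon})^{3/2}=T^{3/2-\alpha/2+\epsilon}$, explains why the residue terms $\tth^{(3)R}_{\ell}$ force $\alpha\ge 1/3$, and produces the $T^{\alpha}|r|^{3/2+\epsilon}$ term from the $|t_j|\ll|r|$ range. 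Without these inputs your sketch has no route to the claimed error term, and the convexity-style estimate you invoke is not strong enough.

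Second, the main term and the admissibility of the test function are not handled. The bare Gaussian $h(t)=e^{-(t-T)^2/T^{2\alpha}}$ does not satisfy the hypotheses of Theorem~\ref{thm:first} ($h$ even and $h(\pm i/2)=0$); the paper uses $\bigl(e^{-((t-T)/T^{\alpha})^2}+e^{-((t+T)/T^{\alpha})^2}\bigr)\frac{t^2+1/4}{t^2+R}$, and one must account for this modification to recover \eqref{e:IfEg}. More substantively, the main term is not a single residue of a factor $L(s+w,f\times\bar g)$ of the double series (whose polar lines are spectral, $s$ or $w=1/2\pm it_j-r$, etc.): $L(\cdot,f\times\bar g)$ enters through the constant terms of $E_N^{(k)}$ (giving $\tM_{f,\phi_1,\phi_2}$) and through the Asai-type evaluation $\left<E_{1/a}(\ast,s),U_{f,gy^{k/2}}\right>\propto A(N/a)\overline{B(N/a)}L(s,f\times\bar g)$ of Lemma~\ref{lem:inner-L}, applied at the residues $z=-s+1\mp\bnu_1$ of the continuous-spectrum terms, producing $M^{(1)}$ and $M^{(3)}$ in \eqref{e:second_main}. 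Finally, the degree-$d$ polynomial in $\log T$ at $r=0$ comes from a delicate $r\to0$ limit in which individually singular factors ($\zeta(1\pm2ir)$, $H_1^{\pm}(ir)$, \dots) combine, the $(\log T)^3$ for $f=g$ arising from third derivatives such as $H_4$; your ``order of the residue'' heuristic does not by itself establish $d=2,3$ or the stated leading coefficients.
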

Taking $\alpha = 1/3$ and bounding $M_{f, g}(T, \alpha; r)$ from above by $T^{4/3+ \epsilon}$, an immediate consequence of the theorem is
\begin{cor}\label{cor:subconvexity}
$$
	\left| L\left(\frac{1}{2}+ir, f\times u_j\right) \right|
	\ll_N T^{2/3+\epsilon}
$$
for $|r|\le T^{2/3}$, with the implied constant polynomially dependent on $N$ but independent of $r$ and $T$.
\end{cor}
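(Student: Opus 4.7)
The plan is to specialize Theorem~\ref{thm:second_intro} to $g=f$, $\alpha=1/3$, and the spectral peak $T\asymp|t_j|$ chosen to match the Maass form $u_j$ at hand (the finitely many $u_j$ with $|t_j|=O(1)$ are absorbed into the $f,N$-dependent implied constant). With $g=f$ and the basis chosen so that $\overline{u_k}=u_k$---possible since we are in trivial nebentypus---each discrete summand on the left-hand side of \eqref{e:IfEg} collapses to
$$
\frac{e^{-(t_k-T)^2/T^{2/3}}}{\cosh(\pi t_k)}\,\bigl|\cL(1/2+ir,\,f\times u_k)\bigr|^{2}\;\geq\;0,
$$
and the continuous-spectrum integrand similarly becomes $\bigl|L(1/2+ir,\,f\times E_{1/a}(*,1/2+it))\bigr|^{2}$ divided by the manifestly non-negative factor $|\zeta^{*}(1+2it)|^{2}\prod_{p\mid N}|1-p^{-1-2it}|^{2}$, multiplied by the positive weight $e^{-(t-T)^{2}/T^{2/3}}/\cosh(\pi t)$. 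Thus every term on the left is non-negative.

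\textbf{Bounding the right-hand side.} At $\alpha=1/3$ the error in \eqref{e:IfEg} reads $T^{4/3+\epsilon}+T^{1/3+\epsilon}|r|^{3/2+\epsilon}$; the hypothesis $|r|\le T^{2/3}$ gives $|r|^{3/2}\le T$, so both pieces are $\ll T^{4/3+\epsilon}$. Granting the stated uniform bound $M_{f,f}(T,1/3;r)\ll T^{4/3+\epsilon}$---which one extracts from the explicit formulas \eqref{e:Nrne}, \eqref{e:1re}, \eqref{e:1re0}---the whole right-hand side is $O_{N}(T^{4/3+\epsilon})$.

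\textbf{Extracting a single term.} By non-negativity I discard every term on the left except the one indexed by the given $j$, yielding
$$
\frac{|\rho_j(1)|^{2}}{\cosh(\pi t_j)}\,\bigl|L(1/2+ir,\,f\times u_j)\bigr|^{2}\;\ll_{N}\;T^{4/3+\epsilon},
$$
where I have used the factorization $\cL(s,f\times u_j)=\rho_j(1)L(s,f\times u_j)$ recalled in the excerpt. The classical Rankin--Selberg unfolding identifies $|\rho_j(1)|^{2}/\cosh(\pi t_j)$ with a positive $N$-dependent constant times $1/L(1,\mathrm{sym}^{2}u_j)$, and the Hoffstein--Lockhart upper bound $L(1,\mathrm{sym}^{2}u_j)\ll_{\epsilon}t_j^{\epsilon}$ forces $|\rho_j(1)|^{2}/\cosh(\pi t_j)\gg_{N}T^{-\epsilon}$. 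Solving for $|L(1/2+ir,f\times u_j)|$ delivers the claimed $T^{2/3+\epsilon}$, with the implied constant polynomial in $N$ and independent of $r,T$.

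\textbf{Main obstacle.} The only non-routine input is the uniform-in-$r$ bound $M_{f,f}(T,1/3;r)\ll T^{4/3+\epsilon}$ for $|r|\le T^{2/3}$: Theorem~\ref{thm:second_intro} quotes the asymptotic only at $r=0$, so one must trace through the explicit main-term expressions \eqref{e:Nrne}, \eqref{e:1re}, \eqref{e:1re0} and verify that their $r$-dependence (built from shifted zeta values and Rankin--Selberg factors) does not inflate the main term beyond $T^{4/3+\epsilon}$ in the stated $r$-range. With that bound in hand, the positivity plus symmetric-square lower-bound argument above is immediate.
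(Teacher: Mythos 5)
Your proposal is correct and follows essentially the same route as the paper, which derives the corollary in one line by taking $\alpha=1/3$ in Theorem~\ref{thm:second_intro} with $g=f$, bounding $M_{f,f}(T,1/3;r)$ by $T^{4/3+\epsilon}$, and implicitly using the positivity of the spectral sum plus the standard normalization $|\rho_j(1)|^2/\cosh(\pi t_j)\gg_N T^{-\epsilon}$; you have simply made those implicit steps explicit. (One small nit: the bound $L(1,\mathrm{sym}^2 u_j)\ll_\epsilon t_j^\epsilon$ you need is the elementary upper bound, whereas Hoffstein--Lockhart is the companion lower bound, but this does not affect the argument.)
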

Thus, in addition to obtaining an estimate with a sharp error term for the fourth order Euler product which is the product of two Rankin-Selberg convolutions, we also achieve the same Weyl type estimate as in \cite{LLY06}, with the advantage that the result is uniform in $r$ for $r \le T^{2/3}$.  We are, however restricted to the spectral case and do not consider the weight aspect.

\begin{rmk}
The non-vanishing of $L(1, f \times g)$ implies that the product  $L\left(1/2, f\times u_j\right)L\left(1/2, g\times u_j\right)$ is non-zero for infinitely many $j$.
A more general theorem for the mean values of 
$$
	L\left(\frac{1}{2}+ir, f\times u_j\right) L\left(\frac{1}{2}+ir_1, \phi \times u_j\right)
$$
for $\phi=$ holomorphic or non-holomorphic cusp form is given in Section \ref{s:second}.
\end{rmk}

The second moment estimate is built on our first moment estimate, which
 is given in full generality in Theorem~\ref{thm:first}.
Here is a special case of it:
\begin{thm}\label{thm:first_Eisenstein}
Fix $r\in \R$, and $m \ge 1$.  Let $h(t)$ be an even function on $\C$, which is holomorphic on $|\Im(t)| < 1/2+\epsilon$, for some $\epsilon>0$. 
As $t\to \infty$, we assume that 
$$
	\left|h(t) \right| \ll (1+|t|)^a, 
$$
for $a<0$. 
We also assume that $h(\pm i/2)=0$.

We have 
\begin{multline*}
	\sum_{j} 
	\frac{h(t_j) }{\cosh(\pi t_j)}
	\overline{\rho_j(m)} 
	\cL(1/2+ir, f\times u_j)
	\\
	+
	\sum_{a\mid N} 
	\frac{1}{4\pi} \int_{-\infty}^\infty 
	\frac{h(t)}{\cosh(\pi t)}
	\tau_{1/a} (1/2-it, m) 
	\frac{2 \left(\frac{N}{a}\right)^{-\frac{1}{2}-it}
	L\left(1/2+ir, f\times E_{1/a}(*, 1/2+it) \right)}
	{\zeta^*(1+2it)\prod_{p\mid N} (1-p^{-1-2it})}
	\; dt
	\\
	=
	M_f(m; r) + E^{(1)}_f(m; r) + E^{(2)}_f (m; r)
	, 
\end{multline*}
Here
\begin{multline*}
	M_f(m; r)
	:=
	\zeta(1+2ir)
	\frac{a(m)}{m^{\frac{k}{2}+ir}}
	\frac{1}{\pi^2} \int_{-\infty}^\infty h(t) t \tanh(\pi t) \; dt	
	\\
	+
	(2\pi)^{4ir}
	\frac{\varphi(N)\zeta(1-2ir)}
	{N^{1+2ir} \prod_{p\mid N}(1-p^{-1-2ir})} 
	\frac{a(m)}{m^{\frac{k}{2}-ir}}
	\\
	\times
	\frac{1}{\pi^2} \int_{-\infty}^\infty
	h(t) t \tanh (\pi t)
	\frac{\Gamma\left(\frac{k}{2}-ir+it\right)
	\Gamma\left(\frac{k}{2}-ir-it\right)}
	{\Gamma\left(\frac{k}{2}+ir +it\right)
	\Gamma\left(\frac{k}{2}+ir -it\right) }
	\; dt
	, 
\end{multline*}
\begin{multline*}
	E^{(1)}_f(m; r)
	:=
	-
	4  
	\frac{(2\pi)^{2ir}\cos\left(\pi ir\right)}{\pi \prod_{p\mid N}(1-p^{-1-2ir})}
	\frac{1}{2\pi i} \int_{(\sigma_u = 1/2+2\epsilon)} 
	\frac{h(u/i) u \tan(\pi u)}
	{\Gamma\left(\frac{k}{2}+ir +u\right) \Gamma\left(\frac{k}{2}+ir-u\right)}
	\\
	\times
	\frac{1}{2\pi i } \int_{(\sigma_w = 1/2-k/2-\epsilon)}
	\Gamma\left(-w+u\right) \Gamma\left(-w-u\right)
	\Gamma\left(w+\frac{k}{2}-ir\right) \Gamma\left(w+\frac{k}{2}+ir\right)
	\\
	\times
	m^{w}
	\sum_{n=1}^{m-1} 
	\frac{a(m-n) \sigma^N_{-2ir}(n) n^{ir} }{n^{w+\frac{k}{2}}} 
	\; dw\; du
\end{multline*}
and 
\begin{multline*}
	E^{(2)}_f(m; r)
	:=
	\frac{4 i^k  (2\pi)^{2ir} }{\pi\prod_{p\mid N}(1-p^{-1-2ir})} 
	\frac{1}{2\pi i} \int_{(\sigma_u = 1/2+2\epsilon)} 
	\frac{h(u/i) u 
	\Gamma\left(\frac{1}{2}+u\right) \Gamma\left(\frac{1}{2}-u\right)}
	{\Gamma\left(\frac{k}{2}+ir +u\right) \Gamma\left(\frac{k}{2}+ir-u\right)}
	\\
	\times
	\frac{1}{2\pi i } \int_{(\sigma_w = 1/2+\epsilon)}
	\frac{\Gamma\left(-w+u\right)
	\Gamma\left(w+\frac{k}{2}+ir\right) \Gamma\left(w+\frac{k}{2}-ir\right)} 
	{\Gamma\left(1+w+u\right)}
	m^{w} \sum_{n=1}^\infty \frac{a(m+n) \sigma^N_{-2ir}(n) n^{ir_1}}
	{n^{w+\frac{k}{2}}}
	\; dw\; du
	.
\end{multline*}
The function $ \sigma^N_{-2ir}(n)$ is a modified divisor function defined in \eqref{e:sigmaa}.
\end{thm}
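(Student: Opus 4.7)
My plan is to compute a single triple product integral in two different ways---by unfolding and by spectral decomposition---and then apply a Mellin--Barnes integral transform to recover the test function $h$ on the spectral side. Concretely, I would form
$$J(s) := \int_{\Gamma\bsl\bH} P_m(z, s)\,y^{k/2} f(z)\,\widetilde{E}(z, 1/2+ir)\,d\mu(z),$$
where $P_m(z,s) = \sum_{\gamma \in \Gamma_\infty\bsl\Gamma}\mathrm{Im}(\gamma z)^s\,e^{-2\pi i m \gamma z}$ is the Poincar\'e series for $\Gamma_0(N)$ whose unfolding picks out the $m$-th Fourier coefficient of $f\cdot\widetilde{E}$, and $\widetilde{E}$ is a specific linear combination over cusps $1/a$, $a\mid N$, of the level-$N$ Eisenstein series at parameter $1/2+ir$, normalized so that its non-constant Fourier coefficients are exactly $\sigma^N_{-2ir}(n)\,n^{ir}$ (up to explicit constants).

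\emph{Unfolded side.} Unfolding $P_m$ reduces $J(s)$ to $\int_0^\infty y^{s+k/2-2} e^{-2\pi m y}[f\cdot\widetilde{E}]_m(y)\,dy$. Expanding both factors in Fourier series and matching the frequencies $k_1+k_2 = m$ splits $[f\cdot\widetilde{E}]_m$ into three pieces: (i) the $k_2=0$ piece $a(m)(y^{1/2+ir}+\phi(1/2+ir)y^{1/2-ir})$ coming from the Eisenstein constant term; (ii) the $1\le k_2=n\le m-1$ piece, a finite sum $\sum_{n=1}^{m-1} a(m-n)\,\sigma^N_{-2ir}(n)\,n^{ir}\cdot\sqrt{y}\,K_{ir}(2\pi n y)$; and (iii) the $k_2=-n\le -1$ piece, an infinite sum $\sum_{n=1}^\infty a(m+n)\,\sigma^N_{-2ir}(n)\,n^{ir}\cdot\sqrt{y}\,K_{ir}(2\pi n y)$. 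Writing each of the resulting $K$-Bessel $y$-integrals via its standard Mellin--Barnes representation in a new contour variable $w$ produces precisely the Gamma combinations $\Gamma(-w+u)\Gamma(-w-u)\Gamma(w+k/2\pm ir)$ of $E^{(1)}_f$ and $\Gamma(-w+u)\Gamma(w+k/2\pm ir)/\Gamma(1+w+u)$ of $E^{(2)}_f$.

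\emph{Spectral side.} Inserting the spectral expansion $P_m(\cdot,s) = \sum_j \langle P_m, u_j\rangle\, u_j + (\text{continuous part})$ and integrating term-by-term, the unfolding of the inner product
$$\langle P_m(\cdot,s), u_j\rangle = \overline{\rho_j(m)}\,\frac{\sqrt\pi}{(4\pi m)^{s-1/2}}\,\frac{\Gamma(s-1/2+it_j)\Gamma(s-1/2-it_j)}{\Gamma(s)}$$
supplies the factor $\overline{\rho_j(m)}$, while $\langle u_j,\,y^{k/2} f\,\widetilde{E}(\cdot,1/2+ir)\rangle$ unfolds by Rankin--Selberg against $\widetilde{E}$ to produce $\cL(1/2+ir, f\times u_j)$ times explicit archimedean and zeta factors. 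The analogous continuous contribution produces the integral over $t$ in the statement, with $\tau_{1/a}(1/2-it, m)$ from $\langle P_m, E_{1/a}(\cdot, 1/2+it)\rangle$ and $L(1/2+ir, f\times E_{1/a}(*, 1/2+it))$ from Rankin--Selberg.

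\emph{Extraction and main obstacle.} Equating the two expressions for $J(s)$ and applying a contour integral $\int_{(\sigma)}\mathcal{K}(s,r)J(s)\,ds$ with kernel $\mathcal{K}$ chosen by the standard Gauss/Sears--Titchmarsh-type inversion (realized by the substitution $u = s - 1/2$ and the identification $h(u/i)$) so that the Gamma ratio above inverts to $h(t_j)/\cosh(\pi t_j)$, one recovers the left-hand side of the theorem. On the unfolded side, the same transform converts piece (i) into the two terms of $M_f(m;r)$---the factors $\zeta(1\pm 2ir)$ arising from the scattering coefficient $\phi(1/2+ir)$ in the Eisenstein constant term, and the Plancherel weight $t\tanh(\pi t)\,dt/\pi^2$ arising from the residues crossed when collapsing the $u$-contour---and converts pieces (ii), (iii) into the double contour integrals $E^{(1)}_f(m;r)$ and $E^{(2)}_f(m;r)$, with the outer $u$-contour supplied by the Mellin inversion and the inner $w$-contour by the Bessel Mellin--Barnes representation. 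The principal obstacle is justifying the term-by-term interchange of the spectral expansion with the integral against the non-$L^2$ object $y^{k/2} f\widetilde{E}$: this requires the analytic continuation in $w$ of the shifted multiple Dirichlet series $\sum_n a(m\pm n)\,\sigma^N_{-2ir}(n)\,n^{ir-w-k/2}$ developed in \cite{HHR,Hul13}; a secondary difficulty is tracking the iterated contour shifts in $s$, $u$, $w$ carefully enough that the residues picked up reproduce exactly the form of $M_f(m;r)$.
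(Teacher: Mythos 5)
Your proposal is essentially the paper's own proof: the identity is obtained by computing a Poincar\'e-series triple product two ways --- spectral decomposition versus unfolding into the shifted sums over $a(m-n)$, $a(m+n)$ and the Eisenstein constant term, which is precisely the spectral expansion of the single shifted Dirichlet series $D_{f,\phi}(w;m)$ imported from \cite{HHR,Hul13} --- followed by a Gamma-kernel contour transform (Barnes' lemma collapsing the inner $w$-variable, then residues at $u=\pm it$) that converts the archimedean factors into $h(t_j)/\cosh(\pi t_j)$, with the main term coming from the constant term and $E^{(1)}_f$, $E^{(2)}_f$ from the two shifted sums, exactly as in the paper. The one repair needed in your setup is that a weight-zero $\widetilde E$ makes $y^{k/2}f\widetilde E$ non-invariant under $\Gamma_0(N)$, so, as the paper does, one must pair $y^{k/2}\overline{f}$ with the Maass-raised weight-$k$ Eisenstein series $E^{(k)}_N(z,1/2+\overline{ir})$, whose Whittaker coefficients $W_{\pm k/2, ir}$ (rather than $K$-Bessel functions) are what produce the factors $\Gamma\left(w+\frac{k}{2}\pm ir\right)$ appearing in $E^{(1)}_f(m;r)$ and $E^{(2)}_f(m;r)$.
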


The case $r=0$ in Theorem \ref{thm:first_Eisenstein}, leads to an interesting first moment statement and an application.    For $\alpha \ge 0$ and $T \gg 1$, set
$$
	h_{T, \alpha} (t) = \left(e^{-\left(\frac{t-T}{T^\alpha} \right)^2}+ e^{-\left(\frac{t+T}{T^\alpha} \right)^2}\right)
	\frac{t^2+\frac{1}{4}}{t^2+R}, 
$$
for $1\ll R < T^2$. 
Then, for $m \ge 1$
\begin{thm}\label{thm:upperbound_first}
\begin{multline*}
	\sum_{j} 
	\frac{h_{T, \alpha} (t_j) }{\cosh(\pi t_j)}
	\overline{\rho_j(m)} 
	\cL(1/2, f\times u_j)
	\\
	+
	\sum_{a\mid N} 
	\frac{1}{4\pi} \int_{-\infty}^\infty 
	\frac{h_{T, \alpha} (t)}{\cosh(\pi t)}
	\tau_{1/a} (1/2-it, m) 
	\frac{2 \left(\frac{N}{a}\right)^{-\frac{1}{2}-it}
	L\left(1/2, f\times E_{1/a}(*, 1/2+it) \right)}
	{\zeta^*(1+2it)\prod_{p\mid N} (1-p^{-1-2it})}
	\; dt
	\\
	=
	\frac{a(m)}{m^{\frac{k}{2}}} 
	\left\{
	\frac{1}{\pi^2} \int_{-\infty}^\infty h_{T, \alpha} (t) t\tanh(\pi t)\left(\psi\left(\frac{k}{2}+it\right) + \psi\left(\frac{k}{2}-it\right)\right) \; dt
	\right.
	\\
	+
	\left.
	\left(-2\log(2\pi) - \sum_{p\mid N} \frac{\log p}{p-1}
	-\log m +2\gamma_0\right)
	\frac{1}{\pi^2} \int_{-\infty}^\infty h_{T, \alpha} (t) t \tanh(\pi t) \; dt \right\}
	\\
	+
	\cO\left(N^\epsilon m^{\frac{1}{2}+\epsilon} T^{1+\epsilon}\right)
	.
\end{multline*}
The implied constant in the estimate is independent of $N$, $m$ and $T$. 
\end{thm}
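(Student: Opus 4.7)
The theorem is the specialization of Theorem~\ref{thm:first_Eisenstein} to $r=0$ with $h=h_{T,\alpha}$. My plan has three stages: verify the hypotheses on the test function, take the limit $r\to 0$ in the main term $M_f(m;r)$ (where a pole cancellation produces the displayed logarithmic expression), and bound the two error integrals $E^{(1)}_f(m;0)$ and $E^{(2)}_f(m;0)$.

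First, I check that $h=h_{T,\alpha}$ satisfies the hypotheses of Theorem~\ref{thm:first_Eisenstein}: the rational factor $(t^2+1/4)/(t^2+R)$ is holomorphic on $|\Im(t)|<\sqrt{R}$, which contains $|\Im(t)|<1/2+\epsilon$ since $R\gg 1$; the factor $t^2+1/4$ vanishes at $t=\pm i/2$, forcing $h(\pm i/2)=0$; the Gaussian factors supply rapid decay as $|t|\to\infty$ on any horizontal strip within the domain of analyticity; and the function is manifestly even.

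Next, I take the limit $r\to 0$ in $M_f(m;r)$. Write
$$
M_f(m;r)=A_1(r)\zeta(1+2ir)+A_2(r)\zeta(1-2ir),
$$
where $A_1,A_2$ are holomorphic at $r=0$. A direct computation gives $A_1(0)=A_2(0)=\frac{a(m)}{m^{k/2}}I_0$ with $I_0:=\frac{1}{\pi^2}\int h_{T,\alpha}(t)t\tanh(\pi t)\,dt$, since $\varphi(N)/(N\prod_{p\mid N}(1-p^{-1}))=1$ and the Gamma ratio in $A_2$ evaluates to $1$ at $r=0$. From $\zeta(1\pm 2ir)=\pm 1/(2ir)+\gamma_0+O(r)$ the principal parts cancel and
$$
M_f(m;0)=\frac{A_1'(0)-A_2'(0)}{2i}+2\gamma_0\, A_1(0).
$$
Logarithmic differentiation at $r=0$ of the factors $(2\pi)^{4ir}$, of the Euler-product factor $\varphi(N)/[N^{1+2ir}\prod_{p\mid N}(1-p^{-1-2ir})]$, of $m^{\pm ir}$, and of the Gamma ratio produces, respectively, a term involving $\log(2\pi)$, terms involving $\log N$ and $\sum_{p\mid N}\log p/(p-1)$, a term involving $\log m$, and the factor $-2i[\psi(k/2+it)+\psi(k/2-it)]$ inside the $t$-integral. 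Combining these with $2\gamma_0 A_1(0)$ reproduces the displayed main term.

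The error contributions $E^{(i)}_f(m;0)$ are double Mellin--Barnes integrals of the form
$$
\frac{1}{(2\pi i)^2}\iint h_{T,\alpha}(u/i)\,G_i(u,w)\,m^w\,S_i(w,m)\,dw\,du,
$$
where $G_i$ is a ratio of Gamma functions and $S_i$ is a finite sum over $n<m$ (for $i=1$) or an absolutely convergent Dirichlet sum (for $i=2$), with coefficients $a(m\mp n)\sigma^N_0(n)/n^{w+k/2}$. I shift the inner $w$-contour into the region of absolute convergence of $S_i$---for $i=1$ past the poles of $\Gamma(-w\pm u)$ at $w=\pm u$, for $i=2$ past the pole at $w=u$---then apply Stirling to the Gamma factors. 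On the $u$-line, $h_{T,\alpha}(u/i)$ has effective support on $|\Im(u)\mp T|\ll T^\alpha\log T$ with pointwise size $O(1)$; at $|\Im(u)|\asymp T$ the ratio $1/[\Gamma(k/2+u)\Gamma(k/2-u)]$ is of order $T^{-(k-1)}e^{\pi T}$, which cancels the exponential growth of the Gamma factors in $G_i$ along the shifted $w$-contour. The divisor sums are controlled via $a(n)\ll n^{(k-1)/2+\epsilon}$ and $\sigma^N_0(n)\ll n^\epsilon$, contributing the factor $m^{1/2+\epsilon}$.

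The main obstacle is achieving the bound $O(T^{1+\epsilon})$ rather than the easier $O(T^{1+\alpha+\epsilon})$ that would follow from a naive sup-times-length estimate on the $u$-integration over the $T^\alpha$-sized bump of $h_{T,\alpha}$. To recover the $T^\alpha$ loss, I would aim for a pointwise bound of size $O(m^{1/2+\epsilon}T^{1-\alpha+\epsilon})$ on the inner $w$-integral, uniformly for $\Im(u)$ in the support of the bump, obtained by optimizing the $w$-contour shift in concert with the $u$-dependent Stirling asymptotics. Carrying out this uniform Stirling analysis in two contour variables is the technical heart of the argument and is where I expect the greatest care to be needed.
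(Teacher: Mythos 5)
Your overall route coincides with the paper's: specialize Theorem~\ref{thm:first_Eisenstein} at $r=0$ with $h=h_{T,\alpha}$, obtain the main term from the limit $r\to 0$ via the cancellation of the poles of $\zeta(1\pm 2ir)$ against the two $r$-dependent coefficients, and estimate $E^{(1)}_f(m;0)$ and $E^{(2)}_f(m;0)$ by Stirling analysis of the double Mellin--Barnes integrals. The hypothesis check on $h_{T,\alpha}$ and the structure of the main-term computation are fine, though you should actually carry out the logarithmic differentiation of the level factors $\varphi(N)N^{-1-2ir}\prod_{p\mid N}(1-p^{-1-2ir})^{-1}$ and of $(2\pi)^{4ir}$, $m^{\pm ir}$ and the Gamma ratio, and check that the resulting $\log N$ and $\sum_{p\mid N}\log p/(p-1)$ contributions really assemble into the displayed constant; asserting that they ``reproduce the displayed main term'' is not a verification.

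The genuine gap is in the error terms, and you have in effect flagged it yourself. The entire content of the bound is the saving of $T^{\alpha}$ (getting $T^{1+\epsilon}$ rather than $T^{1+\alpha+\epsilon}$), and this saving is not cosmetic: Corollary~\ref{cor:determine} takes $\alpha=1$, where $T^{1+\alpha+\epsilon}$ would be as large as the main term. Your plan --- a pointwise bound $\cO(m^{1/2+\epsilon}T^{1-\alpha+\epsilon})$ on the inner $w$-integral, uniformly in $\Im(u)$ on the bump, ``obtained by optimizing the $w$-contour shift'' --- does not work with $\Re(u)$ held at $1/2+2\epsilon$. Writing $u=\sigma_u+i\gamma_u$, $w=\sigma_w+i\gamma_w$, in $E^{(2)}$ the exponential factors cancel exactly in the region $0<\gamma_w\le T$ with $\gamma_w,\gamma_u$ of the same sign, and there the surviving polynomial factor is of the shape $T^{-2\sigma_w-1}\gamma_w^{\,2\sigma_w+k-1}$ times the $u$-factors; its integral over $0<\gamma_w\le T$ is essentially independent of $\sigma_w$ (of size about $T$ after including the $u$-factors), so shifting $\sigma_w$ to the right buys nothing, and shifting left destroys the convergence of the $n$-sum (which is exactly why the paper splits the sum at $n=m$ and only moves the finite piece). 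The device that actually produces the truncation is a $u$-contour move: since $h_{T,\alpha}$ is entire with Gaussian concentration, one may take $\sigma_u=cT^{\alpha-\epsilon/2}$ (Stirling still valid, poles of $\Gamma(\tfrac12-u)$ crossed only where $h$ is exponentially small), and then in the non-damped region the factor $\bigl(\tfrac{T-\gamma_w}{T+\gamma_w}\bigr)^{\sigma_u}$ is $\ll e^{-cT^{\epsilon/2}}$ as soon as $\gamma_w>T^{1-\alpha+\epsilon}$; this truncates the $w$-integral to length $T^{1-\alpha+\epsilon}$, and (effective $u$-length $T^{\alpha}$) times (effective $w$-length $T^{1-\alpha+\epsilon}$) yields $T^{1+\epsilon}$, while $E^{(1)}$ is easier since its exponential factors force $|\gamma_w|\ll 1$ and give $\cO(N^{\epsilon}m^{1/2+\epsilon}T^{\alpha+\epsilon})$. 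Without this $u$-line shift, or an equivalent mechanism, your argument only gives $\cO(m^{1/2+\epsilon}T^{1+\alpha+\epsilon})$ and does not prove the theorem as stated.
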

The main term on the right hand side is actually asymptotic to  
$$
	\frac{a(m)}{m^{\frac{k}{2}}}
	T^{1+\alpha} c_\alpha\log T,
$$
for $c_\alpha >0$,
and the left hand side is dominated by the discrete spectral contribution.  This means that if 
 $g$ is a holomorphic cusp form and $\cL(1/2, f\times u_j)=\cL(1/2, g\times u_j)$ for sufficiently many $j$, then $a(m)$ must be very close to $b(m)$.  If this is the case for sufficiently many $m$, then by the argument in the proof of Theorem 1 in \cite{Sen04}, it follows that $f=g$. 
More precisely, we have the following
\begin{cor}\label{cor:determine}
Let $f$ and $g$ be holomorphic newforms of even weight $k_f$ and $k_g$ respectively, and level $N$. 
Let $k=\max\{k_f, k_g\}$, and let $Q\gg_\epsilon (N(k+1))^{2+\epsilon}$. 
If 
\be\label{above}
	\cL(1/2, f\times u_j) = \cL(1/2, g\times u_j), 
\ee
for $|t_j| \ll Q^{1+\epsilon'} N^{1/2+\epsilon''}$, 
then $f=g$. 
\end{cor}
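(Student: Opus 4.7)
The plan is to apply Theorem~\ref{thm:upperbound_first} to both $f$ and $g$ with the integer $m$ specialised to a prime $p\le Q$, and then to subtract the two identities. Choose the test function $h_{T,\alpha}$ with $T$ just below the hypothesis threshold $Q^{1+\epsilon'}N^{1/2+\epsilon''}$, so that its essential support in the spectral variable lies inside the range covered by \eqref{above}. Under that hypothesis the discrete-spectrum contribution to the difference,
$$
	\sum_j \frac{h_{T,\alpha}(t_j)}{\cosh(\pi t_j)}\,\overline{\rho_j(p)}\bigl(\cL(1/2,f\times u_j)-\cL(1/2,g\times u_j)\bigr),
$$
vanishes up to the exponentially small Gaussian tail of $h_{T,\alpha}$ outside the hypothesis range.

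What remains on the left is the difference of the continuous-spectrum contributions. Using the factorisation of $L(1/2,f\times E_{1/a}(*,1/2+it))$ as $L(1/2+it,f)L(1/2-it,f)$ up to Euler factors at primes dividing $N$, together with any sub convex (or even the convex) bound for $L(1/2\pm it,f)$ over the short window $|t-T|\lesssim T^\alpha$, this contribution is bounded by $O_{f,g,N}(p^{1/2+\epsilon}T^\beta)$ with $\beta<1+\alpha$. As noted in the discussion following Theorem~\ref{thm:upperbound_first}, the whole left-hand side is dominated by the discrete-spectrum piece, so the continuous part is already of smaller order than the main term $\frac{a(p)}{p^{k/2}}T^{1+\alpha}c_\alpha\log T$.

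On the right, using $a(p)/p^{k_f/2}=A(p)/\sqrt{p}$ and the analogue for $g$, the main-term difference simplifies to
$$
	\frac{A(p)-B(p)}{\sqrt{p}}\,T^{1+\alpha}\bigl(c_1\log T+c_2\log p+c_3\bigr)+\text{lower order},
$$
and the intrinsic error from applying Theorem~\ref{thm:upperbound_first} twice is $O(N^\epsilon p^{1/2+\epsilon}T^{1+\epsilon})$. The case $k_f\ne k_g$ is not an obstruction, because $\psi(k/2\pm it)=\log(\pm it)+O(k/|t|)$ shows that the weight dependence is of size $O(k/T)$ relative to the leading $\log T$ and is absorbed into the lower-order part when $T\gg k$. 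Taking $T$ essentially equal to $Q^{1+\epsilon'}N^{1/2+\epsilon''}$ and $\alpha$ sufficiently large, the leading main term dominates both the continuous-spectrum and the intrinsic error contributions, so the comparison forces $|A(p)-B(p)|$ below any prescribed positive tolerance. Since $A(p),B(p)$ are algebraic integers of height bounded in terms of $N$ and $k$, this inequality upgrades to the exact equality $A(p)=B(p)$ for every prime $p\le Q$.

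The strong multiplicity-one argument in the proof of Theorem~1 of \cite{Sen04} then concludes $f=g$. The principal obstacle is the continuous-spectrum estimate on the LHS: the hypothesis provides no direct cancellation there, so one must rely on an independent (sub)convex bound for $L(1/2+it,f)$ and $L(1/2+it,g)$ in the $t$-aspect over a short interval near $T$. A secondary technical point is the passage from the approximate inequality $|A(p)-B(p)|<$~tolerance to the exact equality $A(p)=B(p)$, which is where the algebraic-integer nature of the Hecke eigenvalues enters.
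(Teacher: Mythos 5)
Your overall frame (apply Theorem~\ref{thm:upperbound_first} to $f$ and to $g$, subtract, use \eqref{above} to kill the discrete spectrum, bound the continuous spectrum, and compare with the main term) is the paper's frame, but two of your key steps fail. First, the continuous spectrum: after the exponential factors in $\tau_{1/a}$, $1/\zeta^*(1\pm 2it)$ and $1/\cosh(\pi t)$ cancel, that term is essentially $\int_{|t-T|\ll T^{\alpha}}\left|L\left(1/2+it,f\right)\right|^{2}\left(N(1+|t|)m\right)^{\epsilon}dt$. The convexity bound $L(1/2+it,f)\ll N^{1/4+\epsilon}(k+|t|)^{1/2+\epsilon}$ gives only $O\left(N^{1/2+\epsilon}T^{1+\alpha+\epsilon}\right)$, which is the \emph{same} size as the main term $c_{\alpha}T^{1+\alpha}\log T$; so ``even the convex bound'' does not give $\beta<1+\alpha$, and a short-interval subconvex or second-moment bound with explicit $N$-dependence is precisely what you would still have to supply. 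The paper avoids this by taking $\alpha=1$ and using the full-length mean value $\int_{0}^{T}L(1/2+it,f)L(1/2-it,f)\,dt\ll (N^{1/2}T)^{1+\epsilon}$ (approximate functional equation plus diagonal analysis), which saves a full power of $T$ over the main term and produces the explicit $N^{1/2}$ appearing in the corollary's threshold.

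Second, the endgame. With $T$ essentially equal to $Q^{1+\epsilon'}N^{1/2+\epsilon''}$, the comparison yields only $|A(m)-B(m)|\ll N^{1/2+\epsilon}m^{1+\epsilon'}T^{-1+\epsilon''}$, i.e.\ a fixed small power saving $Q^{-\delta}$, not ``any prescribed positive tolerance.'' Your upgrade to exact equality $A(p)=B(p)$ via algebraicity does not work in this range: $A(p)=a(p)p^{-(k_f-1)/2}$ is not an algebraic integer (one must first clear $p^{(k-1)/2}$, which is possible since $k_f\equiv k_g \bmod 2$), and, more seriously, the coefficient field of $f$ and $g$ can have degree comparable to $\dim S_k(\Gamma_0(N))$, so the lower bound for a nonzero value of $p^{(k-1)/2}(A(p)-B(p))$ is exponentially small in $kN$; beating it analytically would force $T$ to be exponentially large in $k$ and $N$, far beyond the stated range $|t_j|\ll Q^{1+\epsilon'}N^{1/2+\epsilon''}$. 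The paper instead feeds the approximate inequality $|A(m)-B(m)|\ll Q^{-\epsilon'}$, required for \emph{all} $m\ll Q$ (not just primes), into the quantitative determination theorem generalizing Theorem~1 of \cite{Sen04}; it is that theorem which needs $Q\gg_\epsilon (N(k+1))^{2+\epsilon}$ and which dictates the corollary's threshold. So you must either prove the coefficient bound for all $m\le Q$ and invoke that input, or replace it by a genuinely different argument; strong multiplicity one at primes only helps once exact equality is established, which your tolerance argument cannot deliver here.
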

The proof is given in Section \ref{ss:proof_cor_detrmine}.
This is the latest incarnation of a theorem that was proved first by Luo and Ramakrishnan \cite{LR97}.  They showed that if
$$
		L(1/2, f \times \chi_d) = L(1/2, g\times \chi_d) 
$$
	for all quadratic characters $\chi_d$, then $f=g$.  Generalizations of this appeared in 
 \cite{Luo99},  \cite{CD05},  \cite{GHS09}  and \cite{Zha11}.
The most recent result that we are aware of is \cite{MS12}, where they prove a result similar to our corollary, generalized to the case where $f,g$ are Hecke-Maass new forms of full level, with eigenvalues $1/4 +\nu^2,1/4+\mu^2$.   They show that if \eqref{above} holds for 
$|t_j| \ll \max(|\mu|,|\nu|)^{4\theta +3 + \epsilon}$ then $f=g$, where $\theta$ refers to the best progress toward the Ramanujan Conjecture for Mass forms of full level.
We consider any square-free level, and improve the exponent for the analytic conductor.

We will mention one final application to non-vanishing.
A combination of the first and second moment asymptotics, along with Cauchy-Schwartz, taking $\alpha = 1/3$,  leads to
\begin{cor}
Let $f$ and $u_j$ be as above.  Given $T \gg 1$, let $\mathcal{N}(T)$ denote the number of $t_j$ with $|T-t_j| \ll T^{1/3}$ such that $L(1/2,f \times u_j) \ne 0$.  Then
$$
\mathcal{N}(T) \gg T^{4/3}/\log T.
$$

\end{cor}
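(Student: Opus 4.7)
The plan is a Cauchy--Schwarz argument that combines the first-moment asymptotic of Theorem \ref{thm:upperbound_first} as a lower bound with the second-moment bound of Theorem \ref{thm:second_intro} as an upper bound, both specialized to $\alpha = 1/3$.

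First I would take $m = 1$ in Theorem \ref{thm:upperbound_first}, so $a(1) = 1$ and the right-hand side is asymptotic to $c\, T^{4/3}\log T$ for a positive constant $c$, with error $O(T^{1+\epsilon})$. The continuous-spectrum integral on the left is bounded by $O(T^{1+\alpha+\epsilon}) = o(T^{4/3}\log T)$ via the convexity bound $L(1/2+it, f) \ll (1+|t|)^{1/2+\epsilon}$ together with the Gaussian decay of $h_{T, 1/3}(t)$ and Stirling for the Eisenstein Gamma factors. Writing $\cL(1/2, f\times u_j) = \rho_j(1) L(1/2, f\times u_j)$ and combining with the complex conjugate $\overline{\rho_j(1)}$, this leaves the cuspidal sum
$$
S_1 \;:=\; \sum_j \frac{h_{T, 1/3}(t_j)}{\cosh(\pi t_j)} |\rho_j(1)|^2\, L(1/2, f\times u_j) \;\sim\; c\,T^{4/3}\log T.
$$

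Next I would apply Theorem \ref{thm:second_intro} with $g = f$, $r = 0$, and $\alpha = 1/3$. With these choices every summand and every integrand on the left-hand side is a non-negative real number, so the discrete cuspidal part alone is dominated by the entire left-hand side, which equals $M_{f, f}(T, 1/3; 0) + O(T^{4/3+\epsilon}) = O(T^{4/3+\epsilon})$. Hence
$$
S_2 \;:=\; \sum_j \frac{e^{-(t_j - T)^2/T^{2/3}}}{\cosh(\pi t_j)} |\rho_j(1)|^2 |L(1/2, f \times u_j)|^2 \;\ll\; T^{4/3+\epsilon}.
$$

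Setting $w_j := h_{T, 1/3}(t_j)|\rho_j(1)|^2/\cosh(\pi t_j)$, the Gaussian weights appearing in $S_1$ and $S_2$ are comparable on the effective range $|t_j - T| \ll T^{1/3}$, so Cauchy--Schwarz gives
$$
|S_1|^2 \;\le\; \Bigl(\sum_{j:\, L(1/2, f\times u_j)\ne 0} w_j\Bigr) \cdot S_2 \cdot (1+o(1)),
$$
and therefore $\sum_{j:\,L\ne 0} w_j \gg T^{4/3}(\log T)^2 \cdot T^{-\epsilon}$. Finally, the Hoffstein--Lockhart bound $|\rho_j(1)|^2/\cosh(\pi t_j) \ll_\epsilon (1+|t_j|)^\epsilon$ controls $\sup_j w_j$ on the support, so dividing the weighted lower bound by $\sup_j w_j$ converts it into the desired counting lower bound $\mathcal{N}(T) \gg T^{4/3}/\log T$ (with $T^\epsilon$ factors absorbed).

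The main technical obstacle is the continuous-spectrum contribution to the first moment: the cuspidal sum $S_1$ is only extracted from Theorem \ref{thm:upperbound_first} by subtracting off a delicate Eisenstein integral, and verifying that this integral is genuinely of smaller order than the main term $T^{4/3}\log T$ is what allows one to pass from the spectral identity to a one-sided lower bound on $S_1$. In the second moment, positivity of $|L|^2$ sidesteps the analogous issue, so the Eisenstein contribution can simply be discarded.
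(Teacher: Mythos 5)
Your route is the one the paper intends: the paper omits the proof, describing it only as a combination of the first and second moment asymptotics with Cauchy--Schwarz at $\alpha=1/3$, and your skeleton --- take $m=1$ in Theorem~\ref{thm:upperbound_first} to get $S_1\sim cT^{4/3}\log T$, use positivity to discard the Eisenstein part of the left side of Theorem~\ref{thm:second_intro} with $g=f$, $r=0$, then Cauchy--Schwarz with the harmonic weights $w_j=h_{T,1/3}(t_j)|\rho_j(1)|^2/\cosh(\pi t_j)$ and divide by $\sup_j w_j$ --- is exactly that. The positivity observation for the continuous part of the second moment (the integrand is $|L(1/2,f\times E_{1/a}(*,1/2+it))|^2$ against a manifestly nonnegative density) is correct and is the point that lets you drop it.

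Two quantitative steps do not hold as written. First, your treatment of the continuous spectrum in the first moment fails: pointwise convexity gives $|L(1/2+it,f)L(1/2-it,f)|\ll T^{1+\epsilon}$ on the window $|t\mp T|\ll T^{1/3}$, hence a bound $T^{4/3+\epsilon}$, and $T^{4/3+\epsilon}$ is \emph{not} $o(T^{4/3}\log T)$; it would swamp the main term, so the lower bound on $S_1$ is not established this way. Note also that the $\Gamma$-factors inside $\zeta^*(1\pm 2it)$ exactly cancel the $1/\cosh(\pi t)$, so there is no hidden exponential decay to rescue the pointwise bound; what is needed is a mean-value estimate, e.g.\ the weak Lindel\"of-on-average bound $\int_0^T|L(1/2+it,f)|^2\,dt\ll (N^{1/2}T)^{1+\epsilon}$ that the paper itself invokes in the proof of Corollary~\ref{cor:determine}, which gives a continuous contribution $O(T^{1+\epsilon})$ and does suffice. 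Second, the final bookkeeping over-claims: at $\alpha=1/3$ the error term $T^{3/2-\alpha/2+\epsilon}$ in Theorem~\ref{thm:second_intro} is already $T^{4/3+\epsilon}$, so your $S_2\ll T^{4/3+\epsilon}$, and with $\sup_j w_j\ll T^{\epsilon}$ Cauchy--Schwarz delivers $\mathcal{N}(T)\gg T^{4/3-\epsilon}$; a $T^{-\epsilon}$ loss cannot be ``absorbed'' into $1/\log T$, since $T^{4/3-\epsilon}$ is weaker than $T^{4/3}/\log T$. To reach the stated $T^{4/3}/\log T$ you need log-power control throughout (second moment bounded by its main term $T^{4/3}P_3(\log T)$ and harmonic weights bounded by an absolute constant, or at least by powers of $\log T$), which your write-up does not supply; as it stands it proves the slightly weaker $\mathcal{N}(T)\gg T^{4/3-\epsilon}$.
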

The proof is elementary and is omitted. 

The paper is organized as follows.   
In Section \ref{s:singleSDS}, we define our single variable shifted Dirichlet series and obtain it's analytic properties.   
In Section \ref{s:first}, we apply an integral transformation and obtain the first moment identity given in Theorem~\ref{thm:first_Eisenstein}, and more generally in Theorem~\ref{thm:first}.  
In Section~\ref{s:SDDS}, we define the two variable shifted Dirichlet series and obtain its analytic properties.  In Section~\ref{s:second} we obtain our second moment identity and prove Theorem~\ref{thm:second}, a generalized version of Theorem~\ref{thm:second_intro}.  Finally, in the last section, we estimate the error terms of Theorem~\ref{thm:second} and prove Theorem~\ref{thm:second_intro}.
\section{Single shifted Dirichlet series}\label{s:singleSDS}

\subsection{Fourier expansions of Eisenstein series}
The Eisenstein series for $\Gamma$ are indexed by the cusps $\cuspa\in \Q$.
For each cusp $\cuspa\in \Q$, let $\sigma_\cuspa\in SL_2(\R)$, 
with $\sigma_\cuspa \infty = \cuspa$, 
be a scaling matrix for the cusp $\cuspa$, i.e., 
$\sigma_\cuspa$ is the unique matrix (up to right translations)
such that 
$\sigma_\cuspa \infty=\cuspa$
and 
$$
	\sigma_\cuspa^{-1}\Gamma_\cuspa \sigma_\cuspa 
	=
	\Gamma_\infty = \left\{\left.
	\pm \bpm 1 & b\\ 0 & 1\ebpm \;\right|\;
	b\in \Z\right\},
$$
where
$$
	\Gamma_\cuspa 
	= 
	\left\{\gamma\in \Gamma\;|\; \gamma \cuspa = \cuspa\right\}
	.
$$

For a cusp $\cuspa$, define the Eisenstein series at the cusp $\cuspa$ to be
\be\label{e:Ea}
	E_\cuspa(z, s)
	:=
	\sum_{\gamma\in \Gamma_\cuspa \bsl \Gamma}
	\Im(\sigma_\cuspa^{-1}\gamma z)^s, 
\ee
with the following Fourier expansion:
\be\label{m:EisensteinSeries_fex}
	E_\cuspa \left(z, s\right) 
	=
	\delta_{\cuspa, \infty} y^{s}
	+
	\tau_\cuspa \left(s, 0\right) y^{1-s}
	+
	\sum_{n\neq 0}
	\tau_\cuspa \left(s, n\right)
	\sqrt y K_{s-\frac{1}{2}}(2\pi |n|y)e^{2\pi inx}.
\ee

When $N$ is square-free, 
the cusps of $\Gamma_0(N)$ are uniquely represented by 
the rationals $\cuspa=1/a$ for $a \mid N$, 
and we have the following explicit description of the Fourier coefficients which we quote from \cite{DI83}:
$$
	\tau_\cuspa \left(s, 0\right)
	=
	\frac{\sqrt\pi \Gamma\left(s-\frac{1}{2}\right)}
	{\Gamma\left(s\right)}
	\rho_\cuspa \left(s, 0\right)
$$
and
$$
	\tau_\cuspa \left(s, n\right)
	=
	\frac{2\pi^{s}} {\Gamma\left(s\right)} 
	\rho_\cuspa \left(s, n\right) |n|^{s-\frac{1}{2}},
$$
with 
\be\label{e:rhocusp}
	\rho_\cuspa \left(s, n\right)
	=
	\left(\frac{1}{aN}\right)^{s}
	\sum_{\gamma\geq 1, \atop \left(\gamma, \frac{N}{a}\right)=1}
	\gamma^{-2s}
	\sum_{\delta\pmod{\gamma a}, (\delta, \gamma a)=1}
	e^{-2\pi i n\frac{\delta}{\gamma a}}
	\,.
\ee
Here we fix the scaling matrix for the cusp $\cuspa=1/a$ as 
\be\label{e:sigma1/a}
	\sigma_{1/a} 
	= \bpm  1 & 0 \\a & 1\ebpm\bpm \sqrt{\m_\cuspa} & 0 \\ 0 & \sqrt{\m_\cuspa}^{-1}\ebpm
	,
\ee
where $\m_\cuspa = N/a$. 

For each $a\mid N$, for $n\neq 0$, define
\be\label{e:sigmaa}
	\sigma^a_{1-2s}(n)
	:=
	\sum_{d\mid n, \atop (d, N)=1} d^{1-2s} 
	\prod_{p\mid a, p^\alpha\|n, \atop \alpha\geq 0} 
	\frac{p^{-2s}}{1-p^{1-2s}}
	\left(p-p^{\alpha(1-2s)+1} -1 +p^{(\alpha+1)(1-2s)}\right)
	.
\ee
Then, for each cusp $\cuspa=1/a$ with $a\mid N$, following \cite{HM06}, 
for $n\neq 0$, we have
\be\label{e:rho_an}
	\rho_{1/a}(s, n)
	=
	\left(\frac{N}{a}\right)^{-s}
	\frac{\sigma^a_{1-2s}(|n|)}
	{\zeta(2s)\prod_{p\mid N} (1-p^{-2s})}
	, 
\ee
so, 
\be\label{e:tau1/a_neq0}
	\tau_{1/a}(s, n)
	=
	\left(\frac{N}{a}\right)^{-s}
	\frac{2\sigma^a_{1-2s}(|n|) |n|^{s-\frac{1}{2}}}
	{\zeta^*(2s) \prod_{p\mid N}(1-p^{-2s})}
	.
\ee
By \cite{DI83}, for $n=0$, we have
\be\label{e:const_eis}
	\rho_{1/a}\left(s, 0\right)
	=
	\frac{\zeta(2s-1)\prod_{p\mid\frac{N}{a}}\left(1-p^{1-2s}\right)}
	{\zeta(2s)\prod_{p\mid N} \left(1-p^{-2s}\right)}
	\varphi(a)
	\left(\frac{1}{aN}\right)^{s}
	.
\ee
We thus get
\begin{multline}\label{e:EisensteinSeries_1/a_Fourier}
	E_{1/a} \left(z, s \right) 
	=
	\delta_{\cuspa, \infty} y^{s}
	+
	\frac{\zeta^*(2s-1)\prod_{p\mid\frac{N}{a}}\left(1-p^{1-2s}\right)}
	{\zeta^*(2s)\prod_{p\mid N} \left(1-p^{-2s}\right)}
	\varphi(a)
	\left(\frac{1}{aN}\right)^{-\frac{1}{2}+2s}
	y^{1-s}
	\\
	+
	\frac{2\left(\frac{N}{a}\right)^{-s}}
	{\zeta^*(2s)\prod_{p\mid N} (1-p^{-2s})}
	\sum_{n\neq 0}
	\sigma^a_{1-2s}(|n|) |n|^{s-\frac{1}{2}}
	\sqrt{y} K_{s-\frac{1}{2}}(2\pi |n|y)e^{2\pi inx}.
\end{multline}

In Lemma 3.4 in \cite{B}, it is shown that for $t\in \R$ the $\rho_\cuspa \left(\frac{1}{2}+it, n\right)$ satisfy
$$
	\left|\zeta(1+it)\right|^2
	\sum_\cuspa 
	\left|\rho_\cuspa \left(\frac{1}{2}+it, n\right)\right|^2
	\ll_\epsilon ((1+|t| )N)^{\epsilon}
$$
for any $\epsilon>0$.

\subsection{Definition and analytic properties of a single shifted Dirichlet series}

Let $f$ be a holomorphic cusp form of even weight $k$ for $\Gamma$.  Recall that we have notated the Fourier expansion of $f$ as: 
\be\label{e:f_Fourier}
	f(z) = \sum_{n=1}^\infty a(n) e^{2\pi in z}
	= \sum_{n=1}^\infty A(n) n^{\frac{k-1}{2}} e^{2\pi in z}
	.
\ee
Let $\phi$ be an automorphic form of weight $k$ for $\Gamma$, of type $\nu$, with the following Fourier expansion:
\be\label{e:Fourier_phi}
	\phi(z) = c_{\phi}^+ y^{\frac{1}{2}+\nu} + c_{\phi}^- y^{\frac{1}{2}-\nu} 
	+ \sum_{n\neq 0} c_{\phi}(n) W_{\sgn(n)\frac{k}{2}, \nu}(4\pi |n|y) e^{2\pi inx}. 
\ee
Assume that 
\be\label{e:cphipm}
	c_\phi(\pm n) = c_\phi(\pm1)C_\phi(n), 
\ee
for any $n\geq 1$.                    

Here 
$$
	W_{\alpha, \nu}(y)
	=
	\frac{y^{\nu+\frac{1}{2}} e^{-\frac{y}{2}} }{\Gamma\left(\nu-\alpha+\frac{1}{2}\right)} 
	\int_0^\infty e^{-yt} t^{\nu-\alpha-\frac{1}{2}} (1+t)^{\nu+\alpha-\frac{1}{2}}\; dt
$$
is the Whittaker function. 
When $\alpha=0$, the Whittaker function is the $K$-Bessel function:
$$
	W_{0, \nu}(y) = \left(\frac{y}{\pi}\right)^{\frac{1}{2}} K_\nu \left(\frac{y}{2}\right)
	.
$$
When $\nu = \pm (\alpha-1/2)$ and $1/2+\alpha \pm \nu\in \N$, we have
$$
	W_{\alpha, \pm(\alpha-1/2)} (y) = y^\alpha e^{-\frac{y}{2}}
$$
and
$$
	W_{1-\alpha, \pm (\alpha-1/2)}(y) = y^{1-\alpha} e^{-\frac{y}{2}}. 
$$

For each integer $m\geq 1$ and $\Re(w) > 1$, define a shifted Dirichlet series: 
\be\label{e:Dfphi}
	D_{f, \phi} (w; m)
	:=
	\sum_{n\geq 1} \frac{a(n+m) \overline{c_{\phi}(n)}}{n^{w+\frac{k}{2}-1}}
	.
\ee

and define
\be\label{e:Ufphi}
	U_{f, \phi}(z)
	:=
	y^{\frac{k}{2}} \overline{f(z)} \phi(z). 
\ee
The following theorem gives a complete meromorphic continuation of $D_{f, \phi} (w; m)$ to $\C$:
\begin{thm}\label{thm:Dfphi}
For $\Re(w) > 1$, we have
\be\label{e:Dfphi_Dirichlet}
	D_{f, \phi}(w; m)
	=
	\sum_{n=1}^\infty \frac{a(m+n) \overline{c_\phi(n)}}{n^{w+\frac{k}{2}-1}}
	.
\ee

For $\Re(w) < 1/2-k/2$, we have
\begin{multline}\label{e:Dfphi_spec}
	G_{f, \phi}(w)
	D_{f, \phi}(m; w)
	=
	\left\{
	\cI_{0, \cusp}(w; m) + \cI_{0, \cont, \Int}(w; m) + \Omega_{f, \phi}(w; m)
	\right\}
	\\
	-G_{f, \phi}(w) D_{f, \phi}^{\fin}(w; m)
	, 
\end{multline}
where
\be\label{e:Gfphi}
	G_{f, \phi}(w) 
	:=
	\frac{\Gamma\left(w+\frac{k}{2}+\bnu -\frac{1}{2}\right)
	\Gamma\left(w+\frac{k}{2}-\bnu-\frac{1}{2}\right) 
	(4\pi)^{1-w-\frac{k}{2}} }
	{\Gamma(w)}, 
\ee
\begin{multline}\label{e:cI0cusp}
	\cI_{0, \cusp}(w; m)
	\\
	:=
	(4\pi m)^{\frac{1}{2}-w} \sqrt{\pi} \Gamma(1-w)
	\sum_j \overline{\rho_j(-m)} 
	\frac{ 
	\Gamma\left(w-\frac{1}{2}-it_j \right) \Gamma\left(w-\frac{1}{2}+it_j \right)}
	{\Gamma\left(\frac{1}{2}-it_j \right) \Gamma\left(\frac{1}{2}+it_j \right)}
	\left<u_j, U_{f, \phi}\right>
	, 
\end{multline}
\begin{multline}\label{e:cI0contInt}
	\cI_{0, \cont, \Int}(w; m)
	:=
	(4\pi m)^{\frac{1}{2}-w} \sqrt{\pi} \Gamma(1-w) 
	\\
	\times
	\sum_{\cuspa} \frac{1}{4\pi i }\int_{C_w} 
	\tau_\cuspa\left(1/2-z, m\right) 
	\frac{\Gamma\left(w-\frac{1}{2}-z \right) \Gamma\left(w-\frac{1}{2}+z\right)}
	{\Gamma\left(\frac{1}{2}-z \right) \Gamma\left(\frac{1}{2}+z \right)}
	\left<E_\cuspa(*, 1/2+z), U_{f, \phi}\right>\; dz 
	, 
\end{multline}
\begin{multline}\label{e:Omegafphi}
	\Omega_{f, \phi} (w; m) 	
	:=
	-
	(4\pi m)^{\frac{1}{2}-w} \sqrt{\pi} \Gamma(1-w) 
	\sum_{\ell=0}^{\lfloor-\Re(w)+\frac{1}{2}\rfloor}
	\frac{(-1)^\ell}{\ell!} 
	\frac{\Gamma\left(\ell +2w-1\right)}
	{\Gamma\left(1-\ell-w\right) \Gamma\left(\ell+w\right)}
	\\
	\times
	\frac{1}{2}
	\sum_{\cuspa}
	\left\{
	\tau_\cuspa \left(1-\ell -w, m\right)
	\left<E_\cuspa (*, \ell+w), U_{f, \phi}\right>
	+
	\tau_\cuspa \left(\ell+w, m\right)
	\left<E_\cuspa (*, 1-\ell-w ), U_{f, \phi}\right>
	\right\}
\end{multline}
and
\be\label{e:Dfphi_fin}
	D_{f,\phi}^{\fin}(w; m)
	:=
	\frac{\Gamma(w)\Gamma(1-w)}
	{\Gamma\left(\frac{1}{2}+\frac{k}{2}+\bnu\right) 
	\Gamma\left(\frac{1}{2}+\frac{k}{2}-\bnu\right)}
	\sum_{n=1}^{m-1} \frac{a(m-n) \overline{c_\phi(-n)}}{n^{w+\frac{k}{2}-1}} 
	.
\ee
For $\Re(w) < 1/2-k/2$, the functions $\cI_{0, \cusp}(w; m)$ and $\cI_{0, \cont, \Int}(w; m)$ satisfy the following upper bound:
$$
	\cI_{0, \cusp}(w; m) + \cI_{0, \cont, \Int} 
	\ll 
	m^{1/2-w+\epsilon}
	.
$$

The function $D_{f,\phi}(w; m)$ has a meromorphic continuation to all $w\in \C$, 
and $G_{f, \phi}(w) D_{f, \phi}(w; m) - \Omega_{f, \phi}(w; m)$ has simple poles at the points 
$w=1/2\pm it_j -r$ for $r\geq 0$ with the residues
\begin{multline}\label{e:Dfphi-Omegafphi_Res}
	d_{j, r}(m)
	:=
	\Res_{w = 1/2\pm it_j -r } \left(G_{f, \phi}(w) D_{f, \phi}(w; m) - \Omega_{f, \phi}(w; m)\right)
	\\
	=
	\frac{(-1)^r (4\pi m)^{\mp it_j +r} \sqrt{\pi}
	\Gamma\left(\frac{1}{2} \mp it_j +r\right)}
	{r!} 
	\frac{\Gamma\left(\pm 2it_j-r\right) }{\Gamma\left(\frac{1}{2}+it_j\right) \Gamma\left(\frac{1}{2}-it_j\right)}
	\overline{\rho_j(-m)} \left<u_j, U_{f, \phi}\right>
	.
\end{multline}
In particular, at $w=-k/2+1/2\pm \bnu$, we have
\be\label{e:Dfphi_special}
	D_{f, \phi}( -k/2+1/2\pm \bnu ;m)
	=
 	a(m)\overline{c_\phi^\mp }
	\frac{
	(4\pi)^{-\frac{1}{2}\pm \bnu} 
	\Gamma\left(-\frac{k}{2}+\frac{1}{2}\pm \bnu\right) }
	{\Gamma\left(\pm 2\bnu \right)}
	-
	D_{f, \phi}^{\fin}(-k/2+1/2\pm \bnu; m)
\ee

Let 
$$
	D^*_{f, \phi}(w; m)
	:=
	G_{f, \phi}(w) D_{f, \phi}(w; m) - \Omega_{f, \phi}(w; m)
	.
$$
Then for $1/2-k/2 < \Re(w_0) < 1$, 
\begin{multline}\label{e:Dfphi_middle}
	D^*_{f, \phi}(w_0; m)
	=
	\int_{C} \left[ \frac{D^*_{f, \phi} (w; m) e^{(w-w_0)^2}}{w-w_0} 
	- \sum_{0\leq r\leq k/2} \sum_j \sum_\pm
	\frac{d_{r, j} e^{\left(\frac{1}{2}-r\pm it_j -w_0\right)^2}}
	{w-\left(\frac{1}{2}-r\pm it_j\right)} \right]
	\; dw, 
\end{multline}
where $C$ is the infinite rectangle with right hand side the vertical line $\Re(w) = 2$ and left hand side the vertical line $\Re(w) = 1/2-k/2-\epsilon$. 
In this region, away from the poles, 
\be\label{e:D*fphi_upper}
	D^*_{f, \phi}(w_0; m) 
	\ll m^{k/2+\epsilon}
	.
\ee

\end{thm}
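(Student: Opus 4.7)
The plan is to prove $(\ref{e:Dfphi_spec})$ by computing the $m$-th Fourier coefficient of $\overline{U_{f,\phi}}(z) = y^{k/2} f(z) \bar\phi(z)$ as a function of $y$ in two different ways, applying a single weighted integral transform in $y$, and matching. The direct Fourier expansion, after the $x$-integration, separates according to how the modes of $f$ pair with those of $\bar\phi$ into a shift-up infinite sum $\sum_{n\geq 1} a(m+n)\overline{c_\phi(n)} e^{-2\pi(m+n)y} W_{k/2,\bnu}(4\pi n y)$, a constant-term contribution at $n=m$ of the form $a(m)(\overline{c_\phi^+}y^{1/2+\bnu} + \overline{c_\phi^-}y^{1/2-\bnu}) y^{k/2} e^{-2\pi m y}$, and a finite shift-down sum over $1 \leq n \leq m-1$ involving $a(m-n)\overline{c_\phi(-n)}$ and $W_{-k/2,\bnu}$; the spectral decomposition of $\overline{U_{f,\phi}}$ produces $\sum_j \overline{\rho_j(-m)}\langle u_j, U_{f,\phi}\rangle \sqrt y K_{it_j}(2\pi m y)$ plus the Eisenstein analogue, with the appearance of $\overline{\rho_j(-m)}$ reflecting the reality and parity of Hecke--Maass Fourier coefficients.

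On the direct side, the transform $\int_0^\infty (\cdot) e^{2\pi m y} y^{w+k/2-1}\,dy/y$ cancels the surplus exponential decay outside the Whittaker factor; after substituting $u = 4\pi n y$, each of the shift-up and shift-down sums reduces to a standard integral $\int_0^\infty u^{w+k/2-2} e^{-u/2} W_{\pm k/2,\bnu}(u)\,du$, which evaluates (via the reflection identity in the $-k/2$ case) to the gamma structure of $G_{f,\phi}(w)$ and of $G_{f,\phi}(w) \cdot \Gamma(w)\Gamma(1-w)/[\Gamma(1/2+k/2+\bnu)\Gamma(1/2+k/2-\bnu)]$ respectively, producing $G_{f,\phi}(w) [D_{f,\phi}(w;m) + D_{f,\phi}^{\fin}(w;m)]$; the constant-term piece is isolated for later use in deriving $(\ref{e:Dfphi_special})$. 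Applied to the spectral side via a Mellin--Barnes representation of $\sqrt y K_{it}(2\pi m y) e^{2\pi m y}$, the same transform produces the gamma kernel $(4\pi m)^{1/2-w} \sqrt\pi \Gamma(1-w) \Gamma(w-1/2-it)\Gamma(w-1/2+it)/[\Gamma(1/2-it)\Gamma(1/2+it)]$, matching $\cI_{0,\cusp}$ and $\cI_{0,\cont,\Int}$. Equating the two evaluations yields $(\ref{e:Dfphi_spec})$ in the region $\Re(w) < 1/2 - k/2$ of common absolute convergence; the bound $\cI_{0,\cusp} + \cI_{0,\cont,\Int} \ll m^{1/2-w+\epsilon}$ then follows from Rankin--Selberg bounds on $\langle u_j, U_{f,\phi}\rangle$ together with the spectral large-sieve inequality quoted from \cite{B}.

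For the meromorphic continuation and the remaining claims, as $\Re(w)$ decreases the contour $C_w$ in $\cI_{0,\cont,\Int}$ must be deformed across the moving $z$-poles of $\Gamma(w-1/2 \pm z)$; the residues accumulated at $z = \pm(w+\ell-1/2)$ for $0 \leq \ell \leq \lfloor -\Re(w) + 1/2\rfloor$ reassemble as $\Omega_{f,\phi}(w;m)$, with the symmetric appearance of $z$ and $-z$ in the integrand producing the symmetric pair of terms inside the curly brackets of $(\ref{e:Omegafphi})$. The residues $d_{j,r}(m)$ at $w = 1/2 \pm it_j - r$ come from the simple poles of $\Gamma(w-1/2 \mp it_j)$ in $\cI_{0,\cusp}$, computed via $\Res_{s=-r}\Gamma(s) = (-1)^r/r!$. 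The middle-strip formula $(\ref{e:Dfphi_middle})$ is Cauchy's theorem applied to $D^*_{f,\phi}(w;m) e^{(w-w_0)^2}/(w-w_0)$ on the rectangle $C$ with the enclosed $d_{j,r}$ subtracted, and $(\ref{e:D*fphi_upper})$ follows by bounding on the right boundary via the absolutely convergent Dirichlet series and on the left via the spectral formula. The main obstacle will be the precise bookkeeping of the residues from the contour shift --- verifying that they assemble with the delicate coefficients $(-1)^\ell \Gamma(\ell+2w-1)/[\ell! \Gamma(1-\ell-w)\Gamma(\ell+w)]$ appearing in $\Omega_{f,\phi}$ --- together with the rigorous regularization of the spectral decomposition when $\phi$ is a non-cuspidal Eisenstein series and $U_{f,\phi} \notin L^2$.
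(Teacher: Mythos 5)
Your overall strategy---Fourier--expand $\overline{U_{f,\phi}}$ in $x$, hit the $m$-th coefficient with a weight $e^{2\pi m y}y^{\ast}\,dy/y$, and compare with the same transform applied termwise to the spectral expansion of $U_{f,\phi}$---is the route of \cite{HHR} and \cite{Hul13}, which is all the paper itself offers (Theorem \ref{thm:Dfphi} is not proved here, only cited). Your individual Mellin evaluations are essentially the right ones: with the weight $y^{w-1}\,dy/y$ (your $y^{w+k/2-1}$ double-counts the factor $y^{k/2}$ that you already kept inside the Fourier coefficient), the shift-up terms produce $G_{f,\phi}(w)D_{f,\phi}(w;m)$, the shift-down terms produce $G_{f,\phi}(w)D^{\fin}_{f,\phi}(w;m)$ via the $e^{+u/2}W_{-k/2,\bnu}$ formula (the exponential in that integral must be $e^{+u/2}$, not $e^{-u/2}$ as you wrote), and $\int_0^\infty K_{it}(2\pi m y)e^{2\pi m y}y^{w-1/2}\,dy/y$ gives exactly the kernel of \eqref{e:cI0cusp}. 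Your identification of $\Omega_{f,\phi}$ with the residues picked up by the moving poles of $\Gamma(w-\tfrac12\pm z)$, and of \eqref{e:Dfphi-Omegafphi_Res} with the poles of $\Gamma(w-\tfrac12\mp it_j)$, is also correct in mechanism. (Your closing worry about $U_{f,\phi}\notin L^2$ is unfounded: $f$ is cuspidal, so $U_{f,\phi}$ decays exponentially at every cusp and the spectral expansion is legitimate; the delicate point is interchanging it with the $y$-integral, not its existence.)

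The genuine gap is the sentence asserting that \eqref{e:Dfphi_spec} follows by ``equating the two evaluations in the region $\Re(w)<1/2-k/2$ of common absolute convergence.'' No such region exists. The series $D_{f,\phi}(w;m)$ converges only in a right half-plane ($\Re(w)>1$); the termwise integrals $\int_0^\infty \sqrt{y}\,K_{it}(2\pi m y)e^{2\pi m y}y^{w-1}\,dy/y$ converge only in the strip $1/2<\Re(w)<1$; and the transform of the constant-term contribution $a(m)\,\overline{c^{\pm}_\phi}\,y^{k/2+1/2\pm\bnu}e^{-2\pi m y}$ diverges for \emph{every} $w$, since the weight $e^{2\pi m y}$ cancels its decay exactly---so this piece cannot simply be ``isolated for later use''; it must be subtracted or regularized before the transform is applied, and it is precisely the source of the special values \eqref{e:Dfphi_special} and of the poles at $w=1/2-k/2\pm\bnu$. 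Consequently \eqref{e:Dfphi_spec} cannot be obtained by direct matching at $\Re(w)<1/2-k/2$: one must establish the (regularized) identity in the strip where the spectral-side transforms converge, with the continuous spectrum on the straight contour and no $\Omega$-term, and then continue meromorphically in $w$; the $w$-dependent contour $C_w$ together with $\Omega_{f,\phi}$ is exactly the cost of pushing the continuous-spectrum integral past $\Re(w)=1/2$, and $G_{f,\phi}(w)D_{f,\phi}(w;m)$ is then \emph{defined} for $\Re(w)<1/2-k/2$ by this continuation. You possess the needed continuation machinery, but you deploy it only ``for the remaining claims,'' leaving the central identity resting on a false convergence claim; the argument must be reorganized so that the contour-shift/continuation step \emph{is} the proof of \eqref{e:Dfphi_spec}, after which the absolute convergence of the right-hand side for $\Re(w)<1/2-k/2$ (via the $e^{\pm\pi t_j/2}$ bookkeeping for $\overline{\rho_j(-m)}\langle u_j,U_{f,\phi}\rangle$ against the polynomial decay of the gamma kernel) justifies the stated region.
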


The proof of this theorem is similar in many ways, but a bit more complicated than, 
the corresponding theorem in \cite{HHR}, which applies to a less general object; 
in \cite{HHR}, $\phi$ is restricted to be a holomorphic cusp form.   A completely general version of 
Theorem \ref{thm:Dfphi} is given in \cite{Hul13}.   

\section{A first moment identity}\label{s:first}

Let $h(t)$ be an even function on $\C$, which is holomorphic on $|\Im(t)| < 1/2+\epsilon$, for some $\epsilon>0$. 
As $t\to \infty$, we assume that 
$$
	\left|h(t) \right| \ll (1+|t|)^a, 
$$
for $a<0$. 
In this section we will prove the following theorem.  
\begin{thm}\label{thm:first}
For $m\geq 1$, we have
\begin{multline}\label{e:first}
	(4\pi)^{k-\frac{1}{2}} i^k 
	\left\{
	\sum_{j} 
	\frac{h(t_j) }{\cosh(\pi t_j)}
	\overline{\rho_j(-m)} 
	\frac{\left<u_j, U_{f, \phi}\right>}{\Gamma\left(\frac{k}{2}+\bnu+it_j\right)
	\Gamma\left(\frac{k}{2}+\bnu-it_j\right)}
	\right.
	\\
	\left.
	+
	\sum_{\cuspa} 
	\frac{1}{4\pi} \int_{-\infty}^\infty 
	\frac{h(t)}{\cosh(\pi t)}
	\tau_\cuspa (1/2-it, m) 
	\frac{\left<E_{\cuspa}(*, 1/2+it), U_{f, \phi}\right>} 
	{\Gamma\left(\frac{k}{2}+\bnu+it\right) \Gamma\left(\frac{k}{2}+\bnu-it\right)}
	\; dt
	\right\}
	\\
	=
	M_{f, \phi}(m) + E^{(1)}_{f, \phi}(m) + E^{(2)}_{f, \phi}(m)
	, 
\end{multline}
where
\begin{multline}\label{e:Mfphi}
	M_{f, \phi}(m)
	=
	(4\pi)^{\frac{k}{2}-\bnu} 
	\frac{\Gamma\left(-\frac{k}{2}+\frac{1}{2}-\bnu\right) }
	{\Gamma\left(\frac{1}{2}+\bnu\right) \Gamma\left(\frac{1}{2}-\bnu\right)}
	\frac{a(m)\overline{c^+_\phi} }{m^{\frac{k}{2}+\bnu}}
	\frac{1}{2\pi^2} \int_{-\infty}^\infty h(t) t \tanh(\pi t) \; dt	
	\\
	+
	(4\pi)^{\frac{k}{2}+\bnu}
	\frac{\Gamma\left(-\frac{k}{2}+\frac{1}{2}+\bnu\right)}
	{\Gamma\left(\frac{1}{2}+\bnu\right) \Gamma\left(\frac{1}{2}-\bnu\right)}
	\frac{a(m) \overline{c^-_\phi} }{m^{\frac{k}{2}-\bnu}}
	\frac{1}{2\pi^2} \int_{-\infty}^\infty
	h(t) t \tanh (\pi t)
	\frac{\Gamma\left(\frac{k}{2}-\bnu+it\right)
	\Gamma\left(\frac{k}{2}-\bnu-it\right)}
	{\Gamma\left(\frac{k}{2}+\bnu +it\right)
	\Gamma\left(\frac{k}{2}+\bnu -it\right) }
	\; dt
	, 
\end{multline}
\begin{multline}\label{e:E1fphi}
	E^{(1)}_{f, \phi}(m)
	=
	-
	4
	(4\pi)^{\frac{k-1}{2}} i^k  
	\frac{1}{2\pi i} \int_{(\sigma_u = 1/2+2\epsilon)} 
	\frac{h(u/i) u \tan(\pi u)}{\Gamma\left(\frac{k}{2}+\bnu +u\right) \Gamma\left(\frac{k}{2}+\bnu-u\right)}
	\\
	\times
	\frac{1}{2\pi i } \int_{(\sigma_w = 1/2-k/2-\epsilon)}
	\Gamma\left(-w+u\right) \Gamma\left(-w-u\right)
	\Gamma\left(w+\frac{k}{2}-\bnu\right) \Gamma\left(w+\frac{k}{2}+\bnu\right)
	\\
	\times
	\frac{1}{\Gamma\left(\frac{1}{2}+\frac{k}{2}+\bnu\right)
	\Gamma\left(\frac{1}{2}+\frac{k}{2}-\bnu\right)}
	m^{w}
	\sum_{n=1}^{m-1} 
	\frac{a(m-n) \overline{c_\phi(-n)}}{n^{w+\frac{k}{2}-\frac{1}{2}}} 
	\; dw\; du
\end{multline}
and 
\begin{multline}\label{e:E2fphi}
	E^{(2)}_{f, \phi}(m)
	=
	(4\pi)^{\frac{k-1}{2}} i^k \frac{4}{\pi} 
	\frac{1}{2\pi i} \int_{(\sigma_u = 1/2+2\epsilon)} 
	\frac{h(u/i) u 
	\Gamma\left(\frac{1}{2}+u\right) \Gamma\left(\frac{1}{2}-u\right)}
	{\Gamma\left(\frac{k}{2}+\bnu +u\right) \Gamma\left(\frac{k}{2}+\bnu-u\right)}
	\frac{1}{2\pi i } \int_{(\sigma_w = 1/2+\epsilon)}
	\frac{\Gamma\left(-w+u\right)}
	{\Gamma\left(1+w+u\right)}
	\\
	\times
	\Gamma\left(w+\frac{k}{2}+\bar{\nu}\right) 
	\Gamma\left(w+\frac{k}{2}-\bar{\nu}\right)
	m^{w} \sum_{n=1}^\infty \frac{a(m+n) \overline{c_\phi(n)}}{n^{w+\frac{k}{2}-\frac{1}{2}}}
	\; dw\; du
	.
\end{multline}

If $\phi$ is holomorphic, then $E^{(1)}_{f, \phi}(m)$ is not present, 
and $M_{f, \phi}(m)$ is not present if $\phi$ is cuspidal. 
\end{thm}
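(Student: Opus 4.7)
The strategy is to evaluate a Mellin--Barnes double integral involving the meromorphically continued single shifted Dirichlet series $D^{*}_{f,\phi}$ from Theorem~\ref{thm:Dfphi} in two different ways: one evaluation will yield the spectral LHS of \eqref{e:first}, while the other will yield the sum $M_{f,\phi}+E^{(1)}_{f,\phi}+E^{(2)}_{f,\phi}$. Equality of the two evaluations is the asserted identity.

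\textbf{Kernel design.} I would first build a kernel $K_h(u,w)$ in the two complex variables of \eqref{e:E1fphi}--\eqref{e:E2fphi}, assembled from $h(u/i)$ and Gamma-factor ratios, designed so that two things happen in parallel. First, the $u$-integral, paired against the Gamma-ratios already present in the spectral pieces \eqref{e:cI0cusp}--\eqref{e:cI0contInt}, collapses via a Barnes-type identity to the weight $h(t_j)/\cosh(\pi t_j)\cdot[\Gamma(k/2+\bnu+it_j)\Gamma(k/2+\bnu-it_j)]^{-1}$ (with the analogous evaluation along the Eisenstein $t$-contour). Second, the $w$-integral will, on the other side, reproduce the explicit Mellin--Barnes forms \eqref{e:E1fphi}, \eqref{e:E2fphi} with their $\Gamma(-w+u)\Gamma(-w-u)/\Gamma(1+w+u)$-type shape. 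The hypothesis $h(\pm i/2)=0$ cancels singularities at $u=\pm 1/2$, so that the contour $\sigma_u=1/2+2\epsilon$ (just outside the strip of holomorphy of $h(u/i)$) is legitimate; the natural sources of $u$-poles that will later produce the $h(t_j)$ evaluations are $u\tan(\pi u)$ (for the $E^{(1)}$-piece) and $\Gamma(1/2+u)\Gamma(1/2-u)$ (for the $E^{(2)}$-piece).

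\textbf{The two evaluations.} With the $w$-contour placed at $\Re w=1/2-k/2-\epsilon$, the spectral decomposition \eqref{e:Dfphi_spec} is valid and $D^{*}_{f,\phi}(w+\tfrac12;m)=\cI_{0,\cusp}+\cI_{0,\cont,\Int}-G_{f,\phi}D^{\fin}_{f,\phi}$; the $u$-integral against $\cI_{0,\cusp}+\cI_{0,\cont,\Int}$ collapses, as described, into exactly the LHS of \eqref{e:first}, while the $u$-integral against $-G_{f,\phi}D^{\fin}_{f,\phi}$ is recognised immediately, using \eqref{e:Dfphi_fin}, as $E^{(1)}_{f,\phi}(m)$. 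To extract the remaining pieces of the RHS, I then shift the $w$-contour rightward to $\Re w=1/2+\epsilon$. Along the way the contour crosses the two poles $w=-k/2+\tfrac12\pm\bnu$ of the Gamma factors $\Gamma(w+k/2\pm\bnu)$ inside $K_h$; the residues there, combined via the explicit special values \eqref{e:Dfphi_special} of $D_{f,\phi}$ and Legendre duplication applied to the emerging $\Gamma(\pm 2\bnu)^{-1}$ factors, assemble into the two summands of $M_{f,\phi}(m)$ in \eqref{e:Mfphi}, with $\overline{c_\phi^{\mp}}$ coming directly from \eqref{e:Dfphi_special}. On the new contour the Dirichlet series representation \eqref{e:Dfphi_Dirichlet} is valid and, inserted into the remaining double integral, yields $E^{(2)}_{f,\phi}(m)$ directly.

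\textbf{Main obstacle.} The principal technical burden is the Gamma-factor bookkeeping: one must verify that after all contour residues and exchanges of order of integration, the constant $(4\pi)^{k-1/2}i^k$ together with the various $\Gamma$-ratios conspire to produce the closed forms \eqref{e:Mfphi}, \eqref{e:E1fphi}, \eqref{e:E2fphi} on the nose rather than up to a correction factor. Justifying each contour shift and interchange rests on the polynomial decay $|h(t)|\ll(1+|t|)^a$ with $a<0$, Stirling for the Gamma quotients, and the growth bound \eqref{e:D*fphi_upper} for $D^{*}_{f,\phi}$. Finally, the degeneracies asserted at the end of the theorem follow automatically: holomorphicity of $\phi$ forces $c_\phi(-n)=0$ for $n\ge 1$ (killing $E^{(1)}_{f,\phi}$), while cuspidality of $\phi$ forces $c_\phi^{\pm}=0$ (killing $M_{f,\phi}$).
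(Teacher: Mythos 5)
Your overall architecture is the same as the paper's: both proofs run a double Mellin--Barnes transform of $D_{f,\phi}(w+\tfrac12;m)$ against the kernel appearing in \eqref{e:E2fphi}, evaluate it once in the left $w$-half-plane via the spectral decomposition \eqref{e:Dfphi_spec} and once at $\sigma_w=1/2+\epsilon$ where the Dirichlet series \eqref{e:Dfphi_Dirichlet} converges (giving $E^{(2)}_{f,\phi}$), with $M_{f,\phi}$ coming from the poles of $\Gamma(w+\tfrac k2\pm\bnu)$ through the special values \eqref{e:Dfphi_special}. The gap is in your pole bookkeeping for the $w$-shift, and it is not cosmetic. Moving the contour between the two regions crosses not only the two poles you list, but also (i) the spectral poles of $D_{f,\phi}(w+\tfrac12;m)$ at $w=\pm it_j-r$, whose residues are \eqref{e:Dfphi-Omegafphi_Res} (the paper's $\Res_{f,\phi}$), and (ii) the poles of $\Gamma\left(w+\tfrac12\right)$ at $w=-\tfrac12-\ell$, which produce terms proportional to $D^*_{f,\phi}(-\ell;m)$. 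Family (ii) vanishes only because of the parity computation \eqref{e:vanishing}, i.e.\ because $h$ is even and $h(\pm i/2)=0$; that, and not a singularity at $u=\pm1/2$ on the contour $\sigma_u=1/2+2\epsilon$, is the actual role of the hypothesis $h(\pm i/2)=0$. Family (i) does not vanish: in the paper it cancels only against the residues generated when, on the spectral side, the $w$-contour is pushed back to $\sigma_w=\epsilon$ so that Barnes' lemma (Theorem 2.4.3 of \cite{AAR}) can collapse the $w$-integral into $\Gamma\left(\tfrac12+it\right)\Gamma\left(\tfrac12-it\right)/\bigl((u+it)(u-it)\bigr)$; the weights $h(t_j)/\cosh(\pi t_j)$ then arise from the $u$-poles at $u=\pm it_j$ of this factor, not from $u\tan(\pi u)$ or $\Gamma\left(\tfrac12\pm u\right)$ as you assert. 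Without carrying $\Res_{f,\phi}$ on one side and producing its cancelling partner on the other, your two evaluations do not match. Relatedly, you must decide whether your integrand is built from $G_{f,\phi}D_{f,\phi}$ (then $\Omega_{f,\phi}$ must be carried on the spectral side, where it too is killed by \eqref{e:vanishing} after the $u$-integration) or from $D^*_{f,\phi}$ (then $\Omega_{f,\phi}$ reappears on the right-hand contour and must be handled there); your plan silently drops it in both places.

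A second, smaller gap: the residues at $w=-\tfrac k2\pm\bnu$ do not assemble into $M_{f,\phi}$ alone. By \eqref{e:Dfphi_special} each special value also contains $-D^{\fin}_{f,\phi}(-k/2+1/2\pm\bnu;m)$, and these leftover pieces must be recombined with your left-contour finite-sum integral --- together with a further small contour adjustment and the reflection-formula manipulation converting $\Gamma\left(\tfrac12+u\right)\Gamma\left(\tfrac12-u\right)/\Gamma(1+w+u)$ into the $\tan(\pi u)\,\Gamma(-w-u)$ shape --- before one lands exactly on \eqref{e:E1fphi} with its contour $\sigma_w=1/2-k/2-\epsilon$. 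Saying that the finite-sum term is ``recognised immediately'' as $E^{(1)}_{f,\phi}$ skips precisely the step where these extra residue pieces are absorbed. Both defects are repairable, and once repaired your argument becomes the paper's proof; but as written the ledger does not balance.
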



\subsection{A proof of Theorem \ref{thm:first}}

The left hand side of \eqref{e:first} is an integral transform of the spectral expansion of the shifted Dirichlet series $D_{f, \phi}(w+1/2; m)$, 
in its region of absolute convergence. 
To prove the theorem, we will move the line of integration from the region where the spectral side is convergent, to the region where the Dirichlet series is convergent. 

Consider
\begin{multline*}
	(4\pi)^{\frac{k-1}{2}} i^k \frac{4}{\pi} 
	\frac{1}{2\pi i} \int_{(\sigma_u=1/2+2\epsilon)} \frac{h(u/i) u \Gamma\left(\frac{1}{2}+u\right) \Gamma\left(\frac{1}{2}-u\right)}
	{\Gamma\left(\frac{k}{2}+\bnu+u\right) \Gamma\left(\frac{k}{2}+\bnu-u\right)}
	\frac{1}{2\pi i} \int_{(\sigma_w = 1/2+\epsilon)} 
	\frac{\Gamma\left(-w+u\right)}
	{\Gamma\left(1+w+u\right)}
	\\
	\times
	\Gamma\left(w+\frac{k}{2}+\bnu\right) \Gamma\left(w+\frac{k}{2}-\bnu\right)
	m^w D_{f, \phi} (w+1/2; m) \; dw \; du. 
\end{multline*}

For $\Re(w) < -k/2$, by \eqref{e:Dfphi_spec}, we have
\begin{multline*}
	D_{f, \phi}^*(w+1/2; m)
	:=
	\frac{\Gamma\left(w+\frac{k}{2}+\bnu\right) \Gamma\left(w+\frac{k}{2}-\bnu\right)}
	{\Gamma\left(w+\frac{1}{2}\right)}
	D_{f, \phi} (w+1/2; m) 
	\\
	=
	(4\pi)^{\frac{k-1}{2}+w} 
	\left\{\cI_{0, \cusp}(w+1/2; m) + \cI_{0, \cont, \Int}(w+1/2; m)
	+\Omega_{f, \phi}(w+1/2; m)\right\}
	\\
	-
	\frac{
	\Gamma\left(w+\frac{k}{2}+\bnu\right) \Gamma\left(w+\frac{k}{2}-\bnu\right) 
	\Gamma\left(\frac{1}{2}-w\right)}
	{\Gamma\left(\frac{1}{2}+\frac{k}{2}+\bnu\right) \Gamma\left(\frac{1}{2}+\frac{k}{2}-\bnu\right)}
	\sum_{n=1}^{m-1} \frac{a(m-n) \overline{c_{\phi}(-n)}}{n^{w+\frac{k}{2}-\frac{1}{2}}} 
	, 
\end{multline*}
where
\begin{multline*}
	\Omega_{f, \phi}(w+1/2; m)
	=
	- (4\pi m)^{-w} \sqrt{\pi} \Gamma\left(\frac{1}{2}-w\right) 
	\sum_{\ell=0}^{\lfloor-\Re(w)\rfloor} 
	\frac{(-1)^\ell}{\ell!} \frac{\Gamma\left(\ell+2w\right)}
	{\Gamma\left(\frac{1}{2}-\ell-w\right) \Gamma\left(\frac{1}{2}+\ell +w\right)}
	\\
	\times
	\frac{1}{2} \sum_\cuspa \left\{ \tau_\cuspa (1/2-\ell -w, m) \left<E_\cuspa (*, \ell+w+1/2), U_{f, \phi}\right>
	+
	\tau_\cuspa (\ell+w+1/2, m) \left<E_\cuspa (*, 1/2-\ell-w), U_{f, \phi}\right>\right\}
	.
\end{multline*}

To get the left hand side of \eqref{e:first}, we move the $w$ line of integration to $\sigma_w = -k/2-\epsilon$. 
As we move the $w$ line of integration, we pass over the poles of $\cI_{0, \cusp}(w+1/2; m)$, $\Omega_{f, \phi}(w+1/2; m)$, $\Gamma(w+1/2)$, and also $\Gamma(w+k/2+\bnu) \Gamma(w+k/2-\bnu)$. 
Denote by $\Res_{f, \phi}$ the sum of the residues associated with the poles of $\cI_{0, \cusp}(w+1/2; m)$ and $\Omega_{f, \phi}(w+1/2; m)$.
Then we get
\begin{multline*}
	(4\pi)^{\frac{k-1}{2}} i^k \frac{4}{\pi} 
	\frac{1}{2\pi i} \int_{(\sigma_u=1/2+2\epsilon)} \frac{h(u/i) u \Gamma\left(\frac{1}{2}+u\right) \Gamma\left(\frac{1}{2}-u\right)}
	{\Gamma\left(\frac{k}{2}+\bnu+u\right) \Gamma\left(\frac{k}{2}+\bnu-u\right)}
	\frac{1}{2\pi i} \int_{(\sigma_w = 1/2+\epsilon)} 
	\frac{\Gamma\left(-w+u\right)}
	{\Gamma\left(1+w+u\right)}
	\\
	\times
	\Gamma\left(w+\frac{k}{2}+\bnu\right) \Gamma\left(w+\frac{k}{2}-\bnu\right)
	m^w D_{f, \phi} (w+1/2; m) \; dw \; du
	\\
	=
	(4\pi)^{\frac{k-1}{2}} i^k \frac{4}{\pi} 
	\frac{1}{2\pi i} \int_{(\sigma_u=1/2+2\epsilon)} \frac{h(u/i) u \Gamma\left(\frac{1}{2}+u\right) \Gamma\left(\frac{1}{2}-u\right)}
	{\Gamma\left(\frac{k}{2}+\bnu+u\right) \Gamma\left(\frac{k}{2}+\bnu-u\right)}
	\frac{1}{2\pi i} \int_{(\sigma_w = -k/2-\epsilon)} 
	\frac{\Gamma\left(-w+u\right)}
	{\Gamma\left(1+w+u\right)}
	\\
	\times
	\Gamma\left(w+\frac{k}{2}+\bnu\right) \Gamma\left(w+\frac{k}{2}-\bnu\right)
	m^w D_{f, \phi} (w+1/2; m) \; dw \; du
	+
	\Res_{f, \phi} 
	\\
	+
	(4\pi)^{\frac{k-1}{2}} i^k \frac{4}{\pi} 
	\frac{1}{2\pi i} \int_{(\sigma_u=1/2+2\epsilon)} \frac{h(u/i) u \Gamma\left(\frac{1}{2}+u\right) \Gamma\left(\frac{1}{2}-u\right)}
	{\Gamma\left(\frac{k}{2}+\bnu+u\right) \Gamma\left(\frac{k}{2}+\bnu-u\right)}
	\frac{\Gamma\left(\frac{k}{2} -\bnu+u\right)}
	{\Gamma\left(1-\frac{k}{2}+\bnu+u\right)}
	\\
	\times
	\Gamma\left( 2\bnu\right)
	m^{-\frac{k}{2}+\bnu} D_{f, \phi} (1/2-k/2+ \bnu; m) \; du
	\\
	+
	(4\pi)^{\frac{k-1}{2}} i^k \frac{4}{\pi} 
	\frac{1}{2\pi i} \int_{(\sigma_u=1/2+2\epsilon)} \frac{h(u/i) u \Gamma\left(\frac{1}{2}+u\right) \Gamma\left(\frac{1}{2}-u\right)}
	{\Gamma\left(\frac{k}{2}+\bnu+u\right) \Gamma\left(\frac{k}{2}+\bnu-u\right)}
	\frac{\Gamma\left(\frac{k}{2} +\bnu+u\right)}
	{\Gamma\left(1-\frac{k}{2}-\bnu+u\right)}
	\\
	\times
	\Gamma\left(-2\bnu\right)
	m^{-\frac{k}{2}- \bnu} D_{f, \phi} (1/2-k/2-\bnu; m) \; du
	\\
	+
	\sum_{\ell=0}^{\frac{k}{2}-1}
	\frac{(-1)^\ell}{\ell!}
	(4\pi)^{\frac{k-1}{2}} i^k \frac{4}{\pi} 
	D_{f, \phi}^*(-\ell; m)
	\frac{1}{2\pi i} \int_{(\sigma_u=1/2+2\epsilon)} \frac{h(u/i) u \Gamma\left(\frac{1}{2}+u\right) \Gamma\left(\frac{1}{2}-u\right)}
	{\Gamma\left(\frac{k}{2}+\bnu+u\right) \Gamma\left(\frac{k}{2}+\bnu-u\right)}
	\frac{\Gamma\left(\frac{1}{2}+\ell+u\right)}
	{\Gamma\left(\frac{1}{2}-\ell+u\right)}
	\; du
	.
\end{multline*}

For each $\ell\geq 0$, we have
\begin{multline}\label{e:vanishing}
	\frac{(-1)^{\ell}}{\ell!}
	\frac{1}{2\pi i} \int_{(\sigma_u=1/2+2\epsilon)} \frac{h(u/i) u \Gamma\left(\frac{1}{2}+u\right) \Gamma\left(\frac{1}{2}-u\right)}
	{\Gamma\left(\frac{k}{2}+\bnu+u\right) \Gamma\left(\frac{k}{2}+\bnu-u\right)}
	\frac{\Gamma\left(\frac{1}{2}+\ell+u\right)}
	{\Gamma\left(\frac{1}{2}-\ell+u\right)}
	\; du
	\\
	=
	\frac{1}{\ell!}
	\frac{1}{2\pi i} \int_{(\sigma_u=1/2+2\epsilon)} 
	h(u/i) u
	\frac{
	\Gamma\left(\frac{1}{2}+\ell+u\right) 
	\Gamma\left(\frac{1}{2}+\ell-u\right)}
	{\Gamma\left(\frac{k}{2}+\bnu+u\right) \Gamma\left(\frac{k}{2}+\bnu-u\right)}
	\; du
	=
	0, 
\end{multline}
since $h(u/i) = h(-u/i)$ and $h(\pm i/2)=0$. 

Let
\begin{multline}\label{e:Ifphi}
	I_{f, \phi}(m)
	\\
	:=
	(4\pi)^{\frac{k-1}{2}} i^k \frac{4}{\pi} 
	\frac{1}{2\pi i} \int_{(\sigma_u=1/2+2\epsilon)} \frac{h(u/i) u \Gamma\left(\frac{1}{2}+u\right) \Gamma\left(\frac{1}{2}-u\right)}
	{\Gamma\left(\frac{k}{2}+\bnu+u\right) \Gamma\left(\frac{k}{2}+\bnu-u\right)}
	\frac{1}{2\pi i} \int_{(\sigma_w = -k/2-\epsilon)} 
	\frac{\Gamma\left(-w+u\right)}
	{\Gamma\left(1+w+u\right)}
	\\
	\times
	\Gamma\left(w+\frac{1}{2}\right)
	m^w 
	(4\pi)^{\frac{k-1}{2}+w} 
	\left\{
	\cI_{0, \cusp}(w+1/2; m) 
	+ \cI_{0, \cont, \Int}(w+1/2; m)
	+ \Omega_{f, \phi}(w+1/2; m)
	\right\}
	\\
	\times
	\; dw \; du
	+
	\Res_{f, \phi}
	.
\end{multline}
Here
\begin{multline*}
	\Gamma\left(w+\frac{1}{2}\right)
	m^w 
	(4\pi)^{\frac{k-1}{2}+w} 
	\cI_{0, \cusp}(w+1/2; m)
	\\
	=
	\Gamma\left(w+\frac{1}{2}\right)
	\Gamma\left(\frac{1}{2}-w\right)
	(4\pi)^{\frac{k-1}{2}} 
	\sqrt{\pi} 
	\sum_j 
	\overline{\rho_j(-m)} 
	\frac{\Gamma\left(w-it_j\right) \Gamma\left(w+it_j\right)}
	{\Gamma\left(\frac{1}{2}-it_j\right) \Gamma\left(\frac{1}{2}+it_j\right)}
	\left<u_j, U_{f, \phi}\right>
\end{multline*}
and
\begin{multline*}
	\Gamma\left(w+\frac{1}{2}\right)
	m^w 
	(4\pi)^{\frac{k-1}{2}+w} 
	\cI_{0, \cont, \Int}(w+1/2; m)
	\\
	=
	\Gamma\left(w+\frac{1}{2}\right)
	\Gamma\left(\frac{1}{2}-w\right)
	(4\pi)^{\frac{k-1}{2}} 
	\sqrt{\pi} 
	\sum_{\cuspa} 
	\frac{1}{4\pi} \int_{-\infty}^\infty \tau_\cuspa \left(1/2-it, m\right)
	\frac{\Gamma\left(w-it\right) \Gamma\left(w+it\right)}
	{\Gamma\left(\frac{1}{2}-it\right) \Gamma\left(\frac{1}{2}+it\right)}
	\\
	\times
	\left<E_{\cuspa}(*, 1/2+it), U_{f, \phi}\right>
	\; dt
	.
\end{multline*}

By \eqref{e:Dfphi_special}, 
$$
	D_{f, \phi}( -k/2+1/2\pm \bnu ;m)
	=
 	a(m)\overline{c_\phi^\mp }
	\frac{
	(4\pi)^{-\frac{1}{2}\pm \bnu} 
	\Gamma\left(-\frac{k}{2}+\frac{1}{2}\pm \bnu\right) }
	{\Gamma\left(\pm 2\bnu \right)}
	-
	D_{f, \phi}^{\fin}(-k/2+1/2\pm \bnu; m)
	, 
$$
so we get
$$
	I_{f, \phi}(m)
	=
	M_{f, \phi}(m)
	+
	E_{f, \phi}^{(1)}(m)
	+
	E_{f, \phi}^{(2)}(m)
	.
$$

Define
\begin{multline}\label{e:h**}
	h^{**}(t)
	:=
	\frac{1}{\Gamma\left(\frac{1}{2}-it\right) \Gamma\left(\frac{1}{2}+it\right)}
	\frac{1}{2\pi i} \int_{(\sigma_u=1/2+2\epsilon)} \frac{h(u/i) u \Gamma\left(\frac{1}{2}+u\right) \Gamma\left(\frac{1}{2}-u\right)}
	{\Gamma\left(\frac{k}{2}+\bnu+u\right) \Gamma\left(\frac{k}{2}+\bnu-u\right)}
	\\
	\times
	\frac{1}{2\pi i} \int_{(\sigma_w =\epsilon)} 
	\frac{
	\Gamma\left(-w+u\right)
	\Gamma\left(-w+\frac{1}{2}\right)
	\Gamma\left(w+\frac{1}{2}\right)
	\Gamma\left(w-it\right) \Gamma\left(w+it\right)}
	{\Gamma\left(1+w+u\right)}
	\; dw \; du
	.
\end{multline}
Since the integrals and series in \eqref{e:Ifphi} converge absolutely, 
we can change the ordering of integrals and series, and move the $w$ line of integration to $\sigma_w = \epsilon$.
Then $\Res_{f, \phi}$ is canceled, and applying \eqref{e:vanishing}, we get
\begin{multline*}
	I_{f, \phi}(m)
	=
	(4\pi)^{k-1} \sqrt{\pi} i^k \frac{4}{\pi} 
	\left\{
	\sum_j h^{**}(t_j) \overline{\rho_j(-m)} \left<u_j, U_{f, \phi}\right>
	\right.
	\\
	\left.
	+
	\sum_{\cuspa}
	\frac{1}{4\pi} \int_{-\infty}^\infty h^{**}(t) \tau_{\cuspa}(1/2-it, m) \left<E_{\cuspa}(*, 1/2+it), U_{f, \phi}\right> \; dt
	\right\}
	.
\end{multline*}

We will now compute $h^{**}(t)$. 
Applying Theorem 2.4.3 in \cite{AAR}, we get
$$
	\frac{1}{2\pi i} \int_{(\sigma_w =\epsilon)} 
	\frac{
	\Gamma\left(-w+u\right)
	\Gamma\left(-w+\frac{1}{2}\right)
	\Gamma\left(w+\frac{1}{2}\right)
	\Gamma\left(w-it\right) \Gamma\left(w+it\right)}
	{\Gamma\left(1+w+u\right)}
	\; dw 
	=
	\frac{\Gamma\left(\frac{1}{2}-it\right) \Gamma\left(\frac{1}{2}+it\right)}
	{(u+it) (u-it)}
	.
$$
Thus
\begin{multline*}
	h^{**}(t)
	=
	\frac{1}{\Gamma\left(\frac{1}{2}-it\right) \Gamma\left(\frac{1}{2}+it\right)}
	\frac{1}{2\pi i} \int_{(\sigma_u=1/2+2\epsilon)} \frac{h(u/i) u \Gamma\left(\frac{1}{2}+u\right) \Gamma\left(\frac{1}{2}-u\right)}
	{\Gamma\left(\frac{k}{2}+\bnu+u\right) \Gamma\left(\frac{k}{2}+\bnu-u\right)}
	\frac{\Gamma\left(\frac{1}{2}-it\right) \Gamma\left(\frac{1}{2}+it\right)}
	{(u+it) (u-it)}
	\; du
	\\
	=
	\frac{1}{2} h(t) 
	\frac{\Gamma\left(\frac{1}{2}-it\right) \Gamma\left(\frac{1}{2}+it\right)}
	{\Gamma\left(\frac{k}{2}+\bnu+it\right) \Gamma\left(\frac{k}{2}+\bnu-it\right)}
	.
\end{multline*}
Then we get \eqref{thm:first}. 


\subsection{A proof of Theorem \ref{thm:first_Eisenstein}}

Assume that $N\geq 1$ is square-free. 
Let $\Gamma:= \Gamma_0(N)$. 
The Eisenstein series at the cusp $\cuspa=1/N$ is defined in \eqref{e:Ea}:
$$
	E_N(z, 1/2+ir)
	=
	\sum_{\gamma\in \Gamma_N \bsl \Gamma} \Im(\sigma_N^{-1} \gamma z)^{\frac{1}{2}+ir}, 
$$
where
$$
	\sigma_N = \bpm 1 & \\ N & 1\ebpm \in \Gamma.
$$
Since $\sigma_N^{-1} \Gamma_N \sigma_N= \Gamma_\infty$, 
we get
$$
	E_N(z, 1/2+ir)
	=
	\sum_{\gamma\in \Gamma_\infty \bsl \Gamma} \Im(\gamma z)^{\frac{1}{2}+ir}.
$$

Define
\be\label{e:div_N}
	\sigma^N_{-2ir}(n)
	:=
	\left(\sum_{d\mid n, \atop (d, N)=1} d^{-2ir} \right)
	\prod_{p\mid N, p^{\alpha}\|n, \atop \alpha\geq 0} 
	\frac{p^{-1-2ir}}{1-p^{-2ir}} \left(p-p^{\alpha(-2ir)+1 } -1 + p^{(\alpha+1)(-2ir)}\right)
	.
\ee

We have the following Fourier expansion which we quote from \cite{DI83}, and following \cite{HM06}:
\begin{multline*}
	E_N \left(z, 1/2+ir\right) 
	=
	y^{\frac{1}{2}+ir}
	+
	\frac{\zeta^*(2ir)}{\zeta^*(1+2ir)\prod_{p\mid N} (1-p^{-1-2ir})} 
	\frac{\varphi(N)}{N^{1+2ir}} 
	y^{\frac{1}{2}-ir}\\
	+
	\frac{1}{\zeta^*(1+2ir)\prod_{p\mid N} (1-p^{-1-2ir})}
	\sum_{n\neq 0}
	\frac{\sigma^N_{-2ir}(|n|)|n|^{ir}}{\sqrt{|n|}}
	W_{0, ir}(4\pi|n|y) e^{2\pi inx}, 
\end{multline*}
since
$$
	\sqrt{\pi} W_{0, ir}(y) = \sqrt{y} K_{ir} \left(\frac{y}{2}\right)
	. 
$$

For $\ell\in \Z$, the Maass raising operator $R_\ell$ is defined to be the differential operator:
$$
	R_\ell := iy \frac{\partial}{\partial x} + y \frac{\partial}{\partial y} + \frac{\ell}{2}.
$$
For $n \geq 1$, we have
$$
	\left( R_{k-2}\circ R_{k-4} \circ \cdots \circ R_0 W_{0, ir} (4\pi n y) e^{2\pi inx} \right)
	=
	(-1)^{\frac{k}{2}} W_{\frac{k}{2}, ir} (4\pi |n|y ) e^{2\pi inx}
	.
$$
For $n \leq -1$, we have
$$
	\left( R_{k-2} \circ R_{k-4} \circ \cdots \circ R_0 W_{0,ir} (4\pi |n|y )e^{2\pi inx} \right)
	=
	\frac{(-1)^{\frac{k}{2}} \Gamma\left(ir+\frac{1}{2}+\frac{k}{2}\right)}
	{\Gamma\left(ir+\frac{1}{2}-\frac{k}{2}\right) } 
	W_{-\frac{k}{2}, ir} (4\pi |n|y)e^{2\pi inx}
	.
$$

Define
\begin{multline}\label{e:EkN}
	E^{(k)}_N (z, 1/2+ir) 
	:= (-1)^{\frac{k}{2}} \zeta^*(1+2ir) \prod_{p\mid N}(1-p^{-1-2ir}) 
	\left(R_{k-2}\circ \cdots \circ R_0 E_N\right)(z, 1/2+ir)
	\\
	=
	(-1)^{\frac{k}{2}} \zeta^*(1+2ir) \prod_{p\mid N}(1-p^{-1-2ir}) 
	\frac{\Gamma\left(ir+\frac{k+1}{2}\right)}{\Gamma\left(ir+\frac{1}{2}\right)}
	\sum_{\gamma = \sm a & b\\ c& d\esm \in \Gamma_\infty\bsl \Gamma}
	\left(\frac{c\bar{z}+d}{cz+d}\right)^{\frac{k}{2}}
	\Im(\gamma z)^{\frac{1}{2}+ir}
	.
\end{multline}
Then we get
\begin{multline}\label{e:EkN_Fourier}
	E^{(k)}_N (z, 1/2+ir) 
	\\
	=
	(-1)^{\frac{k}{2}} \zeta^*(1+2ir) \prod_{p\mid N}(1-p^{-1-2ir}) 
	\frac{\Gamma\left(ir+\frac{1+k}{2}\right)}{\Gamma\left(ir+\frac{1}{2}\right)}
	y^{ir+\frac{1}{2}}
	+
	(-1)^{\frac{k}{2}} \zeta^*(2ir)
	\frac{\varphi(N)}{N^{1+2ir}} \frac{\Gamma\left(-ir+\frac{1+k}{2}\right)}{\Gamma\left(\frac{1}{2}-ir\right)} 
	y^{\frac{1}{2}-ir}
	\\
	+
	\sum_{n\geq 1}
	\frac{\sigma^N_{-2ir}(|n|)|n|^{ir}}{\sqrt{|n|}}
	W_{\frac{k}{2}, ir}(4\pi|n|y) e^{2\pi inx}
	\\
	+
	\frac{\Gamma\left(ir+\frac{k+1}{2}\right)}{\Gamma\left(ir+\frac{1-k}{2}\right)}
	\sum_{n\leq -1}
	\frac{\sigma^N_{-2ir}(|n|)|n|^{ir}}{\sqrt{|n|}}
	W_{-\frac{k}{2}, ir}(4\pi|n|y) e^{2\pi inx}.
\end{multline}


Take $\phi(z) = E^{(k)}_N(z, 1/2+\overline{ir_1})$, so $\nu= \overline{ir_1}$.
Recalling \eqref{e:Ufphi}, 
$$
	U_{f, \phi}(z) =
	U_{f, E_N^{(k)}(*, 1/2+\overline{ir_1})}(z)
	=
	y^{\frac{k}{2}} \overline{f(z)} E_N^{(k)}(z, 1/2+\overline{ir_1}). 
$$

\begin{lem}
For each $j\geq 1$, we get
\begin{multline}\label{e:inner_ujU}
	\left<u_j, U_{f, E_N^{(k)}(*, 1/2+\overline{ir_1})}\right>
	\\
	=
	(-1)^{\frac{k}{2}} 
	(4\pi)^{-\frac{k}{2}} (2\pi)^{-2ir_1}
	\prod_{p\mid N}(1-p^{-1-2ir_1}) 
	\Gamma\left(ir_1+\frac{k}{2}+it_j\right) \Gamma\left(ir_1+\frac{k}{2}-it_j\right)
	\\
	\times
	(-1)^{\alpha_j}
	\cL(1/2+ir_1, f\times u_j)
	, 
\end{multline}
where
$$
	\cL(s, f\times u_j)
	=
	\zeta(2s) \sum_{n=1}^\infty \frac{A(n) \rho_j(n)}{n^{s}}.
$$
Here $\alpha_j = 0$ if $u_j$ is even, and $\alpha_j = 1$ if $u_j$ is odd. 

For each $a\mid N$, we get
\begin{multline}\label{e:inner_E1/aU}
	\left<E_{1/a}(*, 1/2+it), U_{f, E_N^{(k)}(*, 1/2+\overline{ir_1})}\right>
	\\
	=
	(-1)^{\frac{k}{2}} 
	(4\pi)^{-\frac{k}{2}} (2\pi)^{-2ir_1}
	\prod_{p\mid N}(1-p^{-1-2ir_1})
	\Gamma\left(ir_1+\frac{k}{2}+it\right) \Gamma\left(ir_1+\frac{k}{2}-it\right)
	\\
	\times
	\frac{2 \left(\frac{N}{a}\right)^{-\frac{1}{2}-it}}
	{\zeta^*(1+2it)\prod_{p\mid N} (1-p^{-1-2it})}
	L\left(1/2+ir_1, f\times E_{1/a}(*, 1/2+it) \right)
	, 
\end{multline} 
where
$$
	L\left(s, f\times E_{1/a}(*, 1/2+it) \right)
	=
	\zeta(2s)
	\sum_{n\geq 1} 
	\frac{A(n) \sigma^{a}_{-2it}(n) n^{it}}	
	{n^s}
	.
$$
\end{lem}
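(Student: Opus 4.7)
The plan is a classical Rankin--Selberg unfolding in which the weight-$k$ factor built into $E_N^{(k)}$ by the raising-operator normalization in \eqref{e:EkN} is precisely what cancels the $\Gamma$-transformation of $y^{k/2}f(z)u_j(z)$, leaving a single Mellin integral of $K_{it_j}(u)e^{-u}$ to evaluate.

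First I would expand $\overline{E_N^{(k)}(z,1/2+\overline{ir_1})}$ using the closed-form expression in \eqref{e:EkN}. Since $r_1\in\R$ gives $\overline{ir_1}=-ir_1$, conjugation yields
\[
\overline{E_N^{(k)}(z,1/2+\overline{ir_1})} = C^*(r_1)\sum_{\gamma\in\Gamma_\infty\bsl\Gamma}\left(\frac{cz+d}{c\bar z+d}\right)^{k/2}\Im(\gamma z)^{1/2+ir_1},
\]
where $C^*(r_1):=(-1)^{k/2}\zeta^*(1+2ir_1)\prod_{p\mid N}(1-p^{-1-2ir_1})\Gamma\bigl(ir_1+\tfrac{k+1}{2}\bigr)\big/\Gamma\bigl(ir_1+\tfrac12\bigr)$. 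The key observation is that $\psi(z):=y^{k/2}f(z)u_j(z)$ transforms under $\gamma=\sm a&b\\c&d\esm\in\Gamma$ as $\psi(\gamma z)=(cz+d)^{k/2}(\overline{cz+d})^{-k/2}\,\psi(z)$, which is exactly the inverse of the weight factor in the $\gamma$-th term of $\overline{E_N^{(k)}}$. Hence the product $u_j\cdot y^{k/2}f\cdot\overline{E_N^{(k)}}$ reassembles into $C^*(r_1)\sum_\gamma\psi(\gamma z)\Im(\gamma z)^{1/2+ir_1}$, and the standard Rankin--Selberg unfolding collapses it to
\[
\left<u_j,U_{f,\phi}\right> = C^*(r_1)\int_0^\infty\int_0^1 u_j(z)f(z)\,y^{k/2+ir_1-3/2}\,dx\,dy.
\]

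Next I would insert the Fourier expansions of $u_j$ and $f$; the $x$-integral forces $n=-m$ and leaves $\sum_{m\geq 1}\rho_j(-m)a(m)\sqrt y\,K_{it_j}(2\pi my)\,e^{-2\pi my}$. Substituting $\rho_j(-m)=(-1)^{\alpha_j}\rho_j(m)$ and $a(m)=A(m)m^{(k-1)/2}$, the $y$-integral reduces, after $u=2\pi m y$, to the classical Mellin transform
\[
\int_0^\infty u^{s-1}K_\nu(u)e^{-u}\,du = \frac{\sqrt{\pi}\,\Gamma(s+\nu)\Gamma(s-\nu)}{2^{s}\,\Gamma(s+\tfrac12)}
\]
at $s=k/2+ir_1,\,\nu=it_j$. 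The denominator $\Gamma((k+1)/2+ir_1)$ cancels with the corresponding factor in $C^*(r_1)$; the remaining Dirichlet series equals $\cL(1/2+ir_1,f\times u_j)/\zeta(1+2ir_1)$; and the identity $\zeta^*(1+2ir_1)/\zeta(1+2ir_1)=\pi^{-1/2-ir_1}\Gamma(1/2+ir_1)$ cancels the remaining $\Gamma(1/2+ir_1)^{-1}$ in $C^*$, producing \eqref{e:inner_ujU} after collecting powers of $\pi$ and $2$.

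Formula \eqref{e:inner_E1/aU} is obtained identically, with $u_j$ replaced by $E_{1/a}(\cdot,1/2+it)$; only the $x$-integration changes. The two constant terms of $E_{1/a}$ in \eqref{e:EisensteinSeries_1/a_Fourier} annihilate against $f$ (which has no zeroth Fourier coefficient), and the nonzero-frequency terms contribute $\sum_{m\geq 1}a(m)\tau_{1/a}(1/2+it,m)\sqrt y\,K_{it}(2\pi my)\,e^{-2\pi my}$ (using $\tau_{1/a}(s,-m)=\tau_{1/a}(s,m)$); substituting \eqref{e:tau1/a_neq0} produces the factor $2(N/a)^{-1/2-it}\sigma^a_{-2it}(m)m^{it}/[\zeta^*(1+2it)\prod_{p\mid N}(1-p^{-1-2it})]$, and the rest of the argument from the cusp-form case carries over verbatim. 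The only technical obstacle is the careful bookkeeping of the various gamma factors and powers of $\pi$ and $2$; conceptually the lemma is an immediate consequence of the fact that the normalization in \eqref{e:EkN} was chosen precisely so that the weight factor of $E_N^{(k)}$ matches the Rankin--Selberg transformation of $y^{k/2}fu_j$.
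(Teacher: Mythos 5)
Your proposal is correct and follows essentially the same route as the paper's own proof: the paper likewise expands $E_N^{(k)}$ via the coset sum \eqref{e:EkN}, unfolds the inner product, inserts the Fourier expansions, and evaluates the resulting integral $\int_0^\infty K_{it}(y)e^{-y}y^{s+\frac{k}{2}}\,\frac{dy}{y}$ by the same Gamma-quotient formula, after which $\Gamma\left(ir_1+\frac{k+1}{2}\right)$ and $\Gamma\left(ir_1+\frac{1}{2}\right)$ cancel exactly as you describe. The only differences are presentational: the paper writes the unfolded expression \eqref{e:inner_fuj} directly while you spell out the weight-factor bookkeeping (note that $\left(\frac{cz+d}{c\bar z+d}\right)^{k/2}$ equals, rather than inverts, the automorphy factor of $y^{k/2}f(z)u_j(z)$, which is precisely why your displayed reassembly into $C^*(r_1)\sum_\gamma \psi(\gamma z)\Im(\gamma z)^{1/2+ir_1}$ is the correct one).
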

\begin{proof}
For each $j\geq 1$, by \eqref{e:EkN}, we have
\begin{multline}\label{e:inner_fuj}
	\left<u_j, U_{f, E^{(k)}_N (*, 1/2+\overline{ir_1})}\right>
	=
	(-1)^{\frac{k}{2}} \zeta^*(1+2ir_1) \prod_{p\mid N}(1-p^{-1-2ir_1}) 
	\frac{\Gamma\left(ir_1+\frac{k+1}{2}\right)}{\Gamma\left(ir_1+\frac{1}{2}\right)}
	(2\pi)^{-ir_1-\frac{k}{2}}
	\\
	\times
	(-1)^{\alpha_j}
	\sum_{n\geq 1} \frac{A(n) \rho_j(n)}{n^{ir_1+\frac{1}{2}}} 
	\int_0^\infty K_{it_j}(y)e^{-y} y^{ir_1+\frac{k}{2}} \; \frac{dy}{y}
	\\
	=
	(-1)^{\frac{k}{2}} 
	(4\pi)^{-\frac{k}{2}} (2\pi)^{-2ir_1}
	\prod_{p\mid N}(1-p^{-1-2ir_1}) 
	\Gamma\left(ir_1+\frac{k}{2}+it_j\right) \Gamma\left(ir_1+\frac{k}{2}-it_j\right)
	\\
	\times
	(-1)^{\alpha_j}
	\cL(ir_1+1/2, f\times u_j)
	, 
\end{multline}
since
$$
	\int_0^\infty K_{it}(y) e^{-y} y^{s+\frac{k}{2}}\; \frac{dy}{y}
	=
	2^{-s-\frac{k}{2}} \sqrt{\pi} \frac{\Gamma\left(s+\frac{k}{2}-it\right)\Gamma\left(s+\frac{k}{2}+it\right)}
	{\Gamma\left(s+\frac{k+1}{2}\right)}
	.
$$

For each cusp $\cuspa = 1/a$ for $a\mid N$, 
recalling \eqref{e:EisensteinSeries_1/a_Fourier}, 
and by \eqref{e:EkN}, we have
\begin{multline*}
	\left<E_{1/a}(*, 1/2+it),U_{f, E^{(k)}_N(*, 1/2+\bar{ir_1})}\right>
	\\
	=
	\left(\frac{N}{a}\right)^{-\frac{1}{2}-it}
	\frac{2(-1)^{\frac{k}{2}} 
	(4\pi)^{-\frac{k}{2}} (2\pi)^{-2ir_1}
	\prod_{p\mid N}(1-p^{-1-2ir_1})}
	{\zeta^*(1+2it)\prod_{p\mid N} (1-p^{-1-2it})}
	\Gamma\left(ir_1+\frac{k}{2}+it\right) \Gamma\left(ir_1+\frac{k}{2}-it\right)
	\\
	\times
	L\left(ir_1+\frac{1}{2}, f\times E_{1/a}(*, 1/2+it) \right)
	.
\end{multline*}

\end{proof}

Recalling \eqref{e:first}, for $m\geq 1$, 
by \eqref{e:inner_ujU} and \eqref{e:inner_E1/aU},  
\begin{multline*}
	\sum_{j} 
	\frac{h(t_j) }{\cosh(\pi t_j)}
	\overline{\rho_j(m)} 
	\cL(1/2+ir, f\times u_j)
	\\
	+
	\sum_{a\mid N} 
	\frac{1}{4\pi} \int_{-\infty}^\infty 
	\frac{h(t)}{\cosh(\pi t)}
	\tau_{1/a} (1/2-it, m) 
	\frac{2 \left(\frac{N}{a}\right)^{-\frac{1}{2}-it}
	L\left(1/2+ir, f\times E_{1/a}(*, 1/2+it) \right)}
	{\zeta^*(1+2it)\prod_{p\mid N} (1-p^{-1-2it})}
	\; dt
	\\
	=
	(4\pi)^{-\frac{k-1}{2}} 
	(2\pi)^{2ir}
	\prod_{p\mid N}(1-p^{-1-2ir})^{-1}
	\\
	\times
	\left\{
	M_{f, E_N^{(k)}(*, 1/2+\overline{ir})}(m) 
	+ E^{(1)}_{f, E_N^{(k)}(*, 1/2+\overline{ir})}(m) 
	+ E^{(2)}_{f, E_N^{(k)}(*, 1/2+\overline{ir})}(m)
	\right\}
	.
\end{multline*}
Let 
$$
	M_f(m; r)
	:=
	(4\pi)^{-\frac{k-1}{2}} 
	(2\pi)^{2ir}
	\prod_{p\mid N}(1-p^{-1-2ir})^{-1}
	M_{f, E_N^{(k)}(*, 1/2+\overline{ir_1})}(m), 
$$
$$
	E^{(1)}_f(m; r)
	:=
	(4\pi)^{-\frac{k-1}{2}} 
	(2\pi)^{2ir}
	\prod_{p\mid N}(1-p^{-1-2ir})^{-1}
	E^{(1)}_{f, E_N^{(k)}(*, 1/2+\overline{ir})}(m), 
$$
and
$$
	E^{(2)}_f(m; r)
	:=
	(4\pi)^{-\frac{k-1}{2}} 
	(2\pi)^{2ir}
	\prod_{p\mid N}(1-p^{-1-2ir})^{-1}
	E^{(2)}_{f, E_N^{(k)}(*, 1/2+\overline{ir})}(m).
$$
Then we get Theorem \ref{thm:first_Eisenstein}. 


\subsection{A proof of Theorem \ref{thm:upperbound_first}}
	
Setting $r=0$ and recalling Theorem \ref{thm:first_Eisenstein}, 
\begin{multline*}
	\sum_{j} 
	\frac{h(t_j) }{\cosh(\pi t_j)}
	\overline{\rho_j(m)} 
	\cL(1/2, f\times u_j)
	\\
	+
	\sum_{a\mid N} 
	\frac{1}{4\pi} \int_{-\infty}^\infty 
	\frac{h(t)}{\cosh(\pi t)}
	\tau_{1/a} (1/2-it, m) 
	\frac{2 \left(\frac{N}{a}\right)^{-\frac{1}{2}-it}
	L\left(1/2, f\times E_{1/a}(*, 1/2+it) \right)}
	{\zeta^*(1+2it)\prod_{p\mid N} (1-p^{-1-2it})}
	\; dt
	\\
	=
	M_f(m; 0) + E^{(1)}_f(m; 0) + E^{(2)}_f (m; 0)
	.
\end{multline*}

In this context we specialize $h(t)$ to
$$
h_{T, \alpha} (t) = \left(e^{\left(\frac{t-T}{T^\alpha} \right)^2}+ e^{\left(\frac{t+T}{T^\alpha} \right)^2}\right)
	\frac{t^2+\frac{1}{4}}{t^2+R}, 
$$
for $1\ll R < T^2$. 
We first estimate $E_f^{(1)}(m; 0)$:
\begin{multline*}
	E^{(1)}_f(m; 0)
	=
	-
	4  
	\frac{1}{\pi \prod_{p\mid N}(1-p^{-1})}
	\frac{1}{2\pi i} \int_{(\sigma_u = 1/2+2\epsilon)} 
	\frac{h(u/i) u \tan(\pi u)}
	{\Gamma\left(\frac{k}{2}+u\right) \Gamma\left(\frac{k}{2}-u\right)}
	\\
	\times
	\frac{1}{2\pi i } \int_{(\sigma_w = 1/2-k/2-\epsilon)}
	\Gamma\left(-w+u\right) \Gamma\left(-w-u\right)
	\Gamma\left(w+\frac{k}{2}\right) \Gamma\left(w+\frac{k}{2}\right)
	\\
	\times
	m^{w}
	\sum_{n=1}^{m-1} 
	\frac{a(m-n) \sigma^N_{0}(n) }{n^{w+\frac{k}{2}}} 
	\; dw\; du
	.
\end{multline*}
The exponential contribution from the gamma factors is given by a factor $\exp(C)$, where
\begin{multline*}
	C=-\frac{\pi}{2} \left(-2\left|\gamma_u\right| + 2\max\left(\left|\gamma_w\right|, \left|\gamma_u\right|\right)
	+2\left|\gamma_w\right| \right)
	=
	-\frac{\pi}{2} 
	\left\{\begin{array}{lll}
	2\left|\gamma_w\right|, & \text{ if } \left|\gamma_u\right| \geq \left|\gamma_w\right|\\
	2\left|\gamma_w\right| + 2\left(\left|\gamma_w\right|-\left|\gamma_u\right|\right), 
	& \text{ if } \left|\gamma_w\right| \geq \left|\gamma_u\right| .
	\end{array}\right.
\end{multline*}
Thus there is exponential decay unless $\left|\gamma_w\right| \ll 1$. 
By the definition of $h$ the $u$ variable $u = \sigma_u + i\gamma_u$ is essentially constrained to the interval
$|\gamma_u-T| < T^\alpha$, so, bounding the polynomial piece of the gamma estimates via Stirling and performing the $u$ and $w$ integrations we have
\begin{multline*}
	E_f^{(1)}(m; 0)
	\ll 
	N^{\epsilon} T^{1+\alpha} T^{-2\sigma_w-k}
	\sum_{n \leq m-1} 
	\frac{m^{\frac{1}{2}-\frac{k}{2}-\epsilon} (m-n)^{\frac{k-1}{2}} n^\epsilon}
	{n^{\frac{1}{2}-\epsilon}}
	\\
	=
	N^{\epsilon} T^{\alpha+2\epsilon} 
	\sum_{n \leq m-1} 
	\frac{m^{\frac{1}{2}-\frac{k}{2}-\epsilon} (m-n)^{\frac{k-1}{2}} n^\epsilon}
	{n^{\frac{1}{2}-\epsilon}}
	\ll m^{-\epsilon} \sum_{n=1}^{m-1} \frac{1}{n^{\frac{1}{2}-2\epsilon}}
	\ll m^{\frac{1}{2}+\epsilon}
	.
\end{multline*}
Thus
$$
	E_f^{(1)}(m; 0)
	\ll N^\epsilon T^{\alpha+2\epsilon} m^{\frac{1}{2}+\epsilon}. 
$$

We now estimate $E_f^{(2)}(m; 0)$:  
\begin{multline*}
	E^{(2)}_f(m;0)
	=
	\frac{4 i^k }{\pi\prod_{p\mid N}(1-p^{-1})} 
	\frac{1}{2\pi i} \int_{(\sigma_u = 1/2+2\epsilon)} 
	\frac{h(u/i) u 
	\Gamma\left(\frac{1}{2}+u\right) \Gamma\left(\frac{1}{2}-u\right)}
	{\Gamma\left(\frac{k}{2} +u\right) \Gamma\left(\frac{k}{2}-u\right)}
	\\
	\times
	\frac{1}{2\pi i } \int_{(\sigma_w = 1/2+\epsilon)}
	\frac{\Gamma\left(-w+u\right)
	\Gamma\left(w+\frac{k}{2}\right) \Gamma\left(w+\frac{k}{2}\right)} 
	{\Gamma\left(1+w+u\right)}
	m^{w} \sum_{n=1}^\infty \frac{a(m+n) \sigma^N_{0}(n) }
	{n^{w+\frac{k}{2}}}
	\; dw\; du
	.
\end{multline*}

The exponential piece is dealt with as follows: As $T\approx \gamma_u$, the ratio of gamma factors has an exponential contribution
\begin{multline*}
	-\frac{\pi}{2} 
	\left(\left|\gamma_u-\gamma_w\right| + 2\left|\gamma_w\right| 
	- \left|\gamma_w+\gamma_u\right|\right)
	=
	-\frac{\pi}{2}
	\left\{\begin{array}{llll}
	0, & \text{ if } \gamma_w >0 \text{ and } T\geq |\gamma_w|,\\
	2\left(\left|\gamma_w\right|-T\right), & \text{ if } \gamma_w> 0 \text{ and } T\leq \left|\gamma_w\right| , \\
	4|\gamma_w|, & \text{ if } \gamma_w < 0 \text{ and } T\geq \left|\gamma_w\right|, \\
	2\left|\gamma_w\right| + 2T, &\text{ if } \gamma_w<0 \text{ and } T\leq \left|\gamma_w\right|.
	\end{array}
	\right.
\end{multline*}

Thus there is exponential decay except for the case when $\gamma_w>0$ and 
$T\geq \left|\gamma_w\right|$.   In this case, the polynomial contribution of the ratio of gamma factors
$\Gamma(-w+u)/\Gamma(1 + w +u)$ is bounded above by
$$
\frac{\left(T-\gamma_w\right)^{\sigma_u-\sigma_w -\frac12}}{\left(T+\gamma_w\right)^{\sigma_u+\sigma_w+\frac12}}	
.
$$
If $\left|\gamma_w\right| > T^{1-\alpha+\epsilon}$, then 
$$
	\left(\frac{T-\gamma_w}{T+\gamma_w}\right)^{\sigma_u}
	=
	\left(\frac{1-\frac{\gamma_w}{T}}{1+\frac{\gamma_w}{T}}\right)^{\sigma_u}
	\ll \left(1-\frac{T^{\epsilon}}{T^{\alpha}}\right)^{\sigma_u}
	.
$$
The real part of $u$ can be moved to $\sigma_u=cT^{\alpha-\frac{\epsilon}{2}}$, for fixed $c>0$, and $h(u/i)$ will still decay exponentially outside of the interval $|\gamma_u-T| < T^\alpha$.   Also the Stirling approximation will remain valid.  Thus 

$$
	\left(\frac{T-\gamma_w}{T+\gamma_w}\right)^{\sigma_u}\ll \left(1-\frac{T^{\frac{\epsilon}{2}}}{T^{\alpha-\frac{\epsilon}{2}}} \right)^{cT^{\alpha-\frac{\epsilon}{2}}} 
	\ll e^{-cT^{\frac{\epsilon}{2}}}
	,
$$
which is smaller than any polynomially decaying power of $T$.
We may thus assume that $|\gamma_w| \leq T^{1-\alpha+\epsilon}$. 

We separate the series into two pieces:
$$
	m^{w} \sum_{n=1}^\infty \frac{a(m+n) \sigma^N_{0}(n) }
	{n^{w+\frac{k}{2}}}
	=
	m^{w} \sum_{n=m}^\infty \frac{a(m+n) \sigma^N_{0}(n) }
	{n^{w+\frac{k}{2}}}
	+
	m^{w} \sum_{n=1}^{m-1} \frac{a(m+n) \sigma^N_{0}(n) }
	{n^{w+\frac{k}{2}}}
	.
$$

For the first piece 
we keep $\sigma_w = 1/2+\epsilon$. 
Then 
$$
m^{w} \sum_{n=m}^\infty \frac{a(m+n) \sigma^N_{0}(n) }
	{n^{w+\frac{k}{2}}}	\ll
	N^\epsilon m^{\frac{1}{2}+\epsilon}
	\sum_{n=m}^\infty \frac{n^{\frac{k-1}{2}+\frac{\epsilon}{2}}}{n^{\frac{1}{2}+\frac{k}{2}+\epsilon}}
	=
	N^\epsilon m^{\frac{1}{2}+\epsilon} 
	\sum_{n=m}^\infty \frac{1}{n^{1+\frac{\epsilon}{2}}} 
	\ll N^\epsilon m^{\frac{1}{2}+\epsilon}
	.
$$

For the second piece, move the $w$ line of integration to $\sigma_w = 1/2-k/2 - \epsilon$, 
then we have
$$
	m^{w} \sum_{n=1}^{m-1} \frac{a(m+n) \sigma^N_{0}(n) }
	{n^{w+\frac{k}{2}}}
	\ll N^\epsilon
	m^{\frac{1}{2}-\frac{k}{2}-\epsilon +\frac{k-1}{2}+\frac{\epsilon}{2}}
	\sum_{n=1}^{m-1} \frac{1}{n^{\frac{1}{2}-\epsilon}}
	\ll N^\epsilon
	m^{\frac{1}{2}+\epsilon}
	.
$$

After performing the $u$ and $w$ integrations we have, (with a different $\epsilon$)
$$
	E^{(2)}_f(m; 0)
	\ll
	N^\epsilon
	m^{\frac{1}{2}+\epsilon}
	T^{1+\epsilon}
	.
$$

For the main term, we have
\begin{multline*}
	M_f(m;0)
	=
	\frac{A(m)}{m^{\frac{1}{2}}} 
	\left\{
	\frac{1}{\pi^2} \int_{-\infty}^\infty h(t) t\tanh(\pi t)\left(\psi\left(\frac{k}{2}+it\right) + \psi\left(\frac{k}{2}-it\right)\right) \; dt
	\right.
	\\
	\left.
	+
	\left(-2\log(2\pi) - \sum_{p\mid N} \frac{\log p}{p-1}
	-\log m +2\gamma_0\right)
	\frac{1}{\pi^2} \int_{-\infty}^\infty h(t) t \tanh(\pi t) \; dt 
	\right\}
	\\
	\approx
	\frac{a(m)}{m^{\frac{k}{2}}} T^{1+\alpha}\log T
	.
\end{multline*}

\subsection{A proof of Corollary \ref{cor:determine}}\label{ss:proof_cor_detrmine}

The left hand side of Theorem~\ref{thm:upperbound_first} consists of two pieces, corresponding to the discrete and the continuous spectrum.   Apply the theorem to both $f$ and $g$, and set the difference of the two left hand sides equal to the difference between the two right hand sides.
For sufficiently large $T$, assume that 
$$
	\cL(1/2, f\times u_j) = \cL(1/2, g\times u_j), 
$$
for $|t_j| < 2T$.   Then the discrete contribution will vanish, up to a small error term.   The continuous contribution is
$$
\sum_{a\mid N} 
	\frac{1}{4\pi} \int_{-\infty}^\infty 
	\frac{h_{T, \alpha} (t)}{\cosh(\pi t)}
	\tau_{1/a} (1/2-it, m) 
	\frac{2 \left(\frac{N}{a}\right)^{-\frac{1}{2}-it}
	L\left(1/2, f\times E_{1/a}(*, 1/2+it) \right)}
	{\zeta^*(1+2it)\prod_{p\mid N} (1-p^{-1-2it})}
	\; dt
$$
By \eqref{e:tau1/a_neq0},  $\tau_{1/a} (1/2-it, m) \ll (tmN)^\epsilon$.  
We also have
$$
	L(s, f\times E_{1/a}(*, 1/2+it))
	=
	\sum_{n=1}^\infty \frac{A(n) \sigma^a_{-2it}(n) n^{it}}{n^s}
	.
$$
By \eqref{e:sigmaa}, 
for a prime $p\nmid N$, for $r\geq 0$, we have
$$
	\sigma^a_{-2it}(p^r)p^{rit}
	=
	\frac{p^{(r+1)it}-p^{-(r+1)it}}{p^{it}-p^{-it}}, 
$$
for a prime $p\mid a$, for $r\geq 0$, we have
$$
	\sigma^a_{-2it}(p^r)p^{rit}
	=
	\frac{p^{-2it-1}}{1-p^{-2it}} p^{rit} \left(p-p^{-r(2it) +1} -1 +p^{-(r+1) 2it }\right), 
$$
and for a prime $p\mid N$, $p\nmid a$, for $r\geq 0$, we have
$$
	\sigma^a_{-2it}(p^r) p^{rit}
	=p^{rit}. 
$$
Assume that $f$ is a new form. Then 
\begin{multline*}
	L(s, f\times E_{1/a}(*, 1/2+it))
	=
	\prod_{p\mid a} (A(p)p^{-it-s} - p^{-2it-1})
	\prod_{p\mid N/a} (1-A(p)p^{-it-s})
	L(s+it, f) L(s-it, f)
\end{multline*}
and the correction polynomial at $s=1/2$ is
$$
	\prod_{p\mid a} (A(p)p^{-it-\frac{1}{2}} - p^{-2it-1})
	\prod_{p\mid N/a} (1-A(p)p^{-it-\frac{1}{2}})
	\ll (N(1+|t|))^\epsilon.
$$

 Taking $\alpha=1$ in Theorem \ref{thm:upperbound_first}, 
 it is a simple matter to show that  continuous contribution is 
 $\mathcal{O}(m^\epsilon N^{1/2+\epsilon} T^{\alpha + \epsilon})$. 
 This is because, in addition to the two above estimates for $\tau_{1/a} (1/2-it, m)$ and the correction polynomial, we also have the well known weak Lindel\"of on average result
 $$
 \int_0^T L(1/2+it, f) L(1/2-it, f)dt \ll (N^{1/2}T)^{1+\epsilon}.
 $$
 (This follows after using the approximate functional equation to write $ L(1/2+it, f)$ as a Dirichlet polynomial of length $\sqrt{N} T$, then integrating with respect to $t$, and dividing the sum into diagonal, close to diagonal and far from diagonal pieces.)
 
 Applying this estimate we have, assuming that $\cL(1/2, f\times u_j) = \cL(1/2, g\times u_j)$ for 
$|t_j| < 2T$,

\begin{multline*}
	A(m) 
	\frac{1}{\pi^2} \int_{-\infty}^\infty h_{T, 1} (t) t\tanh(\pi t) \left(\psi\left(\frac{k_f}{2}+it\right) + \psi\left(\frac{k_f}{2}-it\right)\right) \; dt
	\\
	-
	B(m)
	\frac{1}{\pi^2} \int_{-\infty}^\infty h_{T, 1} (t) t\tanh(\pi t) \left(\psi\left(\frac{k_g}{2}+it\right) + \psi\left(\frac{k_g}{2}-it\right)\right) \; dt
	\\
	+
	(A(m)-B(m))
	\left(-2\log(2\pi) -\sum_{p\mid N} \frac{\log p}{p-1} -\log m +2\gamma_0\right)
	\frac{1}{\pi^2} \int_{-\infty}^{\infty} h_{T, 1}(t) t \tanh(\pi t) \; dt
	\\
	=
	\cO\left( N^\epsilon m^{1+\epsilon} T^{1+\epsilon}\right) +\mathcal{O}(m^\epsilon N^{1/2 +\epsilon} T^{1+ \epsilon})
	.
\end{multline*}
Since 
$$
	\psi(n+it) = \frac{1}{it}+\frac{1}{it+1} + \cdots + \frac{1}{n-1+it} + \psi(it), 
$$
for $n=1, 2, \ldots$, we get
\begin{multline*}
	\left|A(m) -B(m)\right|
	\left|\frac{1}{\pi^2} \int_{-\infty}^\infty h_{T, 1} (t) t\tanh(\pi t) \left(\psi\left(it\right) + \psi\left(-it\right)\right) \; dt
	\right.
	\\
	\left.
	+
	\left(-2\log(2\pi) -\sum_{p\mid N} \frac{\log p}{p-1} -\log m +2\gamma_0\right)
	\frac{1}{\pi^2} \int_{-\infty}^{\infty} h_{T, 1}(t) t \tanh(\pi t) \; dt
	\right|
	\\
	\ll |A(m)-B(m)| T^2\log T
	\ll N^\epsilon m^{1+\epsilon} T^{1+\epsilon} +m^\epsilon N^{1/2 +\epsilon} T^{1+ \epsilon}
	.
\end{multline*}
Therefore, we obtain
$$
	\left|A(m)-B(m)\right| \ll N^{1/2+\epsilon} m^{1+\epsilon'}T^{-1+\epsilon''}. 
$$

Although the proof of Theorem 1 in \cite{Sen04} is only given for level 1, it immediately generalizes to  as follows: Taking $Q\gg_\epsilon (N(k+1))^{2+\epsilon}$, and any $\epsilon' >0$.  If 
$$
	\left| A(m)-B(m)\right| \ll Q^{-\epsilon'}
$$
for all $m\ll Q$, then $f=g$. 
It follows that if $T\gg N^{1/2+\epsilon}Q^{1+\epsilon}$ then
we can conclude that $f=g$.

\section{Shifted double Dirichlet series}\label{s:SDDS}

Let $\phi_1$ and $\phi_2$ be automorphic forms of weight $k$ for $\Gamma$, of type $\nu_1$ and $\nu_2$ respectively, with the Fourier expansions as in \eqref{e:Fourier_phi}. Then $1/4-\nu_i^2$ is the Laplace eigenvalue of $\phi_i$. 
We further assume that $c_{\phi_1}(1)\neq 0$ and $c_{\phi_2}(1)\neq 0$, and 
$$
	C_{\phi_1}(n)c_{\phi_1}(1):=c_{\phi_1}(n) 
	\text{ and } 
	C_{\phi_2}(n)c_{\phi_2}(1) := c_{\phi_2}(n), 
$$
for $n\geq 1$. 

Recalling \eqref{e:Dfphi},  for $\Re(w)>1$, we have
$$
	D_{f,\phi_1}(w; m) = \sum_{n\geq 1} \frac{a(n+m) \overline{c_{\phi_1}(n)}}{n^{w+\frac{k}{2}-1}}
$$
and for $\Re(s)>1$, we have 
$$
	D_{f, \phi_2}(s; n) = \sum_{m\geq 1} \frac{a(n+m) \overline{c_{\phi_2}(m)}}{m^{s+\frac{k}{2}-1}} 
	.
$$

Let 
\be\label{e:s'}
	s' := s+w+\frac{k}{2}-1.
\ee
For $\Re(s), \Re(w)>1$, define
\be\label{e:cMfphi1phi2}
	\cM_{f, \phi_1, \phi_2} \left(s, w\right)
	:=
	\zeta\left(2s'\right)
	\sum_{m, n\geq 1} \frac{a(n+m) \overline{c_{\phi_1}(n)} \overline{c_{\phi_2}(m)}}
	{m^{s+\frac{k}{2}-1} n^{w+\frac{k}{2}-1} }
	.
\ee
Then 
$$
	\cM_{f, \phi_1, \phi_2}(s, w)
	=
	\zeta(2s') \sum_{m=1}^\infty \frac{\overline{c_{\phi_2}(m)}}{m^{s+\frac{k}{2}-1}} D_{f, \phi_1}(w; m)
	=
	\zeta(2s') \sum_{n=1}^\infty \frac{\overline{c_{\phi_1}(n)}}{n^{w+\frac{k}{2}-1}}
	D_{f, \phi_2}(s; n)
	.
$$
In this section we will prove
\begin{thm}\label{thm:M12}
The function $\cM_{f, \phi_1, \phi_2}(s, w)$ defined in \eqref{e:cMfphi1phi2} satisfies
$\cM_{f, \phi_1, \phi_2}(s, w) = \cM_{f, \phi_2, \phi_1}(w, s)$ and
has a meromorphic continuation to $\C^2$.   Let
\be\label{e:cM*}
	\cM^*_{f, \phi_1, \phi_2}(s, w)
	:=
	G_{f, \phi_2}(s)
	\cM_{f, \phi_1, \phi_2}(s, w)
	+
	G_{f, \phi_2}(s) \cM^{(3)}_{f, \phi_1, \phi_2}(s, w), 
\ee
where the ratio of gamma functions $G_{f, \phi_2}(s)$ is given in \eqref{e:Gfphi}, and $\cM^{(3)}_{f, \phi_1, \phi_2}(s, w)$
is given by the absolutely convergent spectral expansion
\begin{multline}\label{e:cM3_series} 
	\cM^{(3)}_{f, \phi_1, \phi_2}(s, w)
	:=
	\frac{\Gamma(s)\Gamma(1-s)(4\pi)^{\frac{k}{2}-\frac{1}{2}}\sqrt{\pi} }
	{\Gamma\left(\frac{1}{2}+\frac{k}{2}+\bnu_2\right)
	\Gamma\left(\frac{1}{2}+\frac{k}{2}-\bnu_2\right)
	\Gamma\left(w+\frac{k}{2}+\bnu_1-\frac{1}{2}\right) 
	\Gamma\left(w+\frac{k}{2}-\bnu_1-\frac{1}{2}\right)}
	\\
	\times
	\overline{c_{\phi_2}(-1)}
	\left\{
	\sum_j 
	\left<u_j, U_{f, \phi}\right> 
	\Gamma\left(w-\frac{1}{2}+it_j\right) \Gamma\left(w-\frac{1}{2}-it_j\right)
	\cL\left(s', \overline{u_j} \times \overline{\phi_2}\right)
	\right.
	\\
	+
	\sum_{\cuspa} \frac{1}{4\pi} 
	\int_{-\infty}^\infty \left<E_{\cuspa}(*, 1/2+it), U_{f, \phi_1}\right> 
	\Gamma\left(w-\frac{1}{2}+it\right) \Gamma\left(w-\frac{1}{2}-it\right)
	\\
	\times
	\left.
	\cL\left(s', E_\cuspa(*,1/2-it)\times \overline{\phi_2}\right)
	\; dt
	\right\}
	, 
\end{multline}
where
$$
	\cL\left(s', \overline{u_j} \times \overline{\phi_2}\right)
	:=
	\zeta(2s')
	\sum_{m=1}^\infty \frac{\overline{\rho_j(m)} \overline{C_{\phi_2}(m)}}{m^{s+w+\frac{k}{2}-\frac{3}{2}}}
$$
and
$$
	\cL\left(s', E_\cuspa(*,1/2-it)\times \overline{\phi_2}\right)
	:=
	\zeta(2s')
	\sum_{m=1}^\infty \frac{\tau_{\cuspa}(1/2-it, m) \overline{C_{\phi_2}(m)}}{m^{s+w+\frac{k}{2}-\frac{3}{2}}}
	.
$$

 In the region  $\Re(w) > -\Re(s)/2+7/4- k/4$ and $\Re(s) < 1/2-k/2$, the following spectral expansion of $\cM^*_{f, \phi_1, \phi_2}(s, w)$ is absolutely convergent:
 
 \be\label{e:cM*}
	\cM^*_{f, \phi_1, \phi_2}(s, w)	=
	\cS_{\cusp, f, \phi_1, \phi_2} (s, w) 
	+
	\cS_{\cont, \Int, f, \phi_1, \phi_2}(s, w)
	+
	\Phi_{f, \phi_1, \phi_2}(s, w),
\ee
where
\be\label{e:cScusp}
	\cS_{\cusp, f, \phi_1, \phi_2}(s, w)
	:=
	(2\pi)^{\frac{1}{2}-s}
	\overline{c_{\phi_1}(1)}
	\sum_{j} 
	M(s, t_j) 
	(-1)^{\alpha_j}
	\cL(s', \overline{u_j} \times \overline{\phi_1}) 
	\left<u_j, U_{f, \phi_2}\right>
	, 
\ee
\begin{multline}\label{e:cScontInt}
	\cS_{\cont, \Int, f,\phi_1, \phi_2}(s, w)
	\\
	:=
	(2\pi)^{\frac{1}{2}-s}
	\overline{c_{\phi_1}(1)}
	\sum_{\cuspa} \frac{1}{4\pi} \int_{-\infty}^\infty 
	M(s, t)
	\cL(s', E_{\cuspa}(*, 1/2-it)\times \overline{\phi_1})
	\left<E_{\cuspa}(*, 1/2+it), U_{f, \phi_2}\right>
	\; dt
\end{multline}
and
\begin{multline}\label{e:Phi}
	\Phi_{f, \phi_1, \phi_2}(s, w)
	:=
	\zeta(2s') 
	\sum_{n=1}^\infty \frac{\overline{c_{\phi_1}(n)}}{n^{w+\frac{k}{2}-1}}
	\Omega_{f, \phi_2}(s; n)
	\\
	=
	-(4\pi)^{\frac{1}{2}-s} \sqrt{\pi} \Gamma(1-s) 
	\sum_{\ell=0}^{\lfloor -\Re(s)+\frac{1}{2}\rfloor} 
	\frac{(-1)^{\ell}}{\ell!} \frac{\Gamma\left(\ell+2s-1\right)}{\Gamma\left(1-\ell-s\right) \Gamma\left(\ell+s\right)}
	\\
	\times
	\frac{1}{2} \overline{c_{\phi_1}(1)} \sum_{\cuspa}
	\left\{
	\cL\left(s', E_\cuspa(*, 1-\ell-s)\times \overline{\phi_1}\right)
	\left<E_\cuspa (*, \ell+s), U_{f, \phi_2}\right>
	\right.
	\\
	\left.
	+
	\cL\left(s', E_\cuspa(*, \ell+s) \times \overline{\phi_1}\right)
	\left<E_\cuspa (*, 1-\ell-s), U_{f, \phi_2}\right>
	\right\}.
\end{multline}
Here
$$
	M(s, t)
	=
	\frac{\sqrt{\pi} 2^{\frac{1}{2}-s} 
	\Gamma\left(s-\frac{1}{2}-it\right) \Gamma\left(s-\frac{1}{2}+it\right)
	\Gamma(1-s)}
	{\Gamma\left(\frac{1}{2}-it\right) \Gamma\left(\frac{1}{2}+it\right)}.
$$
The poles of $\cM^*_{f, \phi_1, \phi_2}(s, w)$ come from the gamma factors of $\cS_{\cusp, f, \phi_1, \phi_2}(s, w)$, the gamma factors of $\cS_{\cusp, f, \phi_2, \phi_1}(w,s)$, and from $\Phi_{f, \phi_1, \phi_2}(s, w)$.  These are the polar lines:
\begin{itemize}
	\item $s=1/2 \pm it_j -r, \, r \ge 0$;
	\item $w=1/2 \pm it_j -r, \, r \ge 0$;
	\item $2s = 1-r,\, r \ge 0$;
	\item $2w = 1-r,\, r \ge 0$;
	\item $s= -r,\, r \ge 0$;
	\item $w= -r,\, r \ge 0$;
	\item $s' = 1 \pm \nu_1 \pm \nu_2$.
\end{itemize}

\end{thm}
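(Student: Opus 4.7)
The plan is to exploit the one-variable Dirichlet series expression
$$\cM_{f,\phi_1,\phi_2}(s,w)=\zeta(2s')\sum_{n\ge 1}\frac{\overline{c_{\phi_1}(n)}}{n^{w+k/2-1}}D_{f,\phi_2}(s;n),$$
valid for $\Re(s),\Re(w)>1$, and to substitute the spectral expansion of $D_{f,\phi_2}(s;n)$ furnished by Theorem~\ref{thm:Dfphi}. The symmetry $\cM_{f,\phi_1,\phi_2}(s,w)=\cM_{f,\phi_2,\phi_1}(w,s)$ is immediate from the defining double sum, so any pole structure obtained in $s$ transfers to $w$ upon swapping $(\phi_1,\phi_2)$.

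Multiplying by $G_{f,\phi_2}(s)$ and substituting \eqref{e:Dfphi_spec} under the hypothesis $\Re(s)<1/2-k/2$, the right-hand side splits into four pieces coming from $\cI_{0,\cusp}$, $\cI_{0,\cont,\Int}$, $\Omega_{f,\phi_2}$, and the finite Dirichlet correction $-G_{f,\phi_2}(s)D_{f,\phi_2}^{\fin}$. Interchanging the $n$-sum with the spectral sum and integral, the key identity
$$\sum_{n\ge 1}\frac{\overline{c_{\phi_1}(n)}\,\overline{\rho_j(-n)}}{n^{s+w+k/2-3/2}}=(-1)^{\alpha_j}\,\overline{c_{\phi_1}(1)}\,\frac{\cL(s',\overline{u_j}\times\overline{\phi_1})}{\zeta(2s')}$$
together with its Eisenstein analog involving $\tau_\cuspa(1/2-it,n)$ converts the two $\cI_0$-pieces into exactly $\cS_{\cusp,f,\phi_1,\phi_2}(s,w)+\cS_{\cont,\Int,f,\phi_1,\phi_2}(s,w)$, once the prefactors $(4\pi)^{1/2-s}\sqrt{\pi}\,\Gamma(1-s)$ together with the denominators $\Gamma(1/2\pm it)$ are repackaged into $M(s,t)$. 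The finite Eisenstein piece $\Omega_{f,\phi_2}$ collapses to $\Phi_{f,\phi_1,\phi_2}(s,w)$ by the same mechanism. The remaining short-sum correction $-\zeta(2s')G_{f,\phi_2}(s)\sum_n \overline{c_{\phi_1}(n)}n^{1-k/2-w}D_{f,\phi_2}^{\fin}(s;n)$ is precisely $-G_{f,\phi_2}(s)\cM^{(3)}_{f,\phi_1,\phi_2}(s,w)$ by definition of $\cM^{(3)}$, and the explicit spectral expansion \eqref{e:cM3_series} for it is obtained by a second application of Theorem~\ref{thm:Dfphi}: the substitution $\ell=n-m$ together with $c_{\phi_2}(-m)=c_{\phi_2}(-1)C_{\phi_2}(m)$ rearranges the resulting double sum into an expression of the form $\sum_\ell \overline{C_{\phi_2}(\ell)}\ell^{1-k/2-s}D_{f,\phi_1}(w;\ell)$, whose spectral expansion in the variable $w$ directly produces the inner products $\langle u_j,U_{f,\phi_1}\rangle$ paired with $\cL(s',\overline{u_j}\times\overline{\phi_2})$ that appear in \eqref{e:cM3_series}.

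The absolute convergence of these spectral expansions in the region $\Re(s)<1/2-k/2$, $\Re(w)>-\Re(s)/2+7/4-k/4$ is the main technical obstacle. It follows by combining Stirling decay of the gamma quotient $\Gamma(s-1/2\pm it)\Gamma(1-s)/\Gamma(1/2\pm it)$, convexity bounds for $\cL(s',\overline{u_j}\times\overline{\phi_1})$ on the line $\Re(s')=\Re(s)+\Re(w)+k/2-1$, standard spectral mean-value estimates for $\sum_{|t_j|\le T}|\langle u_j,U_{f,\phi_2}\rangle|^2$, and the analogous continuous-spectrum bound from Lemma~3.4 of~\cite{B}; the exponent $7/4-k/4$ is precisely what is forced by matching these ingredients against Weyl's law. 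The polar lines are then read off term by term: the lines $s=1/2\pm it_j-r$ and $2s=1-r$ arise from the gamma factors in $\cS_{\cusp}$ and from $\Gamma(1-s)\Gamma(\ell+2s-1)$ in $\Phi$; the line $s=-r$ records the poles of the Eisenstein factors $E_\cuspa(*,\ell+s)$ at $\ell+s=1$ inside $\Phi$; the mirror polar lines in $w$ follow by the symmetry $\cM_{f,\phi_1,\phi_2}(s,w)=\cM_{f,\phi_2,\phi_1}(w,s)$; and the poles $s'=1\pm\nu_1\pm\nu_2$ are the residues of the Rankin-Selberg $L$-functions inside $\cS_{\cont,\Int}$ when $\phi_1$ or $\phi_2$ carries continuous-spectrum type. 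The principal bookkeeping difficulty throughout is tracking the three distinct normalizations of $c_\phi(n)$, $\rho_j(n)$, and $\tau_\cuspa(s,n)$ so that the explicit constants and gamma factors in the final formulas combine correctly.
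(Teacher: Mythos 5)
Your overall route for the main decomposition coincides with the paper's: substitute the spectral expansion of $D_{f,\phi_2}(s;n)$ from Theorem~\ref{thm:Dfphi} into $\zeta(2s')\sum_n \overline{c_{\phi_1}(n)}n^{1-k/2-w}D_{f,\phi_2}(s;n)$, interchange the $n$-sum with the spectral data to produce $\cS_{\cusp}$, $\cS_{\cont,\Int}$, $\Phi$, and recognize the finite-sum correction as $\cM^{(3)}_{f,\phi_1,\phi_2}(s,w)$; your sketch of the convergence region via the functional-equation/convexity bound for $\cL(s',\overline{u_j}\times\overline{\phi_1})$ together with the spectral mean-value bound for the inner products is also the paper's argument. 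However, there is a genuine gap at the step that matters most for the continuation: the spectral expansion \eqref{e:cM3_series} of $\cM^{(3)}$. Your claimed rearrangement is false. Writing $\cM^{(3)}$ (up to the explicit gamma prefactor) as $\zeta(2s')\sum_{m\ge1}\frac{\overline{c_{\phi_2}(-m)}}{m^{s+k/2-1}}\sum_{n>m}\frac{a(n-m)\overline{c_{\phi_1}(n)}}{n^{w+k/2-1}}$ and substituting $\ell=n-m$ gives an inner sum $\sum_{\ell\ge1}a(\ell)\overline{c_{\phi_1}(\ell+m)}(\ell+m)^{1-k/2-w}$, in which the shift sits inside the $\phi_1$-coefficient and the shifted variable $\ell+m$ sits in the denominator; this is \emph{not} of the form $D_{f,\phi_1}(w;\cdot)$, so Theorem~\ref{thm:Dfphi} cannot be applied a second time. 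Moreover the series you write down, $\sum_\ell \overline{C_{\phi_2}(\ell)}\ell^{1-k/2-s}D_{f,\phi_1}(w;\ell)$, is (up to $\zeta(2s')$ and $\overline{c_{\phi_2}(1)}$) the \emph{full} double series $\cM_{f,\phi_1,\phi_2}(s,w)$ of \eqref{e:cMfphi1phi2}, not $\cM^{(3)}$, so your identity would collapse the decomposition rather than expand the correction term.

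The paper's actual mechanism (Proposition~\ref{prop:cM3}) is different and is needed precisely because of this structural obstruction: for fixed $m$ the inner sum over $n$ is unfolded as the inner product $\left<P(*,w;m),U_{f,\phi_1}\right>$ against the Poincar\'e series $P(z,w;m)=\sum_{\gamma\in\Gamma_\infty\bsl\Gamma}\Im(\gamma z)^w e^{2\pi i m\gamma z}$, and it is $P$, not $U_{f,\phi_1}$, that is spectrally expanded; the coefficients $\left<P(*,w;m),u_j\right>\propto \overline{\rho_j(m)}m^{1/2-w}\Gamma\!\left(w-\tfrac12+it_j\right)\Gamma\!\left(w-\tfrac12-it_j\right)/\Gamma(w)$ are then summed over $m$ against $\overline{c_{\phi_2}(-m)}m^{1-k/2-s}$ to create $\cL(s',\overline{u_j}\times\overline{\phi_2})$. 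This is exactly what produces the gamma quotient $\Gamma(w-\tfrac12\pm it_j)$ over $\Gamma(w+\tfrac k2\pm\bnu_1-\tfrac12)$ in \eqref{e:cM3_series}; a (hypothetical) second use of Theorem~\ref{thm:Dfphi} would not even give the right gamma factors. Since the absolutely convergent expansion of $\cM^{(3)}$ is what furnishes its continuation to all $(s,w)$, and since the final continuation of $\cM$ to $\C^2$ is obtained by combining the two regions $R^{(1)}$, $R^{(2)}$ (from the expansion in $s$ and, by symmetry, in $w$) through the convexity/tube-domain principle --- a closing step your proposal also omits --- the argument as proposed does not yield the theorem without repairing the treatment of $\cM^{(3)}$ along the Poincar\'e-series lines above.
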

\begin{proof}

For $\Re(s)< 1/2-k/2$ and for sufficiently large $\Re(w)$, 
we may substitute the spectral expansion for $D_{f, \phi_2}(s;n)$ given in Theorem \ref{thm:Dfphi} and bring the sum over $n\geq 1$ inside. 
The sum over $n$ introduces the $L$-series 
$(-1)^{\alpha_j} \cL(s', \overline{u_j} \times \overline{\phi_1})$ and 
$\cL(s', E_\cuspa(*, 1/2-it)\times \overline{\phi_1})$, 
for $\Re(s')>1$, 
giving us 
\begin{multline}\label{e:cM*2}
	\cM^*_{f, \phi_1, \phi_2}(s, w)
	:=
	G_{f, \phi_2}(s)
	\cM_{f, \phi_1, \phi_2}(s, w)
	+
	G_{f, \phi_2}(s) \cM^{(3)}_{f, \phi_1, \phi_2}(s, w)
	\\
	=
	\cS_{\cusp, f, \phi_1, \phi_2} (s, w) 
	+
	\cS_{\cont, \Int, f, \phi_1, \phi_2}(s, w)
	+
	\Phi_{f, \phi_1, \phi_2}(s, w),
\end{multline}
where $\cS_{\cusp, f, \phi_1, \phi_2} (s, w)$ is given in \eqref{e:cScusp},
$\cS_{\cont, \Int, f,\phi_1, \phi_2}(s, w)$ is given in \eqref{e:cScontInt},
$\Phi_{f, \phi_1, \phi_2}(s, w)$ is given in \eqref{e:Phi},
and 
\be\label{e:cM3_series1}
	\cM^{(3)}_{f, \phi_1, \phi_2}(s, w)
	=
	\frac{\overline{c_{\phi_2}(-1)} 
	\Gamma(s) \Gamma\left(1-s\right)}
	{\Gamma\left(\frac{1}{2}+\frac{k}{2}+\bnu_2\right)
	\Gamma\left(\frac{1}{2}+\frac{k}{2}-\bnu_2\right)}
	\zeta(2s') \sum_{n=1}^\infty \frac{\overline{c_{\phi_1}(n)}}{n^{w+\frac{k}{2}-1}}
	\sum_{m=1}^{n-1} \frac{a(n-m) \overline{C_{\phi_2}(m)}}{m^{s+\frac{k}{2}-1}}
	.
\ee
The expression \eqref{e:cM3_series1} will shortly be transformed into \eqref{e:cM3_series}. 

We will now determine a region in which the expression given in \eqref{e:cM*} converges absolutely. 

First, for $\Re(s')>1$ and $\Re(s) < 1/2-k/2$, the series $\cS_{\cusp, f, \phi_1, \phi_2}(s, w)$ and integral $\cS_{\cont, \Int, f, \phi_1, \phi_2}(s, w)$ converge absolutely, by the same argument given in Theorem \ref{thm:Dfphi}. 

For $\Re(s') < 0$, by the functional equation, we have
$$
	e^{-\frac{\pi|t_j|}{2}} \cL(s', \overline{u_j} \times \overline{\phi_1}) \ll |t_j|^{2-4s'}
	.
$$
As given in Proposition 4.1 in \cite{HHR}, we have 
$$
	\sum_{|t_j|\sim T} e^{\frac{\pi|t_j|}{2}} 
	\left<u_j, U_{f, \phi_1}\right>
	\ll
	\left(\sum_{|t_j|\sim T} 1\right)^{\frac{1}{2}}
	\left(\sum_{|t_j|\sim T} e^{\pi|t_j|} \left|\left<u_j, U_{f, \phi_2}\right>\right|^2\right)^{\frac{1}{2}}
	\ll T^{1+k}
	.
$$
The ratio of the gamma factors satisfies
$$
	M(s, t_j) \ll |t_j|^{2\Re(s)-2}
	. 
$$
Thus, 
$$
	\sum_{|t_j|\sim T} 
	M(s, t_j) \cL (s', \overline{u_j} \times \overline{\phi_1}) \left<u_j, U_{f, \phi_2}\right>
	\ll 
	T^{2\Re(s)+1+k-4\Re(s')}, 
$$
and the series $\cS_{\cusp, f, \phi_1, \phi_2}(s, w)$ converges absolutely for $\Re(s) < 2\Re(s')-1/2$ and $\Re(s') < 0$, i.e., $\Re(w) > -\Re(s)/2 + (5-k)/4$ and $\Re(w) < -\Re(s) + 1- k/2$.  
Applying the convexity principle to estimate 
$e^{-\frac{\pi |t_j|}{2}} \cL(s', \overline{u_j} \times \overline{\phi_1})$ 
in the region $0< \Re(s') <1$, these conditions translate to the condition that $\cS_{\cusp, f, \phi_1, \phi_2}(s, w)$ converges absolutely in the region defined by $\Re(s) < 1/2-k/2$ and $\Re(w) > -\Re(s)/2 + 7/4 - k/4$. 
The integral $\cS_{\cont, \Int, f, \phi_1, \phi_2}(s, w)$ converges in a larger region than this. 
There is a possible pole at $s'=1$ of $\cL(s', E_{\cuspa}(*, 1/2-it)\times \overline{\phi_1})$. 
Thus for $\Re(s') <1$, the deviation of the meromorphic continuation of $\cS_{\cont, \Int, f, \phi_1, \phi_2}(s, w)$ from the strict integral definition of $\cS_{\cont, \Int, f, \phi_1, \phi_2}(s, w)$  is given by 
$\Phi_{f, \phi_1, \phi_2}(s, w)$, which is described in \eqref{e:Phi}.
For $\Re(s)<1/2-k/2$ and $\Re(s')<1$, we have
\begin{multline*}
	\cS_{\cont, \Int, f,\phi_1, \phi_2}(s, w)
	=
	M\left( s, (s-1+\bnu_1)/i\right)
	\\
	\times
	\overline{c_{\phi_1}(1)}
	\sum_\cuspa \left<E_\cuspa (*, 3/2-s-\bnu_1), U_{f, \phi_2}\right>
	\Res_{z=-s+1-\bnu_1} \cL\left(s, E_\cuspa (*,1/2-z) \times \overline{\phi_1}\right)
	\\
	+
	M\left( s, (s-1-\bnu_1)/i\right)
	\\
	\times
	\overline{c_{\phi_1}(1)}
	\sum_\cuspa \left<E_\cuspa (*, 3/2-s+\bnu_1), U_{f, \phi_2}\right>
	\Res_{z=-s+1+\bnu_1} \cL\left(s, E_\cuspa(*, 1/2-z) \times \overline{\phi_1}\right) 
	\\
	+
	(2\pi)^{\frac{1}{2}-s}
	\overline{c_{\phi_1}(1)}
	\sum_{\cuspa} \frac{1}{4\pi} \int_{-\infty}^\infty 
	M(s, t)
	\cL(s', E_{\cuspa}(*, 1/2-it)\times \overline{\phi_1})
	\left<E_{\cuspa}(*, 1/2+it), U_{f, \phi_2}\right>
	\; dt
	.
\end{multline*}

The Dirichlet series expression of $\cM_{f, \phi_1, \phi_2}(s, w)$ converges absolutely for $\Re(s), \Re(w)>1$. 
For $1/2-k/2 < \Re(s) < 1$ and $\Re(w) >3/2$, by the upper bound given \eqref{e:D*fphi_upper}, the summation 
$$
	\zeta(2s') \sum_{n=1}^\infty \frac{c_{\phi_1}(n)}{n^{w+\frac{k}{2}-1}} 
	D_{f, \phi_2}(s; n)
$$
converges absolutely. 
Therefore, after subtracting off the poles, $\cM_{f, \phi_1, \phi_2}(s, w)$ has a meromorphic continuation in the region where 
\begin{itemize}
	\item $\Re(w) > -\Re(s)/2+7/4- k/4$ and $\Re(s) < 1/2-k/2$;
	\item $\Re(s) > 1$ and $\Re(w) >1$;
	\item $1/2-k/2 < \Re(s) < 1$ and $\Re(w) >3/2$.
\end{itemize}
We name this region  $R^{(1)}$. 

Similarly, for $\Re(w) < 1/2-k/2$ and for sufficiently large $\Re(s)$, we have
\begin{multline}\label{e:cM2}
	G_{f, \phi_1}(w) \cM_{f, \phi_1, \phi_2}(s, w)
	+
	G_{f, \phi_1}(w)
	\cM^{(3)}_{f, \phi_2, \phi_1}(w, s)
	\\
	=
	\cS_{\cusp, f, \phi_2, \phi_1} (w, s) 
	+
	\cS_{\cont, \Int, f, \phi_2, \phi_1}(w, s)
	+
	\Phi_{f, \phi_2, \phi_1}(w, s)
	.
\end{multline}
Reversing the roles of $s$ and $w$, we obtain a meromorphic continuation to the corresponding area 
\begin{itemize}
	\item $\Re(s) > -\Re(w)/2+7/4- k/4$ and $\Re(w) < 1/2-k/2$;
	\item $\Re(w) > 1$ and $\Re(s) >1$;
	\item $1/2-k/2 < \Re(w) < 1$ and $\Re(s) >3/2$.
\end{itemize}
We name this region $R^{(2)}$.

The series $\cM^{(3)}_{f, \phi_1, \phi_2}(s, w)$ defined in \eqref{e:cM3_series} is originally defined for $\Re(s) < 1/2-k/2$ and sufficiently large $\Re(w)$, and the series given in \eqref{e:cM3_series} converges absolutely for any $s, w\in \C$, away from the poles from the Gamma functions. 
This series can be transformed as a Dirichlet series given in \eqref{e:cM3_series1}. 
For $\Re(s), \Re(w)>1$, interchanging the order of summation in \eqref{e:cM3_series1}, we have
\begin{multline}\label{e:cM3}
	\cM^{(3)}_{f, \phi_1, \phi_2} (s, w)
	=	
	\frac{\Gamma(s) \Gamma\left(1-s\right)}
	{\Gamma\left(\frac{1}{2}+\frac{k}{2}+\bnu_2\right)
	\Gamma\left(\frac{1}{2}+\frac{k}{2}-\bnu_2\right)}
	\zeta(2s') 
	\sum_{m=1}^\infty
	\sum_{n=m+1}^\infty 
	\frac{a(n-m)\overline{c_{\phi_1}(n)} \overline{c_{\phi_2}(-m)}}{n^{w+\frac{k}{2}-1} m^{s+\frac{k}{2}-1}} 
	.
\end{multline}

The inner sum over $n$ is essentially an inner product of $f$ and $\phi_1$ with a standard Poincar\'e series. 
In particular, for $m \geq 1$, if
$$
	P(z, w; m)
	=
	\sum_{\gamma\in \Gamma_\infty\bsl \Gamma} \left(\Im(\gamma z)\right)^{w} e^{2\pi i m \gamma z}, 
$$
then 
\begin{multline*}
	\left< P(*, w; m), U_{f, \phi_1}\right>
	=
	\frac{1}{(2\pi)^{\frac{k}{2}+w-1}} 
	\sum_{n=m+1}^\infty 
	\frac{ a(n-m) \overline{c_{\phi_1}(n)} }{n^{\frac{k}{2}+w-1}} 
	\int_0^\infty W_{\frac{k}{2}, \bnu_1} (2y) 
	e^{-y} 
	y^{\frac{k}{2}+w-1} \; \frac{dy}{y}
	\\
	=
	\frac{\Gamma\left(w+\frac{k}{2}+\bnu_1-\frac{1}{2}\right) \Gamma\left(w+\frac{k}{2}-\bnu_1-\frac{1}{2}\right)}
	{\Gamma(w) (4\pi)^{\frac{k}{2}+w-1}}
	\sum_{n=m+1}^\infty 
	\frac{ a(n-m) \overline{c_{\phi_1}(n)} }{n^{\frac{k}{2}+w-1}} 
	.
\end{multline*}

On the other hand, we also have
\begin{multline*}
	\left<P(*, w; m), U_{f, \phi}\right>
	=
	\sum_j \left<P(*, w; m), u_j\right> \left<u_j, U_{f, \phi_1}\right>
	\\
	+
	\sum_{\cuspa} \frac{1}{4\pi} \int_{-\infty}^\infty 
	\left<P(*, w; m), E_{\cuspa}(*, 1/2+it)\right>
	\left<E_{\cuspa}(*, 1/2+it), U_{f, \phi_1}\right>\; dt
	.
\end{multline*}
Then
$$
	\left<P(*, w; m), u_j\right> 
	=
	\frac{\overline{\rho_j(m)}}{(2\pi m)^{w-\frac{1}{2}}}
	\frac{\sqrt{\pi} 2^{\frac{1}{2}-w} \Gamma\left(w-\frac{1}{2}+it_j\right) \Gamma\left(w-\frac{1}{2}-it_j\right)}
	{\Gamma(w)}
$$
and
$$
	\left<P(*, w; m), E_{\cuspa}(*, 1/2+it)\right>
	=
	\frac{\tau_\cuspa\left(1/2-it, m\right)}{(2\pi m)^{w-\frac{1}{2}}}
	\frac{\sqrt{\pi} 2^{\frac{1}{2}-w} \Gamma\left(w-\frac{1}{2}+it\right) \Gamma\left(w-\frac{1}{2}-it\right)}
	{\Gamma(w)}
	.
$$

\begin{prop}\label{prop:cM3}
For $\Re(w) > 1/2$, the function $\cM^{(3)}_{f, \phi_1, \phi_2}(s, w)$ has the following expression: 
\begin{multline*}
	\cM^{(3)}_{f, \phi_1, \phi_2}(s, w)
	=
	\frac{\Gamma(s)\Gamma(1-s)}
	{\Gamma\left(\frac{1}{2}+\frac{k}{2}+\bnu_2\right)
	\Gamma\left(\frac{1}{2}+\frac{k}{2}-\bnu_2\right)}
	\zeta(2s')
	\sum_{m=1}^\infty \frac{\overline{c_{\phi_2}(-m)}}{m^{s+\frac{k}{2}-1}}
	\sum_{n=m+1}^\infty \frac{a(n-m) \overline{c_{\phi_1}(n)}}{n^{w+\frac{k}{2}-1}}
	\\
	=
	\frac{\Gamma(s)\Gamma(1-s)(4\pi)^{\frac{k}{2}-\frac{1}{2}}\sqrt{\pi} }
	{\Gamma\left(\frac{1}{2}+\frac{k}{2}+\bnu_2\right)
	\Gamma\left(\frac{1}{2}+\frac{k}{2}-\bnu_2\right)
	\Gamma\left(w+\frac{k}{2}+\bnu_1-\frac{1}{2}\right) 
	\Gamma\left(w+\frac{k}{2}-\bnu_1-\frac{1}{2}\right)}
	\\
	\times
	\overline{c_{\phi_2}(-1)}
	\left\{
	\sum_j 
	\left<u_j, U_{f, \phi}\right> 
	\Gamma\left(w-\frac{1}{2}+it_j\right) \Gamma\left(w-\frac{1}{2}-it_j\right)
	\cL\left(s', \overline{u_j} \times \overline{\phi_2}\right)
	\right.
	\\
	+
	\sum_{\cuspa} \frac{1}{4\pi} 
	\int_{-\infty}^\infty \left<E_{\cuspa}(*, 1/2+it), U_{f, \phi_1}\right> 
	\\
	\left.
	\times
	\Gamma\left(w-\frac{1}{2}+it\right) \Gamma\left(w-\frac{1}{2}-it\right)
	\cL\left(s', E_\cuspa(*,1/2-it)\times \overline{\phi_2}\right)
	\; dt
	\right\}
	.
\end{multline*}
The sum and integral converge absolutely, and consequently the function $\cM^{(3)}_{f, \phi_1, \phi_2}(s, w)$ has a meromorphic continuation to all $s, w\in \C$.
\end{prop}


We have now obtained a meromorphic continuations of $\cM_{f, \phi_1, \phi_2}(s, w)$ to the union of the regions $R^{(1)}$ and $R^{(2)}$. 
By subtracting all polar lines, we can construct a function of $s$ and $w$, which is entire in the union of $R^{(1)}$ and $R^{(2)}$. 
As a convex closure of the union of $R^{(1)}$ and $R^{(2)}$ is all of $\C^2$, we have obtained the desired meromorphic continuation of $\cM_{f, \phi_1, \phi_2}(s, w)$.   This completes the proof of Theorem~\ref{thm:M12}.

\end{proof}

\section{Applications to  second moments}\label{s:second}
Recall that
 $f$ is a holomorphic cusp form of even weight $k$ for $\Gamma$, with the Fourier expansion given in \eqref{e:f_Fourier}.
Also $\phi_1$ and $\phi_2$ are automorphic forms of weight $k$ for $\Gamma$, of type $\nu_1$ and $\nu_2$ respectively (i.e., the Laplacian eigenvalue of $\phi_1$ and $\phi_2$ are $1/4-\nu_1^2$ and $1/4-\nu_2^2$ respectively), with the Fourier expansions given in \eqref{e:Fourier_phi}. 

For applications we will be mostly interested in the specific case 
$\phi_1=E_N^{(k)}(z, 1/2+\overline{ir_1})$, which is defined in \eqref{e:EkN}, and $\phi_2 = g(z)y^{k/2}$, where $g$ is a holomorphic cusp form of even weight $k$ for $\Gamma$. 
However, before specializing to this case, we will derive a second moment formula for general $\phi_1, \phi_2$.

For an automorphic form $\phi$ of weight $k$ for $\Gamma$ with the Fourier expansion given in \eqref{e:Fourier_phi}, 
define
\be\label{e:cL-_jphi2}
	\cL(s, u_j\times \phi)
	:=
	\zeta(2s) \sum_{m=1}^\infty \frac{\rho_j(m) C_{\phi}(m)}{m^{s-\frac{1}{2}}}
\ee
and 
\be\label{e:cL_cuapaphi2}
	\cL(s, E_\cuspa(*, 1/2-it)\times \phi_2)
	:=
	\zeta(2s) \sum_{m=1}^\infty \frac{\tau_{\cuspa}(m) C_{\phi}(m)}{m^{s-\frac{1}{2}}}
	, 
\ee
where $c_{\phi}(1) C_{\phi}(m) = c_{\phi}(m)$, for $m\geq 1$.

\begin{thm}\label{thm:second}
We obtain the following: 
Assume that $\phi_2$ is cuspidal, and $c_{\phi_2}(1)\neq 0$.  
Let $\alpha_j = 1$ if $u_j$ is odd and $\alpha_j=0$ if $u_j$ is even. 
For $\Re(s)=1/2$, we have
\begin{multline}\label{e:Ifphi1phi2_general}
	I_{f, \phi_1, \phi_2}(s)
	\\
	:=
	(4\pi)^{k-\frac{1}{2}} i^k 
	\overline{c_{\phi_2}(1)}
	\left\{
	\sum_{j} 
	\frac{h(t_j) }{\cosh(\pi t_j)}
	(-1)^{\alpha_j} \cL\left(s, \overline{u_j}\times \overline{\phi_2}\right)
	\frac{\left<u_j, U_{f, \phi_1}\right>}{\Gamma\left(\frac{k}{2}+\bnu_1+it_j\right)
	\Gamma\left(\frac{k}{2}+\bnu_1-it_j\right)}
	\right.
	\\
	+
	\left.
	\sum_{\cuspa} 
	\frac{1}{4\pi} \int_{-\infty}^\infty 
	\frac{h(t)}{\cosh(\pi t)}
	\cL\left(s, E_\cuspa(*, 1/2-it)\times \overline{\phi_2}\right)
	\frac{\left<E_{\cuspa}(*, 1/2+it), U_{f, \phi_1}\right>} 
	{\Gamma\left(\frac{k}{2}+\bnu_1+it\right) \Gamma\left(\frac{k}{2}+\bnu_1-it\right)}
	\; dt
	\right\}
	\\
	=
	M_{f, \phi_1, \phi_2}(s) 
	+ 
	\Omega^{(1)}_{f, \phi_1, \phi_2}(s)
	+
	\Omega^{(3)}_{f, \phi_1, \phi_2}(s)
	\\
	+
	E^{(1)}_{f, \phi_1, \phi_2}(s)
	+
	E^{(2)}_{f, \phi_1, \phi_2}(s)
	+
	E^{(3)}_{f, \phi_1, \phi_2}(s)
	.
\end{multline}
Here
$M_{f, \phi_1, \phi_2}(s)$ is given in \eqref{e:second_main},
$\Omega^{(1)}_{f, \phi_1, \phi_2}(s)$ in \eqref{e:Omega1},
$E^{(1)}_{f, \phi_1, \phi_2}(s)$ in \eqref{e:E1},
$\Omega^{(3)}_{f, \phi_1, \phi_2}(s)$ in \eqref{e:Omega3},
$E^{(2)}_{f, \phi_1, \phi_2}(s)$ in \eqref{e:E2},
and 
$E^{(3)}_{f, \phi_1, \phi_2}(s)$ in \eqref{e:E3}.

For a square-free $N$, when
 $\phi_1(z) = E_N^{(k)}(z, 1/2+\overline{ir})$, which is a weight $k$ Eisenstein series at the cusp $1/N$, defined in \eqref{e:EkN}, taking $s=1/2-ir$, 
 this specializes to
(by \eqref{e:inner_ujU} and \eqref{e:inner_E1/aU})
\begin{multline}\label{e:second}
	\sum_{j} 
	\frac{h(t_j) }{\cosh(\pi t_j)}
	\cL\left(1/2-ir, \overline{u_j}\times \overline{\phi_2}\right)
	\cL(1/2+ir, f\times u_j)
	\\
	+
	\sum_{a\mid N} 
	\frac{1}{4\pi} \int_{-\infty}^\infty 
	\frac{h(t)}{\cosh(\pi t)}
	\frac{4\left(\frac{N}{a}\right)^{-1}}
	{\zeta^*(1+2it)\zeta^*(1-2it) \prod_{p\mid N} (1-p^{-1-2it})\left(1-p^{-1+2it}\right)}
	\\
	\times
	L\left(1/2-ir, E_{1/a}(*, 1/2-it)\times \overline{\phi_2}\right)
	L\left(1/2+ir, f\times E_{1/a}(*, 1/2+it) \right)
	\; dt
	\\
	=
	\left((4\pi)^{\frac{k}{2}-\frac{1}{2}}  
	(2\pi)^{-2ir}
	\prod_{p\mid N}(1-p^{-1-2ir}) 
	\overline{c_{\phi_2}(1)}\right)^{-1}
	I_{f, E_N^{(k)}(*, 1/2+\overline{ir}), \phi_2}(1/2-ir)
	.
\end{multline}

\end{thm}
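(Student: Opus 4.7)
The plan is to derive Theorem \ref{thm:second} as a weighted sum over $m$ of the first moment identity (Theorem \ref{thm:first}) for $D_{f,\phi_1}(w+1/2;m)$, with weight $\overline{c_{\phi_2}(m)}/m^{s-1/2}$. In a region where all series converge absolutely (large $\Re(s)$), one applies the identity pointwise in $m$ and then sums. Since $\overline{c_{\phi_2}(m)} = \overline{c_{\phi_2}(1)}\,\overline{C_{\phi_2}(m)}$, the inner sum $\sum_m \overline{\rho_j(-m)}\overline{c_{\phi_2}(m)} m^{-s+1/2}$ on the spectral side produces, up to the sign $(-1)^{\alpha_j}$ and the factor $\overline{c_{\phi_2}(1)}$, the Rankin--Selberg convolution $\cL(s,\overline{u_j}\times \overline{\phi_2})$ defined in \eqref{e:cL-_jphi2}, and likewise for the Eisenstein part. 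This identifies the weighted spectral sum with $I_{f,\phi_1,\phi_2}(s)$ as in \eqref{e:Ifphi1phi2_general}.

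On the arithmetic side of Theorem \ref{thm:first}, the main term $M_{f,\phi_1}(m)$ carries the factor $a(m)/m^{k/2 \pm \bnu_1}$. Multiplying by $\overline{c_{\phi_2}(m)}/m^{s-1/2}$ and summing over $m$ produces the Rankin--Selberg $L$-function of $f$ with $\phi_2$ at points shifted by $\pm \bnu_1$, which combined with the first-moment $t$-integrals yields $M_{f,\phi_1,\phi_2}(s)$. The remaining two error terms require exchanging the order of summation inside the double contour integral over $u, w$: the $E^{(2)}_{f,\phi_1}(m)$ contribution has the infinite inner sum $\sum_{n\ge 1} a(m+n)\overline{c_{\phi_1}(n)}/n^{w+k/2-1/2}$, so after interchange its double sum over $m, n$ is a shift of $\cM_{f,\phi_1,\phi_2}(s,w)$ from \eqref{e:cMfphi1phi2}; the $E^{(1)}_{f,\phi_1}(m)$ contribution has the finite inner sum $\sum_{n=1}^{m-1}$ and after interchange is a shift of $\cM^{(3)}_{f,\phi_1,\phi_2}(s,w)$ as in \eqref{e:cM3_series1}.

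At this stage the identity is valid only for $\Re(s)$ large and the $w$-contour in the region stipulated by Theorem \ref{thm:first}. To reach $\Re(s)=1/2$, I would invoke Theorem \ref{thm:M12}, which supplies the meromorphic continuation of both $\cM$ and $\cM^{(3)}$ to $\C^2$ and, in the region $\Re(w) > -\Re(s)/2 + 7/4 - k/4$ with $\Re(s) < 1/2 - k/2$, the spectral expansion \eqref{e:cM*}. Deforming the $s$- and $w$-contours crosses the polar lines listed at the end of Theorem \ref{thm:M12}: $s=1/2 \pm it_j - r$, $2s=1-r$, $s=-r$, and $s'=1\pm \nu_1 \pm \nu_2$ (and their $w$ analogues). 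The residues at these crossings are organized into the auxiliary terms $\Omega^{(1)}_{f,\phi_1,\phi_2}(s)$ (from the $\cM$ contribution) and $\Omega^{(3)}_{f,\phi_1,\phi_2}(s)$ (from the $\cM^{(3)}$ contribution), while the deformed integrals themselves become $E^{(1)}$, $E^{(2)}$, and $E^{(3)}$. The specialization \eqref{e:second} then follows by substituting $\phi_1 = E_N^{(k)}(\cdot,1/2+\overline{ir})$ and $s = 1/2-ir$ into \eqref{e:Ifphi1phi2_general} and using \eqref{e:inner_ujU} and \eqref{e:inner_E1/aU} to re-express $\langle u_j, U_{f,\phi_1}\rangle$ and $\langle E_\cuspa(\cdot,1/2+it), U_{f,\phi_1}\rangle$ in terms of $\cL(1/2+ir, f\times u_j)$ and $L(1/2+ir, f\times E_{1/a}(\cdot,1/2+it))$ respectively.

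The main obstacle is the bookkeeping of polar residues when shifting contours in the two-variable setting. The polar lines of $\cM$ and $\cM^{(3)}$ intersect the $u$- and $w$-contours inherited from Theorem \ref{thm:first} in a nontrivial pattern, and one must verify carefully which residues genuinely contribute to each $\Omega$ or $E^{(j)}$ and which are not crossed. The absolute convergence of the spectral expansion \eqref{e:cM*} in the region $\Re(w) > -\Re(s)/2 + 7/4 - k/4$, together with the Rankin--Selberg and Lindel\"of-on-average bounds used in the proof of Theorem \ref{thm:M12}, is exactly what is needed to justify the termwise manipulations and the interchange of orders of summation and integration.
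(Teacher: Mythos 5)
Your proposal follows essentially the same route as the paper: sum Theorem \ref{thm:first} over $m$ against $\zeta(2s)\,\overline{c_{\phi_2}(m)}\,m^{-s+1/2}$, recognize the resulting error terms as contour integrals of the shifted double Dirichlet series $\cM_{f,\phi_1,\phi_2}$ and $\cM^{(3)}$, and then use the continuation and spectral expansions of Theorem \ref{thm:M12} and Proposition \ref{prop:cM3} to move to $\Re(s)=1/2$, collecting residues along the way. Two bookkeeping caveats if you carry this out: the finite-sum ($E^{(1)}$-type) piece actually produces $\cM^{(3)}_{f,\phi_2,\phi_1}\left(w+1/2,\, s-w-k/2+1/2\right)$ with the roles of $\phi_1,\phi_2$ interchanged, and the main term $M_{f,\phi_1,\phi_2}(s)$ of \eqref{e:second_main} is not merely the termwise sum of the $M_{f,\phi_1}(m)$ (that yields only $\tM_{f,\phi_1,\phi_2}$); it also absorbs the residue pieces $M^{(1)}$ and $M^{(3)}$ coming from the pole at $s'=1$ of $\cL\left(s', E_\cuspa(*,1/2-it)\times\overline{\phi_1}\right)$ in the continuous-spectrum integrals, so not all residues from the continuation land in $\Omega^{(1)}$ and $\Omega^{(3)}$.
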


\begin{proof}
%

By Theorem \ref{thm:first}, for $m\geq 1$, we have
\begin{multline*}
	I_{f, \phi_1}(m)
	=
	(4\pi)^{k-\frac{1}{2}} i^k 
	\sum_{j} 
	\frac{h(t_j) }{\cosh(\pi t_j)}
	\overline{\rho_j(-m)} 
	\frac{\left<u_j, U_{f, \phi_1}\right>}{\Gamma\left(\frac{k}{2}+\bnu_1+it_j\right)
	\Gamma\left(\frac{k}{2}+\bnu_1-it_j\right)}
	\\
	+
	(4\pi)^{k-\frac{1}{2}} i^k 	
	\sum_{\cuspa} 
	\frac{1}{4\pi} \int_{-\infty}^\infty 
	\frac{h(t)}{\cosh(\pi t)}
	\tau_\cuspa (1/2-it, m) 
	\frac{\left<E_{\cuspa}(*, 1/2+it), U_{f, \phi_1}\right>} 
	{\Gamma\left(\frac{k}{2}+\bnu_1+it\right) \Gamma\left(\frac{k}{2}+\bnu_1-it\right)}
	\; dt
	\\
	=
	M_{f, \phi_1}(m) + E^{(1)}_{f, \phi_1}(m) + E^{(2)}_{f, \phi_1}(m)
	, 
\end{multline*}
where $M_{f, \phi_1}(m)$, $E^{(1)}_{f, \phi_1}(m)$ and $E^{(2)}_{f, \phi_1}(m)$ are given in \eqref{e:Mfphi}, \eqref{e:E1fphi} and \eqref{e:E2fphi}, respectively. 

Taking the summation over $m\geq 1$ on both sides, we have
\begin{multline}\label{e:Ifphi1phi2}
	I_{f, \phi_1, \phi_2}(s)
	=
	\zeta(2s)
	\sum_{m=1}^\infty \frac{\overline{c_{\phi_2}(m)}}{m^{s-\frac{1}{2}}}I_{f, \phi_1}(m)
	\\
	=
	(4\pi)^{k-\frac{1}{2}} i^k 
	\overline{c_{\phi_2}(1)}
	\left\{
	\sum_{j} 
	\frac{h(t_j) }{\cosh(\pi t_j)}
	(-1)^{\alpha_j}
	\cL(s, \overline{u_j} \times \overline{\phi_2})
	\frac{\left<u_j, U_{f, \phi_1}\right>}{\Gamma\left(\frac{k}{2}+\bnu_1+it_j\right)
	\Gamma\left(\frac{k}{2}+\bnu_1-it_j\right)}
	\right.
	\\
	\left.
	+
	\sum_{\cuspa} 
	\frac{1}{4\pi} \int_{-\infty}^\infty 
	\frac{h(t)}{\cosh(\pi t)}
	\cL(s, E_{\cuspa}(*, 1/2-it)\times\overline{\phi_2})
	\frac{\left<E_{\cuspa}(*, 1/2+it), U_{f, \phi_1}\right>} 
	{\Gamma\left(\frac{k}{2}+\bnu_1+it\right) \Gamma\left(\frac{k}{2}+\bnu_1-it\right)}
	\; dt
	\right\}
	\\
	=
	\tM_{f, \phi_1, \phi_2}(s)
	+ 
	\tE^{(1)}_{f, \phi_1, \phi_2}(s)
	+
	\tE^{(2)}_{f, \phi_1, \phi_2}(s) 
	, 
\end{multline}
where
\be\label{e:tMfphi1phi2}
	\tM_{f, \phi_1, \phi_2}(s)
	:=
	\zeta(2s) \sum_{m=1}^\infty \frac{\overline{c_{\phi_2}(m)}}{m^{s-\frac{1}{2}}}
	M_{f, \phi_1}(m)
	, 
\ee
\be\label{e:tE1fphi1phi2}
	\tE^{(1)}_{f, \phi_1, \phi_2}(s)
	:=
	\zeta(2s) \sum_{m=1}^\infty \frac{\overline{c_{\phi_2}(m)}}{m^{s-\frac{1}{2}}}
	E^{(1)}_{f, \phi_1}(m)
\ee
and
\be\label{e:tE2fphi1phi2}
	\tE^{(2)}_{f, \phi_1, \phi_2}(s) 
	:=
	\zeta(2s) \sum_{m=1}^\infty \frac{\overline{c_{\phi_2}(m)}}{m^{s-\frac{1}{2}}}
	E^{(2)}_{f, \phi_1}(m)
	.
\ee


To prove Theorem \ref{thm:second}, we compute each $\tM_{f, \phi_1, \phi_2}(s)$, $\tE^{(1)}_{f, \phi_1, \phi_2}(s)$ and $\tE^{(2)}_{f, \phi_1, \phi_2}(s)$ explicitly in 
Lemma \ref{lem:tM}, Lemma \ref{lem:tE1+tE2_SDDS} and Proposition \ref{prop:tE} respectively. 
For each lemma and proposition, we get a meromorphic continuation for each term for $\Re(s)\geq1/2$. 

For $\tE^{(1)}_{f, \phi_1, \phi_2}(s)$ and $\tE^{(2)}_{f, \phi_1, \phi_2}(s)$, we separate the terms into main term and other terms. 
In Proposition \ref{prop:tE}, 
$$
	\tE^{(1)}_{f, \phi_1, \phi_2}(s)
	=
	\tM^{(1)}_{f, \phi_1, \phi_2}(s)
	+
	\Omega^{(1)}_{f, \phi_1, \phi_2}(s)
	+
	E^{(1)}_{f, \phi_1, \phi_2}(s)
$$
and 
\begin{multline*}
	\tE^{(2)}_{f, \phi_1, \phi_2}(s)
	=
	E^{(2)}_{f, \phi_1, \phi_2}(s)
	+
	E^{(3)}_{f, \phi_1, \phi_2}(s)
	+
	\Omega^{(3)}_{f, \phi_1, \phi_2}(s)
	+
	M^{(3)}_{f, \phi_1, \phi_2}(s)
	\\
	+
	2
	\sum_{\ell=1}^{\frac{k}{2}} 
	\frac{1}{(\ell-1)!}
	(4\pi)^{k-\ell}
	\frac{\Gamma\left(s-\frac{1}{2}+\ell+\bar{\nu_1}\right) 
	\Gamma\left(s-\frac{1}{2}+\ell-\bar{\nu_1}\right)}
	{\Gamma\left(\frac{k}{2}-\ell+\frac{1}{2}+\bnu_2\right)
	\Gamma\left(\frac{k}{2}-\ell +\frac{1}{2}-\bnu_2\right) }
	\cM^*_{f, \phi_1, \phi_2} \left(-\ell+1, s+\ell-\frac{k}{2}\right)
	\\
	\times 
	H_{2, \ell}(s; \bnu_1)
	, 
\end{multline*}
where $H_{2, \ell}(s; \bnu_1)$ is defined in \eqref{e:H2}. 

Collecting the main terms in Lemma \ref{lem:Mfphi1phi2}, 
$$
	M_{f, \phi_1, \phi_2}(s)
	:=
	\tM_{f, \phi_1, \phi_2}(s)
	+
	M^{(1)}_{f, \phi_1, \phi_2}(s)
	+
	M^{(3)}_{f, \phi_1, \phi_2}(s), 
$$
we will show an explicit formula in \eqref{e:second_main}.

\end{proof}

\subsection{$\tM_{f, \phi_1, \phi_2}(s)$}

Define
\be\label{e:H1}
	H_1^\pm (\bnu_1)
	:=
	\frac{1}{\pi^2} \int_{-\infty}^\infty h(t) t \tanh(\pi t) 
	\frac{\Gamma\left(\frac{k}{2}\pm\bnu_1+it\right) 
	\Gamma\left(\frac{k}{2}\pm\bnu_1-it\right)}
	{\Gamma\left(\frac{k}{2}+\bnu_1+it\right) 
	\Gamma\left(\frac{k}{2}+\bnu_1-it\right)}
	\; dt
	.
\ee
Recalling \eqref{e:Mfphi} and \eqref{e:tMfphi1phi2}, we have the following lemma.
\begin{lem}\label{lem:tM}
The function $\tM_{f, \phi_1, \phi_2}(s)$ has a meromorphic continuation to any $s\in \C$ with the following relation:
\begin{multline}\label{e:tM}
	\tM_{f, \phi_1, \phi_2} (s)
	\\
	=
	(4\pi)^{\frac{k}{2}-\bnu_1} 
	\overline{c^+_{\phi_1}}
	\frac{\Gamma\left(-\frac{k}{2}+\frac{1}{2}-\bnu_1\right) }
	{\Gamma\left(\frac{1}{2}+\bnu_1\right) \Gamma\left(\frac{1}{2}-\bnu_1\right)}
	\frac{\zeta(2s) }{\zeta(2s+2\bnu_1+1)}
	\overline{c_{\phi_2}(1)}
	L\left(s+1/2+ \bnu_1 , f\times \overline{\phi_2}\right)
	\frac{1}{2}H_1^+ (\bnu_1)
	\\
	+
	(4\pi)^{\frac{k}{2}+\bnu_1}
	\overline{c^-_{\phi_1}} 
	\frac{\Gamma\left(-\frac{k}{2}+\frac{1}{2}+\bnu_1\right)}
	{\Gamma\left(\frac{1}{2}+\bnu_1\right) \Gamma\left(\frac{1}{2}-\bnu_1\right)}
	\frac{\zeta(2s) }{\zeta(2s-2\bnu_1+1)}
	 \overline{c_{\phi_2}(1)}
	L\left(s+1/2- \bnu_1 , f\times \overline{\phi_2}\right)
	\frac{1}{2}H_1^- (\bnu_1).
\end{multline}

Taking $\phi_1=E_N^{(k)}(*, 1/2+\overline{ir_1})$ and $\phi_2(z)=g(z)y^{k/2}$, 
\begin{multline}\label{e:tMg}
	\tM_{f, E_N^{(k)}(*, 1/2+\overline{ir_1}), gy^{k/2}} (s)
	\\
	=
	\frac{(4\pi)^{-ir_1} }{\pi}
	\zeta(1+2ir_1) \pi^{-\frac{1}{2}-ir_1}
	\prod_{p\mid N} (1-p^{-1-2ir_1} )
	\frac{\zeta(2s) }{\zeta(2s+2ir_1+1)}
	L\left(s+1/2+ ir_1 , f\times \bar{g}\right)
	\frac{1}{2}H_1^+ (ir_1)
	\\
	+
	\frac{(4\pi)^{ir_1}}{\pi}
	\zeta(1-2ir_1) \pi^{-\frac{1}{2}+ir_1}
	\frac{\varphi(N)}{N^{1+2ir_1}}
	\frac{\zeta(2s) }{\zeta(2s-2ir_1+1)}
	L\left(s+1/2-ir_1 , f\times \overline{g}\right)
	\frac{1}{2}H_1^- (ir_1).
\end{multline}
\end{lem}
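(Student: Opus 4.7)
The plan is a direct substitution. Substitute the two-term formula \eqref{e:Mfphi} for $M_{f,\phi_1}(m)$ into the definition \eqref{e:tMfphi1phi2} of $\tM_{f,\phi_1,\phi_2}(s)$. In each term the $t$-integral against $h$ is $m$-independent and pulls out; the two integrals match $\tfrac{1}{2} H_1^\pm(\bnu_1)$ from \eqref{e:H1}. What remains is the Dirichlet series
\[
\zeta(2s)\sum_{m\ge 1}\frac{\overline{c_{\phi_2}(m)}\,a(m)}{m^{s+k/2-1/2\pm\bnu_1}}.
\]
Writing $a(m)=A(m)m^{(k-1)/2}$ and $\overline{c_{\phi_2}(m)}=\overline{c_{\phi_2}(1)}\,\overline{C_{\phi_2}(m)}$ collapses the exponent to $s\pm\bnu_1$, and the identity
\[
L(s+\tfrac{1}{2}\pm\bnu_1,\,f\times\overline{\phi_2}) = \zeta(2s+1\pm 2\bnu_1)\sum_{m\ge 1}\frac{A(m)\overline{C_{\phi_2}(m)}}{m^{s\pm\bnu_1}}
\]
(the Rankin--Selberg convention consistent with \eqref{e:cL-_jphi2}) produces the ratio $\zeta(2s)/\zeta(2s\pm 2\bnu_1+1)$ times $L(s+1/2\pm\bnu_1,\,f\times\overline{\phi_2})$. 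Reassembling the constants gives \eqref{e:tM}. Interchange of sum and integral is valid for $\Re(s)$ sufficiently large, and meromorphic continuation in $s$ to all of $\C$ follows from that of the Rankin--Selberg $L$-function.

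For the specialization \eqref{e:tMg}, set $\nu_1=\overline{ir_1}$ (so $\bnu_1=ir_1$ for real $r_1$) and read off $\overline{c^{\pm}_{\phi_1}}$ from the two leading coefficients in the Fourier expansion \eqref{e:EkN_Fourier} of $E_N^{(k)}(z,1/2+\overline{ir_1})$. Compute $c_{\phi_2}(1)$ for $\phi_2(z)=g(z)y^{k/2}$ by matching the Whittaker and exponential expansions via $W_{k/2,(k-1)/2}(y)=y^{k/2}e^{-y/2}$, yielding $c_{\phi_2}(n)=b(n)/(4\pi n)^{k/2}$. Then expand $\zeta^*(s)=\pi^{-s/2}\Gamma(s/2)\zeta(s)$ in the formulae for $\overline{c^\pm_{\phi_1}}$, observe the cancellation of $\Gamma(\tfrac{1}{2}\pm ir_1)$ that appears in both numerator and denominator, and apply the reflection identities
\[
\Gamma(\tfrac{1}{2}+ir_1)\Gamma(\tfrac{1}{2}-ir_1)=\frac{\pi}{\cosh(\pi r_1)},\qquad
\Gamma(-\tfrac{k}{2}+\tfrac{1}{2}\pm ir_1)\,\Gamma(\tfrac{k}{2}+\tfrac{1}{2}\mp ir_1)=\frac{(-1)^{k/2}\pi}{\cosh(\pi r_1)}
\]
(valid for even $k$) to collapse the remaining gamma ratios. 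The $\cosh(\pi r_1)$ factors cancel, the signs $(-1)^{k/2}$ square away, and the surviving combinations of $(4\pi)^{\pm ir_1}$, $\pi^{-1/2\mp ir_1}$, $\prod_{p\mid N}(1-p^{-1\mp 2ir_1})$ and $\varphi(N)/N^{1\pm 2ir_1}$ reassemble into the two summands of \eqref{e:tMg}.

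The main obstacle is purely bookkeeping: tracking the many $\Gamma$, $\zeta$, and $\pi$ factors through the reflection and functional equations, and being careful about the Rankin--Selberg normalization used in the paper when transitioning from a general $\phi_2$ to the holomorphic case $\phi_2 = gy^{k/2}$. No further analytic input is required beyond Theorem~\ref{thm:first} and the classical meromorphic continuation of $L(s,f\times\overline{\phi_2})$.
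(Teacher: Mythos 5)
Your proposal is correct and is essentially the paper's own (unwritten) argument for this lemma: substitute \eqref{e:Mfphi} into \eqref{e:tMfphi1phi2}, pull the $t$-integrals out of the $m$-sum as $\tfrac12 H_1^{\pm}(\bnu_1)$, identify the remaining Dirichlet series with $\zeta(2s)L(s+1/2\pm\bnu_1,f\times\overline{\phi_2})/\zeta(2s+1\pm2\bnu_1)$, and then specialize via \eqref{e:EkN_Fourier}, $c_{\phi_2}(n)=b(n)(4\pi n)^{-k/2}$, the functional equation of $\zeta^{*}$, and the reflection formula. One bookkeeping remark: executed exactly as you describe, the gamma cancellations give the constants $(4\pi)^{\mp ir_1}\pi^{-\frac12\mp ir_1}$ in front of the two summands, i.e.\ the form that the paper itself uses later in \eqref{e:MfENg}, \emph{without} the extra factor $1/\pi$ displayed in \eqref{e:tMg}; that stray $1/\pi$ appears to be a typographical slip in the lemma statement rather than a defect in your argument.
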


\subsection{$\tE^{(1)}_{f, \phi_1, \phi_2}(s)$ and $\tE^{(2)}_{f, \phi_1, \phi_2}(s)$}

\begin{lem}\label{lem:tE1+tE2_SDDS}
For $\Re(s) > 2$, we have
\begin{multline}\label{e:tE1_SDDS}
	\tE^{(1)}_{f, \phi_1, \phi_2}(s)
	=
	-
	4
	(4\pi)^{\frac{k-1}{2}} i^k 
	\frac{1}{2\pi i} \int_{(\sigma_u = 1/2+2\epsilon)} 
	\frac{h(u/i) u \tan(\pi u)}{\Gamma\left(\frac{k}{2}+\bnu +u\right) \Gamma\left(\frac{k}{2}+\bnu-u\right)}
	\\
	\times
	\frac{1}{2\pi i } \int_{(\sigma_w = 1/2-k/2-\epsilon)}
	\frac{\Gamma\left(-w+u\right) \Gamma\left(-w-u\right)
	\Gamma\left(w+\frac{k}{2}-\bnu_1\right) \Gamma\left(w+\frac{k}{2}+\bnu_1\right)}
	{\Gamma\left(\frac{1}{2}+w\right) \Gamma\left(\frac{1}{2}-w\right)}
	\\
	\times
	\cM^{(3)}_{f, \phi_2, \phi_1}(w+1/2, s-w-k/2+1/2)
	\; dw\; du.
\end{multline}

Similarly, for $\Re(s)>1+k/2+\epsilon$, let
\begin{multline}\label{e:tE3}
	\tE^{(3)}_{f, \phi_1, \phi_2}(s)
	\\
	:=
	-
	(4\pi)^{\frac{k-1}{2}} i^k \frac{4}{\pi} 
	\frac{1}{2\pi i} \int_{(\sigma_u = 1/2+2\epsilon)} 
	\frac{h(u/i) u 
	\Gamma\left(\frac{1}{2}+u\right) \Gamma\left(\frac{1}{2}-u\right)}
	{\Gamma\left(\frac{k}{2}+\bnu_1 +u\right) \Gamma\left(\frac{k}{2}+\bnu_1-u\right)}
	\frac{1}{2\pi i } \int_{(\sigma_w = 1/2+\epsilon)}
	\frac{\Gamma\left(-w+u\right)}
	{\Gamma\left(1+w+u\right)}
	\\
	\times
	\Gamma\left(w+\frac{k}{2}+\bar{\nu_1}\right) 
	\Gamma\left(w+\frac{k}{2}-\bar{\nu_1}\right)
	\cM^{(3)}_{f, \phi_1, \phi_2}\left(s-w+\frac{1}{2}-\frac{k}{2}, w+\frac{1}{2}\right)
	\; dw\; du
\end{multline}
and
\begin{multline}\label{e:tE4}
	\tE^{(4)}_{f, \phi_1, \phi_2}(s)
	\\
	:=
	(4\pi)^{\frac{k-1}{2}} i^k \frac{4}{\pi} 
	\frac{1}{2\pi i} \int_{(\sigma_u = 1/2+2\epsilon)} 
	\frac{h(u/i) u 
	\Gamma\left(\frac{1}{2}+u\right) \Gamma\left(\frac{1}{2}-u\right)}
	{\Gamma\left(\frac{k}{2}+\bnu_1 +u\right) \Gamma\left(\frac{k}{2}+\bnu_1-u\right)}
	\frac{1}{2\pi i } \int_{(\sigma_w = 1/2+\epsilon)}
	\frac{\Gamma\left(-w+u\right)}
	{\Gamma\left(1+w+u\right)}
	\\
	\times
	\frac{\Gamma\left(w+\frac{k}{2}+\bar{\nu_1}\right) 
	\Gamma\left(w+\frac{k}{2}-\bar{\nu_1}\right)
	\Gamma\left(s-w+\frac{1}{2}-\frac{k}{2}\right) 
	(4\pi)^{s-w-\frac{1}{2}}}
	{\Gamma\left(s-w+\bnu_2\right)
	\Gamma\left(s-w-\bnu_2\right) }
	\\
	\times
	\cM^*_{f, \phi_1, \phi_2} \left(s-w+\frac{1}{2}-\frac{k}{2}, w+\frac{1}{2}\right)
	\; dw \; du
	.
\end{multline}
Then, for $\Re(s) > 1+k/2+\epsilon$, we have
$$
	\tE^{(2)}_{f, \phi_1, \phi_2}(s)
	=
	\tE^{(3)}_{f, \phi_1, \phi_2}(s)
	+
	\tE^{(4)}_{f, \phi_1, \phi_2}(s)
	.
$$
\end{lem}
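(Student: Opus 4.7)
My proof plan substitutes the defining expressions \eqref{e:E1fphi} and \eqref{e:E2fphi} for $E^{(1)}_{f,\phi_1}(m)$ and $E^{(2)}_{f,\phi_1}(m)$ into \eqref{e:tE1fphi1phi2} and \eqref{e:tE2fphi1phi2}, interchanges the Dirichlet summation over $m$ with the $u,w$-contour integrals, and then identifies the resulting inner double sum as one of the shifted double Dirichlet series of Section~\ref{s:SDDS}.

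For $\tE^{(1)}_{f,\phi_1,\phi_2}(s)$, after the interchange the combinatorial kernel multiplying the $u,w$-integrand is
\[
\zeta(2s)\sum_{m=1}^\infty \frac{\overline{c_{\phi_2}(m)}}{m^{s-\frac12-w}}\sum_{n=1}^{m-1}\frac{a(m-n)\overline{c_{\phi_1}(-n)}}{n^{w+\frac k2-\frac12}}.
\]
Using $\overline{c_{\phi_1}(-n)}=\overline{c_{\phi_1}(-1)}\,\overline{C_{\phi_1}(n)}$ from \eqref{e:cphipm} and relabelling indices, this is exactly the Dirichlet series \eqref{e:cM3_series1} for $\cM^{(3)}_{f,\phi_2,\phi_1}(s_0,w_0)$ with $s_0=w+\tfrac12$ and $w_0=s-w-\tfrac k2+\tfrac12$, since $s_0+w_0+\tfrac k2-1=s$. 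The prefactor $\Gamma(s_0)\Gamma(1-s_0)=\Gamma(w+\tfrac12)\Gamma(\tfrac12-w)$ picked up from $\cM^{(3)}_{f,\phi_2,\phi_1}$ combines with the $[\Gamma(\tfrac12+\tfrac k2+\bnu_1)\Gamma(\tfrac12+\tfrac k2-\bnu_1)]^{-1}$ already present in \eqref{e:E1fphi} so that, after cancellation, the net denominator is precisely the $\Gamma(\tfrac12+w)\Gamma(\tfrac12-w)$ of \eqref{e:tE1_SDDS}, while all other gamma factors are preserved.

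For $\tE^{(2)}_{f,\phi_1,\phi_2}(s)$, the analogous substitution and interchange produce
\[
\zeta(2s)\sum_{m=1}^\infty \frac{\overline{c_{\phi_2}(m)}}{m^{s-\frac12-w}}\sum_{n=1}^\infty\frac{a(m+n)\overline{c_{\phi_1}(n)}}{n^{w+\frac k2-\frac12}}=\cM_{f,\phi_1,\phi_2}\!\left(s-w+\tfrac12-\tfrac k2,\,w+\tfrac12\right),
\]
by direct comparison with the defining series \eqref{e:cMfphi1phi2}. Since the Dirichlet series for $\cM_{f,\phi_1,\phi_2}$ only represents it in its region of absolute convergence, I would then invoke the immediate rewriting
\[
\cM_{f,\phi_1,\phi_2}(s_0,w_0)=G_{f,\phi_2}(s_0)^{-1}\cM^*_{f,\phi_1,\phi_2}(s_0,w_0)-\cM^{(3)}_{f,\phi_1,\phi_2}(s_0,w_0)
\]
of \eqref{e:cM*} from Theorem~\ref{thm:M12}. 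The $-\cM^{(3)}$ contribution reproduces $\tE^{(3)}_{f,\phi_1,\phi_2}(s)$, while the $G_{f,\phi_2}(s_0)^{-1}\cM^*$ contribution gives $\tE^{(4)}_{f,\phi_1,\phi_2}(s)$; a direct evaluation of $G_{f,\phi_2}(s-w+\tfrac12-\tfrac k2)^{-1}$ via \eqref{e:Gfphi} yields exactly the factor $\Gamma(s-w+\tfrac12-\tfrac k2)(4\pi)^{s-w-\frac12}/[\Gamma(s-w+\bnu_2)\Gamma(s-w-\bnu_2)]$ appearing in \eqref{e:tE4}.

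The main technical obstacle is the interchange of the $m$-summation with the $u,w$-integrals. For $\tE^{(1)}$ this requires absolute convergence of the triple sum-integral on the contours $\sigma_w=\tfrac12-\tfrac k2-\epsilon$ and $\sigma_u=\tfrac12+2\epsilon$, which I would establish on $\Re(s)>2$ by reorganising the double sum via $\ell=m-n$ and bounding $|a(\ell)|\ll\ell^{(k-1)/2+\epsilon}$ and $|c_{\phi_j}(n)|\ll n^\epsilon$; Stirling together with the decay hypothesis $|h(u/i)|\ll(1+|u|)^a$, $a<0$, then handles the gamma ratios uniformly. The analogous verification for $\tE^{(2)}$ on $\Re(s)>1+\tfrac k2+\epsilon$ places $(s_0,w_0)=(s-w+\tfrac12-\tfrac k2,\,w+\tfrac12)$ with $\Re(s_0)>1$ and $\Re(w_0)>\tfrac12$, precisely inside the absolute convergence bi-half-plane of $\cM_{f,\phi_1,\phi_2}$, which justifies that specific hypothesis on $\Re(s)$. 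Once Fubini applies the two identities follow from the symbolic matching described above.
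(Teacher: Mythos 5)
Your proposal is correct and follows essentially the same route as the paper: substitute \eqref{e:E1fphi}, \eqref{e:E2fphi} into the $m$-sums, interchange summation and the $u,w$-integrals by absolute convergence, identify the resulting double series with $\cM^{(3)}_{f,\phi_2,\phi_1}(w+1/2,s-w-k/2+1/2)$ and $\cM_{f,\phi_1,\phi_2}(s-w+1/2-k/2,w+1/2)$, and then split the latter via $\cM = G_{f,\phi_2}^{-1}\cM^* - \cM^{(3)}$ using \eqref{e:cM*} and \eqref{e:Gfphi} to obtain $\tE^{(2)} = \tE^{(3)}+\tE^{(4)}$. The only cosmetic point is that at $\sigma_w = 1/2+\epsilon$ one in fact has $\Re(w+1/2)=1+\epsilon>1$ (not merely $>1/2$), which is what places the argument inside the region of absolute convergence of $\cM_{f,\phi_1,\phi_2}$, exactly as the hypothesis $\Re(s)>1+k/2+\epsilon$ is designed to guarantee.
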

\begin{proof}

Recalling \eqref{e:E1fphi} and \eqref{e:tE1fphi1phi2}. 
For $\Re(s)>2$, we have
\begin{multline*}
	\tE^{(1)}_{f, \phi_1, \phi_2}(s)
	=
	\zeta(2s)\sum_{m=1}^\infty \frac{\overline{c_{\phi_2}(m)}}{m^{s-\frac{1}{2}}}
	E^{(1)}_{f, \phi}(m)
	\\
	=
	-
	4
	(4\pi)^{\frac{k-1}{2}} i^k 
	\frac{1}{2\pi i} \int_{(\sigma_u = 1/2+2\epsilon)} 
	\frac{h(u/i) u \tan(\pi u)}{\Gamma\left(\frac{k}{2}+\bnu +u\right) \Gamma\left(\frac{k}{2}+\bnu-u\right)}
	\\
	\times
	\frac{1}{2\pi i } \int_{(\sigma_w = 1/2-k/2-\epsilon)}
	\Gamma\left(-w+u\right) \Gamma\left(-w-u\right)
	\Gamma\left(w+\frac{k}{2}-\bnu_1\right) \Gamma\left(w+\frac{k}{2}+\bnu_1\right)
	\\
	\times
	\frac{1}{\Gamma\left(\frac{1}{2}+\frac{k}{2}+\bnu_1\right)
	\Gamma\left(\frac{1}{2}+\frac{k}{2}-\bnu_1\right)}
	\zeta(2s)
	\sum_{m=1}^\infty \frac{\overline{c_{\phi_2}(m)}}{m^{s-w-\frac{1}{2}}}
	\sum_{n=1}^{m-1} 
	\frac{a(m-n) \overline{c_{\phi_1}(-n)}}{n^{w+\frac{k}{2}-\frac{1}{2}}}	\; dw\; du
	\\
	=
	-
	4
	(4\pi)^{\frac{k-1}{2}} i^k 
	\frac{1}{2\pi i} \int_{(\sigma_u = 1/2+2\epsilon)} 
	\frac{h(u/i) u \tan(\pi u)}{\Gamma\left(\frac{k}{2}+\bnu +u\right) \Gamma\left(\frac{k}{2}+\bnu-u\right)}
	\\
	\times
	\frac{1}{2\pi i } \int_{(\sigma_w = 1/2-k/2-\epsilon)}
	\frac{\Gamma\left(-w+u\right) \Gamma\left(-w-u\right)
	\Gamma\left(w+\frac{k}{2}-\bnu_1\right) \Gamma\left(w+\frac{k}{2}+\bnu_1\right)}
	{\Gamma\left(\frac{1}{2}+w\right) \Gamma\left(\frac{1}{2}-w\right)}
	\\
	\times
	\cM^{(3)}_{f, \phi_2, \phi_1}(w+1/2, s-w-k/2+1/2)
	\; dw\; du, 
\end{multline*}
where $\cM^{(3)}_{f, \phi_2, \phi_1}$ is given in \eqref{e:cM3_series1}. 
We are allowed to change the ordering of the series and integrals since the series and integrals are convergent absolutely. 


Similarly, recalling \eqref{e:E2fphi} and \eqref{e:tE2fphi1phi2}, taking a summation over $m\geq 1$, 
for $\Re(s) > 1+k/2+\epsilon$, we get
\begin{multline*}
	\tE^{(2)}_{f, \phi_1, \phi_2}(s)
	=
	\zeta(2s) \sum_{m=1}^\infty \frac{\overline{c_{\phi_2}(m)}}{m^{s-\frac{1}{2}}}
	E^{(2)}_{f, \phi_1}(m)
	\\
	=
	(4\pi)^{\frac{k-1}{2}} i^k \frac{4}{\pi} 
	\frac{1}{2\pi i} \int_{(\sigma_u = 1/2+2\epsilon)} 
	\frac{h(u/i) u 
	\Gamma\left(\frac{1}{2}+u\right) \Gamma\left(\frac{1}{2}-u\right)}
	{\Gamma\left(\frac{k}{2}+\bnu +u\right) \Gamma\left(\frac{k}{2}+\bnu-u\right)}
	\frac{1}{2\pi i } \int_{(\sigma_w = 1/2+\epsilon)}
	\frac{\Gamma\left(-w+u\right)}
	{\Gamma\left(1+w+u\right)}
	\\
	\times
	\Gamma\left(w+\frac{k}{2}+\bar{\nu}\right) 
	\Gamma\left(w+\frac{k}{2}-\bar{\nu}\right)
	\zeta(2s)
	\sum_{m=1}^\infty 
	D_{f, \phi_1}(w+1/2; m) \frac{\overline{c_{\phi_2}(m)}}{m^{s-w-\frac{1}{2}}}
	\; dw\; du
	.
\end{multline*}
By \eqref{e:Dfphi} and \eqref{e:cMfphi1phi2}, we get
$$
	\zeta(2s) \sum_{m=1}^\infty 
	D_{f, \phi_1}(w+1/2; m) \frac{\overline{c_{\phi_2}(m)}}{m^{s-w-\frac{1}{2}}}
	=
	\cM_{f, \phi_1, \phi_2}(s-w+1/2-k/2, w+1/2)
	. 
$$
We have
\begin{multline*}
	\tE^{(2)}_{f, \phi_1, \phi_2}(s)
	\\
	=
	(4\pi)^{\frac{k-1}{2}} i^k \frac{4}{\pi} 
	\frac{1}{2\pi i} \int_{(\sigma_u = 1/2+2\epsilon)} 
	\frac{h(u/i) u 
	\Gamma\left(\frac{1}{2}+u\right) \Gamma\left(\frac{1}{2}-u\right)}
	{\Gamma\left(\frac{k}{2}+\bnu_1 +u\right) \Gamma\left(\frac{k}{2}+\bnu_1-u\right)}
	\frac{1}{2\pi i } \int_{(\sigma_w = 1/2+\epsilon)}
	\frac{\Gamma\left(-w+u\right)}
	{\Gamma\left(1+w+u\right)}
	\\
	\times
	\Gamma\left(w+\frac{k}{2}+\bar{\nu_1}\right) 
	\Gamma\left(w+\frac{k}{2}-\bar{\nu_1}\right)
	\cM_{f, \phi_1, \phi_2} \left(s-w+1/2-k/2, w+1/2\right)
	\; dw\; du
	.
\end{multline*}

Recalling \eqref{e:cM*} and \eqref{e:Gfphi}, 
\begin{multline*}
	\cM^*_{f, \phi_1, \phi_2}(s, w)
	\\
	=
	\frac{\Gamma\left(s+\frac{k}{2}+\bnu_2 -\frac{1}{2}\right)
	\Gamma\left(s+\frac{k}{2}-\bnu_2-\frac{1}{2}\right) 
	(4\pi)^{1-s-\frac{k}{2}} }
	{\Gamma(s)}
	\left\{
	\cM_{f, \phi_1, \phi_2}(s, w)
	+
	\cM^{(3)}_{f, \phi_1, \phi_2}(s, w)\right\}
	.
\end{multline*}
Then 
$$
	\tE^{(2)}_{f, \phi_1, \phi_2}(s)
	=
	\tE^{(3)}_{f, \phi_1, \phi_2}(s)
	+
	\tE^{(4)}_{f, \phi_1, \phi_2}(s)
	.
$$

\end{proof}

In Lemma \ref{lem:tE1+tE2_SDDS}, we rewrite $\tE^{(1)}_{f, \phi_1, \phi_2}(s)$ and $\tE^{(2)}_{f, \phi_1, \phi_2}(s)$ for $\Re(s)> 1+k/2+\epsilon$, associated to the double Dirichlet series. 
Our goal is to get a meromorphic continuation for $\tE^{(1)}_{f, \phi_1, \phi_2}(s)$ and $\tE^{(2)}_{f, \phi_1, \phi_2}(s)$ for $\Re(s) \geq 1/2$. 

We first need to study the following three functions on $s$ and $t$: 
for $|\Im(t)|< \epsilon/2$ and $\Re(s) \geq 1/2$, define
\begin{multline}\label{e:tth1}
	\tth^{(1)}(t; s, \nu_1,\nu_2)
	=
	-
	\frac{(4\pi)^{k}i^k }{\sqrt{\pi}}
	\frac{1}{2\pi i} \int_{(\sigma_u = 1/2+2\epsilon)} 
	\frac{h(u/i) u \tan(\pi u)}{\Gamma\left(\frac{k}{2}+\bnu +u\right) \Gamma\left(\frac{k}{2}+\bnu-u\right)}
	\\
	\times
	\frac{1}{2\pi i } \int_{(\sigma_w = 1/2-\epsilon)}
	\frac{\Gamma\left(-w+\frac{k}{2}+u\right) \Gamma\left(-w+\frac{k}{2}-u\right)
	\Gamma\left(w-\bnu_1\right) \Gamma\left(w+\bnu_1\right)}
	{\Gamma\left(\frac{1}{2}+\frac{k}{2}+\bnu_1\right)
	\Gamma\left(\frac{1}{2}+\frac{k}{2}-\bnu_1\right)
	\Gamma\left(s-w+\frac{k}{2}+\bnu_2\right) 
	\Gamma\left(s-w+\frac{k}{2}-\bnu_2\right)}
	\\
	\times
	\Gamma\left(s-w+it\right) \Gamma\left(s-w-it\right)
	\; dw\; du
	.
\end{multline}
For $|\Im(t)|< \epsilon/2$ and $1/2\leq \Re(s)\leq 1/2+\epsilon/2$, 
define
\begin{multline}\label{e:tth2}
	\tth^{(2)}(t; s, \nu_1, \nu_2)
	=
	\sum_{\ell=0}^{\frac{k}{2}}
	(-1)^\ell
	\frac{\Gamma\left(-\frac{1}{2}+s-\ell +it\right) \Gamma\left(-\frac{1}{2}+s-\ell -it\right)}
	{\Gamma\left(\frac{1}{2}+\frac{k}{2}+\bnu_2\right)
	\Gamma\left(\frac{1}{2}+\frac{k}{2}-\bnu_2\right)}
	\\
	\times
	\frac{(4\pi)^{k} }{\pi\sqrt{\pi} }
	\left\{
	-
	\frac{\pi}{\sin(\pi s)}
	\sum_{\ell_1=\ell}^{\frac{k}{2}} 
	\frac{1}{\ell_1!}
	\frac{h\left( (s-\ell_1 -1/2)/i\right) \left(s-\ell_1-\frac{1}{2}\right) }
	{\Gamma\left(\frac{k}{2}+\bnu_1 +s-\ell_1 -\frac{1}{2}\right) 
	\Gamma\left(\frac{k}{2}+\bnu_1 -s+\ell_1 +\frac{1}{2}\right)
	\Gamma\left(2s-\ell-\ell_1\right)}
	\right.
	\\
	\left.
	+
	\frac{1}{2\pi i} \int_{(\sigma_u=1/2+2\epsilon)} 
	h(u/i) u \frac{\cos(\pi(s+u))}{\cos(\pi u)}
	\frac{\Gamma\left(\frac{1}{2}-s+\ell+u\right)
	\Gamma\left(\frac{1}{2}-s+\ell-u\right)}
	{\Gamma\left(\frac{k}{2}+\bnu_1 +u\right) \Gamma\left(\frac{k}{2}+\bnu_1-u\right)}
	\; du
	\right\}
	\\
	-
	\frac{\Gamma\left(\frac{1}{2}+s+it\right)
	\Gamma\left(\frac{1}{2}+s-it\right)}
	{\Gamma\left(\frac{1}{2}+\frac{k}{2}+\bnu_2\right)
	\Gamma\left(\frac{1}{2}+\frac{k}{2}-\bnu_2\right)}
	\frac{(4\pi)^{k} }{\pi\sqrt{\pi} }
	\\
	\times
	\left\{
	\frac{\pi}{\sin(\pi s)}
	\sum_{\ell_1=0}^{\frac{k}{2}}
	\frac{1}{\ell_1!}
	\frac{h\left((s-1/2-\ell_1)/i\right) \left(s-\ell_1-\frac{1}{2}\right)}
	{\Gamma\left(\frac{k}{2}+\bnu_1 +s-\ell_1-\frac{1}{2} \right) 
	\Gamma\left(\frac{k}{2}+\bnu_1-s+\ell_1+\frac{1}{2}\right)}
	\right.
	\\
	\times 
	\frac{1}
	{\Gamma\left(2s-\ell_1\right)
	\left(s-\ell_1-\frac{1}{2}+it\right) \left(s-\ell_1-\frac{1}{2}-it\right)}
	\\
	\left.
	+
	\frac{1}{2\pi i} \int_{(\sigma_u = 1/2+2\epsilon)} 
	h(u/i) \frac{u}{(u+it)(u-it)} 
	\frac{\cos(\pi(s+u))}{\cos(\pi u)}
	\frac{\Gamma\left(\frac{1}{2}-s+u\right)
	\Gamma\left(\frac{1}{2}-s-u\right)}
	{\Gamma\left(\frac{k}{2}+\bnu_1 +u\right) \Gamma\left(\frac{k}{2}+\bnu_1-u\right)}
	\; du
	\right\}
\end{multline}
and
\begin{multline}\label{e:tth3}
	\tth^{(3)}(t; s, \nu_1, \nu_2)
	=
	\frac{(4\pi)^{k} i^k}{\pi\sqrt{\pi}}
	\frac{1}{2\pi i} \int_{(\sigma_u = 1/2+2\epsilon)} 
	\frac{h(u/i) u 
	\Gamma\left(\frac{1}{2}+u\right) \Gamma\left(\frac{1}{2}-u\right)}
	{\Gamma\left(\frac{k}{2}+\bnu_1 +u\right) \Gamma\left(\frac{k}{2}+\bnu_1-u\right)}
	\\
	\times
	\frac{1}{2\pi i } \int_{(\sigma_w = 1/2+\epsilon)}
	\frac{\Gamma\left(-w+u\right)}
	{\Gamma\left(1+w+u\right)}
	\Gamma\left(w+\frac{k}{2}+\bar{\nu_1}\right) 
	\Gamma\left(w+\frac{k}{2}-\bar{\nu_1}\right)
	\\
	\times
	\frac{\Gamma\left(s-w+\frac{1}{2}-\frac{k}{2}\right)
	\Gamma\left(-s+w+\frac{1}{2}+\frac{k}{2}\right)
	} 
	{\Gamma\left(s-w+\bnu_2\right)
	\Gamma\left(s-w-\bnu_2\right) }
	\frac{\Gamma\left(s-w-\frac{k}{2}-it\right) \Gamma\left(s-w-\frac{k}{2}+it\right)}
	{\Gamma\left(\frac{1}{2}-it\right) \Gamma\left(\frac{1}{2}+it\right)}
	\; dw \; du
	\\
	+
	\sum_{\ell=0}^{\frac{k}{2}}
	\frac{1}{\ell!}
	\frac{(4\pi)^{k} i^k}{\pi\sqrt{\pi}}
	\frac{
	\Gamma\left(s+\ell \pm it+\bar{\nu_1}\right) 
	\Gamma\left(s+\ell \pm it-\bar{\nu_1}\right)
	\Gamma\left(-\ell \mp 2it\right)} 
	{\Gamma\left(\frac{k}{2}-\ell \mp it+\bnu_2\right)
	\Gamma\left(\frac{k}{2}-\ell \mp it-\bnu_2\right) }
	\\
	\times
	\frac{1}{2\pi i} \int_{(\sigma_u = 1/2+2\epsilon)} 
	\frac{h(u/i) u 
	\Gamma\left(\frac{1}{2}+u\right) \Gamma\left(\frac{1}{2}-u\right)}
	{\Gamma\left(\frac{k}{2}+\bnu_1 +u\right) \Gamma\left(\frac{k}{2}+\bnu_1-u\right)}
	\frac{\Gamma\left(-s+\frac{k}{2}-\ell \mp it+u\right)}
	{\Gamma\left(1+s-\frac{k}{2}+\ell \pm it+u\right)}
	\; du
	.
\end{multline}

\begin{lem}\label{lem:tth1_mero}
Taking $t=(s-1\pm \bnu_1)/i$, $\tth^{(1)}_{f, \phi_1, \phi_2} ((s-1\pm \bnu_1)/i;s, \bnu_1, \bnu_2)$ has a meromorphic continuation to any $s\in \C$.
For $1/2\leq \Re(s)\leq 1/2+\epsilon/2$, let
\be\label{e:H1pm}
	H_1^\pm (s; \bnu_1)
	=
	\frac{1}{\pi^2}\int_{-\infty}^\infty 
	h(t) t \tanh(\pi t)
	\frac{
	\Gamma\left(1-2s+\frac{k}{2}\pm \bnu_1+it \right)
	\Gamma\left(1-2s+\frac{k}{2}\pm \bnu_1 -it \right)}
	{\Gamma\left(\frac{k}{2}+\bnu_1+it\right) 
	\Gamma\left(\frac{k}{2}+\bnu_1-it\right)}
	\; dt
\ee
and
\begin{multline}\label{e:tth1pm}
	\tth^{(1)\pm} (s; \bnu_1, \bnu_2)
	=
	-
	\frac{(4\pi)^k i^k}{\sqrt{\pi}}
	\frac{1}{2\pi i} \int_{(\sigma_u = 1/2+2\epsilon)} 
	\frac{h(u/i) u \tan(\pi u)}
	{\Gamma\left(\frac{k}{2}+\bnu_1+u\right) 
	\Gamma\left(\frac{k}{2}+\bnu_1-u\right)}
	\\
	\times
	\frac{1}{2\pi i} 
	\int_{(\sigma_w = 1-\epsilon)} 
	\frac{\Gamma\left(u+\frac{k}{2}-w\right) \Gamma\left(-u+\frac{k}{2}-w\right) 
	\Gamma\left(2s-1\pm \bnu_1-w\right) \Gamma\left(1\mp \bnu_1-w\right)}
	{\Gamma\left(s+\frac{k}{2}+\bnu_2-w\right)
	\Gamma\left(s+\frac{k}{2}-\bnu_2-w\right)}
	\\
	\times
	\frac{
	\Gamma\left(-\bnu_1+w\right) \Gamma\left(\bnu_1+w\right)}
	{\Gamma\left(\frac{1}{2}+\frac{k}{2}+\bnu_1\right)
	\Gamma\left(\frac{1}{2}+\frac{k}{2}-\bnu_1\right)}
	\; dw
	\; du
	.
\end{multline}
Then we have
\begin{multline}\label{e:tth1_Omega}
	\tth^{(1)}\left((s-1\pm \bnu_1)/i; s, \nu_1, \nu_2\right)
	\\	
	=
	\frac{(4\pi)^k i^k \sqrt{\pi}}{2}
	\frac{\Gamma\left(2\mp 2\bnu_1-2s\right)
	\Gamma\left(2s-1\right)
	\Gamma\left(2s-1\pm 2\bnu_1\right)}
	{\Gamma\left(\frac{1}{2}+\frac{k}{2}+\bnu_1\right) 
	\Gamma\left(\frac{1}{2}+\frac{k}{2}-\bnu_1\right)
	\Gamma\left(-s+1+\frac{k}{2}+\bnu_2\mp \bnu_1\right)
	\Gamma\left(-s+1+\frac{k}{2}-\bnu_2\mp \bnu_1\right)}
	\\
	\times
	H_1^\mp (s;\bnu_1)
	\\
	+
	\tth^{(1)\pm}(s; \bnu_1, \bnu_2)
	.
\end{multline}

\end{lem}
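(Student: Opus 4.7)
The plan is to derive \eqref{e:tth1_Omega} by substituting $t=(s-1\pm\bnu_1)/i$ into the definition \eqref{e:tth1} and analytically continuing from the initial domain $|\Im(t)|<\epsilon/2$. The substitution collapses $\Gamma(s-w+it)\Gamma(s-w-it)$ into $\Gamma(2s-1\pm\bnu_1-w)\Gamma(1\mp\bnu_1-w)$, and the pole of the first factor at $w=2s-1\pm\bnu_1$ sits near $\Re(w)=0$ for $\Re(s)\approx 1/2$, having crossed the original contour $\sigma_w=1/2-\epsilon$ during the continuation. The contour supporting the analytically continued integral therefore lies to the left of this pole; shifting it rightward through the pole to $\sigma_w=1-\epsilon$ reproduces exactly the integrand of $\tth^{(1)\pm}(s;\bnu_1,\bnu_2)$ in \eqref{e:tth1pm}, so the identity reduces to computing the single residue picked up.

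At the residue, using $\Res_{w=a}\Gamma(a-w)=-1$, the surviving gamma factors evaluate to
$$
\Gamma(2s-1)\,\Gamma(2s-1\pm 2\bnu_1)\,\Gamma(2\mp 2\bnu_1-2s)\,\Gamma\bigl(\tfrac{k}{2}+1-2s\mp\bnu_1+u\bigr)\Gamma\bigl(\tfrac{k}{2}+1-2s\mp\bnu_1-u\bigr)
$$
divided by $\Gamma(\tfrac12+\tfrac{k}{2}+\bnu_1)\Gamma(\tfrac12+\tfrac{k}{2}-\bnu_1)\Gamma(-s+1+\tfrac{k}{2}+\bnu_2\mp\bnu_1)\Gamma(-s+1+\tfrac{k}{2}-\bnu_2\mp\bnu_1)$, which matches exactly the prefactor of $H_1^\mp(s;\bnu_1)$ appearing on the right-hand side of \eqref{e:tth1_Omega}. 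The residue still carries a $u$-integral over $\sigma_u=1/2+2\epsilon$; to convert it into $H_1^\mp(s;\bnu_1)$ I would shift this contour to the imaginary axis $\sigma_u=0$. The only potential obstruction is the pole of $\tan(\pi u)$ at $u=1/2$, but the hypothesis $h(\pm i/2)=0$ annihilates it since $h(u/i)\big|_{u=1/2}=h(-i/2)=0$; the $\Gamma$-poles of the residue integrand all lie further to the right for $\Re(s)\approx 1/2$ and $k\geq 2$.

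On the imaginary axis, parametrizing $u=it$ with $\tan(\pi it)=i\tanh(\pi t)$ and $du=i\,dt$, the $u$-integral reduces to $-\tfrac{\pi}{2}H_1^\mp(s;\bnu_1)$. Accumulating the outer prefactor $-(4\pi)^k i^k/\sqrt{\pi}$, the factor $-1$ from the residue of $\Gamma(a-w)$, and the sign coming from the direction of the $w$-contour shift produces the main term $+\tfrac{(4\pi)^k i^k\sqrt{\pi}}{2}\cdot(\text{explicit gamma prefactor})\cdot H_1^\mp(s;\bnu_1)$ with the correct sign. The meromorphic continuation to all $s\in\C$ then follows from the explicit gamma poles in the prefactor together with the meromorphy of $\tth^{(1)\pm}(s;\bnu_1,\bnu_2)$, whose integrand has only explicit gamma-type singularities in $s$. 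The principal obstacle is the careful sign and gamma-factor bookkeeping at the residue step, in particular identifying the correct pole-on-right orientation of the $w$-contour and justifying the two contour shifts via Stirling's asymptotics combined with the polynomial decay of $h$ to rule out further intervening poles.
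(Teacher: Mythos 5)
Your proposal follows essentially the same route as the paper's own proof: substitute $t=(s-1\pm\bnu_1)/i$ in its region of validity, continue in $s$ by deforming the $w$-contour so the moving pole of $\Gamma(2s-1\pm\bnu_1-w)$ stays on its right, then straighten to $\sigma_w=1-\epsilon$, picking up the single residue at $w=2s-1\pm\bnu_1$ whose $u$-integral is converted to $H_1^\mp(s;\bnu_1)$ by shifting to the imaginary axis (with $h(\pm i/2)=0$ handling the $\tan(\pi u)$ pole and $\tan(\pi it)=i\tanh(\pi t)$), leaving $\tth^{(1)\pm}(s;\bnu_1,\bnu_2)$ as the shifted integral. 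The gamma-factor identification at the residue and the sign bookkeeping (outer prefactor, residue of $\Gamma(a-w)$, rightward-shift orientation, and the substitution $u=it$) are all consistent with the paper's computation, so the argument is correct.
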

\begin{proof}

For $1< \Re(s) < 1+\epsilon/2 + |\Re(\bnu_1)|$, i.e., $|\Re(s-1\pm \bnu_1)| < \epsilon/2$ and $\Re(s)>1$, 
taking $t= (s-1\pm \bnu_1)/i$, 
we have
\begin{multline*}
	\tth^{(1)}((s-1\pm \bnu_1)/i; s, \nu_1,\nu_2)
	=
	-
	\frac{(4\pi)^{k}i^k }{\sqrt{\pi}}
	\frac{1}{2\pi i} \int_{(\sigma_u = 1/2+2\epsilon)} 
	\frac{h(u/i) u \tan(\pi u)}{\Gamma\left(\frac{k}{2}+\bnu +u\right) \Gamma\left(\frac{k}{2}+\bnu-u\right)}
	\\
	\times
	\frac{1}{2\pi i } \int_{(\sigma_w = 1/2-\epsilon)}
	\frac{\Gamma\left(-w+\frac{k}{2}+u\right) \Gamma\left(-w+\frac{k}{2}-u\right)
	\Gamma\left(w-\bnu_1\right) \Gamma\left(w+\bnu_1\right)}
	{\Gamma\left(\frac{1}{2}+\frac{k}{2}+\bnu_1\right)
	\Gamma\left(\frac{1}{2}+\frac{k}{2}-\bnu_1\right)
	\Gamma\left(s-w+\frac{k}{2}+\bnu_2\right) 
	\Gamma\left(s-w+\frac{k}{2}-\bnu_2\right)}
	\\
	\times
	\Gamma\left(2s-w-1\pm \bnu_1\right) \Gamma\left(-w+1\mp \bnu_1\right)
	\; dw\; du
	.
\end{multline*}
Let $C$ be a path running from $-\infty$ to $+\infty$ in such a way that poles of the functions 
$$
	\Gamma\left(-\bnu_1+w\right) \Gamma\left(\bnu_1+w\right)
$$
lie to the left, and the poles of the functions 
$$
	\Gamma\left(u+\frac{k}{2}-w\right) \Gamma\left(-u+\frac{k}{2}-w\right)
	\Gamma\left(2s-1\pm \bnu_1-w\right) \Gamma\left(1\mp \bnu_1-w\right)
$$
lie to the right of $C$. 
Then
\begin{multline*}
	\tth^{(1)}\left( (s-1\pm \bnu_1)/i; s, \nu_1, \nu_2\right)
	=
	-\frac{(4\pi)^k i^k}{\sqrt{\pi}}
	\frac{1}{2\pi i} \int_{(\sigma_u = 1/2+2\epsilon)} 
	\frac{h(u/i) u \tan(\pi u)}
	{\Gamma\left(\frac{k}{2}+\bnu_1+u\right) 
	\Gamma\left(\frac{k}{2}+\bnu_1-u\right)}
	\\
	\times
	\frac{1}{2\pi i} 
	\int_C \frac{\Gamma\left(u+\frac{k}{2}-w\right) \Gamma\left(-u+\frac{k}{2}-w\right) 
	\Gamma\left(2s-1\pm \bnu_1-w\right) \Gamma\left(1\mp \bnu_1-w\right)
	\Gamma\left(-\bnu_1+w\right) \Gamma\left(\bnu_1+w\right)}
	{\Gamma\left(\frac{1}{2}+\frac{k}{2}+\bnu_1\right)
	\Gamma\left(\frac{1}{2}+\frac{k}{2}-\bnu_1\right)
	\Gamma\left(s+\frac{k}{2}+\bnu_2-w\right)
	\Gamma\left(s+\frac{k}{2}-\bnu_2-w\right)}
	\\
	\times
	\; dw\; du
\end{multline*}
and it has an analytic continuation to any $s\in \C$. 
For $1/2\leq \Re(s) \leq 1/2+\epsilon/2$, moving the path $C$ to $\Re(w) = 1-\epsilon$, 
gives us
\begin{multline*}
	\tth^{(1)} \left( (s-1\pm \bnu_1)/i; s, \nu_1, \nu_2\right)
	\\
	=
	-\frac{(4\pi)^k i^k}{\sqrt{\pi}}
	\frac{1}{2\pi i} \int_{(\sigma_u = 1/2+2\epsilon)} 
	h(u/i) u \tan(\pi u)
	\frac{
	\Gamma\left(u+\frac{k}{2}-2s+1\mp \bnu_1 \right)
	\Gamma\left(-u+\frac{k}{2}-2s+1\mp \bnu_1\right)}
	{\Gamma\left(\frac{k}{2}+\bnu_1+u\right) 
	\Gamma\left(\frac{k}{2}+\bnu_1-u\right)}
	\; du
	\\
	\times
	\frac{\Gamma\left(2\mp 2\bnu_1-2s\right)
	\Gamma\left(2s-1\right)
	\Gamma\left(2s-1\pm 2\bnu_1\right)}
	{\Gamma\left(\frac{1}{2}+\frac{k}{2}+\bnu_1\right) 
	\Gamma\left(\frac{1}{2}+\frac{k}{2}-\bnu_1\right)
	\Gamma\left(-s+1+\frac{k}{2}+\bnu_2\mp \bnu_1\right)
	\Gamma\left(-s+1+\frac{k}{2}-\bnu_2\mp \bnu_1\right)}
	\\
	-
	\frac{(4\pi)^k i^k}{\sqrt{\pi}}
	\frac{1}{2\pi i} \int_{(\sigma_u = 1/2+2\epsilon)} 
	\frac{h(u/i) u \tan(\pi u)}
	{\Gamma\left(\frac{k}{2}+\bnu_1+u\right) 
	\Gamma\left(\frac{k}{2}+\bnu_1-u\right)}
	\\
	\times
	\frac{1}{2\pi i} 
	\int_{(\sigma_w = 1-\epsilon)} 
	\frac{\Gamma\left(u+\frac{k}{2}-w\right) \Gamma\left(-u+\frac{k}{2}-w\right) 
	\Gamma\left(2s-1\pm \bnu_1-w\right) \Gamma\left(1\mp \bnu_1-w\right)}
	{\Gamma\left(s+\frac{k}{2}+\bnu_2-w\right)
	\Gamma\left(s+\frac{k}{2}-\bnu_2-w\right)}
	\\
	\times
	\frac{
	\Gamma\left(-\bnu_1+w\right) \Gamma\left(\bnu_1+w\right)}
	{\Gamma\left(\frac{1}{2}+\frac{k}{2}+\bnu_1\right)
	\Gamma\left(\frac{1}{2}+\frac{k}{2}-\bnu_1\right)}
	\; dw
	\; du
	\\
	=
	\frac{(4\pi)^k i^k \sqrt{\pi}}{2}
	\frac{\Gamma\left(2\mp 2\bnu_1-2s\right)
	\Gamma\left(2s-1\right)
	\Gamma\left(2s-1\pm 2\bnu_1\right)}
	{\Gamma\left(\frac{1}{2}+\frac{k}{2}+\bnu_1\right) 
	\Gamma\left(\frac{1}{2}+\frac{k}{2}-\bnu_1\right)
	\Gamma\left(-s+1+\frac{k}{2}+\bnu_2\mp \bnu_1\right)
	\Gamma\left(-s+1+\frac{k}{2}-\bnu_2\mp \bnu_1\right)}
	\\
	\times
	H_1^\mp (s;\bnu_1)
	\\
	+
	\tth^{(1)\pm}(s; \bnu_1, \bnu_2)
	.
\end{multline*}

\end{proof}

For $1/2\leq \Re(s)< 1/2+\epsilon/2$, for $0\leq \ell \leq k/2$, let
\be\label{e:H2}
	H_{2, \ell} (s; \bnu_1)
	:=
	-
	\frac{1}{\pi^2} \int_{-\infty}^\infty 
	h(t) it\frac{\cos(\pi(s+it))}{\cos(\pi it)}
	\frac{\Gamma\left(\frac{1}{2}-s+\frac{k}{2}-\ell+it\right)
	\Gamma\left(\frac{1}{2}-s+\frac{k}{2}-\ell-it\right)}
	{\Gamma\left(\frac{k}{2}+\bnu_1 +it\right) 
	\Gamma\left(\frac{k}{2}+\bnu_1-it\right)}
	\; dt
	.
\ee

\begin{lem}
Taking $t=(s-1\pm \bnu_1)/i$, $\tth^{(2)}_{f, \phi_1, \phi_2} ((s-1\pm \bnu_1)/i;s, \bnu_1, \bnu_2)$ has a meromorphic continuation to 
$1/2\leq \Re(s) < 1/2+\epsilon/2$. 
For $1/2\leq \Re(s)< 1/2+\epsilon/2$, 
let
\begin{multline}\label{e:tth3pm}
	\tth^{(3)\pm}(s; \bnu_1, \bnu_2)
	:=
	\frac{(4\pi)^{k} i^k}{\pi\sqrt{\pi}}
	\frac{1}{2\pi i} \int_{(\sigma_u = 1/2+2\epsilon)} 
	\frac{h(u/i) u 
	\Gamma\left(\frac{1}{2}+u\right) \Gamma\left(\frac{1}{2}-u\right)}
	{\Gamma\left(\frac{k}{2}+\bnu_1 +u\right) \Gamma\left(\frac{k}{2}+\bnu_1-u\right)}
	\\
	\times
	\frac{1}{2\pi i } \int_{(\sigma_w = 1/2+\epsilon)}
	\frac{\Gamma\left(-w+u\right)}
	{\Gamma\left(1+w+u\right)}
	\Gamma\left(w+\frac{k}{2}+\bnu_1\right) 
	\Gamma\left(w+\frac{k}{2}-\bnu_1\right)
	\\
	\times
	\frac{\Gamma\left(s-w+\frac{1}{2}-\frac{k}{2}\right)
	\Gamma\left(-s+w+\frac{1}{2}+\frac{k}{2}\right)
	} 
	{\Gamma\left(s-w+\bnu_2\right)
	\Gamma\left(s-w-\bnu_2\right) }
	\frac{\Gamma\left(-w-\frac{k}{2}+1\mp \bnu_1\right) 
	\Gamma\left(2s-1-w-\frac{k}{2}\pm \bnu_1\right)}
	{\Gamma\left(\frac{3}{2}-s\mp \bnu_1\right) 
	\Gamma\left(-\frac{1}{2}+s\pm \bnu_1\right)}
	\; dw \; du
	\\
	+
	\sum_{\ell=1}^{\frac{k}{2}}
	(-1)^{\ell}
	\frac{(4\pi)^{k}}{\pi\sqrt{\pi}}
	\frac{\Gamma\left(\ell \mp 2\bnu_1\right)
	\Gamma\left(2s-1-\ell \pm 2\bnu_1\right)} 
	{\Gamma\left(s+\frac{k}{2}-\ell \pm \bnu_1+\bnu_2\right)
	\Gamma\left(s+\frac{k}{2}-\ell \pm \bnu_1-\bnu_2\right) }
	\\
	\times
	\frac{1}{2\pi i} \int_{(\sigma_u = 1/2+2\epsilon)} 
	h(u/i) u
	\left(\sin(\pi (\pm \bnu_1)) - \cos(\pi (\pm \bnu_1)) \tan(\pi u)\right)
	\\
	\times
	\frac{\Gamma\left(\frac{k}{2}-\ell  \pm \bnu_1+u\right)
	\Gamma\left(\frac{k}{2}-\ell  \pm \bnu_1-u\right)}
	{\Gamma\left(\frac{k}{2}+\bnu_1 +u\right) \Gamma\left(\frac{k}{2}+\bnu_1-u\right)}
	\; du
	\\
	-
	\sum_{\ell=1}^{\frac{k}{2}}
	\frac{1}{\ell!}
	\frac{(4\pi)^{k}}{\sqrt{\pi}}
	\frac{\Gamma\left(2s-1+\ell \right)}
	{\sin\left(\pi\left(2s\pm 2\bnu_1\right)\right)}
	\frac{1} 
	{\Gamma\left(-s+1+\frac{k}{2}-\ell \mp \bnu_1+\bnu_2\right)
	\Gamma\left(-s+1+\frac{k}{2}-\ell \mp \bnu_1-\bnu_2\right) }
	\\
	\times
	\frac{1}{2\pi i} \int_{(\sigma_u = 1/2+2\epsilon)} 
	h(u/i) u 
	\left(\sin\left(\pi(2s\pm \bnu_1)\right) + \cos\left(\pi(2s\pm \bnu_1)\right)\tan(\pi u)\right)
	\\
	\times
	\frac{\Gamma\left(-2s+1+\frac{k}{2}-\ell \mp \bnu_1+u\right)
	\Gamma\left(-2s+1+\frac{k}{2}-\ell \mp \bnu_1-u\right)}
	{\Gamma\left(\frac{k}{2}+\bnu_1 +u\right) \Gamma\left(\frac{k}{2}+\bnu_1-u\right)}
	\; du
	.
\end{multline}
Then 
\begin{multline}\label{e:tth3_Omega}
	\tth^{(3)}\left( (s-1\pm \bnu_1)/i; s, \nu_1, \nu_2\right)
	=
	\tth^{(3)\pm}(s; \bnu_1, \bnu_2)
	\\
	+
	\frac{(4\pi)^{k}\sqrt{\pi}}{2}
	\frac{\Gamma\left(2s-1\right)
	\cos\left(\pi(2s\pm \bnu_1)\right)}
	{\sin\left(\pi\left(2s\pm 2\bnu_1\right)\right)}
	\frac{1} 
	{\Gamma\left(-s+1+\frac{k}{2}\mp \bnu_1+\bnu_2\right)
	\Gamma\left(-s+1+\frac{k}{2}\mp \bnu_1-\bnu_2\right) }
	H_1^\mp (s;\bnu_1)
\end{multline}

\end{lem}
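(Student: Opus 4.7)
The plan is to parallel the argument used for \eqref{e:tth1_Omega} in the preceding lemma. Beginning from the definition \eqref{e:tth3} of $\tth^{(3)}(t; s, \bnu_1, \bnu_2)$, I would substitute $t = (s-1\pm \bnu_1)/i$, choosing the sign matching the desired conclusion. Under this substitution the two gamma factors $\Gamma(s-w-k/2 - it)\Gamma(s-w-k/2 + it)$ in the double integral collapse to $\Gamma(1-w-k/2\mp\bnu_1)\Gamma(2s-1-w-k/2\pm\bnu_1)$, and the normalizing denominator $\Gamma(1/2-it)\Gamma(1/2+it)$ becomes $\Gamma(3/2-s\mp\bnu_1)\Gamma(-1/2+s\pm\bnu_1)$. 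Comparing with the first summand of \eqref{e:tth3pm}, one sees that the double integral piece of \eqref{e:tth3} already matches the first term of $\tth^{(3)\pm}$ up to residues incurred when shifting the $w$-contour.

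Next, I would shift the $w$-contour in the double integral from $\Re(w) = 1/2+\epsilon$ across the sequences of poles of $\Gamma(-w-k/2+1\mp\bnu_1)$ and $\Gamma(2s-1-w-k/2\pm\bnu_1)$, located at $w = 1-k/2\mp\bnu_1 + n$ and $w = 2s-1-k/2\pm\bnu_1 + n$ for $n = 0, 1, 2, \ldots$ Since $\Re(s)$ is close to $1/2$, only finitely many (indexed by $\ell = 0, 1, \ldots, k/2$) need to be crossed; the remaining tail is absorbed into the analytic continuation and contributes to the first term of $\tth^{(3)\pm}$. Taking the residue at the leading pole ($\ell = 0$) produces a single $u$-integral containing the ratio $\Gamma(k/2-1\pm\bnu_1+u)\Gamma(k/2-1\pm\bnu_1-u)/[\Gamma(k/2+\bnu_1+u)\Gamma(k/2+\bnu_1-u)]$ weighted by $h(u/i)\, u \tan(\pi u)$ and various gamma prefactors. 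Shifting this $u$-contour from $\Re(u) = 1/2+2\epsilon$ to the imaginary axis and setting $u = it$, so that $\tan(\pi u) = i\tanh(\pi t)$ and $h(u/i) = h(t)$, converts the $u$-integral into a real $t$-integral of the form defining $H_1^\mp(s;\bnu_1)$ in \eqref{e:H1pm}.

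Simultaneously, I would substitute $t = (s-1\pm\bnu_1)/i$ into the sum $\sum_{\ell=0}^{k/2}$ of single $u$-integrals in \eqref{e:tth3}. The factors $\Gamma(s+\ell\pm it\pm\bnu_1)$, $\Gamma(-\ell\mp 2it)$, and the ratio $\Gamma(-s+k/2-\ell\mp it+u)/\Gamma(1+s-k/2+\ell\pm it+u)$ simplify by direct substitution to gamma functions in $s$, $\bnu_1$, $\bnu_2$, $u$ alone. Applying the reflection identity $\Gamma(z)\Gamma(1-z) = \pi/\sin(\pi z)$ to the resulting factors (for instance $\Gamma(-\ell\mp 2it) \mapsto \Gamma(2 - 2s - \ell\mp 2\bnu_1)$), and expanding the quotient $\cos(\pi(s+u))/\cos(\pi u) = \cos(\pi s) - \sin(\pi s)\tan(\pi u)$, the $\ell$-sum reorganizes into the two finite sums indexed by $\ell \ge 1$ in \eqref{e:tth3pm}. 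The exceptional $\ell = 0$ contribution from this sum then combines with the residue analysis of the previous step, producing the single $H_1^\mp(s;\bnu_1)$ main term with the claimed coefficient $\cos(\pi(2s\pm\bnu_1))/\sin(\pi(2s\pm 2\bnu_1))$.

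The hard part will be the trigonometric bookkeeping needed to produce this precise coefficient of $H_1^\mp$. Several sources contribute: the factor $\tan(\pi u)$ in the double integral of \eqref{e:tth3} becomes $i\tanh(\pi t)$ upon $u = it$, matching the hyperbolic tangent in \eqref{e:H1pm}; the reflection formulas applied to the gamma ratios evaluated at $2s-1\pm 2\bnu_1$ and $2-2s\mp 2\bnu_1$ produce additional sines and cosines; and the $\cos(\pi s) - \sin(\pi s)\tan(\pi u)$ expansion from the $\ell=0$ term produces yet another. These contributions must combine via sum-to-product identities into the single clean ratio $\cos(\pi(2s\pm\bnu_1))/\sin(\pi(2s\pm 2\bnu_1))$. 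A parallel check, more tedious but routine, is required to verify that the residues from the $n \ge 1$ poles crossed in the contour shift match, with the correct signs and $\Gamma$-factorials, the $\ell \ge 1$ pieces displayed in \eqref{e:tth3pm}.
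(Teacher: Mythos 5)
Your plan has the right ingredients (contour shifts, residues, reflection formulas), but there is a genuine gap at the very first step. Equation \eqref{e:tth3} defines $\tth^{(3)}(t;s,\nu_1,\nu_2)$ only for $|\Im(t)|<\epsilon/2$, whereas the target value $t=(s-1\pm\bnu_1)/i$ has $|\Im(t)|=|1-\Re(s)|\approx 1/2$ when $1/2\le\Re(s)<1/2+\epsilon/2$ and $\bnu_1$ is purely imaginary. So you cannot ``begin from the definition and substitute'' and then repair matters by shifting the $w$-contour of an identity you have not yet established: as $t$ moves off the real axis to its target the poles $w=s-\frac k2\pm it+n$ (and the $u$-poles of the $\ell$-sum) cross the straight-line contours, so the naively substituted double integral neither equals $\tth^{(3)}((s-1\pm\bnu_1)/i;\cdot)$ nor ``already matches'' the first term of \eqref{e:tth3pm} up to later corrections -- the corrections are generated by the continuation itself, not by a subsequent shift of a valid identity. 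The paper's proof supplies exactly this missing step: it first deforms the $w$-line to a pole-separating path $C$, notes the resulting formula also holds for $1<\Re(s)<1+\epsilon/2+|\Re(\bnu_1)|$ where the substituted $t$ is nearly real, substitutes there, continues meromorphically back to $1/2\le\Re(s)<1/2+\epsilon/2$ (this is the continuation assertion of the lemma), and only then moves $C$ back to $\sigma_w=1/2+\epsilon$, collecting the two residue families at $w=1-\frac k2\mp\bnu_1+\ell$ and $w=2s-1-\frac k2\pm\bnu_1+\ell$. Some argument of this shape (or an explicit continuation in $t$ with the crossed poles tracked) is indispensable; without it your residue bookkeeping has no identity to attach to.

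You also misattribute the main term. The residue whose $u$-integrand contains $\Gamma\left(\frac k2-1\pm\bnu_1+u\right)\Gamma\left(\frac k2-1\pm\bnu_1-u\right)/\left[\Gamma\left(\frac k2+\bnu_1+u\right)\Gamma\left(\frac k2+\bnu_1-u\right)\right]$ is independent of $s$ and carries the sign $\pm\bnu_1$; setting $u=it$ there cannot produce $H_1^\mp(s;\bnu_1)$, whose integrand has the $s$-dependent numerator $\Gamma\left(1-2s+\frac k2\mp\bnu_1\pm it\right)$ with the opposite sign of $\bnu_1$. That term is in fact the $\ell=1$ member of the second (error) sum in \eqref{e:tth3pm}. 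The $H_1^\mp$ main term arises from the $n=0$ pole of $\Gamma\left(2s-1-w-\frac k2\pm\bnu_1\right)$, i.e.\ $w=2s-1-\frac k2\pm\bnu_1$, whose residue has precisely the numerator $\Gamma\left(-2s+1+\frac k2\mp\bnu_1\pm u\right)$, and the coefficient $\Gamma(2s-1)\cos(\pi(2s\pm\bnu_1))/\sin(\pi(2s\pm2\bnu_1))$ is obtained from that single term by the reflection formula, together with the evenness of $h$ and $h(\pm i/2)=0$ when the vertical $u$-integral is converted into the real $t$-integral of \eqref{e:H1pm}; it is not produced by recombining with the $\ell=0$ term of the original $\ell$-sum as you suggest. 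As written, your scheme would place the main term in the wrong residue family and could not yield the stated coefficient.
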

\begin{proof}

Recalling \eqref{e:tth3}, 
let $C$ be a path running from $-\infty$ to $+\infty$ 
in such a way that the poles of 
$$
	\Gamma\left(w+\frac{k}{2}+\bnu_1\right) 
	\Gamma\left(w+\frac{k}{2}-\bnu_1\right)
	\Gamma\left(-s+w+\frac{1}{2}+\frac{k}{2}\right)
$$
lie to the left, 
and the poles of 
$$
	\Gamma\left(-w+u\right)
	\Gamma\left(s-w+\frac{1}{2}-\frac{k}{2}\right)
	\Gamma\left(s-w-\frac{k}{2}-it\right) 
	\Gamma\left(s-w-\frac{k}{2}+it\right)
$$
lie to the right of $C$.
For $1/2\leq \Re(s)< 1/2+\epsilon/2$, we get
\begin{multline*}
	\tth^{(3)}(t; s, \nu_1, \nu_2)
	=
	\frac{(4\pi)^{k} i^k}{\pi\sqrt{\pi}}
	\frac{1}{2\pi i} \int_{(\sigma_u = 1/2+2\epsilon)} 
	\frac{h(u/i) u 
	\Gamma\left(\frac{1}{2}+u\right) \Gamma\left(\frac{1}{2}-u\right)}
	{\Gamma\left(\frac{k}{2}+\bnu_1 +u\right) \Gamma\left(\frac{k}{2}+\bnu_1-u\right)}
	\\
	\times
	\frac{1}{2\pi i } \int_{C}
	\frac{\Gamma\left(-w+u\right)}
	{\Gamma\left(1+w+u\right)}
	\Gamma\left(w+\frac{k}{2}+\bnu_1\right) 
	\Gamma\left(w+\frac{k}{2}-\bnu_1\right)
	\\
	\times
	\frac{\Gamma\left(s-w+\frac{1}{2}-\frac{k}{2}\right)
	\Gamma\left(-s+w+\frac{1}{2}+\frac{k}{2}\right)
	} 
	{\Gamma\left(s-w+\bnu_2\right)
	\Gamma\left(s-w-\bnu_2\right) }
	\frac{\Gamma\left(s-w-\frac{k}{2}-it\right) \Gamma\left(s-w-\frac{k}{2}+it\right)}
	{\Gamma\left(\frac{1}{2}-it\right) \Gamma\left(\frac{1}{2}+it\right)}
	\; dw \; du
	\\
	+
	\sum_{\ell=1}^{\frac{k}{2}}
	\frac{(4\pi)^{k}}{\pi\sqrt{\pi}}
	\frac{1}{2\pi i} \int_{(\sigma_u = 1/2+2\epsilon)} 
	h(u/i) u 
	\left(\cos(\pi s) - \sin(\pi s) \tan(\pi u)\right)
	\\
	\times
	\frac{\Gamma\left(-s+\frac{1}{2}+\frac{k}{2}-\ell+u\right)
	\Gamma\left(-s+\frac{1}{2}+\frac{k}{2}-\ell-u\right)}
	{\Gamma\left(\frac{k}{2}+\bnu_1 +u\right) 
	\Gamma\left(\frac{k}{2}+\bnu_1-u\right)}
	\; du
	\\
	\times
	\frac{\Gamma\left(s-\frac{1}{2}+\ell+\bnu_1\right) 
	\Gamma\left(s-\frac{1}{2}+\ell -\bnu_1\right)}
	{\Gamma\left(\frac{1}{2}+\frac{k}{2}-\ell +\bnu_2\right)
	\Gamma\left(\frac{1}{2}+\frac{k}{2}-\ell -\bnu_2\right) }
	\frac{1}{\prod_{j=1}^{\ell} \left[\left(\frac{1}{2}-it -j \right) 
	\left(\frac{1}{2}+it-j\right)\right]}
	, 
\end{multline*}
and it has a meromorphic continuation to any $s\in \C$. 

For $\ell=k/2$, we have
\begin{multline*}
	\frac{1}{2\pi i} \int_{(\sigma_u = 1/2+2\epsilon)} 
	h(u/i) u 
	\left(\cos(\pi s) - \sin(\pi s) \tan(\pi u)\right)
	\frac{\Gamma\left(-s+\frac{1}{2}+u\right)
	\Gamma\left(-s+\frac{1}{2}-u\right)}
	{\Gamma\left(\frac{k}{2}+\bnu_1 +u\right) 
	\Gamma\left(\frac{k}{2}+\bnu_1-u\right)}
	\; du
	\\
	=
	\frac{1}{2\pi i} \int_{(\sigma_u = 1/2+2\epsilon)} 
	h(u/i) u 
	\frac{\pi}{\cos(\pi u)}
	\frac{\Gamma\left(\frac{1}{2}-s+u\right)}
	{\Gamma\left(\frac{1}{2}+s+u\right)}
	\frac{1}
	{\Gamma\left(\frac{k}{2}+\bnu_1 +u\right) 
	\Gamma\left(\frac{k}{2}+\bnu_1-u\right)}
	\; du, 
\end{multline*}
so we have
\begin{multline*}
	\tth^{(3)}(t; s, \nu_1, \nu_2)
	=
	\frac{(4\pi)^{k} i^k}{\pi\sqrt{\pi}}
	\frac{1}{2\pi i} \int_{(\sigma_u = 1/2+2\epsilon)} 
	\frac{h(u/i) u 
	\Gamma\left(\frac{1}{2}+u\right) \Gamma\left(\frac{1}{2}-u\right)}
	{\Gamma\left(\frac{k}{2}+\bnu_1 +u\right) \Gamma\left(\frac{k}{2}+\bnu_1-u\right)}
	\\
	\times
	\frac{1}{2\pi i } \int_{C}
	\frac{\Gamma\left(-w+u\right)}
	{\Gamma\left(1+w+u\right)}
	\Gamma\left(w+\frac{k}{2}+\bnu_1\right) 
	\Gamma\left(w+\frac{k}{2}-\bnu_1\right)
	\\
	\times
	\frac{\Gamma\left(s-w+\frac{1}{2}-\frac{k}{2}\right)
	\Gamma\left(-s+w+\frac{1}{2}+\frac{k}{2}\right)
	} 
	{\Gamma\left(s-w+\bnu_2\right)
	\Gamma\left(s-w-\bnu_2\right) }
	\frac{\Gamma\left(s-w-\frac{k}{2}-it\right) \Gamma\left(s-w-\frac{k}{2}+it\right)}
	{\Gamma\left(\frac{1}{2}-it\right) \Gamma\left(\frac{1}{2}+it\right)}
	\; dw \; du
	\\
	+
	\sum_{\ell=1}^{\frac{k}{2}-1}
	\frac{(4\pi)^{k}}{\pi\sqrt{\pi}}
	\frac{1}{2\pi i} \int_{(\sigma_u = 1/2+2\epsilon)} 
	h(u/i) u 
	\left(\cos(\pi s) - \sin(\pi s) \tan(\pi u)\right)
	\\
	\times
	\frac{\Gamma\left(-s+\frac{1}{2}+\frac{k}{2}-\ell+u\right)
	\Gamma\left(-s+\frac{1}{2}+\frac{k}{2}-\ell-u\right)}
	{\Gamma\left(\frac{k}{2}+\bnu_1 +u\right) 
	\Gamma\left(\frac{k}{2}+\bnu_1-u\right)}
	\; du
	\\
	\times
	\frac{\Gamma\left(s-\frac{1}{2}+\ell+\bnu_1\right) 
	\Gamma\left(s-\frac{1}{2}+\ell -\bnu_1\right)}
	{\Gamma\left(\frac{1}{2}+\frac{k}{2}-\ell +\bnu_2\right)
	\Gamma\left(\frac{1}{2}+\frac{k}{2}-\ell -\bnu_2\right) }
	\frac{1}{\prod_{j=1}^{\ell} \left[\left(\frac{1}{2}-it -j \right) 
	\left(\frac{1}{2}+it-j\right)\right]}
	\\
	+
	\frac{(4\pi)^{k}}{\pi\sqrt{\pi}}
	\frac{1}{2\pi i} \int_{(\sigma_u = 1/2+2\epsilon)} 
	h(u/i) u 
	\frac{\pi}{\cos(\pi u)}
	\frac{\Gamma\left(\frac{1}{2}-s+u\right)}
	{\Gamma\left(\frac{1}{2}+s+u\right)}
	\frac{1}
	{\Gamma\left(\frac{k}{2}+\bnu_1 +u\right) 
	\Gamma\left(\frac{k}{2}+\bnu_1-u\right)}
	\; du
	\\
	\times
	\frac{\Gamma\left(s-\frac{1}{2}+\frac{k}{2}+\bnu_1\right) 
	\Gamma\left(s-\frac{1}{2}+\frac{k}{2} -\bnu_1\right)}
	{\Gamma\left(\frac{1}{2}+\bnu_2\right)
	\Gamma\left(\frac{1}{2}-\bnu_2\right) }
	\frac{1}{\prod_{j=1}^{\frac{k}{2}} \left[\left(\frac{1}{2}-it -j \right) 
	\left(\frac{1}{2}+it-j\right)\right]}
	, 
\end{multline*}
and this description also works for $1< \Re(s) < 1+\epsilon/2+|\Re(\bnu_1)|$, 
i.e., $|\Re(s-1\pm \bnu_1)| < \epsilon/2$ 
and $\Re(s)>1$. 

On $1< \Re(s) < 1+\epsilon/2+|\Re(\bnu_1)|$, taking $t=(s-1\pm \bnu_1)/i$, 
we have
\begin{multline*}
	\tth^{(3)}\left( (s-1\pm \bnu_1)/i ; s, \nu_1, \nu_2\right)
	=
	\frac{(4\pi)^{k} i^k}{\pi\sqrt{\pi}}
	\frac{1}{2\pi i} \int_{(\sigma_u = 1/2+2\epsilon)} 
	\frac{h(u/i) u 
	\Gamma\left(\frac{1}{2}+u\right) \Gamma\left(\frac{1}{2}-u\right)}
	{\Gamma\left(\frac{k}{2}+\bnu_1 +u\right) \Gamma\left(\frac{k}{2}+\bnu_1-u\right)}
	\\
	\times
	\frac{1}{2\pi i } \int_{C}
	\frac{\Gamma\left(-w+u\right)}
	{\Gamma\left(1+w+u\right)}
	\Gamma\left(w+\frac{k}{2}+\bnu_1\right) 
	\Gamma\left(w+\frac{k}{2}-\bnu_1\right)
	\\
	\times
	\frac{\Gamma\left(s-w+\frac{1}{2}-\frac{k}{2}\right)
	\Gamma\left(-s+w+\frac{1}{2}+\frac{k}{2}\right)
	} 
	{\Gamma\left(s-w+\bnu_2\right)
	\Gamma\left(s-w-\bnu_2\right) }
	\frac{\Gamma\left(-w-\frac{k}{2}+1\mp \bnu_1\right) 
	\Gamma\left(2s-1-w-\frac{k}{2}\pm \bnu_1\right)}
	{\Gamma\left(\frac{3}{2}-s\mp \bnu_1\right) 
	\Gamma\left(-\frac{1}{2}+s\pm \bnu_1\right)}
	\; dw \; du
	\\
	+
	\sum_{\ell=1}^{\frac{k}{2}-1}
	\frac{(4\pi)^{k}}{\pi\sqrt{\pi}}
	\frac{1}{2\pi i} \int_{(\sigma_u = 1/2+2\epsilon)} 
	h(u/i) u 
	\left(\cos(\pi s) - \sin(\pi s) \tan(\pi u)\right)
	\\
	\times
	\frac{\Gamma\left(-s+\frac{1}{2}+\frac{k}{2}-\ell+u\right)
	\Gamma\left(-s+\frac{1}{2}+\frac{k}{2}-\ell-u\right)}
	{\Gamma\left(\frac{k}{2}+\bnu_1 +u\right) 
	\Gamma\left(\frac{k}{2}+\bnu_1-u\right)}
	\; du
	\\
	\times
	\frac{\Gamma\left(s-\frac{1}{2}+\ell+\bnu_1\right) 
	\Gamma\left(s-\frac{1}{2}+\ell -\bnu_1\right)}
	{\Gamma\left(\frac{1}{2}+\frac{k}{2}-\ell +\bnu_2\right)
	\Gamma\left(\frac{1}{2}+\frac{k}{2}-\ell -\bnu_2\right) }
	\frac{1}{\prod_{j=1}^{\ell} \left[\left(\frac{3}{2}-s\mp \bnu_1-j \right) 
	\left(-\frac{1}{2}+s\pm \bnu_1-j\right)\right]}
	\\
	+
	\frac{(4\pi)^{k}}{\pi\sqrt{\pi}}
	\frac{1}{2\pi i} \int_{(\sigma_u = 1/2+2\epsilon)} 
	h(u/i) u 
	\frac{\pi}{\cos(\pi u)}
	\frac{\Gamma\left(\frac{1}{2}-s+u\right)}
	{\Gamma\left(\frac{1}{2}+s+u\right)}
	\frac{1}
	{\Gamma\left(\frac{k}{2}+\bnu_1 +u\right) 
	\Gamma\left(\frac{k}{2}+\bnu_1-u\right)}
	\; du
	\\
	\times
	\frac{\Gamma\left(s-\frac{1}{2}+\frac{k}{2}+\bnu_1\right) 
	\Gamma\left(s-\frac{1}{2}+\frac{k}{2} -\bnu_1\right)}
	{\Gamma\left(\frac{1}{2}+\bnu_2\right)
	\Gamma\left(\frac{1}{2}-\bnu_2\right) }
	\frac{1}{\prod_{j=1}^{\frac{k}{2}} \left[\left(\frac{3}{2}-s\mp \bnu_1-j \right) 
	\left(-\frac{1}{2}+s\pm \bnu_1-j\right)\right]}
	, 
\end{multline*}
and it has a meromorphic continuation to $1/2 \leq \Re(s) < 1/2+\epsilon/2$. 

Assuming $1/2\leq \Re(s) < 1/2+\epsilon/2$, 
moving $C$ back to $\sigma_w = 1/2+\epsilon$, 
gives us
\begin{multline*}
	\tth^{(3)}\left( (s-1\pm \bnu_1)/i ; s, \nu_1, \nu_2\right)
	=
	\frac{(4\pi)^{k} i^k}{\pi\sqrt{\pi}}
	\frac{1}{2\pi i} \int_{(\sigma_u = 1/2+2\epsilon)} 
	\frac{h(u/i) u 
	\Gamma\left(\frac{1}{2}+u\right) \Gamma\left(\frac{1}{2}-u\right)}
	{\Gamma\left(\frac{k}{2}+\bnu_1 +u\right) \Gamma\left(\frac{k}{2}+\bnu_1-u\right)}
	\\
	\times
	\frac{1}{2\pi i } \int_{(\sigma_w = 1/2+\epsilon)}
	\frac{\Gamma\left(-w+u\right)}
	{\Gamma\left(1+w+u\right)}
	\Gamma\left(w+\frac{k}{2}+\bnu_1\right) 
	\Gamma\left(w+\frac{k}{2}-\bnu_1\right)
	\\
	\times
	\frac{\Gamma\left(s-w+\frac{1}{2}-\frac{k}{2}\right)
	\Gamma\left(-s+w+\frac{1}{2}+\frac{k}{2}\right)
	} 
	{\Gamma\left(s-w+\bnu_2\right)
	\Gamma\left(s-w-\bnu_2\right) }
	\frac{\Gamma\left(-w-\frac{k}{2}+1\mp \bnu_1\right) 
	\Gamma\left(2s-1-w-\frac{k}{2}\pm \bnu_1\right)}
	{\Gamma\left(\frac{3}{2}-s\mp \bnu_1\right) 
	\Gamma\left(-\frac{1}{2}+s\pm \bnu_1\right)}
	\; dw \; du
	\\
	+
	\sum_{\ell=0}^{\frac{k}{2}-1}
	(-1)^\ell 
	\frac{(4\pi)^{k} i^k}{\pi\sqrt{\pi}}
	\frac{1}{2\pi i} \int_{(\sigma_u = 1/2+2\epsilon)} 
	\frac{h(u/i) u 
	\Gamma\left(\frac{1}{2}+u\right) \Gamma\left(\frac{1}{2}-u\right)}
	{\Gamma\left(\frac{k}{2}+\bnu_1 +u\right) \Gamma\left(\frac{k}{2}+\bnu_1-u\right)}
	\\
	\times
	\frac{\Gamma\left(-1+\frac{k}{2}-\ell \pm \bnu_1+u\right)}
	{\Gamma\left(2-\frac{k}{2}+\ell\mp \bnu_1+u\right)}
	\Gamma\left(1+\ell \mp 2\bnu_1\right)
	\\
	\times
	\frac{\Gamma\left(s+\frac{1}{2}-1-\ell \pm \bnu_1\right)
	\Gamma\left(-s+\frac{1}{2}+1+\ell \mp \bnu_1\right)
	} 
	{\Gamma\left(s-1+\frac{k}{2}-\ell \pm \bnu_1+\bnu_2\right)
	\Gamma\left(s-1+\frac{k}{2}-\ell \pm \bnu_1-\bnu_2\right) }
	\frac{
	\Gamma\left(2s-2-\ell \pm 2\bnu_1\right)}
	{\Gamma\left(\frac{3}{2}-s\mp \bnu_1\right) 
	\Gamma\left(-\frac{1}{2}+s\pm \bnu_1\right)}
	\; du
	\\
	+
	\sum_{\ell=0}^{\frac{k}{2}}
	\frac{(-1)^\ell}{\ell!}
	\frac{(4\pi)^{k} i^k}{\pi\sqrt{\pi}}
	\frac{1}{2\pi i} \int_{(\sigma_u = 1/2+2\epsilon)} 
	\frac{h(u/i) u 
	\Gamma\left(\frac{1}{2}+u\right) \Gamma\left(\frac{1}{2}-u\right)}
	{\Gamma\left(\frac{k}{2}+\bnu_1 +u\right) \Gamma\left(\frac{k}{2}+\bnu_1-u\right)}
	\\
	\times
	\frac{\Gamma\left(-2s+1+\frac{k}{2}-\ell \mp \bnu_1+u\right)}
	{\Gamma\left(2s-\frac{k}{2}+\ell \pm \bnu_1+u\right)}
	\Gamma\left(2s-1+\ell \pm2\bnu_1\right) 
	\Gamma\left(2s-1+\ell \right)
	\\
	\times
	\frac{\Gamma\left(-s+\frac{3}{2}-\ell \mp \bnu_1\right)
	\Gamma\left(s-\frac{1}{2}+\ell \pm \bnu_1\right)
	} 
	{\Gamma\left(-s+1+\frac{k}{2}-\ell \mp \bnu_1+\bnu_2\right)
	\Gamma\left(-s+1+\frac{k}{2}-\ell \mp \bnu_1-\bnu_2\right) }
	\frac{\Gamma\left(-2s+2-\ell\mp 2\bnu_1\right) }
	{\Gamma\left(\frac{3}{2}-s\mp \bnu_1\right) 
	\Gamma\left(-\frac{1}{2}+s\pm \bnu_1\right)}
	\; du
	. 
\end{multline*}

For $0\leq \ell \leq k/2-1$, we have
\begin{multline*}
	(-1)^\ell 
	\frac{(4\pi)^{k} i^k}{\pi\sqrt{\pi}}
	\frac{1}{2\pi i} \int_{(\sigma_u = 1/2+2\epsilon)} 
	\frac{h(u/i) u 
	\Gamma\left(\frac{1}{2}+u\right) \Gamma\left(\frac{1}{2}-u\right)}
	{\Gamma\left(\frac{k}{2}+\bnu_1 +u\right) \Gamma\left(\frac{k}{2}+\bnu_1-u\right)}
	\frac{\Gamma\left(-1+\frac{k}{2}-\ell \pm \bnu_1+u\right)}
	{\Gamma\left(2-\frac{k}{2}+\ell\mp \bnu_1+u\right)}
	\\
	\times
	\Gamma\left(1+\ell \mp 2\bnu_1\right)
	\frac{\Gamma\left(s+\frac{1}{2}-1-\ell \pm \bnu_1\right)
	\Gamma\left(-s+\frac{1}{2}+1+\ell \mp \bnu_1\right)
	} 
	{\Gamma\left(s-1+\frac{k}{2}-\ell \pm \bnu_1+\bnu_2\right)
	\Gamma\left(s-1+\frac{k}{2}-\ell \pm \bnu_1-\bnu_2\right) }
	\frac{
	\Gamma\left(2s-2-\ell \pm 2\bnu_1\right)}
	{\Gamma\left(\frac{3}{2}-s\mp \bnu_1\right) 
	\Gamma\left(-\frac{1}{2}+s\pm \bnu_1\right)}
	\; du
	\\
	=
	(-1)^{\ell +1}
	\frac{(4\pi)^{k}}{\pi\sqrt{\pi}}
	\frac{\Gamma\left(1+\ell \mp 2\bnu_1\right)
	\Gamma\left(2s-2-\ell \pm 2\bnu_1\right)} 
	{\Gamma\left(s+\frac{k}{2}-\ell -1\pm \bnu_1+\bnu_2\right)
	\Gamma\left(s+\frac{k}{2}-\ell-1 \pm \bnu_1-\bnu_2\right) }
	\\
	\times
	\frac{1}{2\pi i} \int_{(\sigma_u = 1/2+2\epsilon)} 
	h(u/i) u
	\left(\sin(\pi (\pm \bnu_1)) - \cos(\pi (\pm \bnu_1)) \tan(\pi u)\right)
	\\
	\times
	\frac{\Gamma\left(\frac{k}{2}-\ell -1 \pm \bnu_1+u\right)
	\Gamma\left(\frac{k}{2}-\ell -1 \pm \bnu_1-u\right)}
	{\Gamma\left(\frac{k}{2}+\bnu_1 +u\right) \Gamma\left(\frac{k}{2}+\bnu_1-u\right)}
	\; du
	.
\end{multline*}
For $0\leq \ell \leq k/2$, we have
\begin{multline*}
	\frac{(-1)^\ell}{\ell!}
	\frac{(4\pi)^{k} i^k}{\pi\sqrt{\pi}}
	\frac{1}{2\pi i} \int_{(\sigma_u = 1/2+2\epsilon)} 
	\frac{h(u/i) u 
	\Gamma\left(\frac{1}{2}+u\right) \Gamma\left(\frac{1}{2}-u\right)}
	{\Gamma\left(\frac{k}{2}+\bnu_1 +u\right) \Gamma\left(\frac{k}{2}+\bnu_1-u\right)}
	\\
	\times
	\frac{\Gamma\left(-2s+1+\frac{k}{2}-\ell \mp \bnu_1+u\right)}
	{\Gamma\left(2s-\frac{k}{2}+\ell \pm \bnu_1+u\right)}
	\Gamma\left(2s-1+\ell \pm2\bnu_1\right) 
	\Gamma\left(2s-1+\ell \right)
	\\
	\times
	\frac{\Gamma\left(-s+\frac{3}{2}-\ell \mp \bnu_1\right)
	\Gamma\left(s-\frac{1}{2}+\ell \pm \bnu_1\right)
	} 
	{\Gamma\left(-s+1+\frac{k}{2}-\ell \mp \bnu_1+\bnu_2\right)
	\Gamma\left(-s+1+\frac{k}{2}-\ell \mp \bnu_1-\bnu_2\right) }
	\frac{\Gamma\left(-2s+2-\ell\mp 2\bnu_1\right) }
	{\Gamma\left(\frac{3}{2}-s\mp \bnu_1\right) 
	\Gamma\left(-\frac{1}{2}+s\pm \bnu_1\right)}
	\; du
	\\
	=
	-
	\frac{1}{\ell!}
	\frac{(4\pi)^{k}}{\sqrt{\pi}}
	\frac{\Gamma\left(2s-1+\ell \right)}
	{\sin\left(\pi\left(2s\pm 2\bnu_1\right)\right)}
	\frac{1} 
	{\Gamma\left(-s+1+\frac{k}{2}-\ell \mp \bnu_1+\bnu_2\right)
	\Gamma\left(-s+1+\frac{k}{2}-\ell \mp \bnu_1-\bnu_2\right) }
	\\
	\times
	\frac{1}{2\pi i} \int_{(\sigma_u = 1/2+2\epsilon)} 
	h(u/i) u 
	\left(\sin\left(\pi(2s\pm \bnu_1)\right) + \cos\left(\pi(2s\pm \bnu_1)\right)\tan(\pi u)\right)
	\\
	\times
	\frac{\Gamma\left(-2s+1+\frac{k}{2}-\ell \mp \bnu_1+u\right)
	\Gamma\left(-2s+1+\frac{k}{2}-\ell \mp \bnu_1-u\right)}
	{\Gamma\left(\frac{k}{2}+\bnu_1 +u\right) \Gamma\left(\frac{k}{2}+\bnu_1-u\right)}
	\; du
	.
\end{multline*}
For $\ell=0$, we have
\begin{multline*}
	-
	\frac{1}{\ell!}
	\frac{(4\pi)^{k}}{\sqrt{\pi}}
	\frac{\Gamma\left(2s-1+\ell \right)}
	{\sin\left(\pi\left(2s\pm 2\bnu_1\right)\right)}
	\frac{1} 
	{\Gamma\left(-s+1+\frac{k}{2}-\ell \mp \bnu_1+\bnu_2\right)
	\Gamma\left(-s+1+\frac{k}{2}-\ell \mp \bnu_1-\bnu_2\right) }
	\\
	\times
	\frac{1}{2\pi i} \int_{(\sigma_u = 1/2+2\epsilon)} 
	h(u/i) u 
	\left(\sin\left(\pi(2s\pm \bnu_1)\right) + \cos\left(\pi(2s\pm \bnu_1)\right)\tan(\pi u)\right)
	\\
	\times
	\frac{\Gamma\left(-2s+1+\frac{k}{2}-\ell \mp \bnu_1+u\right)
	\Gamma\left(-2s+1+\frac{k}{2}-\ell \mp \bnu_1-u\right)}
	{\Gamma\left(\frac{k}{2}+\bnu_1 +u\right) \Gamma\left(\frac{k}{2}+\bnu_1-u\right)}
	\; du
	\\
	=
	\frac{(4\pi)^{k}\sqrt{\pi}}{2}
	\frac{\Gamma\left(2s-1\right)
	\cos\left(\pi(2s\pm \bnu_1)\right)}
	{\sin\left(\pi\left(2s\pm 2\bnu_1\right)\right)}
	\frac{1} 
	{\Gamma\left(-s+1+\frac{k}{2}\mp \bnu_1+\bnu_2\right)
	\Gamma\left(-s+1+\frac{k}{2}\mp \bnu_1-\bnu_2\right) }
	H_1^\mp (s;\bnu_1)
	.
\end{multline*}

Thus we have
\begin{multline*}
	\tth^{(3)}\left( (s-1\pm \bnu_1)/i ; s, \nu_1, \nu_2\right)
	=
	\tth^{(3)\pm}(s; \bnu_1, \bnu_2)
	\\
	+
	\frac{(4\pi)^{k}\sqrt{\pi}}{2}
	\frac{\Gamma\left(2s-1\right)
	\cos\left(\pi(2s\pm \bnu_1)\right)}
	{\sin\left(\pi\left(2s\pm 2\bnu_1\right)\right)}
	\frac{1} 
	{\Gamma\left(-s+1+\frac{k}{2}\mp \bnu_1+\bnu_2\right)
	\Gamma\left(-s+1+\frac{k}{2}\mp \bnu_1-\bnu_2\right) }
	H_1^\mp (s;\bnu_1)
	.
\end{multline*}
\end{proof}

For $1/2\leq \Re(s)< 1/2+\epsilon/2$, 
define
\begin{multline}\label{e:E1}
	E^{(1)}_{f, \phi_1, \phi_2}(s)
	:=
	\overline{c_{\phi_1}(-1)}
	\left\{
	\sum_j 
	\tth^{(1)}(t_j; s, \nu_1, \nu_2)
	\left<u_j, U_{f, \phi_2}\right> 
	\cL\left(s, \overline{u_j} \times \overline{\phi_1}\right)
	\right.
	\\
	\left.
	+
	\sum_{\cuspa} \frac{1}{4\pi} 
	\int_{-\infty}^\infty 
	\tth^{(1)}(t; s, \nu_1, \nu_2)
	\left<E_{\cuspa}(*, 1/2+it), U_{f, \phi_2}\right> 
	\cL\left(s, E_\cuspa(*,1/2-it)\times \overline{\phi_1}\right)
	\; dt
	\right\}
	, 
\end{multline}
\begin{multline}\label{e:E2}
	E^{(2)}_{f, \phi_1, \phi_2}(s)
	:=
	-
	\overline{c_{\phi_2}(-1)}
	\left\{
	\sum_j 
	\tth^{(2)}(t_j; s, \nu_1, \nu_2)
	\left<u_j, U_{f, \phi}\right> 
	\cL\left(s, \overline{u_j} \times \overline{\phi_2}\right)
	\right.
	\\
	\left.
	+
	\sum_{\cuspa} \frac{1}{4\pi} 
	\tth^{(2)}(t; s, \nu_1, \nu_2)
	\int_{-\infty}^\infty \left<E_{\cuspa}(*, 1/2+it), U_{f, \phi_1}\right> 
	\cL\left(s, E_\cuspa(*,1/2-it)\times \overline{\phi_2}\right)
	\; dt
	\right\}
\end{multline}
and 
\begin{multline}\label{e:E3}
	E^{(3)}_{f, \phi_1, \phi_2}(s)
	:=
	\overline{c_{\phi_1}(1)}
	\left\{
	\sum_j 
	\tth^{(3)}(t_j; s, \nu_1, \nu_2) 
	(-1)^{\alpha_j} 
	\cL\left(s, \overline{u_j} \times \overline{\phi_1} \right) \left<u_j, U_{f, \phi_2}\right>
	\right.
	\\
	+
	\left.
	\sum_\cuspa \frac{1}{4\pi} \int_{-\infty}^\infty 
	\tth^{(3)} (t; s, \nu_1, \nu_2) 
	\cL\left(s, E_\cuspa (*, 1/2-it) \times \overline{\phi_1}\right)
	\left<E_\cuspa (*, 1/2+it), U_{f, \phi_2}\right> \; dt
	\right\}
	.
\end{multline}

We define
\begin{multline}\label{e:Omega1}
	\Omega^{(1)}_{f, \phi_1, \phi_2}(s)
	:=
	\overline{c_{\phi_1}(-1)}
	\\
	\times
	\left\{
	\tth^{(1)+}(s; \bnu_1, \bnu_2)
	\sum_{\cuspa} 	
	\left<E_\cuspa \left(*, 3/2-s-\bnu_1\right), U_{f, \phi_2}\right> 
	\Res_{z=-s+1 -\bnu_1} 
	\cL\left(s, E_{\cuspa} \left(*, 1/2-z\right) \times \overline{\phi_1}\right)
	\right.
	\\
	\left.
	+
	\tth^{(1)-}(s; \bnu_1, \bnu_2)
	\sum_{\cuspa}
	\left<E_\cuspa \left(*, 3/2-s+\bnu_1\right), U_{f, \phi_2}\right> 
	\Res_{z=-s+1 +\bnu_1} 
	\cL\left(s, E_{\cuspa} \left(*, 1/2-z\right) \times \overline{\phi_1}\right)
	\right\}
\end{multline}
and 
\begin{multline}\label{e:Omega3}
	\Omega^{(3)}_{f, \phi_1, \phi_2}(s)
	:=
	\overline{c_{\phi_1}(1)}
	\\
	\times
	\left\{
	\tth^{(3)+}\left( s; \bnu_1, \bnu_2\right)
	\sum_{\cuspa} \left<E_{\cuspa}(*, 3/2-s-\bnu_1), U_{f, \phi_2} \right>
	\Res_{z=-s+1-\bnu_1} \cL\left(s, E_\cuspa(*, 1/2-z)\times \overline{\phi_1}\right) 
	\right.
	\\
	\left.
	+
	\tth^{(3)-} \left( s; \bnu_1, \bnu_2\right) 	
	\sum_{\cuspa} \left<E_{\cuspa}(*, 3/2-s+\bnu_1), U_{f, \phi_2} \right>
	\Res_{z=-s+1+\bnu_1} \cL\left(s, E_\cuspa(*, 1/2-z)\times \overline{\phi_1}\right) 
	\right\}.
\end{multline}
Finally, we define
\begin{multline}\label{e:M1}
	M^{(1)}_{f, \phi_1, \phi_2}(s)
	:=
	\overline{c_{\phi_1}(-1)}
	\\
	\times
	\left\{
	\frac{(4\pi)^k i^k \sqrt{\pi}}{2}
	\frac{\Gamma\left(2- 2\bnu_1-2s\right)
	\Gamma\left(2s-1\right)
	\Gamma\left(2s-1+2\bnu_1\right)}
	{\Gamma\left(\frac{1}{2}+\frac{k}{2}+\bnu_1\right) 
	\Gamma\left(\frac{1}{2}+\frac{k}{2}-\bnu_1\right)
	\Gamma\left(-s+1+\frac{k}{2}+\bnu_2- \bnu_1\right)
	\Gamma\left(-s+1+\frac{k}{2}-\bnu_2- \bnu_1\right)}
	\right.
	\\
	\times
	H_1^-(s;\bnu_1)
	\sum_{\cuspa} 	
	\left<E_\cuspa \left(*, 3/2-s-\bnu_1\right), U_{f, \phi_2}\right> 
	\Res_{z=-s+1 -\bnu_1} 
	\cL\left(s, E_{\cuspa} \left(*, 1/2-z\right) \times \overline{\phi_1}\right)
	\\
	+
	\frac{(4\pi)^k i^k \sqrt{\pi}}{2}
	\frac{\Gamma\left(2+ 2\bnu_1-2s\right)
	\Gamma\left(2s-1\right)
	\Gamma\left(2s-1- 2\bnu_1\right)}
	{\Gamma\left(\frac{1}{2}+\frac{k}{2}+\bnu_1\right) 
	\Gamma\left(\frac{1}{2}+\frac{k}{2}-\bnu_1\right)
	\Gamma\left(-s+1+\frac{k}{2}+\bnu_2+\bnu_1\right)
	\Gamma\left(-s+1+\frac{k}{2}-\bnu_2+\bnu_1\right)}
	\\
	\left.
	\times
	H_1^+ (s;\bnu_1)
	\sum_{\cuspa}
	\left<E_\cuspa \left(*, 3/2-s+\bnu_1\right), U_{f, \phi_2}\right> 
	\Res_{z=-s+1 +\bnu_1} 
	\cL\left(s, E_{\cuspa} \left(*, 1/2-z\right) \times \overline{\phi_1}\right)
	\right\}
	, 
\end{multline}
and
\begin{multline}\label{e:M3}
	M^{(3)}_{f, \phi_1, \phi_2}(s)
	:=
	\overline{c_{\phi_1}(1)}
	\\
	\times
	\left\{
	\frac{(4\pi)^{k}\sqrt{\pi}}{2}
	\frac{\Gamma\left(2s-1\right)
	\cos\left(\pi(2s+ \bnu_1)\right)}
	{\sin\left(\pi\left(2s+ 2\bnu_1\right)\right)}
	\frac{1} 
	{\Gamma\left(-s+1+\frac{k}{2}-\bnu_1+\bnu_2\right)
	\Gamma\left(-s+1+\frac{k}{2}- \bnu_1-\bnu_2\right) }
	H_1^- (s;\bnu_1)
	\right.
	\\
	\times
	\sum_{\cuspa} \left<E_{\cuspa}(*, 3/2-s-\bnu_1), U_{f, \phi_2} \right>
	\Res_{z=-s+1-\bnu_1} \cL\left(s, E_\cuspa(*, 1/2-z)\times \overline{\phi_1}\right) 
	\\
	+
	\frac{(4\pi)^{k}\sqrt{\pi}}{2}
	\frac{\Gamma\left(2s-1\right)
	\cos\left(\pi(2s- \bnu_1)\right)}
	{\sin\left(\pi\left(2s- 2\bnu_1\right)\right)}
	\frac{1} 
	{\Gamma\left(-s+1+\frac{k}{2}+ \bnu_1+\bnu_2\right)
	\Gamma\left(-s+1+\frac{k}{2}+ \bnu_1-\bnu_2\right) }
	H_1^+ (s;\bnu_1)
	\\
	\times
	\left.
	\sum_{\cuspa} \left<E_{\cuspa}(*, 3/2-s+\bnu_1), U_{f, \phi_2} \right>
	\Res_{z=-s+1+\bnu_1} \cL\left(s, E_\cuspa(*, 1/2-z)\times \overline{\phi_1}\right) 
	\right\}
	.
\end{multline}
Here $\tth^{(1)}$, $\tth^{(2)}$ and $\tth^{(3)}$ are defined in \eqref{e:tth1}, \eqref{e:tth2} and \eqref{e:tth3} respectively; 
$\tth^{(1)\pm}$ and $\tth^{(3)\pm}$ are defined in \eqref{e:tth1pm} and \eqref{e:tth3pm} respectively; 
$H_1^\pm(s; \bnu_1)$ and $H_{2, \ell}(s; \bnu_1)$ are defined in \eqref{e:H1pm} and \eqref{e:H2} respectively. 

Then we have the following proposition.

\begin{prop}\label{prop:tE}
$\tE^{(1)}_{f, \phi_1, \phi_2}(s)$, $\tE^{(2)}_{f, \phi_1, \phi_2}(s)$ have meromorphic continuation to $\Re(s)\geq 1/2$. 
For $1/2\leq \Re(s)<1/2+\epsilon/2$, we have
$$
	\tE^{(1)}_{f, \phi_1, \phi_2}(s)
	=
	E^{(1)}_{f, \phi_1, \phi_2}(s)
	+\Omega^{(1)}_{f, \phi_1, \phi_2}(s)
	+M^{(1)}_{f, \phi_1, \phi_2}(s)
	, 
$$
\begin{multline*}
	\tE^{(2)}_{f, \phi_1, \phi_2}(s)
	=
	E^{(2)}_{f, \phi_1, \phi_2}(s)
	+
	E^{(3)}_{f, \phi_1, \phi_2}(s)
	+
	\Omega^{(3)}_{f, \phi_1, \phi_2}(s)
	+
	M^{(3)}_{f, \phi_1, \phi_2}(s)
	\\
	+
	2
	\sum_{\ell=1}^{\frac{k}{2}} 
	\frac{1}{(\ell-1)!}
	(4\pi)^{k-\ell}
	\frac{\Gamma\left(s-\frac{1}{2}+\ell+\bar{\nu_1}\right) 
	\Gamma\left(s-\frac{1}{2}+\ell-\bar{\nu_1}\right)}
	{\Gamma\left(\frac{k}{2}-\ell+\frac{1}{2}+\bnu_2\right)
	\Gamma\left(\frac{k}{2}-\ell +\frac{1}{2}-\bnu_2\right) }
	\cM^*_{f, \phi_1, \phi_2} \left(-\ell+1, s+\ell-\frac{k}{2}\right)
	\\
	\times 
	H_{2, \ell}(s; \bnu_1)
	.
\end{multline*}

\end{prop}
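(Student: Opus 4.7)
The strategy is to take the contour-integral formulas for $\tE^{(1)}$ and $\tE^{(2)}=\tE^{(3)}+\tE^{(4)}$ from Lemma~\ref{lem:tE1+tE2_SDDS}, substitute the absolutely convergent spectral expansions of $\cM^{(3)}$ and $\cM^{*}$ from Theorem~\ref{thm:M12} and Proposition~\ref{prop:cM3}, and then shift the outer $s$-parameter into the critical range $\Re(s)\ge 1/2$. After interchanging the spectral sum/integral with the $u,w$-contours, the $u,w$-integrals are exactly $\tth^{(1)}(t_j;s,\nu_1,\nu_2)$, $\tth^{(3)}(t_j;s,\nu_1,\nu_2)$ tested against $\cL(s,\overline{u_j}\times\overline{\phi_1})$, and analogously on the continuous spectrum. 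Lemmas~\ref{lem:tth1_mero} and its companion for $\tth^{(3)}$ already compute the meromorphic behavior of $\tth^{(1)},\tth^{(3)}$ at the critical points $t=(s-1\pm\bnu_1)/i$; the point of the proposition is to assemble these data together with the polar structure of the $L$-series into a clean decomposition.

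For $\tE^{(1)}$, I would first plug the Dirichlet-series form \eqref{e:cM3_series1} of $\cM^{(3)}_{f,\phi_2,\phi_1}(w+\tfrac12,s-w-\tfrac{k}{2}+\tfrac12)$ into \eqref{e:tE1_SDDS}, then invoke Proposition~\ref{prop:cM3} to rewrite it via the absolutely convergent spectral sum; the inner $u,w$-integral collapses to $\tth^{(1)}(t;s,\nu_1,\nu_2)$ weighted by $\overline{c_{\phi_1}(-1)}\cL(s,\overline{u_j}\times\overline{\phi_1})$ (resp.\ $\cL(s,E_\cuspa(*,1/2-it)\times\overline{\phi_1})$). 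As $\Re(s)$ is pushed down to $1/2$ the continuous-spectrum factor acquires poles at $z=-s+1\pm\bnu_1$, and accounting for these via the definition of $\Phi_{f,\phi_1,\phi_2}$ produces $\Omega^{(1)}$. Finally, the decomposition \eqref{e:tth1_Omega} of $\tth^{(1)}((s-1\pm\bnu_1)/i;s,\nu_1,\nu_2)$ splits the residual contribution into the ``automorphic'' piece $\tth^{(1)\pm}$, which feeds back into $E^{(1)}$, and an explicit Gamma/$H_1^{\mp}$ piece, which is precisely $M^{(1)}$.

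For $\tE^{(2)}$, the $\tE^{(3)}$-part is treated in parallel: substitute the spectral expansion of $\cM^{(3)}_{f,\phi_1,\phi_2}$, identify the inner $u,w$-integral with $\tth^{(3)}(t;s,\nu_1,\nu_2)$, use \eqref{e:tth3_Omega} to peel off $\tth^{(3)\pm}$ (giving $E^{(3)}+\Omega^{(3)}$) and the Gamma/$H_1^\mp$ piece (giving $M^{(3)}$). The $\tE^{(4)}$-part is the genuinely new contribution: in \eqref{e:tE4} I would shift the $w$-line from $\sigma_w=1/2+\epsilon$ leftward across the poles of $\Gamma(s-w+1/2-k/2)$ at $w=s-k/2+1/2+\ell-1$ for $\ell=1,\dots,k/2$ (the passage through $\ell=0$ being absorbed into the spectral identities above). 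Each residue inserts $\cM^{*}_{f,\phi_1,\phi_2}(-\ell+1,s+\ell-k/2)$ multiplied by Gamma ratios, and after the $u$-integral is identified as $H_{2,\ell}(s;\bnu_1)$ (via \eqref{e:H2}), this yields precisely the $\ell$-sum in the statement. The residual shifted $w$-contour is still absolutely convergent once $\Re(s)\ge 1/2$ because $\cM^{*}$ has the meromorphic continuation to $\C^2$ established in Theorem~\ref{thm:M12}, so it contributes to $E^{(2)}$.

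The main obstacles are bookkeeping rather than technique. First, one must verify that at each contour shift the inner spectral expansion continues to converge absolutely after the $u,w$-integration (which follows from the rapid decay of $h$ together with the convexity bounds used in the proof of Theorem~\ref{thm:M12}). Second, one has to identify carefully which residues come from Gamma poles of the weight function, which come from poles of $\cM^{*}$ on the lines $s'=1\pm\nu_1\pm\nu_2$, $2s=1-r$, $s=-r$ (and the corresponding $w$-lines), and which come from poles of $\cL(s,E_\cuspa(*,1/2-z)\times\overline{\phi_1})$; only when these three sources are kept distinct do the terms reassemble into $E^{(j)}$, $\Omega^{(j)}$, $M^{(j)}$ as advertised. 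In particular the equality between the ``automorphic residue'' term $M^{(j)}_{f,\phi_1,\phi_2}$ and the explicit Gamma-$H_1^\pm$ expression relies crucially on the lemma-level identities \eqref{e:tth1_Omega} and \eqref{e:tth3_Omega}, which have already been established.
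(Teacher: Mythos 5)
Your treatment of $\tE^{(1)}_{f,\phi_1,\phi_2}(s)$ follows the paper's proof essentially verbatim (spectral expansion of $\cM^{(3)}_{f,\phi_2,\phi_1}$, identification of the weight with $\tth^{(1)}$ after the shift $w\mapsto w+k/2$, Eisenstein residues at $z=-s+1\pm\bnu_1$, then the splitting \eqref{e:tth1_Omega}), and your identification of the residues of $\Gamma\left(s-w+\frac12-\frac k2\right)$ in \eqref{e:tE4} with the $\cM^*_{f,\phi_1,\phi_2}\left(-\ell+1,s+\ell-\frac k2\right)H_{2,\ell}$ sum is also the paper's computation. But there is a genuine structural error in how you distribute the remaining terms between $\tE^{(3)}$ and $\tE^{(4)}$: you claim that $\tE^{(3)}$ (the piece built from $\cM^{(3)}_{f,\phi_1,\phi_2}$) yields $E^{(3)}+\Omega^{(3)}+M^{(3)}$ via $\tth^{(3)}$ and \eqref{e:tth3_Omega}, and that the residual contour of $\tE^{(4)}$ ``contributes to $E^{(2)}$.'' With the definitions \eqref{e:E2}, \eqref{e:E3}, \eqref{e:tth2}, \eqref{e:tth3} fixed, this attribution cannot work. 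After inserting the spectral expansion of $\cM^{(3)}_{f,\phi_1,\phi_2}\left(s-w+\frac12-\frac k2,w+\frac12\right)$ from Proposition~\ref{prop:cM3}, the inner $w$-integral of $\tE^{(3)}$ has gamma structure $\Gamma(-w+u)\Gamma\left(\frac12+s-w\right)\Gamma\left(\frac12-s+w\right)\Gamma(w+it)\Gamma(w-it)/\Gamma(1+w+u)$; this is exactly a Barnes-type integral, evaluated by Theorem 2.4.3 of \cite{AAR}, and it produces the weight $\tth^{(2)}$ of \eqref{e:tth2} --- not $\tth^{(3)}$, whose integrand carries $\Gamma\left(s-w-\frac k2\pm it\right)$ instead. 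Moreover, the $L$-functions appearing in that expansion are $\cL(s,\cdot\times\overline{\phi_2})$ with $\phi_2$ cuspidal, so no poles at $z=-s+1\pm\bnu_1$ are crossed and $\tE^{(3)}$ can produce neither $\Omega^{(3)}$ nor $M^{(3)}$ (both of which involve $\Res\,\cL\left(s,E_\cuspa(*,1/2-z)\times\overline{\phi_1}\right)$, i.e.\ the $L$-function against $\phi_1$). In the paper $\tE^{(3)}$ equals $E^{(2)}$ exactly on $1/2\le\Re(s)<1/2+\epsilon/2$.

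Correspondingly, your analysis of $\tE^{(4)}$ is incomplete: when the $w$-contour is deformed and returned to $\sigma_w=1/2+\epsilon$ (as $\Re(s)$ moves down to $1/2$), the poles crossed are not only those of $\Gamma\left(s-w+\frac12-\frac k2\right)$ but also those coming from the spectral expansion of $\cM^*$, namely $\Gamma\left(s-w-\frac k2\pm it_j\right)$ (equivalently the polar lines $s=1/2\pm it_j-r$ of $\cM^*$ in its first variable) and the Eisenstein poles at $z=-s+1\pm\bnu_1$ in the continuous-spectrum integral. The first family produces the $\tth^{(3)R}_\ell$ residue sums which must be recombined with the leftover contour integral to reconstitute the full weight $\tth^{(3)}$ of \eqref{e:tth3} (whose definition contains that $\ell$-sum), giving $E^{(3)}$; the second family gives $\Omega^{(3)}$ and, through \eqref{e:tth3_Omega}, the main-term piece $M^{(3)}$. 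Omitting these residue families, and instead routing $E^{(3)},\Omega^{(3)},M^{(3)}$ through $\tE^{(3)}$, leaves the claimed identity for $\tE^{(2)}$ unproved: the proposal as written would not reproduce the decomposition $\tE^{(2)}=E^{(2)}+E^{(3)}+\Omega^{(3)}+M^{(3)}+(\text{the }H_{2,\ell}\text{ sum})$ with the quantities as defined in \eqref{e:E2}, \eqref{e:E3}, \eqref{e:Omega3}, \eqref{e:M3}.
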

\begin{proof}
By Proposition \ref{prop:cM3}, for $\Re(s-w-k/2) > 0$, we have
\begin{multline*}
	\cM^{(3)}_{f, \phi_2, \phi_1}(w+1/2, s-w+1/2-k/2)
	=
	\frac{\Gamma\left(w+\frac{1}{2}\right)
	\Gamma\left(\frac{1}{2}-w\right)
	(4\pi)^{\frac{k}{2}-\frac{1}{2}}\sqrt{\pi} }
	{\Gamma\left(\frac{1}{2}+\frac{k}{2}+\bnu_1\right)
	\Gamma\left(\frac{1}{2}+\frac{k}{2}-\bnu_1\right)
	\Gamma\left(s-w+\bnu_2\right) 
	\Gamma\left(s-w -\bnu_2\right)}
	\\
	\times
	\overline{c_{\phi_1}(-1)}
	\left\{
	\sum_j 
	\left<u_j, U_{f, \phi_2}\right> 
	\Gamma\left(s-w-\frac{k}{2}+it_j\right) \Gamma\left(s-w-\frac{k}{2}-it_j\right)
	\cL\left(s, \overline{u_j} \times \overline{\phi_1}\right)
	\right.
	\\
	+
	\sum_{\cuspa} \frac{1}{4\pi} 
	\int_{-\infty}^\infty \left<E_{\cuspa}(*, 1/2+it), U_{f, \phi_2}\right> 
	\\
	\left.
	\times
	\Gamma\left(s-w-\frac{k}{2}+it\right) \Gamma\left(s-w-\frac{k}{2}-it\right)
	\cL\left(s, E_\cuspa(*,1/2-it)\times \overline{\phi_1}\right)
	\; dt
	\right\}
	.
\end{multline*}
Recalling \eqref{e:tE1_SDDS}, 
for $\Re(s) > 2$, we have
\begin{multline*}
	\tE^{(1)}_{f, \phi_1, \phi_2}(s)
	=
	-
	4
	(4\pi)^{\frac{k-1}{2}} i^k 
	\frac{1}{2\pi i} \int_{(\sigma_u = 1/2+2\epsilon)} 
	\frac{h(u/i) u \tan(\pi u)}{\Gamma\left(\frac{k}{2}+\bnu_1 +u\right) \Gamma\left(\frac{k}{2}+\bnu_1-u\right)}
	\\
	\times
	\frac{1}{2\pi i } \int_{(\sigma_w = 1/2-k/2-\epsilon)}
	\frac{\Gamma\left(-w+u\right) \Gamma\left(-w-u\right)
	\Gamma\left(w+\frac{k}{2}-\bnu_1\right) \Gamma\left(w+\frac{k}{2}+\bnu_1\right)}{\Gamma\left(w+\frac{1}{2}\right) \Gamma\left(-w+\frac{1}{2}\right)}
	\\
	\times
	\cM^{(3)}_{f, \phi_2, \phi_1}\left(w+1/2, s-w-k/2+1/2\right)
	\; dw\; du
	\\
	=
	-
	\overline{c_{\phi_1}(-1)}
	\frac{(4\pi)^{k}i^k }{\sqrt{\pi}}
	\frac{1}{2\pi i} \int_{(\sigma_u = 1/2+2\epsilon)} 
	\frac{h(u/i) u \tan(\pi u)}{\Gamma\left(\frac{k}{2}+\bnu_1 +u\right) \Gamma\left(\frac{k}{2}+\bnu_1-u\right)}
	\\
	\times
	\frac{1}{2\pi i } \int_{(\sigma_w = 1/2-k/2-\epsilon)}
	\frac{\Gamma\left(-w+u\right) \Gamma\left(-w-u\right)
	\Gamma\left(w+\frac{k}{2}-\bnu_1\right) \Gamma\left(w+\frac{k}{2}+\bnu_1\right)}
	{\Gamma\left(\frac{1}{2}+\frac{k}{2}+\bnu_1\right)
	\Gamma\left(\frac{1}{2}+\frac{k}{2}-\bnu_1\right)
	\Gamma\left(s-w+\bnu_2\right) 
	\Gamma\left(s-w -\bnu_2\right)}
	\\
	\times
	\left\{
	\sum_j 
	\left<u_j, U_{f, \phi_2}\right> 
	\Gamma\left(s-w-\frac{k}{2}+it_j\right) \Gamma\left(s-w-\frac{k}{2}-it_j\right)
	\cL\left(s, \overline{u_j} \times \overline{\phi_1}\right)
	\right.
	\\
	+
	\sum_{\cuspa} \frac{1}{4\pi} 
	\int_{-\infty}^\infty \left<E_{\cuspa}(*, 1/2+it), U_{f, \phi_2}\right> 
	\Gamma\left(s-w-\frac{k}{2}+it\right) \Gamma\left(s-w-\frac{k}{2}-it\right)
	\\
	\times
	\left.
	\cL\left(s, E_\cuspa(*,1/2-it)\times \overline{\phi_1}\right)
	\; dt
	\right\}
	\; dw\; du
	.
\end{multline*}
For any $s\in \C$, the series and integrals converge absolutely. 
For any $t$ with $|\Im(t)|< \epsilon/2$, for $\Re(s)\geq 1/2$, let
\begin{multline*}
	\tth^{(1)}_0 (t; s, \nu_1,\nu_2)
	:=
	-
	\frac{(4\pi)^{k}i^k }{\sqrt{\pi}}
	\frac{1}{2\pi i} \int_{(\sigma_u = 1/2+2\epsilon)} 
	\frac{h(u/i) u \tan(\pi u)}{\Gamma\left(\frac{k}{2}+\bnu_1 +u\right) \Gamma\left(\frac{k}{2}+\bnu_1-u\right)}
	\\
	\times
	\frac{1}{2\pi i } \int_{(\sigma_w = 1/2-k/2-\epsilon)}
	\frac{\Gamma\left(-w+u\right) \Gamma\left(-w-u\right)
	\Gamma\left(w+\frac{k}{2}-\bnu_1\right) \Gamma\left(w+\frac{k}{2}+\bnu_1\right)}
	{\Gamma\left(\frac{1}{2}+\frac{k}{2}+\bnu_1\right)
	\Gamma\left(\frac{1}{2}+\frac{k}{2}-\bnu_1\right)
	\Gamma\left(s-w+\bnu_2\right) 
	\Gamma\left(s-w -\bnu_2\right)}
	\\
	\times
	\Gamma\left(s-w-\frac{k}{2}+it\right) \Gamma\left(s-w-\frac{k}{2}-it\right)
	\; dw\; du
	.
\end{multline*}
By changing the variable $w$ to $w+k/2$, we get
\begin{multline*}
	\tth^{(1)}_0(t; s, \nu_1,\nu_2)
	=
	-
	\frac{(4\pi)^{k}i^k }{\sqrt{\pi}}
	\frac{1}{2\pi i} \int_{(\sigma_u = 1/2+2\epsilon)} 
	\frac{h(u/i) u \tan(\pi u)}{\Gamma\left(\frac{k}{2}+\bnu_1 +u\right) \Gamma\left(\frac{k}{2}+\bnu_1-u\right)}
	\\
	\times
	\frac{1}{2\pi i } \int_{(\sigma_w = 1/2-\epsilon)}
	\frac{\Gamma\left(-w+\frac{k}{2}+u\right) \Gamma\left(-w+\frac{k}{2}-u\right)
	\Gamma\left(w-\bnu_1\right) \Gamma\left(w+\bnu_1\right)}
	{\Gamma\left(\frac{1}{2}+\frac{k}{2}+\bnu_1\right)
	\Gamma\left(\frac{1}{2}+\frac{k}{2}-\bnu_1\right)
	\Gamma\left(s-w+\frac{k}{2}+\bnu_2\right) 
	\Gamma\left(s-w+\frac{k}{2}-\bnu_2\right)}
	\\
	\times
	\Gamma\left(s-w+it\right) \Gamma\left(s-w-it\right)
	\; dw\; du
	\\
	=
	\tth^{(1)}(t; s, \nu_1,\nu_2)
	.
\end{multline*}
For $\Re(s) = 1+\epsilon/2$, we have
\begin{multline*}
	\tth^{(1)}((s-1\pm \bnu_1)/i; s, \nu_1, \nu_2)
	=
	-
	\frac{(4\pi)^{k} i^k }{\sqrt{\pi}} 
	\frac{1}{2\pi i} \int_{(\sigma_u = 1/2+2\epsilon)} 
	\frac{h(u/i) u \tan(\pi u)}{\Gamma\left(\frac{k}{2}+\bnu_1 +u\right) 
	\Gamma\left(\frac{k}{2}+\bnu_1-u\right)}
	\\
	\times
	\frac{1}{2\pi i } \int_{(\sigma_w = 1/2-\epsilon)}
	\frac{\Gamma\left(-w+\frac{k}{2}+u\right) 
	\Gamma\left(-w+\frac{k}{2}-u\right)
	\Gamma\left(w-\bnu_1\right) \Gamma\left(w+\bnu_1\right)}
	{\Gamma\left(\frac{1}{2}+\frac{k}{2}+\bnu_1\right)
	\Gamma\left(\frac{1}{2}+\frac{k}{2}-\bnu_1\right)
	\Gamma\left(s-w+\frac{k}{2}+\bnu_2\right) 
	\Gamma\left(s-w+\frac{k}{2}-\bnu_2\right)}
	\\
	\times
	\Gamma\left(2s-w-1\pm \bnu_1\right) \Gamma\left(-w+1\mp \bnu_1\right)
	\; dw\; du
	.
\end{multline*}

Then, for $\Re(s) \geq 1/2$, getting the meromorphic continuation of the integral for the continuous spectrum, and applying the functional equation of the Eisenstein series over cusps, we have
\begin{multline*}
	\tE^{(1)}_{f, \phi_1, \phi_2}(s)
	=
	\overline{c_{\phi_1}(-1)}
	\left\{
	\sum_j 
	\tth^{(1)}(t_j; s, \nu_1, \nu_2)
	\left<u_j, U_{f, \phi_2}\right> 
	\cL\left(s, \overline{u_j} \times \overline{\phi_1}\right)
	\right.
	\\
	+
	\sum_{\cuspa} \frac{1}{4\pi} 
	\int_{-\infty}^\infty 
	\tth^{(1)}(t; s, \nu_1, \nu_2)
	\left<E_{\cuspa}(*, 1/2+it), U_{f, \phi_2}\right> 
	\cL\left(s, E_\cuspa(*,1/2-it)\times \overline{\phi_1}\right)
	\; dt
	\\
	+
	\tth^{(1)}\left( (s-1+\bnu_1)/i; s, \nu_1, \nu_2\right) 
	\sum_{\cuspa} 	
	\left<E_\cuspa \left(*, 3/2-s-\bnu_1\right), U_{f, \phi_2}\right> 
	\Res_{z=-s+1 -\bnu_1} 
	\cL\left(s, E_{\cuspa} \left(*, 1/2-z\right) \times \overline{\phi_1}\right)
	\\
	\left.
	+
	\tth^{(1)}\left( (s-1-\bnu_1)/i; s, \nu_1, \nu_2\right)
	\left<E_\cuspa \left(*, 3/2-s+\bnu_1\right), U_{f, \phi_2}\right> 
	\Res_{z=-s+1 +\bnu_1} 
	\cL\left(s, E_{\cuspa} \left(*, 1/2-z\right) \times \overline{\phi_1}\right)
	\right\}
	.
\end{multline*}


Again, by Proposition \ref{prop:cM3}, for $\Re(w) > 0$, we have
\begin{multline*}
	\cM^{(3)}_{f, \phi_1, \phi_2}(s-w+1/2-k/2, w+1/2)
	\\
	=
	\overline{c_{\phi_2}(-1)}
	\frac{\Gamma\left(s-w+\frac{1}{2}-\frac{k}{2}\right)
	\Gamma(-s+w+\frac{1}{2}+\frac{k}{2})
	(4\pi)^{\frac{k}{2}-\frac{1}{2}}\sqrt{\pi} }
	{\Gamma\left(\frac{1}{2}+\frac{k}{2}+\bnu_2\right)
	\Gamma\left(\frac{1}{2}+\frac{k}{2}-\bnu_2\right)
	\Gamma\left(w+\frac{k}{2}+\bnu_1\right) 
	\Gamma\left(w+\frac{k}{2}-\bnu_1\right)}
	\\
	\times
	\left\{
	\sum_j 
	\left<u_j, U_{f, \phi}\right> 
	\Gamma\left(w+it_j\right) \Gamma\left(w-it_j\right)
	\cL\left(s, \overline{u_j} \times \overline{\phi_2}\right)
	\right.
	\\
	\left.
	+
	\sum_{\cuspa} \frac{1}{4\pi} 
	\int_{-\infty}^\infty \left<E_{\cuspa}(*, 1/2+it), U_{f, \phi_1}\right> 
	\Gamma\left(w+it\right) \Gamma\left(w-it\right)
	\cL\left(s, E_\cuspa(*,1/2-it)\times \overline{\phi_2}\right)
	\; dt
	\right\}
	.
\end{multline*}
Recalling \eqref{e:tE3}, for $\Re(s)>1+k/2+\epsilon$, we have
\begin{multline*}
	\tE^{(3)}_{f, \phi_1, \phi_2}(s)
	\\
	=
	-
	(4\pi)^{\frac{k-1}{2}} i^k \frac{4}{\pi} 
	\frac{1}{2\pi i} \int_{(\sigma_u = 1/2+2\epsilon)} 
	\frac{h(u/i) u 
	\Gamma\left(\frac{1}{2}+u\right) \Gamma\left(\frac{1}{2}-u\right)}
	{\Gamma\left(\frac{k}{2}+\bnu_1 +u\right) \Gamma\left(\frac{k}{2}+\bnu_1-u\right)}
	\frac{1}{2\pi i } \int_{(\sigma_w = 1/2+\epsilon)}
	\frac{\Gamma\left(-w+u\right)}
	{\Gamma\left(1+w+u\right)}
	\\
	\times
	\Gamma\left(w+\frac{k}{2}+\bar{\nu_1}\right) 
	\Gamma\left(w+\frac{k}{2}-\bar{\nu_1}\right)
	\cM^{(3)}_{f, \phi_1, \phi_2}\left(s-w+\frac{1}{2}-\frac{k}{2}, w+\frac{1}{2}\right)
	\; dw\; du
	\\
	=
	-
	\overline{c_{\phi_2}(-1)}
	\frac{(4\pi)^{k} i^k }{\pi\sqrt{\pi} }
	\frac{1}{2\pi i} \int_{(\sigma_u = 1/2+2\epsilon)} 
	\frac{h(u/i) u 
	\Gamma\left(\frac{1}{2}+u\right) \Gamma\left(\frac{1}{2}-u\right)}
	{\Gamma\left(\frac{k}{2}+\bnu_1 +u\right) \Gamma\left(\frac{k}{2}+\bnu_1-u\right)}
	\\
	\times
	\frac{1}{2\pi i } \int_{(\sigma_w = 1/2+\epsilon)}
	\frac{\Gamma\left(-w+u\right)}
	{\Gamma\left(1+w+u\right)}
	\frac{\Gamma\left(s-w+\frac{1}{2}-\frac{k}{2}\right)
	\Gamma(-s+w+\frac{1}{2}+\frac{k}{2})}
	{\Gamma\left(\frac{1}{2}+\frac{k}{2}+\bnu_2\right)
	\Gamma\left(\frac{1}{2}+\frac{k}{2}-\bnu_2\right)}
	\\
	\times
	\left\{
	\sum_j 
	\left<u_j, U_{f, \phi}\right> 
	\Gamma\left(w+it_j\right) \Gamma\left(w-it_j\right)
	\cL\left(s, \overline{u_j} \times \overline{\phi_2}\right)
	\right.
	\\
	\left.
	+
	\sum_{\cuspa} \frac{1}{4\pi} 
	\int_{-\infty}^\infty \left<E_{\cuspa}(*, 1/2+it), U_{f, \phi_1}\right> 
	\Gamma\left(w+it\right) \Gamma\left(w-it\right)
	\cL\left(s, E_\cuspa(*,1/2-it)\times \overline{\phi_2}\right)
	\; dt
	\right\}
	\; dw\; du
	.
\end{multline*}

For $\Re(s)>1+k/2+\epsilon$, let
\begin{multline*}
	\tth^{(2)}_0(t; s, \nu_1, \nu_2)
	\\
	:=
	-
	\frac{(4\pi)^{k} i^k }{\pi\sqrt{\pi} }
	\frac{1}{2\pi i} \int_{(\sigma_u = 1/2+2\epsilon)} 
	\frac{h(u/i) u 
	\Gamma\left(\frac{1}{2}+u\right) \Gamma\left(\frac{1}{2}-u\right)}
	{\Gamma\left(\frac{k}{2}+\bnu_1 +u\right) \Gamma\left(\frac{k}{2}+\bnu_1-u\right)}
	\frac{1}{\Gamma\left(\frac{1}{2}+\frac{k}{2}+\bnu_2\right)
	\Gamma\left(\frac{1}{2}+\frac{k}{2}-\bnu_2\right)}
	\\
	\times
	\frac{1}{2\pi i } \int_{(\sigma_w = 1/2+\epsilon)}
	\frac{\Gamma\left(-w+u\right)
	\Gamma\left(s-w+\frac{1}{2}-\frac{k}{2}\right)
	\Gamma(-s+w+\frac{1}{2}+\frac{k}{2})
	\Gamma\left(w+it\right) \Gamma\left(w-it\right)}
	{\Gamma\left(1+w+u\right)}
	\; dw\; du
	.
\end{multline*}
Then 
\begin{multline*}
	\tth^{(2)}_0(t; s, \nu_1, \nu_2)
 	\\
	=
	-
	\frac{(4\pi)^{k} }{\pi\sqrt{\pi} }
	\frac{1}{2\pi i} \int_{(\sigma_u = 1/2+2\epsilon)} 
	\frac{h(u/i) u 
	\Gamma\left(\frac{1}{2}+u\right) \Gamma\left(\frac{1}{2}-u\right)}
	{\Gamma\left(\frac{k}{2}+\bnu_1 +u\right) \Gamma\left(\frac{k}{2}+\bnu_1-u\right)}
	\frac{1}{\Gamma\left(\frac{1}{2}+\frac{k}{2}+\bnu_2\right)
	\Gamma\left(\frac{1}{2}+\frac{k}{2}-\bnu_2\right)}
	\\
	\times
	\frac{1}{2\pi i } \int_{(\sigma_w = 1/2+\epsilon)}
	\frac{\Gamma\left(-w+u\right)
	\Gamma\left(\frac{1}{2}+s-w\right)
	\Gamma\left(\frac{1}{2}-s+w\right)
	\Gamma\left(w+it\right) \Gamma\left(w-it\right)}
	{\Gamma\left(1+w+u\right)}
	\; dw
	\; du
	.
\end{multline*}
By Theorem 2.4.3 in \cite{AAR}, we have
\begin{multline*}
	\tth^{(2)}_0(t; s, \nu_1, \nu_2)
	=
	\sum_{\ell=0}^{\frac{k}{2}}
	(-1)^\ell
	\frac{\Gamma\left(-\frac{1}{2}+s-\ell +it\right) \Gamma\left(-\frac{1}{2}+s-\ell -it\right)}
	{\Gamma\left(\frac{1}{2}+\frac{k}{2}+\bnu_2\right)
	\Gamma\left(\frac{1}{2}+\frac{k}{2}-\bnu_2\right)}
	\\
	\times
	\frac{(4\pi)^{k} }{\pi\sqrt{\pi} }
	\frac{1}{2\pi i} \int_{(\sigma_u = 1/2+2\epsilon)} 
	\frac{h(u/i) u 
	\Gamma\left(\frac{1}{2}+u\right) \Gamma\left(\frac{1}{2}-u\right)}
	{\Gamma\left(\frac{k}{2}+\bnu_1 +u\right) \Gamma\left(\frac{k}{2}+\bnu_1-u\right)}
	\frac{\Gamma\left(\frac{1}{2}-s+\ell+u\right)}
	{\Gamma\left(\frac{1}{2}+s-\ell +u\right)}
	\; du
	\\
	-
	\frac{\Gamma\left(\frac{1}{2}+s+it\right)
	\Gamma\left(\frac{1}{2}+s-it\right)}
	{\Gamma\left(\frac{1}{2}+\frac{k}{2}+\bnu_2\right)
	\Gamma\left(\frac{1}{2}+\frac{k}{2}-\bnu_2\right)}
	\frac{(4\pi)^{k} }{\pi\sqrt{\pi} }
	\\
	\times
	\frac{1}{2\pi i} \int_{(\sigma_u = 1/2+2\epsilon)} 
	\frac{h(u/i) u 
	\Gamma\left(\frac{1}{2}+u\right) \Gamma\left(\frac{1}{2}-u\right)}
	{\Gamma\left(\frac{k}{2}+\bnu_1 +u\right) \Gamma\left(\frac{k}{2}+\bnu_1-u\right)}
	\frac{\Gamma\left(\frac{1}{2}-s+u\right)}
	{\Gamma\left(u+\frac{1}{2}+s\right)
	\left(u+it\right) \left(u-it\right)}
	\; du
	.
\end{multline*}

To get a meromorphic continuation to $\Re(s) \geq 1/2$, 
let $C$ and $C_\ell$ for $0\leq \ell \leq k/2$ be a path running from $-\infty$ to $+\infty$ in such a way that 
the sequence of increasing poles and the sequence of decreasing poles are separated. 
Then we get
\begin{multline*}
	\tth^{(2)}_0 (t; s, \nu_1, \nu_2)
	=
	\sum_{\ell=0}^{\frac{k}{2}}
	(-1)^\ell
	\frac{\Gamma\left(-\frac{1}{2}+s-\ell +it\right) \Gamma\left(-\frac{1}{2}+s-\ell -it\right)}
	{\Gamma\left(\frac{1}{2}+\frac{k}{2}+\bnu_2\right)
	\Gamma\left(\frac{1}{2}+\frac{k}{2}-\bnu_2\right)}
	\\
	\times
	\frac{(4\pi)^{k} }{\pi\sqrt{\pi} }
	\left\{
	-
	\sum_{\ell_1=0}^{\frac{k}{2}-\ell} 
	\frac{(-1)^{\ell_1}}{\ell_1!}
	\frac{h\left( (s-\ell-\ell_1 -1/2)/i\right) \left(s-\ell-\ell_1-\frac{1}{2}\right) 
	\Gamma\left(s -\ell_1-\ell \right) \Gamma\left(1-s+\ell_1+\ell\right) }
	{\Gamma\left(\frac{k}{2}+\bnu_1 +s-\ell_1-\ell -\frac{1}{2}\right) 
	\Gamma\left(\frac{k}{2}+\bnu_1 -s+\ell_1+\ell +\frac{1}{2}\right)
	\Gamma\left(2s-2\ell-\ell_1\right)}
	\right.
	\\
	\left.
	+
	\frac{1}{2\pi i} \int_{C_\ell} 
	\frac{h(u/i) u 
	\Gamma\left(\frac{1}{2}+u\right) \Gamma\left(\frac{1}{2}-u\right)}
	{\Gamma\left(\frac{k}{2}+\bnu_1 +u\right) \Gamma\left(\frac{k}{2}+\bnu_1-u\right)}
	\frac{\Gamma\left(\frac{1}{2}-s+\ell+u\right)}
	{\Gamma\left(\frac{1}{2}+s-\ell +u\right)}
	\; du
	\right\}
	\\
	-
	\frac{\Gamma\left(\frac{1}{2}+s+it\right)
	\Gamma\left(\frac{1}{2}+s-it\right)}
	{\Gamma\left(\frac{1}{2}+\frac{k}{2}+\bnu_2\right)
	\Gamma\left(\frac{1}{2}+\frac{k}{2}-\bnu_2\right)}
	\frac{(4\pi)^{k} }{\pi\sqrt{\pi} }
	\\
	\times
	\left\{
	\sum_{\ell_1=0}^{\frac{k}{2}}
	\frac{(-1)^{\ell_1}}{\ell_1!}
	\frac{h\left((s-1/2-\ell_1)/i\right) \left(s-\ell_1-\frac{1}{2}\right)
	\Gamma\left(s-\ell_1\right) \Gamma\left(1-s+\ell_1\right)}
	{\Gamma\left(\frac{k}{2}+\bnu_1 +s-\ell_1-\frac{1}{2} \right) 
	\Gamma\left(\frac{k}{2}+\bnu_1-s+\ell_1+\frac{1}{2}\right)}
	\right.
	\\
	\times 
	\frac{1}
	{\Gamma\left(2s-\ell_1\right)
	\left(s-\ell_1-\frac{1}{2}+it\right) \left(s-\ell_1-\frac{1}{2}-it\right)}
	\\
	\left.
	+
	\frac{1}{2\pi i} \int_{C} 
	\frac{h(u/i) u 
	\Gamma\left(\frac{1}{2}+u\right) \Gamma\left(\frac{1}{2}-u\right)}
	{\Gamma\left(\frac{k}{2}+\bnu_1 +u\right) \Gamma\left(\frac{k}{2}+\bnu_1-u\right)}
	\frac{\Gamma\left(\frac{1}{2}-s+u\right)}
	{\Gamma\left(u+\frac{1}{2}+s\right)
	\left(u+it\right) \left(u-it\right)}
	\; du
	\right\}
	.
\end{multline*}
Then we have a meromorphic continuation to $\Re(s)\geq 1/2$.
Taking $s$ with $1/2\leq \Re(s) \leq 1/2+\epsilon/2$, we move the $u$ line of integration to $\sigma_u = 1/2+2\epsilon$. 
This gives us
\begin{multline*}
	\tth^{(2)}_0(t; s, \nu_1, \nu_2)
	=
	\sum_{\ell=0}^{\frac{k}{2}}
	(-1)^\ell
	\frac{\Gamma\left(-\frac{1}{2}+s-\ell +it\right) \Gamma\left(-\frac{1}{2}+s-\ell -it\right)}
	{\Gamma\left(\frac{1}{2}+\frac{k}{2}+\bnu_2\right)
	\Gamma\left(\frac{1}{2}+\frac{k}{2}-\bnu_2\right)}
	\\
	\times
	\frac{(4\pi)^{k} }{\pi\sqrt{\pi} }
	\left\{
	-
	\sum_{\ell_1=0}^{\frac{k}{2}-\ell} 
	\frac{(-1)^{\ell_1}}{\ell_1!}
	\frac{h\left( (s-\ell-\ell_1 -1/2)/i\right) \left(s-\ell-\ell_1-\frac{1}{2}\right) 
	\Gamma\left(s -\ell_1-\ell \right) \Gamma\left(1-s+\ell_1+\ell\right) }
	{\Gamma\left(\frac{k}{2}+\bnu_1 +s-\ell_1-\ell -\frac{1}{2}\right) 
	\Gamma\left(\frac{k}{2}+\bnu_1 -s+\ell_1+\ell +\frac{1}{2}\right)
	\Gamma\left(2s-2\ell-\ell_1\right)}
	\right.
	\\
	\left.
	+
	\frac{1}{2\pi i} \int_{(\sigma_u=1/2+2\epsilon)} 
	\frac{h(u/i) u 
	\Gamma\left(\frac{1}{2}+u\right) \Gamma\left(\frac{1}{2}-u\right)}
	{\Gamma\left(\frac{k}{2}+\bnu_1 +u\right) \Gamma\left(\frac{k}{2}+\bnu_1-u\right)}
	\frac{\Gamma\left(\frac{1}{2}-s+\ell+u\right)}
	{\Gamma\left(\frac{1}{2}+s-\ell +u\right)}
	\; du
	\right\}
	\\
	-
	\frac{\Gamma\left(\frac{1}{2}+s+it\right)
	\Gamma\left(\frac{1}{2}+s-it\right)}
	{\Gamma\left(\frac{1}{2}+\frac{k}{2}+\bnu_2\right)
	\Gamma\left(\frac{1}{2}+\frac{k}{2}-\bnu_2\right)}
	\frac{(4\pi)^{k} }{\pi\sqrt{\pi} }
	\\
	\times
	\left\{
	\sum_{\ell_1=0}^{\frac{k}{2}}
	\frac{(-1)^{\ell_1}}{\ell_1!}
	\frac{h\left((s-1/2-\ell_1)/i\right) \left(s-\ell_1-\frac{1}{2}\right)
	\Gamma\left(s-\ell_1\right) \Gamma\left(1-s+\ell_1\right)}
	{\Gamma\left(\frac{k}{2}+\bnu_1 +s-\ell_1-\frac{1}{2} \right) 
	\Gamma\left(\frac{k}{2}+\bnu_1-s+\ell_1+\frac{1}{2}\right)}
	\right.
	\\
	\times 
	\frac{1}
	{\Gamma\left(2s-\ell_1\right)
	\left(s-\ell_1-\frac{1}{2}+it\right) \left(s-\ell_1-\frac{1}{2}-it\right)}
	\\
	\left.
	+
	\frac{1}{2\pi i} \int_{(\sigma_u = 1/2+2\epsilon)} 
	\frac{h(u/i) u 
	\Gamma\left(\frac{1}{2}+u\right) \Gamma\left(\frac{1}{2}-u\right)}
	{\Gamma\left(\frac{k}{2}+\bnu_1 +u\right) \Gamma\left(\frac{k}{2}+\bnu_1-u\right)}
	\frac{\Gamma\left(\frac{1}{2}-s+u\right)}
	{\Gamma\left(u+\frac{1}{2}+s\right)
	\left(u+it\right) \left(u-it\right)}
	\; du
	\right\}
	\\
	=
	\tth^{(2)}(t; s, \nu_1, \nu_2)	
	.
\end{multline*}
Then, for $1/2 \leq \Re(s) < 1/2+\epsilon/2$, we have
\begin{multline*}
	\tE^{(3)}_{f, \phi_1, \phi_2}(s)
	=
	-
	\overline{c_{\phi_2}(-1)}
	\left\{
	\sum_j 
	\tth^{(2)}(t_j; s, \nu_1, \nu_2)
	\left<u_j, U_{f, \phi}\right> 
	\cL\left(s, \overline{u_j} \times \overline{\phi_2}\right)
	\right.
	\\
	\left.
	+
	\sum_{\cuspa} \frac{1}{4\pi} 
	\tth^{(2)}(t; s, \nu_1, \nu_2)
	\int_{-\infty}^\infty \left<E_{\cuspa}(*, 1/2+it), U_{f, \phi_1}\right> 
	\cL\left(s, E_\cuspa(*,1/2-it)\times \overline{\phi_2}\right)
	\; dt
	\right\}
	.
\end{multline*}

By \eqref{e:cM*}, for $\Re(s-w+1/2-k/2) >1 $ and $\Re(w+1/2) > 1$, 
\begin{multline*}
	\cM^*_{f, \phi_1, \phi_2} \left(s-w+\frac{1}{2}-\frac{k}{2}, w+\frac{1}{2}\right)
	\\
	=
	\frac{\Gamma\left(s-w+\bnu_2\right)
	\Gamma\left(s-w-\bnu_2\right)
	(4\pi)^{\frac{1}{2}-s+w}}
	{\Gamma\left(s-w+\frac{1}{2}-\frac{k}{2}\right)}
	\left\{ \cM_{f, \phi_1, \phi_2}\left(s-w+\frac{1}{2}-\frac{k}{2}, w+\frac{1}{2}\right)
	\right.
	\\
	\left.
	+
	\cM_{f, \phi_1, \phi_2}^{(3)}\left(s-w+\frac{1}{2}-\frac{k}{2}, w+\frac{1}{2}\right)\right\}, 
\end{multline*}
where
$$
	\cM_{f, \phi_1, \phi_2}\left(s-w+\frac{1}{2}-\frac{k}{2}, w+\frac{1}{2}\right)
	=
	\zeta(2s)\sum_{m, n\geq 1} \frac{a(n+m) \overline{c_{\phi_1}(n)} \overline{c_{\phi_2}(m)}}
	{m^{s-w-\frac{1}{2}} n^{w+\frac{k}{2}-\frac{1}{2}}}
$$
and
\begin{multline*}
	\cM_{f, \phi_1, \phi_2}^{(3)}\left(s-w+\frac{1}{2}-\frac{k}{2}, w+\frac{1}{2}\right)
	=
	\frac{\overline{c_{\phi_2}(-1)} 
	\Gamma\left(s-w+\frac{1}{2}-\frac{k}{2}\right) 
	\Gamma\left(\frac{1}{2}-s+w+\frac{k}{2}\right)}
	{\Gamma\left(\frac{1}{2}+\frac{k}{2}+\bnu_2\right) 
	\Gamma\left(\frac{1}{2}+\frac{k}{2}-\bnu_2\right)}
	\\
	\times
	\zeta(2s) 
	\sum_{n=1}^\infty 
	\frac{\overline{c_{\phi_1}(n)}}{n^{w+\frac{k}{2}-\frac{1}{2}}}
	\sum_{m=1}^{n-1} \frac{a(n-m) \overline{c_{\phi_2}(m)}}
	{m^{s-w-\frac{1}{2}}}
	.
\end{multline*}
By \eqref{e:cM*}, for $\Re(s)-\Re(w)<0$ and for sufficiently large $\Re(w)$, 
we have
\begin{multline*}
	\cM^*_{f, \phi_1, \phi_2} \left(s-w+\frac{1}{2}-\frac{k}{2}, w+\frac{1}{2}\right)
	=
	\left(4\pi\right)^{-s+w+\frac{k}{2}}
	\sqrt{\pi}
	\Gamma\left(-s+w+\frac{1}{2}+\frac{k}{2}\right)
	\\
	\times
	\left\{
	\sum_j 
	\frac{\Gamma\left(s-w-\frac{k}{2}-it_j\right) \Gamma\left(s-w-\frac{k}{2}+it_j\right) }
	{\Gamma\left(\frac{1}{2}-it_j\right) \Gamma\left(\frac{1}{2}+it_j\right)}
	(-1)^{\alpha_j} \cL\left(s, \overline{u_j} \times \overline{\phi_1}\right) 
	\left<u_j, U_{f, \phi_2}\right>
	\right.
	\\
	+
	\sum_\cuspa \frac{1}{4\pi} \int_{-\infty}^\infty 
	\frac{\Gamma\left(s-w-\frac{k}{2}-it\right) \Gamma\left(s-w-\frac{k}{2}+it\right) }
	{\Gamma\left(\frac{1}{2}-it\right) \Gamma\left(\frac{1}{2}+it\right)}
	\cL\left(s, E_{\cuspa} (*, 1/2-it) \times \overline{\phi_1}\right)
	\left<E_{\cuspa} (*, 1/2+it), U_{f, \phi_2}\right> \; dt
	\\
	-
	\sum_{\ell=0}^{\lfloor\Re\left(-s+w+\frac{k}{2}\right)\rfloor}
	\frac{(-1)^{\ell}}{\ell!} 
	\frac{\Gamma\left(2s-2w-k+\ell\right)} 
	{\Gamma\left(\frac{1}{2}-\ell+\frac{k}{2} -s+w\right)
	\Gamma\left(\frac{1}{2}+\ell-\frac{k}{2}+s-w\right)}
	\\
	\times
	\frac{1}{2}
	\sum_\cuspa 
	\left(
	\cL\left(s, E_\cuspa \left(*, \frac{1}{2}-\ell+\frac{k}{2}-s+w\right) \times \overline{\phi_1}\right)
	\left<E_\cuspa\left(*, \frac{1}{2}+\ell-\frac{k}{2}+s-w\right), U_{f, \phi_2}\right>
	\right.
	\\
	\left.\left.
	+
	\cL\left(s, E_\cuspa\left(\frac{1}{2}+\ell-\frac{k}{2}+s-w\right)\times \overline{\phi_1}\right)
	\left<E_\cuspa\left(*, \frac{1}{2}-\ell +\frac{k}{2}-s+w\right), U_{f, \phi_2}\right>
	\right)
	\right\}
	.
\end{multline*}

Recalling \eqref{e:tE4}, for $\Re(s)>1+k/2+\epsilon$, we have
\begin{multline*}
	\tE^{(4)}_{f, \phi_1, \phi_2}(s)
	\\
	=
	(4\pi)^{\frac{k-1}{2}} i^k \frac{4}{\pi} 
	\frac{1}{2\pi i} \int_{(\sigma_u = 1/2+2\epsilon)} 
	\frac{h(u/i) u 
	\Gamma\left(\frac{1}{2}+u\right) \Gamma\left(\frac{1}{2}-u\right)}
	{\Gamma\left(\frac{k}{2}+\bnu_1 +u\right) \Gamma\left(\frac{k}{2}+\bnu_1-u\right)}
	\frac{1}{2\pi i } \int_{(\sigma_w = 1/2+\epsilon)}
	\frac{\Gamma\left(-w+u\right)}
	{\Gamma\left(1+w+u\right)}
	\\
	\times
	\frac{\Gamma\left(w+\frac{k}{2}+\bar{\nu_1}\right) 
	\Gamma\left(w+\frac{k}{2}-\bar{\nu_1}\right)
	\Gamma\left(s-w+\frac{1}{2}-\frac{k}{2}\right) 
	(4\pi)^{s-w-\frac{1}{2}}}
	{\Gamma\left(s-w+\bnu_2\right)
	\Gamma\left(s-w-\bnu_2\right) }
	\\
	\times
	\cM^*_{f, \phi_1, \phi_2} \left(s-w+\frac{1}{2}-\frac{k}{2}, w+\frac{1}{2}\right)
	\; dw \; du
	.
\end{multline*}
Our aim is to get a meromorphic continuation of $\tE^{(4)}_{f,\phi_1, \phi_2}(s)$ to $\Re(s)\geq 1/2$. 
There are poles passed over as we move $\Re(s)$ from $1+k/2+\epsilon'$ to $1/2$. 
We first curve the $w$ line of integration to pass over those poles. 
With the residues and the original integral, we obtain the meromorphic continuation to $\Re(s)\geq 1/2$.
Now take $s$ with $\Re(s)\geq 1/2$, and move back the $w$ line of integration to $\sigma_w = 1/2+\epsilon$. 
We again pass over the poles, and get the following formula. 

For $1/2\leq \Re(s) \leq 1/2+\epsilon/2$, for $0\leq \ell \leq k/2$, let
\begin{multline*}
	\tth^{(3)R}_\ell (t; s, \nu_1, \nu_2)
	:=
	-
	\frac{1}{\ell!}
	\frac{(4\pi)^{k} i^k}{\pi\sqrt{\pi}}
	\frac{
	\Gamma\left(s+\ell \pm it+\bar{\nu_1}\right) 
	\Gamma\left(s+\ell \pm it-\bar{\nu_1}\right)
	\Gamma\left(-\ell \mp 2it\right)} 
	{\Gamma\left(\frac{k}{2}-\ell \mp it+\bnu_2\right)
	\Gamma\left(\frac{k}{2}-\ell \mp it-\bnu_2\right) }
	\\
	\times
	\frac{1}{2\pi i} \int_{(\sigma_u = 1/2+2\epsilon)} 
	\frac{h(u/i) u 
	\Gamma\left(\frac{1}{2}+u\right) \Gamma\left(\frac{1}{2}-u\right)}
	{\Gamma\left(\frac{k}{2}+\bnu_1 +u\right) \Gamma\left(\frac{k}{2}+\bnu_1-u\right)}
	\frac{\Gamma\left(-s+\frac{k}{2}-\ell \mp it+u\right)}
	{\Gamma\left(1+s-\frac{k}{2}+\ell \pm it+u\right)}
	\; du
	.
\end{multline*}
After changing the ordering of integrals and series in $\tE^{(4)}_{f, \phi_1, \phi_2}(s)$, 
for $1/2\leq \Re(s) \leq 1/2+\epsilon/2$, we get
\begin{multline*}
	\tE^{(4)}_{f, \phi_1, \phi_2}(s)
	\\
	=
	(4\pi)^{\frac{k-1}{2}} i^k \frac{4}{\pi} 
	\frac{1}{2\pi i} \int_{(\sigma_u = 1/2+2\epsilon)} 
	\frac{h(u/i) u 
	\Gamma\left(\frac{1}{2}+u\right) \Gamma\left(\frac{1}{2}-u\right)}
	{\Gamma\left(\frac{k}{2}+\bnu_1 +u\right) \Gamma\left(\frac{k}{2}+\bnu_1-u\right)}
	\frac{1}{2\pi i } \int_{(\sigma_w = 1/2+\epsilon)}
	\frac{\Gamma\left(-w+u\right)}
	{\Gamma\left(1+w+u\right)}
	\\
	\times
	\frac{\Gamma\left(w+\frac{k}{2}+\bar{\nu_1}\right) 
	\Gamma\left(w+\frac{k}{2}-\bar{\nu_1}\right)
	\Gamma\left(s-w+\frac{1}{2}-\frac{k}{2}\right) 
	(4\pi)^{s-w-\frac{1}{2}}}
	{\Gamma\left(s-w+\bnu_2\right)
	\Gamma\left(s-w-\bnu_2\right) }
	\\
	\times
	\cM^*_{f, \phi_1, \phi_2} \left(s-w+\frac{1}{2}-\frac{k}{2}, w+\frac{1}{2}\right)
	\; dw \; du
	\\
	-
	\sum_{\ell=0}^{\frac{k}{2}}
	\overline{c_{\phi_1}(1)}
	\left\{
	\sum_j \tth^{(3)R}_\ell (t_j; s, \nu_1, \nu_2) (-1)^{\alpha_j} 
	\cL\left(s, \overline{u_j} \times \overline{\phi_1} \right) \left<u_j, U_{f, \phi_2}\right>
	\right.
	\\
	+
	\sum_\cuspa \frac{1}{4\pi} \int_{-\infty}^\infty \tth^{(3)R}_\ell (t; s, \nu_1, \nu_2) 
	\cL\left(s, E_\cuspa (*, 1/2-it) \times \overline{\phi_1}\right)
	\left<E_\cuspa (*, 1/2+it), U_{f, \phi_2}\right> \; dt
	\\
	+
	\tth^{(3)R}_\ell \left( (s-1+\bnu_1)/i; s, \nu_1, \nu_2\right) 
	\sum_{\cuspa} \left<E_{\cuspa}(*, 3/2-s-\bnu_1), U_{f, \phi_2} \right>
	\Res_{z=-s+1-\bnu_1} \cL\left(s, E_\cuspa(*, 1/2-z)\times \overline{\phi_1}\right) 
	\\
	\left.
	+
	\tth^{(3)R}_\ell \left( (s-1-\bnu_1)/i; s, \nu_1, \nu_2\right) 
	\sum_{\cuspa} \left<E_{\cuspa}(*, 3/2-s+\bnu_1), U_{f, \phi_2} \right>
	\Res_{z=-s+1+\bnu_1} \cL\left(s, E_\cuspa(*, 1/2-z)\times \overline{\phi_1}\right) 
	\right\}
	\\
	+
	\sum_{\ell=0}^{\frac{k}{2}-1} 
	\frac{(-1)^\ell}{\ell!}
	(4\pi)^{k-\ell-1} i^k \frac{4}{\pi} 
	\frac{\Gamma\left(s+\frac{1}{2}+\ell+\bar{\nu_1}\right) 
	\Gamma\left(s+\frac{1}{2}+\ell-\bar{\nu_1}\right)}
	{\Gamma\left(\frac{k}{2}-\ell -\frac{1}{2}+\bnu_2\right)
	\Gamma\left(\frac{k}{2}-\ell -\frac{1}{2}-\bnu_2\right) }
	\cM^*_{f, \phi_1, \phi_2} \left(-\ell, s+1+\ell-\frac{k}{2}\right)
	\\
	\times
	\frac{1}{2\pi i}  \int_{(\sigma_u = 1/2+2\epsilon)} 
	\frac{h(u/i) u 
	\Gamma\left(\frac{1}{2}+u\right) \Gamma\left(\frac{1}{2}-u\right)}
	{\Gamma\left(\frac{k}{2}+\bnu_1 +u\right) 
	\Gamma\left(\frac{k}{2}+\bnu_1-u\right)}
	\frac{\Gamma\left(\frac{k}{2}-\ell -s-\frac{1}{2}+u\right)}
	{\Gamma\left(\frac{3}{2}-\frac{k}{2}+\ell +s+u\right)}
	\; du
	.
\end{multline*}
After applying the spectral description of $\cM^*_{f, \phi_1, \phi_2}$, 
since 
\begin{multline*}
	\frac{1}{2\pi i}  \int_{(\sigma_u = 1/2+2\epsilon)} 
	\frac{h(u/i) u 
	\Gamma\left(\frac{1}{2}+u\right) \Gamma\left(\frac{1}{2}-u\right)}
	{\Gamma\left(\frac{k}{2}+\bnu_1 +u\right) 
	\Gamma\left(\frac{k}{2}+\bnu_1-u\right)}
	\frac{\Gamma\left(\frac{k}{2}-\ell -s-\frac{1}{2}+u\right)}
	{\Gamma\left(\frac{3}{2}-\frac{k}{2}+\ell +s+u\right)}
	\; du
	\\
	=
	(-1)^{\frac{k}{2}-\ell -1} 
	\frac{1}{2\pi i}  \int_{(\sigma_u = 1/2+2\epsilon)} 
	h(u/i) u
	\frac{\cos(\pi(s+u))}{\cos(\pi u)}
	\frac{\Gamma\left(\frac{k}{2}-\ell -s-\frac{1}{2}+u\right)
	\Gamma\left(\frac{k}{2}-\ell -\frac{1}{2} -s -u\right)}
	{\Gamma\left(\frac{k}{2}+\bnu_1 +u\right) 
	\Gamma\left(\frac{k}{2}+\bnu_1-u\right)}
	\; du
	\\
	=
	(-1)^{\frac{k}{2}-\ell -1} 
	\frac{\pi}{2}
	\frac{1}{\pi^2}  \int_{-\infty}^\infty 
	h(t) it
	\frac{\cos(\pi(s+it))}{\cos(\pi it)}
	\frac{\Gamma\left(\frac{k}{2}-\ell -s-\frac{1}{2}+it\right)
	\Gamma\left(\frac{k}{2}-\ell -\frac{1}{2} -s -it\right)}
	{\Gamma\left(\frac{k}{2}+\bnu_1 +it\right) 
	\Gamma\left(\frac{k}{2}+\bnu_1-it\right)}
	\; dt
	, 
\end{multline*}
we get
\begin{multline*}
	\tE^{(4)}_{f, \phi_1, \phi_2}(s)
	=
	\overline{c_{\phi_1}(1)}
	\left\{
	\sum_j 
	\tth^{(3)}(t_j; s, \nu_1, \nu_2) 
	(-1)^{\alpha_j} 
	\cL\left(s, \overline{u_j} \times \overline{\phi_1} \right) \left<u_j, U_{f, \phi_2}\right>
	\right.
	\\
	+
	\sum_\cuspa \frac{1}{4\pi} \int_{-\infty}^\infty 
	\tth^{(3)} (t; s, \nu_1, \nu_2) 
	\cL\left(s, E_\cuspa (*, 1/2-it) \times \overline{\phi_1}\right)
	\left<E_\cuspa (*, 1/2+it), U_{f, \phi_2}\right> \; dt
	\\
	+
	\tth^{(3)}\left( (s-1+\bnu_2)/i; s, \nu_1, \nu_2\right)
	\\
	\times
	\sum_{\cuspa} \left<E_{\cuspa}(*, 3/2-s-\bnu_1), U_{f, \phi_2} \right>
	\Res_{z=-s+1-\bnu_1} \cL\left(s, E_\cuspa(*, 1/2-z)\times \overline{\phi_1}\right) 
	\\
	+
	\tth^{(3)} \left( (s-1-\bnu_1)/i; s, \nu_1, \nu_2\right) 	
	\\
	\times
	\left.
	\sum_{\cuspa} \left<E_{\cuspa}(*, 3/2-s+\bnu_1), U_{f, \phi_2} \right>
	\Res_{z=-s+1+\bnu_1} \cL\left(s, E_\cuspa(*, 1/2-z)\times \overline{\phi_1}\right) 
	\right\}
	\\
	+
	2
	\sum_{\ell=1}^{\frac{k}{2}} 
	\frac{1}{(\ell-1)!}
	(4\pi)^{k-\ell}
	\frac{\Gamma\left(s-\frac{1}{2}+\ell+\bar{\nu_1}\right) 
	\Gamma\left(s-\frac{1}{2}+\ell-\bar{\nu_1}\right)}
	{\Gamma\left(\frac{k}{2}-\ell+\frac{1}{2}+\bnu_2\right)
	\Gamma\left(\frac{k}{2}-\ell +\frac{1}{2}-\bnu_2\right) }
	\cM^*_{f, \phi_1, \phi_2} \left(-\ell+1, s+\ell-\frac{k}{2}\right)
	\\
	\times 
	H_{2, \ell}(s; \bnu_1)
	.
\end{multline*}

By \eqref{e:tth1_Omega} and \eqref{e:tth3_Omega}, 
\begin{multline*}
	\tth^{(1)}\left((s-1\pm \bnu_1)/i; s, \nu_1, \nu_2\right)
	\\	
	=
	\frac{(4\pi)^k i^k \sqrt{\pi}}{2}
	\frac{\Gamma\left(2\mp 2\bnu_1-2s\right)
	\Gamma\left(2s-1\right)
	\Gamma\left(2s-1\pm 2\bnu_1\right)}
	{\Gamma\left(\frac{1}{2}+\frac{k}{2}+\bnu_1\right) 
	\Gamma\left(\frac{1}{2}+\frac{k}{2}-\bnu_1\right)
	\Gamma\left(-s+1+\frac{k}{2}+\bnu_2\mp \bnu_1\right)
	\Gamma\left(-s+1+\frac{k}{2}-\bnu_2\mp \bnu_1\right)}
	\\
	\times
	H_1^\mp (s;\bnu_1)
	\\
	+
	\tth^{(1)\pm}(s; \bnu_1, \bnu_2)
\end{multline*}
and
\begin{multline*}
	\tth^{(3)}\left( (s-1\pm \bnu_1)/i; s, \nu_1, \nu_2\right)
	=
	\tth^{(3)\pm}(s; \bnu_1, \bnu_2)
	\\
	+
	\frac{(4\pi)^{k}\sqrt{\pi}}{2}
	\frac{\Gamma\left(2s-1\right)
	\cos\left(\pi(2s\pm \bnu_1)\right)}
	{\sin\left(\pi\left(2s\pm 2\bnu_1\right)\right)}
	\frac{1} 
	{\Gamma\left(-s+1+\frac{k}{2}\mp \bnu_1+\bnu_2\right)
	\Gamma\left(-s+1+\frac{k}{2}\mp \bnu_1-\bnu_2\right) }
	H_1^\mp (s;\bnu_1)
	.
\end{multline*}

We prove the proposition.

\end{proof}


\subsection{Main term $M_{f, \phi_1, \phi_2}(s)$}

Recalling \eqref{e:M1} and \eqref{e:M3}, we get
\begin{multline*}
	M^{(1)}_{f, \phi_1, \phi_2}(s)
	+
	M^{(3)}_{f, \phi_1, \phi_2}(s)
	\\
	=
	\left(
	- \frac{\pi \overline{c_{\phi_1}(-1)} i^k}
	{\Gamma\left(\frac{1}{2}+\frac{k}{2}+\bnu_1\right) 
	\Gamma\left(\frac{1}{2}+\frac{k}{2}-\bnu_1\right)}
	+
	\overline{c_{\phi_1}(1)}
	\cos\left(\pi(2s+ \bnu_1)\right)
	\right)
	\\
	\times
	\frac{(4\pi)^k\sqrt{\pi}}{2}
	\frac{\Gamma\left(2s-1\right)}
	{\sin\left(\pi\left(2s+2\bnu_1\right)\right)}
	\frac{1}
	{\Gamma\left(-s+1+\frac{k}{2}+\bnu_2- \bnu_1\right)
	\Gamma\left(-s+1+\frac{k}{2}-\bnu_2- \bnu_1\right)}
	\\
	\times
	H_1^-(s;\bnu_1)
	\sum_{\cuspa} 	
	\left<E_\cuspa \left(*, 3/2-s-\bnu_1\right), U_{f, \phi_2}\right> 
	\Res_{z=-s+1 -\bnu_1} 
	\cL\left(s, E_{\cuspa} \left(*, 1/2-z\right) \times \overline{\phi_1}\right)
	\\
	+
	\left(
	- \frac{\pi \overline{c_{\phi_1}(-1)} i^k}
	{\Gamma\left(\frac{1}{2}+\frac{k}{2}+\bnu_1\right) 
	\Gamma\left(\frac{1}{2}+\frac{k}{2}-\bnu_1\right)}
	+
	\overline{c_{\phi_1}(1)}
	\cos\left(\pi(2s- \bnu_1)\right)
	\right)
	\\
	\times
	\frac{(4\pi)^k \sqrt{\pi}}{2}
	\frac{\Gamma\left(2s-1\right)}
	{\sin\left(\pi\left(2s-2\bnu_1\right)\right)}
	\frac{1}
	{\Gamma\left(-s+1+\frac{k}{2}+\bnu_2+\bnu_1\right)
	\Gamma\left(-s+1+\frac{k}{2}-\bnu_2+\bnu_1\right)}
	\\
	\times
	H_1^+ (s;\bnu_1)
	\sum_{\cuspa} \left<E_\cuspa \left(*, 3/2-s+\bnu_1\right), U_{f, \phi_2}\right> 
	\Res_{z=-s+1 +\bnu_1} 
	\cL\left(s, E_{\cuspa} \left(*, 1/2-z\right) \times \overline{\phi_1}\right)
	.
\end{multline*}
Here $H_1^\pm(s; \bnu_1)$ and $H_{2, \ell}(s; \bnu_1)$ are defined in \eqref{e:H1pm} and \eqref{e:H2} respectively. 

Recalling \eqref{e:tM}, we have the following lemma.
\begin{lem}\label{lem:Mfphi1phi2}
\begin{multline}\label{e:second_main}
	M_{f, \phi_1, \phi_2}(s)
	:=
	\tM_{f, \phi_1, \phi_2} (s)
	+
	M^{(1)}_{f, \phi_1, \phi_2}(s)
	+
	M^{(3)}_{f, \phi_1, \phi_2}(s)
	\\
	=	
	(4\pi)^{\frac{k}{2}-\bnu_1} 
	\overline{c^+_{\phi_1}}
	\frac{\Gamma\left(-\frac{k}{2}+\frac{1}{2}-\bnu_1\right) }
	{\Gamma\left(\frac{1}{2}+\bnu_1\right) \Gamma\left(\frac{1}{2}-\bnu_1\right)}
	\frac{\zeta(2s) }{\zeta(2s+2\bnu_1+1)}
	\overline{c_{\phi_2}(1)}
	L\left(s+1/2+ \bnu_1 , f\times \overline{\phi_2}\right)
	\frac{1}{2}H_1^+ (\bnu_1)
	\\
	+
	\left(
	- \frac{\pi \overline{c_{\phi_1}(-1)} i^k}
	{\Gamma\left(\frac{1}{2}+\frac{k}{2}+\bnu_1\right) 
	\Gamma\left(\frac{1}{2}+\frac{k}{2}-\bnu_1\right)}
	+
	\overline{c_{\phi_1}(1)}
	\cos\left(\pi(2s- \bnu_1)\right)
	\right)
	\\
	\times
	\frac{(4\pi)^k \sqrt{\pi}}{2}
	\frac{\Gamma\left(2s-1\right)}
	{\sin\left(\pi\left(2s-2\bnu_1\right)\right)}
	\frac{1}
	{\Gamma\left(-s+1+\frac{k}{2}+\bnu_2+\bnu_1\right)
	\Gamma\left(-s+1+\frac{k}{2}-\bnu_2+\bnu_1\right)}
	\\
	\times
	H_1^+ (s;\bnu_1)
	\sum_{\cuspa} \left<E_\cuspa \left(*, 3/2-s+\bnu_1\right), U_{f, \phi_2}\right> 
	\Res_{z=-s+1 +\bnu_1} 
	\cL\left(s, E_{\cuspa} \left(*, 1/2-z\right) \times \overline{\phi_1}\right)
	\\
	+
	(4\pi)^{\frac{k}{2}+\bnu_1}
	\overline{c^-_{\phi_1}} 
	\frac{\Gamma\left(-\frac{k}{2}+\frac{1}{2}+\bnu_1\right)}
	{\Gamma\left(\frac{1}{2}+\bnu_1\right) \Gamma\left(\frac{1}{2}-\bnu_1\right)}
	\frac{\zeta(2s) }{\zeta(2s-2\bnu_1+1)}
	 \overline{c_{\phi_2}(1)}
	L\left(s+1/2- \bnu_1 , f\times \overline{\phi_2}\right)
	\frac{1}{2}H_1^- (\bnu_1)
	\\
	+
	\left(
	- \frac{\pi \overline{c_{\phi_1}(-1)} i^k}
	{\Gamma\left(\frac{1}{2}+\frac{k}{2}+\bnu_1\right) 
	\Gamma\left(\frac{1}{2}+\frac{k}{2}-\bnu_1\right)}
	+
	\overline{c_{\phi_1}(1)}
	\cos\left(\pi(2s+ \bnu_1)\right)
	\right)
	\\
	\times
	\frac{(4\pi)^k\sqrt{\pi}}{2}
	\frac{\Gamma\left(2s-1\right)}
	{\sin\left(\pi\left(2s+2\bnu_1\right)\right)}
	\frac{1}
	{\Gamma\left(-s+1+\frac{k}{2}+\bnu_2- \bnu_1\right)
	\Gamma\left(-s+1+\frac{k}{2}-\bnu_2- \bnu_1\right)}
	\\
	\times
	H_1^-(s;\bnu_1)
	\sum_{\cuspa} 	
	\left<E_\cuspa \left(*, 3/2-s-\bnu_1\right), U_{f, \phi_2}\right> 
	\Res_{z=-s+1 -\bnu_1} 
	\cL\left(s, E_{\cuspa} \left(*, 1/2-z\right) \times \overline{\phi_1}\right)
	, 
\end{multline}
where $H_1^\pm(\bnu_1)$ is given in \eqref{e:H1}.

\end{lem}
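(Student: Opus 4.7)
The plan is to do a straightforward algebraic consolidation: substitute the three defining expressions \eqref{e:tM}, \eqref{e:M1}, \eqref{e:M3} and collect the terms that share the same Eisenstein residue in pairs indexed by the sign $\pm$ of $\bnu_1$. The contribution $\tM_{f,\phi_1,\phi_2}(s)$ is already in the form shown in \eqref{e:tMfphi1phi2}, and it supplies the first and fifth lines of \eqref{e:second_main} verbatim (those containing $L(s+1/2 \pm \bnu_1, f\times \overline{\phi_2})$ and $H_1^\pm(\bnu_1)$). All the remaining work is therefore to prove that, for each choice of sign, the sum of the corresponding residue-of-Eisenstein term from $M^{(1)}$ and the corresponding residue-of-Eisenstein term from $M^{(3)}$ equals the bracketed expression in \eqref{e:second_main}.

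Fixing the sign $+$ (the case of residue at $z = -s+1-\bnu_1$ and $H_1^-(s;\bnu_1)$), the two relevant coefficients are
\[
\tfrac{(4\pi)^k i^k \sqrt{\pi}}{2}\;\overline{c_{\phi_1}(-1)}\,\frac{\Gamma(2-2\bnu_1-2s)\Gamma(2s-1)\Gamma(2s-1+2\bnu_1)}{\Gamma(\tfrac{1}{2}+\tfrac{k}{2}+\bnu_1)\Gamma(\tfrac{1}{2}+\tfrac{k}{2}-\bnu_1)\Gamma(1-s+\tfrac{k}{2}+\bnu_2-\bnu_1)\Gamma(1-s+\tfrac{k}{2}-\bnu_2-\bnu_1)}
\]
from $M^{(1)}$, and
\[
\tfrac{(4\pi)^k \sqrt{\pi}}{2}\;\overline{c_{\phi_1}(1)}\,\frac{\Gamma(2s-1)\cos(\pi(2s+\bnu_1))}{\sin(\pi(2s+2\bnu_1))\,\Gamma(1-s+\tfrac{k}{2}-\bnu_1+\bnu_2)\Gamma(1-s+\tfrac{k}{2}-\bnu_1-\bnu_2)}
\]
from $M^{(3)}$. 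The key step is the reflection identity $\Gamma(a)\Gamma(1-a) = \pi/\sin(\pi a)$ applied with $a = 2s-1+2\bnu_1$, together with $\sin(\pi(a-1)) = -\sin(\pi a)$, giving
\[
\Gamma(2-2\bnu_1-2s)\,\Gamma(2s-1+2\bnu_1) \;=\; \frac{-\pi}{\sin(\pi(2s+2\bnu_1))}.
\]
Substituting this into the $M^{(1)}$ coefficient produces the same denominator $\sin(\pi(2s+2\bnu_1))\,\Gamma(1-s+\tfrac{k}{2}+\bnu_2-\bnu_1)\Gamma(1-s+\tfrac{k}{2}-\bnu_2-\bnu_1)$ as in $M^{(3)}$, after which the two coefficients add to give precisely
\[
\tfrac{(4\pi)^k\sqrt{\pi}}{2}\,\frac{\Gamma(2s-1)}{\sin(\pi(2s+2\bnu_1))\,\Gamma(1-s+\tfrac{k}{2}+\bnu_2-\bnu_1)\Gamma(1-s+\tfrac{k}{2}-\bnu_2-\bnu_1)}\,\Bigl\{-\tfrac{\pi\,\overline{c_{\phi_1}(-1)}\,i^k}{\Gamma(\tfrac{1}{2}+\tfrac{k}{2}+\bnu_1)\Gamma(\tfrac{1}{2}+\tfrac{k}{2}-\bnu_1)} + \overline{c_{\phi_1}(1)}\cos(\pi(2s+\bnu_1))\Bigr\},
\]
which is the bracketed combination appearing in the fourth displayed line of \eqref{e:second_main}. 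The case of sign $-$ is identical after swapping $\bnu_1 \mapsto -\bnu_1$ everywhere (both in the coefficient and in the Eisenstein residue at $z=-s+1+\bnu_1$).

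The only obstacle is bookkeeping: one must be careful to pair the $\tth^{(1)\pm}$ branch used in defining $M^{(1)}$ with the $\tth^{(3)\pm}$ branch used in defining $M^{(3)}$ via the correct sign convention of \eqref{e:tth1_Omega} and \eqref{e:tth3_Omega}, since in those identities the $\pm$ on the pole location $(s-1\pm\bnu_1)/i$ corresponds to the $\mp$ superscript on $H_1$. Once the pairing is fixed there is no analytic content — the identity is a one-line application of the reflection formula, and the lemma follows.
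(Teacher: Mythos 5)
Your consolidation is correct and is essentially the paper's own derivation: the text immediately preceding the lemma combines \eqref{e:M1} and \eqref{e:M3} into exactly the bracketed form you obtain, the only nontrivial step being the reflection identity $\Gamma(2s-1\pm 2\bnu_1)\Gamma(2\mp 2\bnu_1-2s)=-\pi/\sin(\pi(2s\pm 2\bnu_1))$ that you invoke, after which \eqref{e:tM} supplies the two $L(s+1/2\pm\bnu_1, f\times\overline{\phi_2})$ terms. Apart from the harmless citation slip (\eqref{e:tMfphi1phi2} where you mean the evaluated form \eqref{e:tM}), the pairing of signs and coefficients matches the paper, so nothing further is needed.
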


\subsection{A proof of Theorem \ref{thm:second_intro}}
We will now consider the case where $N$ is square free and  $\phi_1(z) = E_N^{(k)}(z, 1/2+\overline{ir})$, which is a weight $k$ Eisenstein series at the cusp $1/N$, defined in \eqref{e:EkN}, 
We also take $\phi_2(z) = g(z)y^{k/2}$, where $g$ is a holomorphic newform of weight $k$ and level $N$.  Recalling \eqref{e:second}, taking $s=1/2-ir$, 
by \eqref{e:inner_ujU} and \eqref{e:inner_E1/aU}, we have
\begin{multline*}
	\sum_{j} 
	\frac{h(t_j) }{\cosh(\pi t_j)}
	\cL\left(1/2-ir, \overline{u_j}\times \bar{g}\right)
	\cL(1/2+ir, f\times u_j)
	\\
	+
	\sum_{a\mid N} 
	\frac{1}{4\pi} \int_{-\infty}^\infty 
	\frac{h(t)}{\cosh(\pi t)}
	\frac{4\left(\frac{N}{a}\right)^{-1}}
	{\zeta^*(1+2it)\zeta^*(1-2it) \prod_{p\mid N} (1-p^{-1-2it})\left(1-p^{-1+2it}\right)}
	\\
	\times
	L\left(1/2-ir, E_{1/a}(*, 1/2-it)\times \bar{g}\right)
	L\left(1/2+ir, f\times E_{1/a}(*, 1/2+it) \right)
	\; dt
	\\
	=
	\left((4\pi)^{-\frac{1}{2}}  
	(2\pi)^{-2ir}
	\prod_{p\mid N}(1-p^{-1-2ir})
	\right)^{-1}
	\left\{
	M_{f, E_N^{(k)}(*, 1/2+\overline{ir}) , gy^{k/2}}(1/2-ir) 
	\right.
	\\
	\left.
	+ 
	\Omega^{(1)}_{f, E_N^{(k)}(*, 1/2+\overline{ir}), gy^{k/2}}(1/2-ir)
	+
	\Omega^{(3)}_{f, E_N^{(k)}(*, 1/2+\overline{ir}), gy^{k/2}}(1/2-ir)
	\right.
	\\
	\left.
	+
	E^{(1)}_{f, E_N^{(k)}(*, 1/2+\overline{ir}), gy^{k/2}}(1/2-ir)
	+
	E^{(3)}_{f, E_N^{(k)}(*, 1/2+\overline{ir}), gy^{k/2}}(1/2-ir)
	\right\}
	.
\end{multline*}
Here
$M_{f, E_N^{(k)}(*, 1/2+\overline{ir}), gy^{k/2}}(s)$ is given in \eqref{e:second_main},
$\Omega^{(1)}_{f, E_N^{(k)}(*, 1/2+\overline{ir}), gy^{k/2}}(s)$ in \eqref{e:Omega1},
$E^{(1)}_{f, E_N^{(k)}(*, 1/2+\overline{ir}), gy^{k/2}}(s)$ in \eqref{e:E1},
$\Omega^{(3)}_{f, E_N^{(k)}(*, 1/2+\overline{ir}), gy^{k/2}}(s)$ in \eqref{e:Omega3},
and 
$E^{(3)}_{f, E_N^{(k)}(*, 1/2+\overline{ir}), gy^{k/2}}(s)$ in \eqref{e:E3}.

We first compute the inner product appearing in the main term $M_{f, E_N^{(k)} (*, 1/2+\overline{ir}), gy^{k/2}}(s)$. 
\begin{lem}\label{lem:inner-L}
Assume that $f$ and $g$ are new forms of weight $k$, for $\Gamma_0(N)$. 
For $a\mid N$, we have
\be\label{e:inner-L}
	\left<E_{1/a}(*, s), U_{f, gy^{k/2}}(z)\right> 
	=
	\frac{\Gamma(s+k-1)}{(4\pi)^{s+k-1} \zeta(2s)}
	\frac{N}{a} A(N/a) \overline{B(N/a)} L(s, f\times \bar{g}) 
	.
\ee
\end{lem}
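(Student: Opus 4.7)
The strategy is classical Rankin--Selberg unfolding. Since $U_{f,gy^{k/2}}(z)=y^k\overline{f(z)}g(z)$ is $\Gamma$-invariant of weight zero, so is its complex conjugate $y^k f(z)\overline{g(z)}$. Unfolding $E_{1/a}(z,s)$ against this invariant function and changing variables $z=\sigma_{1/a}w$ (using $\sigma_{1/a}^{-1}\Gamma_{1/a}\sigma_{1/a}=\Gamma_\infty$), together with the weight-$k$ transformation $f(\sigma_{1/a}w)=(cw+d)^k(f|_k\sigma_{1/a})(w)$ (recall $\det\sigma_{1/a}=1$) and the identity $\Im(\sigma_{1/a}w)^k|cw+d|^{2k}=y^k$, collapses the Jacobians and reduces everything to
$$
\left\langle E_{1/a}(\cdot,s),U_{f,gy^{k/2}}\right\rangle=\int_0^\infty\!\!\int_0^1 y^{s+k-2}\,(f|_k\sigma_{1/a})(w)\,\overline{(g|_k\sigma_{1/a})(w)}\,dx\,dy.
$$
Thus the whole proof reduces to computing the Fourier expansion of $f$ (and of $g$) at the cusp $1/a$.

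The key geometric input is Atkin--Lehner theory. Set $b:=N/a$. Since $N$ is square-free we have $(a,b)=1$, so integers $y_0,w_0$ exist with $bw_0-ay_0=1$; form the Atkin--Lehner matrix $W_b:=\sm b & y_0 \\ N & bw_0\esm$ (of determinant $b$). A direct matrix multiplication verifies
$$
\sigma_{1/a}=\tfrac{1}{\sqrt b}\,W_b\cdot T^{-y_0/b},\qquad T^t:=\sm 1 & t \\ 0 & 1\esm.
$$
Because the scalar $\sqrt b\,I$ acts trivially under $|_k$, one has $f|_k(W_b/\sqrt b)=f|_kW_b=\epsilon_b(f)\,f$, where $\epsilon_b(f)\in\{\pm1\}$ is the Atkin--Lehner eigenvalue of the newform $f$. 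Therefore
$$
(f|_k\sigma_{1/a})(w)=\epsilon_b(f)\sum_{n\ge 1}a(n)e^{-2\pi iny_0/b}e^{2\pi inw},
$$
and analogously for $g$. Substituting these expansions, integrating $x\in[0,1]$ enforces $n=m$ (which simultaneously kills the $y_0$-dependent phases), and the $y$-integral is the standard Gamma identity. This yields
$$
\left\langle E_{1/a}(\cdot,s),U_{f,gy^{k/2}}\right\rangle=\epsilon_b(f)\overline{\epsilon_b(g)}\cdot\frac{\Gamma(s+k-1)}{(4\pi)^{s+k-1}\zeta(2s)}\,L(s,f\times\bar g).
$$

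The only non-routine point is the sign/arithmetic identity
$$
\epsilon_b(f)\,\overline{\epsilon_b(g)}=\tfrac{N}{a}\,A(N/a)\,\overline{B(N/a)}.
$$
This follows from the classical Atkin--Lehner relation for square-free level: for each prime $p\mid N$ one has $a(p)=-\epsilon_p(f)\,p^{(k-2)/2}$, equivalently $\sqrt p\,A(p)=-\epsilon_p(f)$. Multiplicativity of $A(n)$ on the square-free divisor $b\mid N$ gives $\sqrt b\,A(b)=(-1)^{\omega(b)}\epsilon_b(f)$, and combining with the analogous expression for $g$ the $(-1)^{2\omega(b)}$ parity cancels, leaving $\epsilon_b(f)\overline{\epsilon_b(g)}=b\,A(b)\overline{B(b)}=(N/a)A(N/a)\overline{B(N/a)}$. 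This is the main content of the lemma; the rest of the argument is just careful bookkeeping of the Rankin--Selberg computation, and provides a sanity check in the case $a=N$ (the cusp $\infty$), where $b=1$, $\epsilon_1=1$, and one recovers the classical Rankin--Selberg formula.
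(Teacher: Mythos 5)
Your proof is correct and follows essentially the same route as the paper: unfold the Eisenstein series along the cusp $1/a$, identify $f|_k\sigma_{1/a}$ (up to a translation) with an Atkin--Lehner image of $f$, and evaluate the resulting Rankin--Selberg integral. The only difference is bookkeeping: the paper quotes Asai's Theorem 2, which gives $f|_k W_{N/a}=\mu(N/a)\sqrt{N/a}\,A(N/a)\,f$ directly, whereas you work with the abstract eigenvalue $\epsilon_{N/a}(f)$ and recover the factor $\frac{N}{a}A(N/a)\overline{B(N/a)}$ from the classical relation $a(p)=-\epsilon_p(f)p^{k/2-1}$ for $p\mid N$ --- an equivalent formulation of the same input.
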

\begin{proof}

For $\Gamma=\Gamma_0(N)$, let $\fund = \Gamma\bsl \bH$ be the fundamental domain for $\Gamma$. 
For $\cuspa = 1/a$ with $a\mid N$, compute the inner product
\begin{multline*}
	\left<E_{1/a}(*, s), U_{f, gy^{k/2}}(z)\right>
	=
	\iint_{\fund} \overline{U_{f, gy^{k/2}}(z)} 
	\sum_{\gamma\in \Gamma_{1/a}\bsl \Gamma} \left(\Im(\sigma_{1/a}^{-1}\gamma z)\right)^s\; \frac{dx\; dy}{y^2}
	\\
	=
	\sum_{\gamma\in \Gamma_{1/a} \bsl \Gamma} \int_{\sigma_{1/a}^{-1} \gamma \fund} \overline{U_{f, gy^{k/2}}(\sigma_{1/a} z)} y^s \; \frac{dx\; dy}{y^2}
	.
\end{multline*}
As in \cite{Iwa02}, as $\gamma$ runs over $\Gamma_{1/a}\bsl \Gamma$, the sets $\sigma_{1/a}^{-1}\gamma \fund$ cover the strip $\left\{z\in \bH\;|\; 0 < x< 1\right\}$ once (for an appropriate choice of representatives), giving
\be\label{e:after_covering}
	\int_0^\infty \int_0^1 \overline{U_{f, gy^{k/2}}(\sigma_{1/a}z)} y^s \; \frac{dx\; dy}{y^2} 
	.
\ee
Here
$$
	U_{f, gy^{k/2}}(\sigma_{1/a} z)
	= \left(\Im(\sigma_{1/a} z)\right)^k \overline{f(\sigma_{1/a} z)} g(\sigma_{1/a} z).
$$

Now, we will follow Asai's method \cite{Asa76}. 
For a matrix $\gamma = \sm a & b\\ c& d\esm\in GL_2(\R)$ of positive determinant, write
$$
	\left(f\mid_k \gamma\right)(z) = \det(\gamma)^{\frac{k}{2}} (cz+d)^{-k} f(\gamma z)
	.
$$
Then, for each $a\mid N$, we have
\be\label{e:V_a}
	\overline{U_{f, gy^{k/2}}(\sigma_{1/a} z)}
	=
	\left(f\mid_k \sigma_{1/a}\right)(z) 
	\overline{\left(g\mid_k \sigma_{1/a}\right)(z)}
	y^k
	.
\ee

Let 
$$
	M_a := \frac{N}{a}.
$$
Since $N$ is square-free, $(M_a, a) = 1$, so we can take $C_a, D_a\in \Z$ such that 
$$
	M_a D_a- a C_a = 1.
$$
Define
$$
	W_a := \bpm 1 & C_a\\ a & D_aM_a\ebpm \bpm M_a & \\ & 1\ebpm 
	=
	\bpm M_a & C_a\\ aM_a & D_aM_a\ebpm
$$
and this matrix $W_a$ satisfies conditions described in (2) in \cite{Asa76}. 
Since $f$ and $g$ are newforms of level $N$, $\left(f\mid_k W_a\right)$ and $\left(g\mid_k W_a\right)$ are again newforms of level $N$. 

By Theorem 2 in \cite{Asa76}, we have
$$
	f\mid_k W_a = \mu(M_a) \sqrt{M_a} A(M_a)\cdot f
$$
and 
$$
	g\mid_k W_a  = \mu(M_a) \sqrt{M_a} B(M_a) \cdot g
	.
$$

Since
$$
	W_a 
	= \bpm 1& C_a\\ a & D_a M_a \ebpm \bpm M_a & \\ & 1\ebpm 
	= \sigma_{1/a} 
	\bpm \frac{M_a}{\sqrt{\m_{1/a}}} & \frac{C_a}{\sqrt{\m_{1/a}}} \\ 0 & \sqrt{\m_{1/a}}\ebpm, 
$$
where $\sigma_{1/a}$ is defined in \eqref{e:sigma1/a} 
and $\m_{1/a} = M_a$, we have
$$
	\sigma_{1/a} = W_a \bpm \frac{1}{\sqrt{\m_{1/a}}} & - \frac{C_a}{\m_{1/a}\sqrt{\m_{1/a}}}\\ 0 & \frac{1}{\sqrt{\m_{1/a}}}\ebpm
	.
$$
Then we get
\begin{multline*}
	\left(f\mid_k \sigma_{1/a}\right)(z)
	=
	\mu(\m_{1/a}) \sqrt{\m_{1/a}} A(\m_{1/a}) 
	\left(f\mid_k \bpm \frac{1}{\sqrt{\m_{1/a}}} & - \frac{C_a}{\m_{1/a}\sqrt{\m_{1/a}}} \\ 0 & \frac{1}{\sqrt{\m_{1/a}}}\ebpm\right)(z) 
	\\
	=
	\mu(\m_{1/a}) \sqrt{\m_{1/a}} A(\m_{1/a}) 
	f\left( z- \frac{C_a}{\m_{1/a}}\right)
\end{multline*}
and
$$
	\left(g\mid_k \sigma_{1/a}\right)(z)
	=
	\mu(\m_{1/a}) \sqrt{\m_{1/a}} B(\m_{1/a}) 
	g\left( z- \frac{C_a}{\m_{1/a}}\right)
	.
$$
Substituting into \eqref{e:V_a}, 
$$
	\overline{U_{f, gy^{k/2}}(\sigma_{1/a} z)}
	=
	\m_{1/a} A(\m_{1/a}) \overline{B(\m_{1/a}) }
	f\left( z- \frac{C_a}{\m_{1/a}}\right)
	\overline{g\left( z- \frac{C_a}{\m_{1/a}}\right)}
	y^k
	, 
$$
and then \eqref{e:after_covering}, we get
\begin{multline*}
	\left<E_{1/a}(*, s), U_{f, gy^{k/2}}(z)\right> 
	\\
	=
	\m_{1/a} A(\m_{1/a}) \overline{B(\m_{1/a}) }
	\int_0^\infty 
	\int_0^1
	f\left( z- \frac{C_a}{\m_{1/a}}\right)
	\overline{g\left( z- \frac{C_a}{\m_{1/a}}\right)}
	\; dx\; 
	y^{k+s-1}
	\; \frac{dy}{y} 
	\\
	=
	\m_{1/a} A(\m_{1/a}) \overline{B(\m_{1/a}) }
	\int_0^\infty 
	\sum_{n\geq 1} a(n) \overline{b(n)} 
	e^{-4\pi ny} 
	y^{k+s-1}
	\; \frac{dy}{y} 
	\\
	=
	\frac{\Gamma(s+k-1)}{(4\pi)^{s+k-1} \zeta(2s)}
	\m_{1/a} A(\m_{1/a}) \overline{B(\m_{1/a})} L(s, f\times \bar{g}) 
	.
\end{multline*}
\end{proof}

Define
\be\label{e:H1_r=0}
	H_1
	:=
	\frac{1}{\pi^2} \int_{-\infty}^\infty h(t) t \tanh(\pi t) 
	\; dt
	, 
\ee
\be\label{e:H2_r=0}
	H_2 
	:=
	\frac{1}{\pi^2} \int_{-\infty}^\infty h(t) t \tanh(\pi t) 
	\left(\psi\left(\frac{k}{2}+it\right) + \psi\left(\frac{k}{2}-it\right)\right)
	\; dt
	, 
\ee
\be\label{e:H3}
	H_3:=
	\frac{1}{\pi^2} \int_{-\infty}^\infty h(t) t \tanh(\pi t) 
	\left(\psi\left(\frac{k}{2}+it\right) + \psi\left(\frac{k}{2}-it\right)\right)^2
	\; dt
	, 
\ee
\be\label{e:H4}
	H_4:=
	\frac{1}{\pi^2} \int_{-\infty}^\infty h(t) t \tanh(\pi t) 
	\left(\psi\left(\frac{k}{2}+it\right)+\psi\left(\frac{k}{2}-it\right)
	\right)^3
	\; dt, 
\ee
\begin{multline}\label{e:H10}
	H_{1, 0}
	:=
	\frac{1}{\pi^2} \int_{-\infty}^\infty h(t) t \tanh(\pi t) 
	\\
	\times
	\left(\psi\left(\frac{k}{2}+it\right)+\psi\left(\frac{k}{2}-it\right)\right)
	\left(\psi'\left(\frac{k}{2}+it\right)+\psi'\left(\frac{k}{2}-it\right)\right)
	\; dt, 
\end{multline}
\be\label{e:H0}
	H_0
	:=
	\frac{1}{\pi^2} \int_{-\infty}^\infty h(t) t \tanh(\pi t) 
	\left(\psi'\left(\frac{k}{2}+it\right) + \psi'\left(\frac{k}{2}-it\right) \right)
	\; dt
\ee
and 
\be\label{e:H01}
	H_{0, 1}
	:=
	\frac{1}{\pi^2} \int_{-\infty}^\infty h(t) t \tanh(\pi t) 
	\left(\psi''\left(\frac{k}{2}+it\right) + \psi''\left(\frac{k}{2}-it\right)\right)
	\; dt
	.
\ee

For each $a\mid N$, define $P_a(s; it, ir_1)$ as 
\begin{multline}\label{e:Pa}
	\cL\left(s, E_{1/a}(*, 1/2-it) \times E_N^{(k)}(*, 1/2+ir_1)\right)
	\\
	=:
	\frac{P_a(s; it, ir_1)}{\zeta^*(1-2it)}
	\zeta(s+it+ir_1) \zeta(s+it-ir_1) \zeta(s-it+ir_1) \zeta(s-it-ir_1).
\end{multline}
Then 
\begin{multline}\label{e:Res_cL_Eisenstein}
	\Res_{z=-s+1\mp ir_1}
	\cL\left(s, E_{1/a}(*, 1/2-z) \times E_N^{(k)}(*,1/2+ir_1)\right)
	\\
	=
	\frac{\zeta(1\mp 2ir_1)
	\zeta(2s-1)}
	{\Gamma\left(s-\frac{1}{2}\pm ir_1\right)
	\pi^{-s+\frac{1}{2}\mp ir_1}}
	P_a (s; -s+1\mp ir_1, ir_1)
	.
\end{multline}

\begin{lem}\label{lem:Mfg}
Let $\phi_1(z) = E_N^{(k)}(z, 1/2+\overline{ir})$ and $\phi_2(z) = g(z) y^{k/2}$. 

For $f\neq g$, we have
\begin{multline}\label{e:Nrne}
	\left((4\pi)^{-\frac{1}{2}}  
	\prod_{p\mid N}(1-p^{-1})
	\right)^{-1}
	M_{f, E_N^{(k)}(*, 1/2), g y^{k/2} }(1/2)
	=
	\frac{2}{\zeta(2)} L(1, f\times \bar{g})
	H_3
	\\
	+ 
	C_{f, g}^{(2)} H_2
	+ C^{(1)}_{f, g} H_1
	+ C^{(0)}_{f, g} H_0
	, 
\end{multline}
for some constant $C^{(2)}_{f, g}$, $C^{(1)}_{f, g}$ and $C^{(0)}_{f, g}$. 

For $f=g$, for any $r\in \R$, we have
\begin{multline}\label{e:1re}
	\left((4\pi)^{-\frac{1}{2}}  
	(2\pi)^{-2ir}
	\prod_{p\mid N}(1-p^{-1-2ir})
	\right)^{-1}
	M_{f, E_N^{(k)}(*, 1/2+\overline{ir}), f y^{k/2} }(1/2-ir)
	\\
	=	
	2
	\frac{\zeta(1-2ir)\zeta(1+2ir) }{\zeta(2)}
	\left.\frac{d}{ds}\left((s-1) L(s, f\times \bar{f})\right) \right|_{s=1}
	H_1
	\\
	+
	\left\{ 2\frac{\zeta'(1-2ir)}{\zeta(1-2ir)}
	+2\frac{\zeta'(1+2ir)}{\zeta(1+2ir)}
	- 4\frac{\zeta'(2)}{\zeta(2)}
	- 4\log(2\pi) 
	\right.
	\\
	\left.
	- \frac{\sum_{a\mid N} \frac{N}{a} A(N/a) \overline{B(N/a)} 
	\left. \frac{d}{ds} P_a(s; -s+1-ir, ir)\right|_{s=1/2-ir}}
	{\sum_{a\mid N} \frac{N}{a} A(N/a) \overline{B(N/a)} P_a(1/2-ir; 1/2, ir) }
	\right\}
	\\
	\times
	\frac{\zeta(1-2ir)\zeta(1+2ir) }{\zeta(2)}
	H_1
	\Res_{s=1}L(s, f\times \bar{f})
	\\
	+
	\frac{\zeta(1-2ir)\zeta(1+2ir) }{\zeta(2)}
	\left.\frac{d}{ds}H_1^-(s; ir)\right|_{s=1/2-ir}
	\Res_{s=1}L(s, f\times \bar{f})
	\\
	+
	2^{-4ir}\pi^{-4ir}
	\frac{\zeta(1+2ir) \zeta(1+2ir)}
	{\zeta(2)}
	\\
	\times
	\frac{\left(\sum_{a\mid N} \frac{N}{a} A(N/a) \overline{B(N/a)} P_a(1/2-ir; 1/2+2ir, ir)\right)}
	{2\prod_{p\mid N}(1-p^{-1-2ir})}
	L(1+2ir, f\times \bar{f})
	H_1^+ (3ir)
	\\
	+
	2^{4ir} \pi^{4ir}
	\frac{\varphi(N)}{N^{1+2ir} \prod_{p\mid N}(1-p^{-1-2ir})} 
	\frac{\zeta(1-2ir)\zeta(1-2ir)  }{\zeta(2-4ir)}
	L\left(1- 2ir , f\times \bar{f}\right)
	H_1^- (ir)
	.
\end{multline}
Here $H_1^\pm$ is given in \eqref{e:H1}. 
For $f=g$, taking $r=0$, we have
\begin{multline}\label{e:1re0}
	\left((4\pi)^{-\frac{1}{2}}  
	\prod_{p\mid N}(1-p^{-1})
	\right)^{-1}
	M_{f, E_N^{(k)}(*, 1/2), f y^{k/2} }(1/2)
	=
	\frac{2}{\zeta(2)} 
	\Res_{s=1} L(s, f\times\bar{f})
	H_4
	\\
	+
	C_{f, f}^{(3)} H_3 + C_{f, f}^{(2)} H_2 + C_{f, f}^{(1)} H_1 + C_{f, f}^{(1, 1)} H_{1, 1} + C_{f, f}^{(0)} H_0 + C_{f, f}^{(0, 1)} H_{0, 1}
	, 
\end{multline}
for some constants $C_{f, f}^{(3)}$, $C_{f, f}^{(2)}$, $C_{f, f}^{(1)}$, $C_{f, f}^{(1, 1)}$, 
$C_{f, f}^{(0)}$ and $C_{f, f}^{(0, 1)}$.

\end{lem}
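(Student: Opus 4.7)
The plan is to substitute $\phi_1 = E_N^{(k)}(*, 1/2 + \overline{ir})$ and $\phi_2 = g\,y^{k/2}$ into the explicit formula \eqref{e:second_main} for $M_{f, \phi_1, \phi_2}(s)$, evaluate at $s = 1/2 - ir$, and simplify using Lemma \ref{lem:inner-L}, the residue formula \eqref{e:Res_cL_Eisenstein}, and the Fourier expansion \eqref{e:EkN_Fourier}.

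First I would extract the coefficient data. Since $g$ is holomorphic of weight $k$, the form $\phi_2 = g y^{k/2}$ has type $\nu_2 = (k-1)/2$, its constant-term coefficients $c_{\phi_2}^{\pm}$ vanish, and $c_{\phi_2}(-n) = 0$ for all $n \geq 1$; the positive-frequency coefficients simplify to $c_{\phi_2}(n) = b(n)/(4\pi n)^{k/2}$ via $W_{k/2, (k-1)/2}(y) = y^{k/2} e^{-y/2}$. The form $\phi_1$ has type $\nu_1 = \overline{ir}$ so that $\bnu_1 = ir$, and \eqref{e:EkN_Fourier} expresses the constants $\overline{c_{\phi_1}^{\pm}}$ and $\overline{c_{\phi_1}(\pm 1)}$ in terms of $\zeta^*(1 \pm 2ir)$, $\varphi(N)/N^{1 \pm 2ir}$, and the Maass raising-operator Gamma ratio $\Gamma(\pm ir + (k+1)/2)/\Gamma(\pm ir + (1-k)/2)$.

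Next I would insert these data into each of the four summands of \eqref{e:second_main}. The first and third summands produce the Rankin-Selberg factors $L(1, f \times \bar g)$ and $L(1 - 2ir, f \times \bar g)$; the second and fourth summands require evaluating $\left<E_{1/a}(*, 3/2 - s \mp ir), U_{f, g y^{k/2}}\right>$ via Lemma \ref{lem:inner-L} (yielding $L(1, f \times \bar g)$ or $L(1 + 2ir, f \times \bar g)$) and multiplying by the residue $\Res_{z = -s + 1 \mp ir}\cL(s, E_{1/a}(*, 1/2 - z) \times \overline{\phi_1})$ from \eqref{e:Res_cL_Eisenstein}, introducing a factor $\zeta(2s - 1) \zeta(1 \mp 2ir) P_a$. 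At $s = 1/2 - ir$ the sine factor $\sin(\pi(2s + 2\bnu_1)) = \sin(\pi)$ vanishes, creating an apparent pole in the fourth summand; the companion coefficient
\[
-\frac{\pi\, \overline{c_{\phi_1}(-1)}\, i^k}{\Gamma\left(\frac{1}{2} + \frac{k}{2} + \bnu_1\right) \Gamma\left(\frac{1}{2} + \frac{k}{2} - \bnu_1\right)} + \overline{c_{\phi_1}(1)} \cos(\pi(2s + \bnu_1))
\]
carries a matching zero by \eqref{e:EkN_Fourier} and the $\Gamma$-reflection formula, making the expression finite; its value is obtained by a single differentiation in $s$.

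With the pole cancellations established, the three cases separate. For $f \neq g$ and $r = 0$, all $L$-values are holomorphic, and collecting the surviving derivative contributions from both the $\Gamma$-ratios in $H_1^{\pm}(s; \bnu_1)$ and from the Eisenstein correction polynomial $P_a(s; -s + 1 \mp \bnu_1, ir)$ organizes the answer into the combination \eqref{e:Nrne}, with the named moment integrals $H_0, H_1, H_2, H_3$ arising from the iterated $\psi$- and $\psi'$-expansions of the Gamma factors at $k/2 \pm it$. For $f = g$ and $r \neq 0$, the factor $L(s + 1/2 + \bnu_1, f \times \bar f)$ has a simple pole at $s = 1/2 - ir$; its Laurent coefficient $\tfrac{d}{ds}[(s - 1) L(s, f \times \bar f)]|_{s = 1}$ is extracted as the constant term and combined with the logarithmic derivatives of $\zeta(1 \pm 2ir)$ and of $P_a$ to produce \eqref{e:1re}. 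For $f = g$ and $r = 0$, the $\zeta(1 \pm 2ir)$ and $L(s, f \times \bar f)$ singularities coincide to give a triple Laurent expansion in $r$, which must be matched term-by-term against $\zeta(1 + z) = z^{-1} + \gamma_0 + \cdots$ and against the $\psi, \psi', \psi''$ expansions at $k/2 \pm it$ to recover the leading $H_4$ coefficient and the subleading $H_3, H_2, H_1, H_{1,1}, H_0, H_{0,1}$ terms of \eqref{e:1re0}.

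The main obstacle is bookkeeping: verifying the vanishing in the fourth summand that removes the $\sin(\pi)$ pole, organizing the simultaneous Laurent expansions at the intersecting singular lines $s = 1/2 - ir$, $r = 0$, and $s = 1$, and matching the resulting coefficients against the named moment integrals is a lengthy but purely algebraic computation. No new analytic input is needed beyond \eqref{e:second_main}, \eqref{e:EkN_Fourier}, \eqref{e:Res_cL_Eisenstein}, and Lemma \ref{lem:inner-L}.
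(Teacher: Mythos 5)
Your overall route is the same as the paper's: substitute $\phi_1=E_N^{(k)}(*,1/2+\overline{ir})$, $\phi_2=gy^{k/2}$ into \eqref{e:second_main}, evaluate the cusp sums by Lemma \ref{lem:inner-L} and \eqref{e:Res_cL_Eisenstein}, read off the constant-term data from \eqref{e:EkN_Fourier}, and then extract \eqref{e:Nrne}, \eqref{e:1re}, \eqref{e:1re0} by Laurent/Taylor expansion at the coincident singularities (the pole of $L(s,f\times\bar f)$ at $s=1$ for $f=g$, and the poles of $\zeta(1\pm 2ir)$ at $r=0$, which is what forces the derivatives of $H_1^{\pm}$ and produces $H_2,H_3,H_4,H_0,H_{0,1},H_{1,1}$). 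That architecture is correct.

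However, your stated mechanism for removing the apparent pole at $s=1/2-ir$ is wrong, and since you single it out as the key verification, this is a genuine gap. With $\bnu_1=ir$, $r_1=r$, the companion coefficient you quote equals, after \eqref{e:EkN_Fourier} and reflection, $-\cos(\pi i r)+\cos(\pi(2s+ir))$, and at $s=1/2-ir$ this is $-2\cos(\pi i r)=-2\cosh(\pi r)\neq 0$; it does \emph{not} carry a zero matching $\sin(\pi(2s+2\bnu_1))=\sin(\pi)=0$, and no L'H\^opital differentiation in $s$ occurs at this point. The actual cancellation comes from the gamma factors: with $\bnu_2=(k-1)/2$ the fourth summand carries $1/\Gamma(-s+3/2-ir)$, and the residue \eqref{e:Res_cL_Eisenstein} carries $1/\Gamma(s-1/2+ir)$, and
$$
	\frac{1}{\Gamma\left(s-\tfrac12+ir\right)\Gamma\left(\tfrac32-s-ir\right)}
	=
	-\frac{\cos\left(\pi(s+ir)\right)}{\pi},
$$
whose zero at $s=1/2-ir$ cancels the factor $\cos(\pi(s+ir))$ in $\sin(\pi(2s+2ir))=2\sin(\pi(s+ir))\cos(\pi(s+ir))$. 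Equivalently, after the functional-equation rewrite the paper performs to reach \eqref{e:MfENg}, the denominator is $\sin(\pi s)\sin(\pi(s+ir))$, which is nonvanishing at $s=1/2-ir$ for $r\neq 0$, and the term is manifestly finite. If you carried out your plan as written---expecting the coefficient to vanish and evaluating "by a single differentiation in $s$"---the computation would not close; you need the gamma/functional-equation cancellation above (the paper's step from \eqref{e:second_main} to \eqref{e:MfENg}) before specializing $s=1/2-ir$ and then performing the $r$-expansions. The only places where genuine differentiation enters are the $f=g$ pole of $L(s+1/2\pm ir, f\times\bar f)$ (cancelling between the first/third and second/fourth summands and producing the $\frac{d}{ds}$ terms in \eqref{e:1re}) and the $r\to 0$ limits, exactly as in the paper.
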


\begin{proof}
Recalling \eqref{e:second_main}, 
taking $\phi_1(z) = E_N^{(k)}(z, 1/2+\overline{ir_1})$ and $\phi_2(z) = g(z)y^{k/2}$, 
\begin{multline*}
	M_{f, E_N^{(k)}(*, 1/2+\overline{ir_1}), g y^{k/2} }(s)
	\\
	=	
	\frac{(4\pi)^{-ir_1} \pi^{-\frac{1}{2}-ir_1} }{2}
	\prod_{p\mid N} (1-p^{-1-2ir_1})
	\frac{\zeta(2s)\zeta(1+2ir_1) }{\zeta(2s+2ir_1+1)}
	L\left(s+1/2+ ir_1 , f\times g\right)
	H_1^+ (ir_1)
	\\
	+
	\left( - \cos(\pi ir_1) + \cos\left(\pi(2s- ir_1)\right) \right)
	\\
	\times
	\frac{(4\pi)^k \sqrt{\pi}}{2}
	\frac{\Gamma\left(2s-1\right)}
	{\sin\left(\pi\left(2s-2ir_1\right)\right)}
	\frac{1}
	{\Gamma\left(-s+\frac{1}{2}+k+ir_1\right)
	\Gamma\left(-s+\frac{3}{2}+ir_1\right)}
	\\
	\times
	H_1^+ (s;ir_1)
	\sum_{\cuspa} \left<E_\cuspa \left(*, 3/2-s+ir_1\right), U_{f, gy^{k/2}}\right> 
	\Res_{z=-s+1 +ir_1} 
	\cL\left(s, E_{\cuspa} \left(*, 1/2-z\right) \times E_N^{(k)}(*, 1/2+ir_1)\right)
	\\
	+
	\frac{(4\pi)^{ir_1}\pi^{-\frac{1}{2}+ir_1}}{2}
	\frac{\varphi(N)}{N^{1+2ir_1}} 
	\frac{\zeta(2s)\zeta(1-2ir_1)  }{\zeta(2s-2ir_1+1)}
	L\left(s+1/2- ir_1 , f\times g\right)
	H_1^- (ir_1)
	\\
	+
	\left(
	- \cos(\pi ir_1)
	+
	\cos\left(\pi(2s+ir_1)\right)
	\right)
	\\
	\times
	\frac{(4\pi)^k\sqrt{\pi}}{2}
	\frac{\Gamma\left(2s-1\right)}
	{\sin\left(\pi\left(2s+2ir_1\right)\right)}
	\frac{1}
	{\Gamma\left(-s+\frac{1}{2}+k-ir_1\right)
	\Gamma\left(-s+\frac{3}{2}-ir_1\right)}
	\\
	\times
	H_1^-(s;ir_1)
	\sum_{\cuspa} 	
	\left<E_\cuspa \left(*, 3/2-s-ir_1 \right), U_{f, gy^{k/2}}\right> 
	\Res_{z=-s+1 -ir_1} 
	\cL\left(s, E_{\cuspa} \left(*, 1/2-z\right) \times E_N^{(k)}(*, 1/2+ir_1)\right)
	.
\end{multline*}
Here $H_1^\pm(\bnu_1)$ is given in \eqref{e:H1} and  
$H_1^\pm(s; \bnu_1)$ is defined in \eqref{e:H1pm}. 

For a square-free $N\geq 1$, by Lemma \ref{lem:inner-L} and \eqref{e:Res_cL_Eisenstein}, 
\begin{multline}\label{e:sum_1/a_L_g}
	\left(
	\sum_{a\mid N}
	\left<E_{1/a} \left(*, 3/2-s\mp ir_1\right), U_{f, gy^{k/2}}\right> 
	\Res_{z=-s+1\mp ir_1} 
	\cL\left(s, E_{1/a} \left(*, 1/2-z\right) \times E_N^{(k)}(*, 1/2+ir_1) \right) 
	\right)
	\\
	=
	\frac{\zeta(1\mp 2ir_1)
	\zeta(2s-1)}
	{\Gamma\left(s-\frac{1}{2}\pm ir_1\right)}
	\frac{\Gamma\left(\frac{1}{2}-s\mp ir_1+k\right)
	\left(\sum_{a\mid N}\frac{N}{a} A(N/a) \overline{B(N/a)} 
	P_a (s; -s+1\mp ir_1, ir_1)\right)}
	{(4\pi)^k
	(2\pi)^{1-2s\mp 2ir_1} \zeta(3-2s\mp 2ir_1 )}
	\\
	\times
	L(3/2-s\mp ir_1, f\times \bar{g}) 
	.
\end{multline}
Applying \eqref{e:EkN_Fourier} and \eqref{e:sum_1/a_L_g}, we have
\begin{multline}\label{e:MfENg}
	M_{f, E_N^{(k)}(*, 1/2+\overline{ir_1}), g y^{k/2} }(s)
	\\
	=	
	\frac{2^{-2ir_1} \pi^{-\frac{1}{2}-2ir_1}}{2}
	\prod_{p\mid N} (1-p^{-1-2ir_1})
	\frac{\zeta(2s)\zeta(1+2ir_1) }{\zeta(2s+2ir_1+1)}
	L\left(s+1/2+ ir_1 , f\times \bar{g}\right)
	H_1^+ (ir_1)
	\\
	+
	\frac{2^{2ir_1} \pi^{-\frac{1}{2}+2ir_1} (2\pi)^{-2+4s}}{8}
	\frac{
	\left(\cos(\pi ir_1)- \cos\left(\pi(2s+ir_1)\right)\right)}
	{\sin(\pi s) \sin\left(\pi\left(s+ir_1\right)\right)}
	\frac{\zeta(1-2ir_1)\zeta(2-2s)}{\zeta(3-2s-2ir_1)}
	\\
	\times
	\left(\sum_{a\mid N} \frac{N}{a} A(N/a) \overline{B(N/a)} 
	P_a(s; -s+1-ir_1, ir_1)\right)
	L(3/2-s-ir_1, f\times\bar{g})
	H_1^-(s;ir_1)
	\\
	+
	\frac{2^{2ir_1}\pi^{-\frac{1}{2}+2ir_1}}{2}
	\frac{\varphi(N)}{N^{1+2ir_1}} 
	\frac{\zeta(2s)\zeta(1-2ir_1)  }{\zeta(2s-2ir_1+1)}
	L\left(s+1/2- ir_1 , f\times \bar{g}\right)
	H_1^- (ir_1)
	\\
	+
	\frac{2^{-2ir_1}\pi^{-\frac{1}{2}-2ir_1}
	(2\pi)^{-2+4s}}{8}
	\frac{
	\left(\cos(\pi ir_1) - \cos\left(\pi(2s- ir_1)\right) \right)}
	{\sin(\pi s)\sin\left(\pi\left(s-ir_1\right)\right)}
	\frac{\zeta(1+2ir_1)\zeta(2-2s)}
	{\zeta(3-2s+2ir_1)}
	\\
	\times
	\left(\sum_{a\mid N} \frac{N}{a} A(N/a) \overline{B(N/a)} P_a(s; -s+1+ir_1, ir_1)\right)
	L(3/2-s+ir_1, f\times \overline{g})
	H_1^+ (s;ir_1)
	.
\end{multline}

Taking $s=1/2-ir$ and $r_1=r$, we have
$$
	H_1^+(1/2-ir; ir)
	=
	H_1^+(3ir)
	\;
	\text{ and } 
	\;
	H_1^-(1/2-ir; ir) = H_1^+(ir) = H_1. 
$$

If $f\neq g$, taking $s=1/2-ir$ and $r_1=r$,  we have
%
%
%
%
\begin{multline*}
	M_{f, E_N^{(k)}(*, 1/2+\overline{ir}), g y^{k/2} }(1/2-ir)
	\\
	=	
	2^{-2ir} \pi^{-\frac{1}{2}-2ir}
	\prod_{p\mid N} (1-p^{-1-2ir})
	\frac{\zeta(1-2ir)\zeta(1+2ir) }{\zeta(2)}
	L\left(1, f\times \bar{g}\right)
	H_1
	\\
	+
	\frac{2^{-6ir}\pi^{-\frac{1}{2}-6ir}}{4}
	\frac{\zeta(1+2ir) \zeta(1+2ir)}
	{\zeta(2)}
	\\
	\times
	\left(\sum_{a\mid N} \frac{N}{a} A(N/a) \overline{B(N/a)} P_a(1/2-ir; 1/2+2ir, ir)\right)
	L(1+2ir, f\times \overline{g})
	H_1^+ (3ir)
	\\
	+
	\frac{2^{2ir} \pi^{-\frac{1}{2}+2ir}}{2}
	\frac{\varphi(N)}{N^{1+2ir}} 
	\frac{\zeta(1-2ir)\zeta(1-2ir)  }{\zeta(2-4ir)}
	L\left(1- 2ir , f\times \bar{g}\right)
	H_1^- (ir)
	.
\end{multline*}

We are now going to carefully evaluate at $r=0$.
By \eqref{e:H1}, we have
$$
	H_1^+(0) = H_1^-(0) = H_1, 
$$
$$
	\left.\frac{d}{dr} H_1^-(ir)\right|_{r=0}
	=
	-2i H_2
	=
	-\left.\frac{d}{dr} H_1^+(3ir)\right|_{r=0}
	, 
$$
$$
	\left.\frac{d^2}{dr^2}H_1^- (ir)\right|_{r=0}
	=
	- 4H_3
	\;  \text{ and }\; 
	\left.\frac{d^2}{dr^2} H_1^{+}(3ir)\right|_{r=0}
	=
	-8 H_0- 4 H_3
	.
$$
Then taking $r=0$, for $f\neq g$, we have \eqref{e:Nrne}.

If $f=g$, taking $s=1/2-ir$ and $r_1=r$ in \eqref{e:MfENg}, we have \eqref{e:1re}. 
Taking $r=0$, since
$$
	\left.\frac{d^3}{dr^3}H_1^- (ir)\right|_{r=0}
	=
	2i H_{0, 1} + 8i H_4
$$
and
$$
	\left.\frac{d^3}{dr^3} H_1^+ (3ir)\right|_{r=0}
	=
	-26 i H_{0, 1} -48 i H_{1, 1} - 8i H_4
	, 
$$
we get \eqref{e:1re0}.

It now remains for us to find upper bounds for 
$\Omega^{(1)}_{f, E_N^{(k)}(*, 1/2+\overline{ir}), gy^{k/2}}(s)$, 
$E^{(1)}_{f, E_N^{(k)}(*, 1/2+\overline{ir}), gy^{k/2}}(s)$,
$\Omega^{(3)}_{f, E_N^{(k)}(*, 1/2+\overline{ir}), gy^{k/2}}(s)$
and 
$E^{(3)}_{f, E_N^{(k)}(*, 1/2+\overline{ir}), gy^{k/2}}(s)$
given in section \ref{s:upper}.   This will be done in the next section, which will complete the proof of Theorem \ref{thm:second_intro}.

\end{proof}

\section{Upper bounds}\label{s:upper}
Recall that the error terms in Theorem~\ref{thm:second} were labeled
$\Omega^{(1)}_{f, \phi_1, \phi_2}(s)$ in \eqref{e:Omega1},
$E^{(1)}_{f, \phi_1, \phi_2}(s)$ in \eqref{e:E1},
$\Omega^{(3)}_{f, \phi_1, \phi_2}(s)$ in \eqref{e:Omega3},
$E^{(2)}_{f, \phi_1, \phi_2}(s)$ in \eqref{e:E2},
and
$E^{(3)}_{f, \phi_1, \phi_2}(s)$ in \eqref{e:E3}.
Our goal is to estimate these in the specific case $\phi_1 =  E^{(k)}_N(*, 1/2+\overline{ir_1})$ and $\phi_2 = g(z)y^{k/2}$.   
Fortunately, the term $E^{(2)}_{f, \phi_1, \phi_2}(s)$ does not occur as $\phi_2$ is a cusp form.    We will prove the following
\begin{prop}\label{prop:bounds}
Fix 
$$
h(t) =  \left(e^{-\left(\frac{t-T}{T^{\alpha}}\right)^2}+e^{-\left(\frac{t+T}{T^{\alpha}}\right)^2}\right) 
	\frac{t^2+1/4}{t^2+R},
$$
with $T \gg 1$, $\alpha \ge 1/3$ and $1\ll R < T^2$. 
In the case $s= 1/2 +ir, r_1 = -r\in \R$, with $|r| \le T^{2/3}$, with $\phi_1,\phi_2$ as above
$$
E^{(1)}_{f, \phi_1, \phi_2}(s)+E^{(3)}_{f, \phi_1, \phi_2}(s)+\Omega^{(1)}_{f, \phi_1, \phi_2}(s)+
\Omega^{(3)}_{f, \phi_1, \phi_2}(s) \ll_N T^{3/2-\alpha/2 + \epsilon} + T^\alpha|r|^{3/2 + \epsilon}.
$$
\end{prop}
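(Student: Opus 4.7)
The plan is two-fold: first, estimate the spectral weights $\tth^{(1)}$ and $\tth^{(3)}$ (together with their residue versions $\tth^{(1)\pm}, \tth^{(3)\pm}$) as functions of the spectral parameter by shifting the inner $u$- and $w$-contours in the defining double integrals \eqref{e:tth1} and \eqref{e:tth3} and applying Stirling; second, bound the resulting spectral sums and integrals by combining Cauchy--Schwarz with the spectral large-sieve inequality, Proposition 4.1 of \cite{HHR} for the inner products $\langle u_j, U_{f, \phi_2}\rangle$, and a mean-value estimate (or convexity) for the Rankin--Selberg values $\cL(1/2-ir, \overline{u_j}\times \overline{g})$ on short spectral windows.

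For the kernel estimates, observe that with the specified $h(t)$ the effective support is $||t|-T|\ll T^{\alpha+\epsilon}$, so in \eqref{e:tth1} and \eqref{e:tth3} the $u$-contour may be shifted to $\sigma_u=cT^\alpha$ (as was already done in the estimation of $E_f^{(2)}(m;0)$ in the proof of Theorem~\ref{thm:upperbound_first}) while preserving the exponential decay of $h(u/i)$ outside the window. With $s=1/2+ir$ and $\bnu_1=ir$, the Gamma ratios combine with the $\tan(\pi u)$, $\cos(\pi(s+u))/\cos(\pi u)$ and $\Gamma(-w+u)/\Gamma(1+w+u)$ factors to produce, by Stirling, an effective kernel of size $\ll T^{-1+\epsilon}(1+|r|/T^\alpha)^{O(1)}$, supported in $||t_j|-T|\ll T^{\alpha+\epsilon}$. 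The $w$-integrations then contribute the polynomial growth in $|r|$ that eventually produces the $|r|^{3/2+\epsilon}$ factor in the claim; the essential input here is the same exponential decay analysis already used to control the case $\gamma_w>0$, $T\ge |\gamma_w|$ in the proof of Theorem~\ref{thm:upperbound_first}.

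To assemble $E^{(1)}_{f,\phi_1,\phi_2}(s)+E^{(3)}_{f,\phi_1,\phi_2}(s)$, I would separate the cuspidal and continuous contributions. For the cuspidal part, Cauchy--Schwarz reduces the sum to the product of $\bigl(\sum_{|t_j-T|\ll T^\alpha} |\langle u_j, U_{f,\phi_2}\rangle|^2/\cosh(\pi t_j)\bigr)^{1/2}$, estimated by Proposition 4.1 of \cite{HHR} as $\ll T^{1+\alpha/2+\epsilon}$, and a spectral second-moment of $\cL(1/2-ir, \overline{u_j}\times \overline{g})$, estimated by a Kuznetsov-free spectral large sieve in conjunction with the convexity bound for Rankin--Selberg $L$-functions. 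When $|r|$ is small this yields a bound of the quality $T^{3/2-\alpha/2+\epsilon}$; for $|r|$ large, the polynomial $|r|^{3/2+\epsilon}$ arising from the length of the approximate functional equation of $\cL(1/2+ir, f\times u_j)$ absorbs the growth. The continuous contribution is handled by the same mechanism, using the weak Lindel\"of-on-average estimate
\[
\int_0^T |L(1/2+it, f)L(1/2-it,f)|\, dt \ll (N^{1/2}T)^{1+\epsilon}
\]
already recorded in the proof of Corollary~\ref{cor:determine}. For $\Omega^{(1)}$ and $\Omega^{(3)}$ the explicit formulas \eqref{e:Omega1}, \eqref{e:Omega3} reduce the estimate to a finite combination of the weights $\tth^{(1)\pm}, \tth^{(3)\pm}$ at $s=1/2+ir$ times inner products $\langle E_\cuspa(*, 3/2-s\mp ir), U_{f,gy^{k/2}}\rangle$ and residues described by Lemma~\ref{lem:inner-L} and \eqref{e:Res_cL_Eisenstein}; these are bounded directly via Stirling and convexity.

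The main obstacle is obtaining uniformity in $|r|\le T^{2/3}$. When $|r|$ approaches $T^\alpha$ (which occurs for $\alpha$ close to $1/3$, which is exactly the case needed for Corollary~\ref{cor:subconvexity}), several $\Gamma$-factor poles in \eqref{e:tth1pm} and \eqref{e:tth3pm} approach the $u$- and $w$-contours, and a naive Stirling estimate is insufficient. A careful saddle-point analysis in the transitional regime $T^{\alpha}\ll |r|\ll T^{2/3}$ is required to verify that the contribution is controlled by $T^\alpha |r|^{3/2+\epsilon}$ rather than by a larger power. The balance $T^{3/2-\alpha/2+\epsilon} \asymp T^\alpha |r|^{3/2+\epsilon}$ at $\alpha=1/3$, $|r|=T^{2/3}$ is precisely what produces the Weyl exponent $2/3$ of Corollary~\ref{cor:subconvexity}, so the analysis has to be sharp at exactly this transition.
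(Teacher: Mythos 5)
There is a genuine gap, and it starts with a misidentification of the objects to be averaged. In the error terms \eqref{e:E1} and \eqref{e:E3}, specialized to $\phi_1=E_N^{(k)}(*,1/2+\overline{ir_1})$ and $\phi_2=gy^{k/2}$, the $L$-value attached to $u_j$ is $\cL\left(1/2+ir,\overline{u_j}\times E_N^{(k)}(*,1/2+ir_1)\right)$, which factors as $\overline{\rho_j(-d)}\,L_N(1/2+i(r+r_1),\overline{u_j})L_N(1/2+i(r-r_1),\overline{u_j})$; the Rankin--Selberg values $\cL(1/2-ir,\overline{u_j}\times\overline{g})$ and $\cL(1/2+ir,f\times u_j)$ occur only on the left-hand side of the moment identity, not in the error terms. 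Consequently your Cauchy--Schwarz pairing of $\left<u_j,U_{f,\phi_2}\right>$ against a ``spectral second moment of $\cL(1/2-ir,\overline{u_j}\times\overline{g})$'' estimates the wrong quantity, and the proposed input for it (``spectral large sieve plus convexity for Rankin--Selberg $L$-functions'') is either insufficient or circular: a sharp second moment of a degree-four Rankin--Selberg convolution over a window of length $T^{\alpha}$ is essentially the content of Theorem \ref{thm:second_intro} itself. The paper's proof instead couples Proposition 4.1 of \cite{HHR} (for $\sum_{|t_j|\sim T'}e^{\pi|t_j|}|\left<u_j,U_{f,\phi_2}\right>|^2$) with the Motohashi/Jutila--Motohashi spectral fourth-moment bound $\sum_{|t_j|\sim T'}\cL\left(1/2+ir,\overline{u_j}\times E_N(*,1/2-ir)\right)^2/\cosh(\pi t_j)\ll_N (T'^2+|r|^{4/3})^{1+\epsilon}$, and it is this Lindel\"of-on-average input for the Eisenstein convolution, via \eqref{jsum1}, that makes the final exponents possible; your outline contains no substitute for it.

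The second gap is that the exponents $T^{3/2-\alpha/2+\epsilon}$ and $T^{\alpha}|r|^{3/2+\epsilon}$ are asserted rather than derived. In the paper they emerge from a specific contour mechanism: for $E^{(3)}$, after shifting $\sigma_u$ to about $T^{\alpha-\epsilon}$ to kill the range $|\gamma_w|\ge T^{\beta}$ with $\alpha+\beta>1$, the $w$-line is pushed far to the right, and the whole contribution is concentrated in the residue terms $\tth^{(3)R}_{\ell}$, where the effective restriction $|t_j|+|r|\le T^{1-\alpha+\epsilon}$ yields $\sum_{\ell}T^{\alpha-2\ell+(1-\alpha+\epsilon)(3/2+3\ell+\epsilon)}$ --- this is exactly where the hypothesis $\alpha\ge 1/3$ is forced and where $T^{3/2-\alpha/2+\epsilon}$ comes from; for $E^{(1)}$, the residues at $w=1/2+i(r+t_j)+\ell$ with $|t_j|\ll|r|$ give $T^{\alpha}|r|^{3/2+\epsilon}(|r|^3/T^2)^{\ell}$, which explains both the second term of the bound and the restriction $|r|\le T^{2/3}$. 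Your proposal explicitly defers the uniformity in the transitional range $T^{\alpha}\ll|r|\ll T^{2/3}$ to an unexecuted ``saddle-point analysis,'' but that range is precisely where the proposition has content for Corollary \ref{cor:subconvexity}, so as written the argument does not establish the stated bound.
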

The upper bounds for $\Omega^{(1)}_{f, \phi_1, \phi_2}(s)$ and  $\Omega^{(3)}_{f, \phi_1, \phi_2}(s)$
are easier to obtain and better than the corresponding bounds for $E^{(1)}_{f, \phi_1, \phi_2}(s)$ and $E^{(3)}_{f, \phi_1, \phi_2}(s)$.   Therefore, in the following sections, we will only work out in detail the estimates for $E^{(1)}_{f, \phi_1, \phi_2}(s)$ and $E^{(3)}_{f, \phi_1, \phi_2}(s)$.

\subsection{The estimation of $E^{(3)}_{f, E^{(k)}_N(*, 1/2+\overline{ir_1}), \phi_2}(s)$}
The goal of this section is to find an upper bound for 

\begin{multline*}
	E^{(3)}_{f, E^{(k)}_N(*, 1/2+\overline{ir_1}), \phi_2}(s)
	\\
	=
	\sum_j \tth^{(3)} \left(t_j; s, \overline{ir_1}, \nu_2\right) 
	\left<u_j, U_{f, \phi_2}\right>
	(-1)^{\alpha_j} \cL\left(s, E^{(k)}_N(*, 1/2+ir_1)\times \overline{u_j}\right)
	\\
	+
	\sum_{\cuspa} \frac{1}{4\pi} \int_{-\infty}^\infty 
	\tth^{(3)}\left(t; s, \overline{ir_1}, \nu_2\right) 
	\left<E_\cuspa\left(*, 1/2+it\right), U_{f, \phi_2}\right>
	\cL\left(s, E^{(k)}_N(*, 1/2+ir_1)\times E_{\cuspa}\left(*, 1/2-it\right)\right) 
	\; dt
	, 
\end{multline*}
where
\begin{multline*}
	\tth^{(3)}(t; s, \overline{ir_1}, \nu_2)
	=
	\frac{(4\pi)^{k} i^k}{\pi\sqrt{\pi}}
	\frac{1}{2\pi i} \int_{(\sigma_u = 1/2+2\epsilon)} 
	\frac{h(u/i) u 
	\Gamma\left(\frac{1}{2}+u\right) \Gamma\left(\frac{1}{2}-u\right)}
	{\Gamma\left(\frac{k}{2}+ir_1 +u\right) \Gamma\left(\frac{k}{2}+ir_1-u\right)}
	\\
	\times
	\frac{1}{2\pi i } \int_{(\sigma_w = 1/2+\epsilon)}
	\frac{\Gamma\left(-w+u\right)}
	{\Gamma\left(1+w+u\right)}
	\Gamma\left(w+\frac{k}{2}+ir_1\right) 
	\Gamma\left(w+\frac{k}{2}-ir_1 \right)
	\\
	\times
	\frac{\Gamma\left(s-w+\frac{1}{2}-\frac{k}{2}\right)
	\Gamma\left(-s+w+\frac{1}{2}+\frac{k}{2}\right)
	} 
	{\Gamma\left(s-w+\bnu_2\right)
	\Gamma\left(s-w-\bnu_2\right) }
	\frac{\Gamma\left(s-w-\frac{k}{2}-it\right) \Gamma\left(s-w-\frac{k}{2}+it\right)}
	{\Gamma\left(\frac{1}{2}-it\right) \Gamma\left(\frac{1}{2}+it\right)}
	\; dw \; du
	\\
	+
	\sum_{\ell=0}^{\frac{k}{2}}
	\frac{1}{\ell!}
	\frac{(4\pi)^{k} i^k}{\pi\sqrt{\pi}}
	\sum_\pm
	\frac{
	\Gamma\left(s+\ell \pm it+ir_1\right) 
	\Gamma\left(s+\ell \pm it-ir_1\right)
	\Gamma\left(-\ell \mp 2it\right)} 
	{\Gamma\left(\frac{k}{2}-\ell \mp it+\bnu_2\right)
	\Gamma\left(\frac{k}{2}-\ell \mp it-\bnu_2\right) }
	\\
	\times
	\frac{1}{2\pi i} \int_{(\sigma_u = 1/2+2\epsilon)} 
	\frac{h(u/i) u 
	\Gamma\left(\frac{1}{2}+u\right) \Gamma\left(\frac{1}{2}-u\right)}
	{\Gamma\left(\frac{k}{2}+ir_1 +u\right) \Gamma\left(\frac{k}{2}+ir_1-u\right)}
	\frac{\Gamma\left(-s+\frac{k}{2}-\ell \mp it+u\right)}
	{\Gamma\left(1+s-\frac{k}{2}+\ell \pm it+u\right)}
	\; du
	.
\end{multline*}
%
%
%
These were defined in \eqref{e:E3} and \eqref{e:tth3} and are repeated here for convenience.
Let 
\begin{multline}\label{e:h3Rell}
	\tth^{(3)R}_\ell (t; s, \overline{ir_1}, \nu_2)
	:=
	\frac{1}{\ell!}
	\frac{(4\pi)^{k} i^k}{\pi\sqrt{\pi}}
	\sum_\pm
	\frac{
	\Gamma\left(s+\ell \pm it+ir_1\right) 
	\Gamma\left(s+\ell \pm it-ir_1\right)
	\Gamma\left(-\ell \mp 2it\right)} 
	{\Gamma\left(\frac{k}{2}-\ell \mp it+\bnu_2\right)
	\Gamma\left(\frac{k}{2}-\ell \mp it-\bnu_2\right) }
	\\
	\times
	\frac{1}{2\pi i} \int_{(\sigma_u = 1/2+2\epsilon)} 
	\frac{h(u/i) u 
	\Gamma\left(\frac{1}{2}+u\right) \Gamma\left(\frac{1}{2}-u\right)}
	{\Gamma\left(\frac{k}{2}+ir_1 +u\right) \Gamma\left(\frac{k}{2}+ir_1-u\right)}
	\frac{\Gamma\left(-s+\frac{k}{2}-\ell \mp it+u\right)}
	{\Gamma\left(1+s-\frac{k}{2}+\ell \pm it+u\right)}
	\; du.
\end{multline}
We consider the integral part of $\tth^{(3)}$, i.e., 
$$
	\tth^{(3)}(t; s, \overline{ir_1}, \nu_2) - \sum_{\ell=1}^{\frac{k}{2}} \tth^{(3)R}_\ell(t; s, \overline{ir_1}, \nu_2)
$$
in this section. 
The remainder is going to be studied in Section \ref{ss:residues}.

We will work through explicitly the case of most interest to us:  $s= 1/2 +ir$ and $\nu_2= (k-1)/2$.    
Note that  a cancellation of gamma factors occurs in the $w$ integral.

We will assume that for fixed $T \gg 1$, $\alpha \ge 1/3$,
$$
h(t) =  \left(e^{-\left(\frac{t-T}{T^{\alpha}}\right)^2}+e^{-\left(\frac{t+T}{T^{\alpha}}\right)^2}\right) \frac{t^2+1/4}{t^2+R}
$$
Write $u = \sigma_u+i\gamma$ and $w = \sigma_w+i\gamma_2$. 
Bringing the sum over $j$ inside, consider the inner integral
\begin{multline}\label{II}
I(u)=\frac{1}{2\pi i } \int_{(\sigma_w = 1+\epsilon)}
	\frac{\Gamma\left(-w+u\right)\Gamma\left(w+\frac{k}{2}+ir_1\right) 
	\Gamma\left(w+\frac{k}{2}-ir_1\right)
	\Gamma\left(-ir+w+\frac{k}{2}\right)}
	{\Gamma\left(1+w+u\right)\Gamma\left(ir-w+\frac{k}{2}\right) }
	\\
	\times
	\sum_j 
	\left<u_j, U_{f, \phi_2}\right> 
	(-1)^{\alpha_j} 
	\cL\left(1/2+ir, \overline{u_j}\times E^{(k)}_N(*, 1/2+ir_1)\right) \\ \times \frac{\Gamma\left(\frac{1}{2}+ir-w-\frac{k}{2}-it_j\right) 
	\Gamma\left(\frac{1}{2}+ir-w-\frac{k}{2}+it_j\right)}
	{\Gamma\left(\frac{1}{2}-it_j\right) \Gamma\left(\frac{1}{2}+it_j\right)}	\; dw.
\end{multline}
The $L$-series above factors as follows:
$$
	(-1)^{\alpha_j} \cL\left(1/2+ir, \overline{u_j}\times E^{(k)}_N(*, 1/2+ir_1)\right)=  \overline{\rho_j(-d)}L_N(1/2 + i(r+r_1), \overline{u_j})L_N(1/2 +i(r-r_1), \overline{u_j}).
$$
Here $d|N$  and equals 1 if $u_j$  is a new form.  The $N$ subscripts indicate that Euler factors differ from the generic form at primes dividing $N$.  In the simplest case of level 1 and $r_1=r =0$, it is exactly true that 
$$
	(-1)^{\alpha_j} \cL \left(\frac{1}{2}, \overline{u_j}\times E_1 (*, 1/2) \right) =  \overline{\rho_j(-1)}L_1(1/2, \overline{u_j})L_1(1/2, \overline{u_j}).
$$

Our particular case of interest, other than $r = r_1 =0$, will be $r = -r_1$.   In this case 
$$
	(-1)^{\alpha_j} \cL\left(1/2+ir, \overline{u_j}\times E^{(k)}_N(*, 1/2-ir)\right)=  \overline{\rho_j(-d)}L_N(1/2, \overline{u_j})L_N(1/2 +2ri, \overline{u_j}).
$$
In \cite{JM05} it is shown in the case $N=1$, after taking $K=T, t=2r$ in Theorem 2, line  (1.30) and translating into our notation, that 
$$
	\sum_{|t_j|\sim T}\frac{(-1)^{\alpha_j} \cL \left(\frac{1}{2}+ir, \overline{u_j}\times E_1 (*, 1/2-ir) \right)^2}{\cosh(\pi t_j)} \ll (T^2+|r|^{4/3})^{1+\epsilon},
$$
for any $T,r\ge 1$.  In the case $r=0$ it is easy to generalize their results to general level $N$ (actually Motohashi's original proof in \cite{Mot92} suffices), and show that the Lindel\"of Hypothesis holds in the $N$ aspect as well:
$$
	\sum_{|t_j|\sim T}\frac{(-1)^{\alpha_j} \cL \left(\frac{1}{2}, \overline{u_j}\times E_N (*, 1/2) \right)^2}{\cosh(\pi t_j)}\ll N T^{2+\epsilon}.
$$
For general $r$ we believe it is likewise easy to see that their argument works for general level.   However, we will leave the dependence on $N$ suppressed, and write 
\be\label{MI}
	\sum_{|t_j|\sim T}\frac{(-1)^{\alpha_j} \cL\left(\frac{1}{2}+ir, \overline{u_j}\times E_N (*, 1/2-ir) \right)^2}{\cosh(\pi t_j)}
\ll_N (T^2+|r|^{4/3})^{1+\epsilon}.
\ee
We also require the bound  of Proposition 4.1 \cite{HHR}: 
\be\label{HHR}
\sum_{|t_j| \sim T}|\left<V,u_j\right>|^2e^{\pi |t_j|} \ll_{N_0} (\ell_1 \ell_2)^{-k}T^{2k}\log (T).
\ee
Here there is an important subtlety that we will record for future reference.   In \eqref{HHR} the level is in two pieces: $N = \ell_1\ell_2N_0$, where the functions $f,g$ have level $N_0$, and $V(z) = \overline{f(\ell_1z)}g(\ell_2z)u_j(z)y^k$.   The dependence on $\ell_1, \ell_2$ is recorded, but the dependence on $N_0$ is polynomial, with an undetermined exponent.   We will now suppress the $\ell_1, \ell_2$ from the notation.

(Alternatively, applying Watson's formula, \cite{Wat02} one can also bound a single inner product as follows:
\be\label{HHR3}
|\left<V,u_j\right>|e^{\frac{\pi}{2} |t_j|} \ll N^{-1}|t_j|^{k-1+\epsilon} \sqrt{ |L(1/2, f\otimes g\otimes u_j)|} \ll N^{-5/8+\epsilon}|t_j|^{k+\epsilon}.
\ee
Here we have applied the convexity bound in the $N$ and $t_j$ aspects, using the fact that the level of the triple product is $N^3$.  The factor $N^{-1}$ on the right hand side occurs because of our normalization of the inner product.)

Applying Cauchy-Schwartz, followed by \eqref{MI} and \eqref{HHR}, it follows that
for any  $T' \gg 1$, $N=N_0\ell_1\ell_2$,  
\be\label{jsum1}
	\sum_{|t_j| \sim T'} (-1)^{\alpha_j} \cL \left(1/2+ir, \overline{u_j}\times E_N (*, 1/2-ir) \right) \left<u_j, V\right> \ll_{N}T^{k+\epsilon}
(T'^2+|r|^{4/3})^{\frac12+\epsilon}.
\ee

Returning to our estimate, 
by Stirling's formula, for $|t_j| \sim T'$, 
\be\label{jsum2}
 \frac{\Gamma\left(\frac{1}{2}+ir-w-\frac{k}{2}-it_j\right) 
	\Gamma\left(\frac{1}{2}+ir-w-\frac{k}{2}+it_j\right)}
	{\Gamma\left(\frac{1}{2}-it_j\right) \Gamma\left(\frac{1}{2}+it_j\right)} \ll_{\gamma_2,\tau} T'^{-k-2\sigma_w}.
\ee
Combining \eqref{jsum1} and \eqref{jsum2}, the total exponent of $T'$ is $ 1+ \epsilon-2\sigma_w$, which is negative for $\sigma_w >1/2$.
Thus the sum over $j$ converges absolutely for $\sigma_w > 1/2$.   The sum decays exponentially in $t_j$ in the region where $|t_j| <|\gamma_2-r|$.
Also, via a dyadic subdivision, it is easily seen that most of the sum is concentrated in the region $|t_j |\sim | \gamma_2-r|$.  A  version of \eqref{jsum2} with the dependence on 
$r$ and $\gamma_2$ explicit is
\be\label{jsum3}
 \frac{\Gamma\left(\frac{1}{2}+ir-w-\frac{k}{2}-it_j\right) 
	\Gamma\left(\frac{1}{2}+ir-w-\frac{k}{2}+it_j\right)}
	{\Gamma\left(\frac{1}{2}-it_j\right) \Gamma\left(\frac{1}{2}+it_j\right)}
	 \ll  \left( 1+|\gamma_2-r|+|t_j|\right)^{-k/2-\sigma_w}.
\ee
Notice that only one of the two ratios of gamma functions has contributed to the estimate.
Combining \eqref{jsum1} and \eqref{jsum3} gives us
\begin{multline}\label{sum4}
	\sum_j \frac{\Gamma\left(\frac{1}{2}+ir-w-\frac{k}{2}-it_j\right) 
	\Gamma\left(\frac{1}{2}+ir-w-\frac{k}{2}+it_j\right)}
	{\Gamma\left(\frac{1}{2}-it_j\right) \Gamma\left(\frac{1}{2}+it_j\right)}
	(-1)^{\alpha_j} \cL\left(\frac{1}{2}+ir, \overline{u_j}\times E_N (*, 1/2+ir_1) \right) \left<u_j, V\right>
	\\
	\ll 
	\sum_{|t_j |\sim \gamma_2-r} \left( 1+|\gamma_2-r|+|t_j|\right)^{-k/2-\sigma_w}
(-1)^{\alpha_j} \cL\left(\frac{1}{2}+ir, \overline{u_j}\times E_N (*, 1/2+ir_1) \right) \left<u_j, V\right>   \\
\ll_N (1+|\gamma_2 -r|)^{1 + k/2-\sigma_w + \epsilon} 
.
\end{multline}
The other piece of $I(u)$ is the ratio of gamma functions
$$
\frac{\Gamma\left(-w+u\right)\Gamma\left(w+\frac{k}{2}+ir_1\right) 
	\Gamma\left(w+\frac{k}{2}-ir_1\right)
	\Gamma\left(-ir+w+\frac{k}{2}\right)}
	{\Gamma\left(1+w+u\right)\Gamma\left(ir-w+\frac{k}{2}\right) }.
	$$
By Stirling's formula, for $|\gamma|,|\gamma_2| \gg 1$ this is bounded above by
\begin{multline}\label{sum5}
\frac{(1+| \gamma_2+r_1|)^{(k-1)/2+\sigma_w}(1+|\gamma_2-r_1|)^{(k-1)/2 +\sigma_w} (1+|\gamma-\gamma_2|)^{\sigma_u-\sigma_w-\frac{1}{2}}(1+|\gamma_2-r|)^{2\sigma_w}}
	{(1+|\gamma+\gamma_2|)^{\frac{1}{2}+\sigma_u+\sigma_w}}\\ \times e^{-\frac \pi 2(|\gamma_2+r_1|+|\gamma_2-r_1|+|\gamma -\gamma_2|-|\gamma +\gamma_2|)}.
\end{multline}
The exponent $|\gamma_2+r_1|+|\gamma_2-r_1|+|\gamma -\gamma_2|-|\gamma +\gamma_2|$ equals
\begin{itemize}
\item $2\max(|\gamma_2|,|r_1|)+2|\gamma_2|$ If $|\gamma| \ge |\gamma_2|$, with opposite signs, 

\item  $2\max(|\gamma_2|,|r_1|)-2|\gamma_2|$ If $|\gamma| \ge |\gamma_2|$, with equal signs, 

\item $2\max(|\gamma_2|,|r_1|)+2|\gamma|$  If $|\gamma| \le |\gamma_2|$, with opposite signs, 

\item  $2\max(|\gamma_2|,|r_1|)-2|\gamma|$ If $|\gamma| \le |\gamma_2|$, with equal signs.

\end{itemize}
There is exponential decay in $|\gamma_2|$ for all pairs $(\gamma,\gamma_2)$, except for the region where $\gamma_2, \gamma$ have equal signs and
$|r_1|\le |\gamma_2| \le | \gamma|$.   The integral $I(u)$ is thus bounded above by a multiple of the same integral restricted to this region.   We assume therefore, that 
$\gamma \ge \gamma_2 \ge |r_1|\ge 0$ or $\gamma \le \gamma_2 \le -|r_1| \le 0$, and restrict ourselves to considering the positive case.

Combining this analysis with \eqref{sum4} and \eqref{sum5}, we have shown that
for $\gamma \ge \gamma_2 \ge 0$  the integrand of $I(u)$ is bounded above by 
\begin{multline}\label{firstpiece}
\frac{(1+| \gamma_2+r_1|)^{(k-1)/2+\sigma_w}(1+|\gamma_2-r_1|)^{(k-1)/2 +\sigma_w} (1+|\gamma-\gamma_2|)^{\sigma_u-\sigma_w-\frac{1}{2}}(1+|\gamma_2-r|)^{2\sigma_w}}
	{(1+|\gamma+\gamma_2|)^{\frac{1}{2}+\sigma_u+\sigma_w}} \\ \times 
(1+|\gamma_2 -r|)^{1 + k/2-\sigma_w + \epsilon} (1+ |r|+|r_1|)^AN_0^B(\ell_1\ell_2)^{-k/2} \\
\\
	\ll  \frac{(1+ \gamma_2+|r_1| +|r|)^{3k/2+3\sigma_w+\epsilon} (1+\gamma-\gamma_2)^{\sigma_u-\sigma_w-\frac{1}{2}}}
	{(1+\gamma+\gamma_2)^{\frac{1}{2}+\sigma_u+\sigma_w}}(1+|r_1| +|r|)^AN_0^B(\ell_1\ell_2)^{-k/2} \\
	= \frac{(1+ \gamma_2+|r_1| +|r|)^{3k/2+\epsilon}}{\left((1+\gamma)^2 -\gamma_2^2\right)^{1/2}}
	\left(\frac{(1+\gamma_2+|r_1| +|r|)^3}{(1+\gamma)^2-\gamma_2^2}\right)^{\sigma_w}
	\left(\frac{ 1+\gamma-\gamma_2}{1+\gamma+\gamma_2} \right)^{\sigma_u}(1+ |r_1| +|r|)^AN_0^B(\ell_1\ell_2)^{-k/2}.
\end{multline}
for some $A,B >0$.

The function $h$  that we are using decays exponentially quickly away from the region  
$|\gamma-T| \ll T^{\alpha}$.
  The Stirling formula asymptotics remain valid for 
$\sigma_u/\gamma$ approaching 0, and in particular, if we move the $u$ line of integration to  $\sigma_u = c(N_0,r_1,r)T^{\alpha - \epsilon}$, for some $c \gg 1$.   Poles of $\Gamma(1/2-u)$ are crossed over at points $u = 1/2+\ell$, for $\ell \ge 0$,  but the residues are weighted by the exponentially decaying $h$ and their total contribution is exponentially decaying in $T$.

Consider the region where $\gamma_2 \ge  T^{\beta}$, for some $\beta >0$.   In this region, extracting from \eqref{firstpiece} the part of the upper bound above that is raised to the power $\sigma_u$, we have
$$
\left(\frac{ 1+\gamma-\gamma_2}{1+\gamma+\gamma_2} \right)^{\sigma_u}= \left( \frac{1-\frac{\gamma_2}{1+\gamma}}{1+\frac{\gamma_2}{1+\gamma}}\right)^{\sigma_u}\ll 
\left(1- \frac{T^\beta}{T}\right)^{c(N_0,r_1,r)T^{\alpha-\epsilon}}\ll e^{-c(N_0,r_1,r)T^{\alpha +\beta-1 -\epsilon}}.
$$
This will decay exponentially quickly in $T$ when $\alpha+ \beta > 1+2\epsilon$.
For fixed $\sigma_w$, the remaining factors of \eqref{firstpiece} are bounded above by $\gamma^{ C}$, for fixed $C$ and thus the entire contribution of   \eqref{firstpiece}, and hence of $I(u,h)$ decays exponentially in $T$.  The remaining integral over $u$ contributes, by Stirling, and the definition of $h$, a factor of $T^{2-k}$.
 It follows that for  $\alpha+ \beta > 1+2\epsilon$ the contribution from $\gamma_2 \ge  T^{\beta}$ decays exponentially in $T$.

Now consider the contribution from $\gamma_2$ such that $\gamma_2 \le  T^\beta$, and take $\beta \le 2/3 - \epsilon$.   Factoring out the part of \eqref{firstpiece} raised to the power $\sigma_w$, we have 
$$
\left(\frac{(1+\gamma_2)^3}{(1+\gamma)^2-\gamma_2^2}\right)^{\sigma_w} \ll 
\frac{T^{3\beta\sigma_w}}{T^{2\sigma_w}} \ll T^{-3 \epsilon \sigma_w}.
$$
Finally, move $\sigma_u$ to  $(c+1)/ \epsilon$, and  $\sigma_w$ to $c/ \epsilon$,  for sufficiently large $c>>c(N_0,k)$. In the shifted integral the product in \eqref{firstpiece} is seen to be less than $T^{-3c}$, for $c \gg 1$.    However, poles of $\Gamma(\frac{1-k}{2}-\tau-w\pm it_j)$ are passed over, contributing on the order of $c/\epsilon$ residues at $\frac{1-k}{2}+ir-w \pm it_j =-\ell$, for $\ell \ge 0$.   The residues at these points have already been labeled as $\tth^{(3)R}_\ell (t; 1/2+ir, \overline{ir_1}, \nu_2)$, and thus the contribution of $E^{(3)}_{f, \phi_2}(r, r_1)$ decays to an arbitrary polynomial degree, with the exception of the sum of residues
\be
	\sum_{0\le \ell <c/\epsilon}
	\sum_j 
	\tth^{(3)R}_{\ell}(t_j; 1/2+ir, \overline{ir_1}, \nu_2)
	\left<u_j, U_{f, \phi_2}\right> 
	(-1)^{\alpha_j}
	\cL\left(1/2+ir, E^{(k)}_N(*, 1/2+ir_1) \times \overline{u_j}\right).
\ee
This contribution will be dealt with in the next section.

\subsection{The estimation of $E^{(3)}_{f, E_N^{(k)}(*, 1/2+\overline{ir_1}), \phi_2)}$ II : Residues}\label{ss:residues}

Let 
$$
	\cR_{f, E^{(k)}_N(*, 1/2+\overline{ir_1}), \phi_2} (1/2 +ir)
	:= 
	\cR_{f, E^{(k)}_N(*, 1/2+\overline{ir_1}), \phi_2,\text{cusp}} (1/2 +ir)+\cR_{f, E^{(k)}_N(*, 1/2+\overline{ir_1}), \phi_2, \text{cont}} (1/2 +ir), 
$$
with 
\begin{multline*}
	\cR_{f, E^{(k)}_N(*, 1/2+\overline{ir_1}), \phi_2,\text{cusp}} (1/2 +ir)
	\\
	:=
	\sum_{\ell=0}^{\frac{k}{2}}
	\sum_j 
	\tth^{(3)R}_{\ell}(t_j; 1/2+ir, \overline{ir_1}, \nu_2)
	\left<u_j, U_{f, \phi_2}\right> 
	(-1)^{\alpha_j}
	\cL\left(1/2+ir, E^{(k)}_N(*, 1/2+ir_1) \times \overline{u_j}\right)
\end{multline*}
and
\begin{multline*}
	\cR_{f, E^{(k)}_N(*, 1/2+\overline{ir_1}), \phi_2,\text{cont}} (1/2 +ir)
	\\
	:=
	\sum_{\ell=0}^{\frac{k}{2}}
	\sum_{\cuspa} \frac{1}{4\pi} \int_{-\infty}^\infty 
	\tth^{(3)R}_{\ell}(t; 1/2+ir, \overline{ir_1}, \nu_2)
	\left<E_{\cuspa}(*, 1/2+it), U_{f, \phi_2}\right> 
	\\
	\times
	\cL\left(1/2+ir, E^{(k)}_N(*, 1/2+ir_1)\times E_{\cuspa}(*, 1/2-it)\right)	
	\; dt
	, 
\end{multline*}
with
\begin{multline*}
	\tth^{(3)R}_\ell (t; 1/2+ir, \overline{ir_1}, \nu_2)
	=
	\frac{1}{\ell!}
	\frac{(4\pi)^{k} i^k}{\pi\sqrt{\pi}}
	\sum_\pm
	\frac{
	\Gamma\left(s+\ell \pm it+ir_1\right) 
	\Gamma\left(s+\ell \pm it-ir_1\right)
	\Gamma\left(-\ell \mp 2it\right)} 
	{\Gamma\left(\frac{k}{2}-\ell \mp it+\bnu_2\right)
	\Gamma\left(\frac{k}{2}-\ell \mp it-\bnu_2\right) }
	\\
	\times
	\frac{1}{2\pi i} \int_{(\sigma_u = 1/2+2\epsilon)} 
	\frac{h(u/i) u 
	\Gamma\left(\frac{1}{2}+u\right) \Gamma\left(\frac{1}{2}-u\right)}
	{\Gamma\left(\frac{k}{2}+ir_1 +u\right) \Gamma\left(\frac{k}{2}+ir_1-u\right)}
	\frac{\Gamma\left(-\frac{1}{2}-ir+\frac{k}{2}-\ell \mp it+u\right)}
	{\Gamma\left(\frac{3}{2}+ir-\frac{k}{2}+\ell \pm it+u\right)}
	\; du.
\end{multline*}
as given in \eqref{e:h3Rell}, at $s=1/2+ir$. 
We will also include the extensions of these sums to $k/2 <\ell <c/\epsilon$, as mentioned at the end of the previous section.   Thus the objective of this section is to bound 
\begin{multline*}
	{\cR '}_{f, E^{(k)}_N(*, 1/2+\overline{ir_1}), \phi_2,\text{cont}} (1/2 +ir)
	\\
	=
	\sum_{\ell <c/\epsilon}
	\frac{(4\pi)^{k} i^k }{\pi^{\frac{3}{2}}} 
	\sum_{\cuspa} \frac{1}{4\pi} \int_{-\infty}^\infty 
	\tth^{(3)R}_{\ell}(t; 1/2+ir, \overline{ir_1}, \nu_2)
	\left<E_{\cuspa}(*, 1/2+it), U_{f, \phi_2}\right> 
	\\
	\times
	\cL\left(1/2+ir, E^{(k)}_N(*, 1/2+ir_1)\times E_{\cuspa}(*, 1/2-it)\right)	
	\; dt
	.
\end{multline*}

By Stirling, and \eqref{jsum1}, the exponential contribution to the cuspidal part of the term 
$\cR_{f, E^{(k)}_N(*, 1/2+\overline{ir_1}), \phi_2,\text{cusp}} (1/2+ir)$ is
\begin{multline*}
\exp{\left(-\frac\pi2\left(|r-t_j+r_1| +|r-t_j-r_1|+2|\gamma| +|\gamma +t_j -r|-|r_1+\gamma| -|r_1-\gamma|-|\gamma-t_j+r|\right)\right)}\\
= \exp{\left(-\frac\pi2\left(2 \max (|r-t_j|, |r_1|) +2|\gamma|  -2 \max(|\gamma|, |r_1|) +|\gamma +t_j -r|-|\gamma-t_j+r|\right)\right)}\
\end{multline*}
The key thing is to first verify that this exponent is always non-positive, and second, to determine the regions where the exponential decay vanishes.    By symmetry we will restrict ourselves to the case $\gamma \ge 0$.  We will also restrict ourselves to the case $|r|,|r_1| \le \gamma$.  An examination of various cases shows that for $\gamma \ge 0$, the exponent will be 0 precisely when 
$\gamma \ge |t_j -r| \ge |r_1|$, with $t_j \le 0$, $r\ge 0$.

Recall  that $h$ concentrates $u = \sigma_u + i \gamma$ near $|\gamma - T| \ll T^\alpha$.   As noted previously, we may move $\sigma_u$ anywhere satisfying $1+\epsilon \le \sigma_u \le T^{\alpha - \epsilon}$ without altering this property of $h$.  It follows that for $\sigma_u$ in this range, the sum over $j$ converges absolutely and
\begin{multline}\label{gammaprod}
{\cR}'_{f, E^{(k)}_N(*, 1/2+\overline{ir_1}), \phi_2,\text{cusp}} (1/2 +ir) \\ \ll \sum_{\ell <c/\epsilon} T^{1+\alpha}\sum_{|t_j |\le T -|r|}
\frac{\Gamma\left( 2it_j -\ell\right) 
	\Gamma\left(\frac{1}{2}+ir- it_j + \ell +ir_1\right) 
	\Gamma\left(\frac{1}{2}+ir- it_j + \ell  -ir_1\right) }
	{\Gamma\left(\frac{k}{2}+ it_j-\ell+\bnu_2\right) 
	\Gamma\left(\frac{k}{2}+ it_j-\ell-\bnu_2\right)}\\ \times
	\frac{
	\Gamma\left(\frac{1}{2}+u\right) \Gamma\left(\frac{1}{2}-u\right)\Gamma\left(-\frac{1}{2}-ir+\frac{k}{2} + it_j -\ell+u\right)}
	{\Gamma\left(\frac{k}{2}+ir_1 +u\right) 
	\Gamma\left(\frac{k}{2}+ir_1-u\right)\Gamma\left(\frac{3}{2}+ir -\frac{k}{2} - it_j +\ell+u\right)}\\ \times
	\left<u_j, U_{f, \phi_2}\right> 
	(-1)^{\alpha_j}
	\cL\left(1/2+ir, E^{(k)}_N(*, 1/2+ir_1) \times \overline{u_j}\right).
\end{multline}
Here $u = \sigma_u +iT$. 

Applying Stirling's formula to two of the gamma factors, we find that, as $r>0, t_j<0$,
$$
\frac{\Gamma\left(-\frac{1}{2}-ir+\frac{k}{2} + it_j -\ell+u\right)}{\Gamma\left(\frac{3}{2}+ir -\frac{k}{2} - it_j +\ell+u\right)} \ll \frac{\left( T-|t_j|-|r|\right)^{k/2-1 -\ell + \sigma_u}}{\left(T + |t_j| + |r|\right)^{1-k/2 +\ell + \sigma_u}}.
$$

This contains the factor
$$
\left(\frac{T -|t_j|-|r|}{T +|t_j|+|r|}   \right)^{\sigma_u}
$$
Suppose first that $|r| + |t_j| > T^{1-\alpha+\epsilon}$.
Then 
$$
\left(\frac{T -|r|-|t_j|}{T +|r|+|t_j|}   \right)^{\sigma_u} \ll \left( 1 - \frac{1}{T^{\alpha- \epsilon}} \right)^{\sigma_u}.
$$
Choosing $\sigma_u> C T^{\alpha-\epsilon} \log{(T N(1+|r|))}$ gives us
$$
 \left( 1 - \frac{1}{T^{\alpha-\epsilon}} \right)^{\sigma_u} \ll(T N(1+|r|))^{-C},
$$
which will dominate the remainder of the integral for $C$ sufficiently large.

We have now reduced ourselves to the case $|t_j |+ |r| \le T^{1-\alpha+ \epsilon}$. The sum over $|t_j| \le T^{1-\alpha+\epsilon}-|r|$ can be broken up into a dyadic  sum and is bounded above by an estimate for the sum over $|t_j|+|r| \sim T^{1-\alpha+\epsilon}$.
Thus, including the estimates above, and bounding the remaining gamma factors of \eqref{gammaprod} by Stirling, and referring to \eqref{jsum1}, with $T' = T^{1-\alpha+\epsilon}$, we find that 
$$
{\cR}'_{f, E^{(k)}_N(*, 1/2+\overline{ir_1}), \phi_2,\text{cusp}} (1/2 +ir)  \ll_N  \sum_{\ell <c/\epsilon}T^{\alpha-2\ell+(1-\alpha + \epsilon)(3/2 + 3\ell +\epsilon)}.
$$
The right hand side makes it clear that we must choose $\alpha \ge 1/3$, as otherwise the exponent will grow with $\ell$.
If $\alpha = 1/3$, this becomes
$$
{\cR}'_{f, E^{(k)}_N(*, 1/2+\overline{ir_1}), \phi_2,\text{cusp}} (1/2 +ir)\ll_{r,r_1,N} T^{4/3 + \epsilon'},
$$
while choosing $\alpha = 1$ gives us
$$
{\cR}'_{f, E^{(k)}_N(*, 1/2+\overline{ir_1}), \phi_2,\text{cusp}} (1/2 +ir) \ll_{r,r_1,N} T^{1+ \epsilon'}.
$$
In general, for $1\ge \alpha \ge 1/3$, 
$$
{\cR}'_{f, E^{(k)}_N(*, 1/2+\overline{ir_1}), \phi_2,\text{cusp}} (1/2 +ir) \ll_{N} 
T^{3/2-\alpha/2 + \epsilon'}.
$$

If $|r| \ge T^{1-\alpha+ \epsilon}$ then the case  $|t_j |+ |r| \le T^{1-\alpha+ \epsilon}$ never occurs, and 
$$
{\cR}'_{f, E^{(k)}_N(*, 1/2+\overline{ir_1}), \phi_2,\text{cusp}} (1/2 +ir) \ll_N T (1+|r|))^{-C}.
$$
If $|r| \le T^{1-\alpha+ \epsilon}$ then for $\alpha = 1/3$ we obtain
$$
{\cR}'_{f, E^{(k)}_N(*, 1/2+\overline{ir_1}), \phi_2,\text{cusp}} (1/2 +ir)\ll_{\N}T^{4/3 + \epsilon'}.
$$
The contribution from $\cR_{f, E^{(k)}_N(*, 1/2+\overline{ir_1}), \phi_2,\text{cont}} (1/2 +ir)$ is bounded in a similar way, and contributes a smaller amount.
\subsection{The estimation of $E^{(1)}_{f, E^{(k)}_N(*, 1/2+\overline{ir_1}), gy^{k/2}}(s)$}
Recall that
\begin{multline}\label{e:E1_EkN_g}
	E^{(1)}_{f, E^{(k)}_N(*, 1/2+\overline{ir_1}), gy^{k/2}}(s)
	\\
	=
	\left\{
	\sum_j 
	\frac{\Gamma\left(\frac{k}{2}+\frac{1}{2}+ir_1\right)}
	{\Gamma\left(-\frac{k}{2}+\frac{1}{2}+ir_1\right)}
	\tth^{(1)}(t; s, \overline{ir_1}, k/2-1/2)
	\left<u_j, U_{f, gy^{k/2}}\right> 
	\cL\left(s, \overline{u_j} \times E^{(k)}_N(*, 1/2+ir_1)\right)
	\right.
	\\
	+
	\sum_{a\mid N} \frac{1}{4\pi} \int_{-\infty}^\infty 
	\frac{\Gamma\left(\frac{k}{2}+\frac{1}{2}+ir_1\right)}
	{\Gamma\left(-\frac{k}{2}+\frac{1}{2}+ir_1\right)}
	\tth^{(1)}(t; s, \overline{ir_1}, k/2-1/2)
	\\
	\left.
	\left<E_{1/a}\left(*, 1/2+it\right), U_{f, gy^{k/2}}\right>
	\cL\left(s, E_{1/a}(*, 1/2-it) \times E^{(k)}_N(*, 1/2+ir_1)\right)\; dt
	\right\}
	, 
\end{multline}
where
\begin{multline}\label{e:tth1_EkN_g}
	\frac{\Gamma\left(\frac{k}{2}+\frac{1}{2}+ir_1\right)}
	{\Gamma\left(-\frac{k}{2}+\frac{1}{2}+ir_1\right)}
	\tth^{(1)}(t; s, \overline{ir_1}, k/2-1/2)
	\\
	=
	-
	\frac{(4\pi)^{k} \cos(\pi ir_1) }{\pi \sqrt{\pi}} 
	\frac{1}{2\pi i} \int_{(\sigma_u = 1/2+2\epsilon)} 
	\frac{h(u/i) u \tan(\pi u)}{\Gamma\left(\frac{k}{2}+ir_1 +u\right) 
	\Gamma\left(\frac{k}{2}+ir_1-u\right)}
	\\
	\times
	\frac{1}{2\pi i } \int_{(\sigma_w = 1/2-\epsilon)}
	\frac{\Gamma\left(-w+\frac{k}{2}+u\right) 
	\Gamma\left(-w+\frac{k}{2}-u\right)
	\Gamma\left(w-ir_1\right) \Gamma\left(w+ir_1\right)}
	{
	\Gamma\left(s-w+k-\frac{1}{2}\right) 
	\Gamma\left(s-w+\frac{1}{2}\right)}
	\\
	\times
	\Gamma\left(s-w+it\right) \Gamma\left(s-w-it\right)
	\; dw\; du
	.
\end{multline}
Here we take $s = 1/2 + ir$, and $r_1 = \pm r$.  The exponential decay arising from the gamma factors and 
$\cos(\pi i r_1)$ is
$$
\exp{\left(-\frac{\pi}{2}\left( -2|r_1| + 2\max(|\gamma_2|,|r_1|)-2|r-\gamma_2| +2\max(|r-\gamma_2|,|t_j|) \right)\right)}.
$$
There is exponential decay except if $|\gamma_2| \le T$,  $|r-\gamma_2|\ge|t_j|$   and $|\gamma_2|\le |r_1|$.   
For each $j$, the product of $\cos(\pi i r_1)$ times the gamma function ratio is bounded above as follows:
\begin{multline*}
	\frac{\cos(\pi i r_1)\Gamma\left(-w+\frac{k}{2}+u\right) 
	\Gamma\left(-w+\frac{k}{2}-u\right)
	\Gamma\left(w-ir_1\right) \Gamma\left(w+ir_1\right)
}
	{\Gamma\left(\frac{k}{2}+ir_1 +u\right) 
	\Gamma\left(\frac{k}{2}+ir_1-u\right)
	\Gamma\left(ir-w+k\right) 
	\Gamma\left(1+ir-w\right)}\\ \times \Gamma\left(\frac12 +ir-w+it_j\right) \Gamma\left(\frac12 +ir-w-it_j\right)\\
	\ll 
	T^{-2\sigma_w}\frac{\left(|\gamma_2|-|r_1|\right)^{\sigma_w -1/2}\left(|\gamma_2|+|r_1|\right)^{\sigma_w -1/2}}{\left(|\gamma_2-r|\right)^{-2\sigma_w +k}}\Gamma\left(\frac12 +ir-w+it_j\right) \Gamma\left(\frac12 +ir-w-it_j\right) \\
\ll T^{-2\sigma_w}\ |r|^{4 \sigma_w-1 -k}\Gamma\left(\frac12 +ir-w+it_j\right) \Gamma\left(\frac12 +ir-w-it_j\right).
		\end{multline*}

The estimate improves if $w$ is shifted to $\sigma_w = 1/2 +\ell'+\epsilon$, with $\ell'<k/2$.    Simple poles are passed over at $w = 1/2 +i(r+t_j) +\ell$, with $\ell \le \ell'$.   The sum of the residues at these poles, after the $u$ integration is performed, give the largest contribution to $E^{(1)}_{f, E^{(k)}_N(*, 1/2+\overline{ir_1}), gy^{k/2}}(1/2 + ir)$.   As
$$
\Res_{w = 1/2 +i(r+t_j) +\ell}\Gamma\left(\frac12 +ir-w+it_j\right) \Gamma\left(\frac12 +ir-w-it_j\right)
\ll \left(1+|t_j|\right)^{-1/2-\ell},
$$
For each $\ell$ this contribution is bounded above by
\begin{multline*}
T^{ \alpha -2\ell}|r|^{1/2 + 3\ell-k}\\ \times \sum_{|t_j| \ll |r|}
\left<E_{1/a}\left(*, 1/2+it\right), U_{f, gy^{k/2}}\right>
	\cL\left(s, E_{1/a}(*, 1/2-it) \times E^{(k)}_N(*, 1/2+ir_1)\right).
\end{multline*}
Referring to \eqref{jsum1}, with $T' = |r|$, this is bounded above by
\be\label{E0}
T^{ \alpha -2\ell}|r|^{1/2 + 3\ell-k+k+1 +\epsilon}= T^\alpha |r|^{3/2 + \epsilon}\left(\frac{|r|^3}{T^2}\right)^\ell.
\ee
For $|r|< T^{2/3}$ this will be bounded above by our main term, which is on the order of $T^{1 + \alpha}$.

In the shifted integral, we have
$$
\Gamma\left(\frac12 +ir-w+it_j\right) \Gamma\left(\frac12 +ir-w-it_j\right)
\ll \left(1+|t_j|\right)^{-2\sigma_w},
$$
Integrating over $u$, and $w$ with  $|\gamma_2| \le |r|$, and app;lying  \eqref{jsum1} again, the shifted integral is bounded above by
$$
T^{1+\alpha}|r|^{1+\epsilon}\left(\frac{|r|}{T}\right)^{2\sigma_w},
$$
Taking $|r|< T^{2/3}$, this is bounded above by \eqref{E0} as long as $\sigma_w >3$.
This concludes the proof of Proposition~\ref{prop:bounds}.

\thispagestyle{empty}
{\footnotesize
\nocite{*}
\bibliographystyle{amsalpha}
\bibliography{reference_SDDS}
}

\end{document}